\title{A cartesian presentation of weak $n$-categories}
\author{Charles Rezk}
\date{ \today}
\address{Department of Mathematics \\
University of Illinois at Urbana-Champaign \\ 
Urbana, IL}
\email{rezk@math.uiuc.edu}
\thanks{The author was supported under NSF grant DMS-0505056.}
\numberwithin{equation}{section}
  \let\c@subsection\c@equation
\theoremstyle{plain}   
\newtheorem{thm}[subsection]{Theorem}
\newtheorem{prop}[subsection]{Proposition}
\newtheorem{cor}[subsection]{Corollary}
\newtheorem{lemma}[subsection]{Lemma}
\newtheorem{conj}[subsection]{Conjecture}
\theoremstyle{remark}
\newtheorem{rem}[subsection]{Remark}
\theoremstyle{plain}
\DeclareMathOperator{\id}{id}
\DeclareMathOperator{\colim}{colim}
\newcommand{\op}{{\operatorname{op}}}
\newcommand{\ob}{{\operatorname{ob}}}
\newcommand{\ra}{\rightarrow}
\newcommand{\xra}{\xrightarrow}
\newcommand{\la}{\leftarrow}
\newcommand{\xla}{\xleftarrow}
\DeclareMathOperator{\hocolim}{hocolim}
\DeclareMathOperator{\holim}{holim}
\newcommand{\set}[2]{{\{\,#1\mid#2\,\}}}
\newcommand{\Z}{\mathbb{Z}}
\newcommand{\dfn}{\textbf}
\def\noloc{\;{:}\,}
\def\defeq{\overset{\mathrm{def}}=}
\newcommand{\forcepar}{\mbox{}\par}
\begin{document}

\newcommand{\Ab}{\mathrm{Ab}} 
\newcommand{\Set}{\mathrm{Set}} 
\newcommand{\sSet}{s\mathrm{Set}} 
\newcommand{\Sp}{\mathrm{Sp}} 
\newcommand{\Psh}{\operatorname{Psh}}
\newcommand{\Mod}{\operatorname{Mod}}
\newcommand{\Bun}{\mathrm{Bun}}
\newcommand{\enr}{\mathrm{enr}}

\newcommand{\Map}{\operatorname{Map}} 
\newcommand{\hMap}{h\mathrm{Map}} 

\newcommand{\RAdj}{\operatorname{RAdj}} 
\newcommand{\CAT}{\mathrm{CAT}} 
\newcommand{\Cat}{\mathrm{Cat}} 
\newcommand{\enrcat}[1]{{#1}\mathrm{\mbox{-}Cat}}
\newcommand{\stcat}[1]{\mathbf{St\mbox{-}}{#1}\mathbf{\mbox{-}Cat}}

\newcommand{\dnerve}{\operatorname{dnerve}} 

\newcommand{\thetaspk}[2]{\Theta_{#1}\mathrm{Sp}_{#2}}
\newcommand{\thetaspkfib}[2]{\Theta_{#1}\mathrm{Sp}^{\mathrm{fib}}_{#2}}
\newcommand{\thetaspenr}[1]{#1\mathrm{\mbox{-}}\Theta\mathrm{Sp}}
\newcommand{\thetagpdk}[2]{\Theta_{#1}\mathrm{Gpd}_{#2}}
\newcommand{\thetagpdenr}[1]{#1\mathrm{\mbox{-}}\Theta\mathrm{Gpd}}

\newcommand{\Se}{\mathrm{Se}} 
\newcommand{\se}{\mathrm{se}} 
\newcommand{\Cpt}{\mathrm{Cpt}} 
\newcommand{\Gpd}{\mathrm{Gpd}} 
\newcommand{\hequiv}{\mathrm{equiv}}

\newcommand{\proj}{\mathrm{proj}} 
\newcommand{\inj}{\mathrm{inj}} 

\newcommand{\sPsh}{s\mathrm{PSh}} 

\newcommand{\timesover}[1]{\underset{#1}{\times}} 

\newcommand{\umap}{\underline{\mathrm{map}}}

\newcommand{\pQ}{\mathcal{Q}}

\begin{abstract}
We propose a notion of weak $(n+k,n)$-category, which we call
$(n+k,n)$-$\Theta$-spaces.  The $(n+k,n)$-$\Theta$-spaces are precisely
the fibrant objects of a certain model category structure on the
category of presheaves of simplicial sets on Joyal's category
$\Theta_n$.  This notion is a generalization of that of complete Segal
spaces (which are precisely the $(\infty,1)$-$\Theta$-spaces).  Our
main result is that the above model category is cartesian.
\emph{Note:} This version of the article has been revised to
correct errors which appear in the published version.
\end{abstract}

\maketitle


\section{Introduction}

In this note, we propose a definition of weak $n$-category, and more
generally, weak $(n+k,n)$-category for all $0\leq n<\infty$ and
$-2\leq k\leq \infty$, called \dfn{$(n+k,n)$-$\Theta$-spaces}.  The
collection of $(n+k,n)$-$\Theta$-spaces forms a category
$\thetaspkfib{n}{k}$, and there is 
a notion for a morphism  in this category to be an equivalence.
The category $\thetaspkfib{n}{k}$ together with the given class of
equivalences has the following desirable property: 
it is cartesian closed, in a way compatible with the equivalences.
More precisely, we have the following.
\begin{enumerate}
\item The category $\thetaspkfib{n}{k}$ is cartesian closed; i.e., it has
  products $Y\times Z$ and 
  function objects $Z^Y$ for any pair 
  of objects $Y,Z$ in $\thetaspkfib{n}{k}$, so that
  $\thetaspkfib{n}{k}(X\times Y,Z) \approx \thetaspkfib{n}{k}(X,Z^Y)$. 
\item If $f\colon X\ra Y$ is an equivalence in $\thetaspkfib{n}{k}$, then
  so are $f\times Z\colon X\times Z\ra Y\times Z$ and  $Z^f\colon
  Z^Y\ra Z^X$.
\end{enumerate}

The  category $\thetaspkfib{n}{k}$ will be defined as the full
subcategory of fibrant objects in a Quillen model category
$\thetaspk{n}{k}$. 
The underlying category of $\thetaspk{n}{k}$ is the category
$\sPsh(\Theta_n)$ of simplicial presheaves on a certain category
$\Theta_n$.  We
equip this category with a  model category structure, 
obtained as the Bousfield localization of the injective model
structure on presheaves with respect to a certain set of morphisms
$\mathscr{T}_{n,k}$.  We 
will prove that $\thetaspk{n}{k}$ is a
\emph{cartesian model category}, i.e., the model category structure is nicely
compatible with the internal function objects of $\sPsh(\Theta_n)$.  
Then $\thetaspkfib{n}{k}$ is the full subcategory of
\emph{fibrant} objects in
$\thetaspk{n}{k}$; equivalences in
$\thetaspkfib{n}{k}$ are just levelwise weak equivalences of
presheaves. 

For $n=0$, the category $\Theta_0$ is the terminal category, so that
$\sPsh(\Theta_0)$ is the category of simplicial sets $\Sp$.  An
$(\infty,0)$-$\Theta$-space is precisely a Kan complex, and a
$(k,0)$-$\Theta$-space is precisely a $k$-truncated Kan complex, i.e.,
a Kan complex with homotopy groups vanishing above dimension $k$.

For $n=1$, the category $\Theta_1$ is the category $\Delta$ of finite
ordinals, so that $\sPsh(\Theta_1)$ is the category of simplicial
spaces.  An $(\infty,1)$-$\Theta$-space is precisely a \emph{complete
  Segal space}, in the sense of \cite{rezk-ho-theory-of-ho-theory}.

The category $\Theta_n$ which we use was introduced by Joyal
\cite{joyal-theta-note}, as part of an attempt to define a notion of
weak $n$-category generalizing the notion of quasicategory .  Sketches
of these ideas can be found in \cite{leinster-survey} and
\cite{cheng-lauda-illustrated-guide}.   The category $\Theta_n$ has
also been studied by Berger \citelist{\cite{berger-cellular-nerve}
  \cite{berger-iterated-wreath}}, with particular application to the
theory of iterated loop spaces.

\subsection{The categories $\Theta_n$}

We will give an informal description of Joyal's categories $\Theta_n$ 
here, suitable for our purposes; our description is essentially the
same as that given in \cite{berger-iterated-wreath}*{\S3}.  It is most
useful for us 
to regard 
$\Theta_n$ as a full subcategory of $\stcat{n}$, the category of
strict $n$ categories.  Thus, $\Theta_0$ is the full subcategory of
$\stcat{0}=\Set$ consisting of the terminal object.  The category $\Theta_1$
is the full subcategory of $\stcat{1}$ consisting of object $[n]$ for
$n\geq0$, where $[n]$ represents the free strict $1$-category on the
diagram
\[
\xymatrix{
{0} \ar[r]
& {1} \ar[r]
& {\cdots} \ar[r]
& {(n-1)} \ar[r]
& {n} 
}\]
Thus, $\Theta_1=\Delta$, the usual simplicial indexing category.  The
category 
$\Theta_2$ is the full subcategory of $\stcat{2}$ consisting of
objects which are denoted $[m]([n_1],\dots,[n_m])$ for
$m,n_1,\dots,n_m\geq0$.  This represents the strict $2$-category $C$ which
is ``freely generated'' by: 
objects $\{0,1,\dots,m\}$, and morphism categories $C(i-1,i)=[n_i]$.  
For
instance, the object $[4]([2],[3],[0],[1])$ in $\Theta_2$ corresponds to
the ``free $2$-category'' on the following picture:
\[\xymatrix{
{0} \ar@/^2pc/[r]^{}="10"  \ar[r]^{}="11" \ar@/_2pc/[r]^{}="12"
\ar@{=>}"10";"11" \ar@{=>}"11";"12"
& {1} \ar@/^3pc/[r]^{}="20" \ar@/^1pc/[r]^{}="21"
\ar@/_1pc/[r]^{}="22" \ar@/_3pc/[r]^{}="23" \ar@{=>}"20";"21"
\ar@{=>}"21";"22" \ar@{=>}"22";"23"  
& {2} \ar[r]
& {3} \ar@/^1pc/[r]^{}="30" \ar@/_1pc/[r]^{}="31" \ar@{=>}"30";"31"
& {4}
}\]
In general, the objects of $\Theta_n$ are of the form
$[m](\theta_1,\dots,\theta_m)$, where $m\geq0$ and the $\theta_i$ are
objects of $\Theta_{n-1}$; this object corresponds to the strict
$n$-category $C$ 
``freely generated'' by: objects $\{0,\dots,m\}$, and a strict
$(n-1)$-category of morphisms $C(i-1,i)=\theta_i$.  The morphisms of
$\Theta_n$ are functors between strict $n$-categories.

We make special note of objects $O_0,\dots,O_n$ in $\Theta_n$.  These
are defined recursively by: $O_0=[0]$, and $O_k=[1](O_{k-1})$ for
  $k=1,\dots,n$.  Thus, the object $O_k$ in $\Theta_n$ corresponds to the
  ``freestanding $k$-cell'' in $\stcat{n}$.

\subsection{Informal description of $\Theta$-spaces}
\label{subsec:informal-description}

We will start by describing $\thetaspkfib{n}{\infty}$, the category of
$(\infty,n)$-$\Theta$-spaces.  Let $\Sp$ denote the category of
simplicial sets.  We will regard objects of $\Sp$ as ``spaces''; the
following definitions are perfectly sensible if objects of $\Sp$ are
taken to be actual topological spaces (compactly generated).

An object of $\thetaspkfib{n}{\infty}$ is a simplicial presheaf on
$\Theta_n$ (i.e., a
functor $X\colon \Theta_n^\op\ra \Sp$), satisfying three conditions:
\begin{enumerate}
\item [(i)] an \emph{injective fibrancy} condition,
\item [(ii)] a \emph{Segal} condition, and
\item [(iii)] a \emph{completeness} condition.
\end{enumerate}
A morphism $f\colon X\ra Y$ of $\thetaspkfib{n}{\infty}$ is a morphism
of simplicial 
presheaves; the morphism $f$ is said to be an \dfn{equivalence} (or
\dfn{weak equivalence}) if it is a ``levelwise'' weak equivalence of
simplicial presheaves, i.e., if $f(\theta)\colon X\theta\ra Y\theta$
is a weak equivalence of simplicial sets for all $\theta\in \ob
\Theta$. 

The injective fibrancy condition (i) says that $X$ has a right lifting
property with respect to maps in $\sPsh(\Theta_n)$ which are both
monomorphisms and levelwise weak equivalences; that is, $X$ is fibrant in the
\emph{injective} model structure on $\sPsh(\Theta_n)$.

The Segal condition (ii) says that for all objects $\theta$ of
$\Theta_n$, the space $X(\theta)$ is weakly equivalent to an
inverse limit of a certain diagram of spaces $X(O_k)$; taken together
with the injective 
fibrancy condition, this inverse limit is in fact the
homotopy inverse limit of the given diagram.  For $n=1$, the Segal
condition amounts to requiring that the ``Segal map''
\[
X([m])\ra X([1])\timesover{X([0])} \cdots\timesover{X([0])} X([1]) \approx 
X(O_1)\timesover{X(O_0)}\cdots \timesover{X(O_0)} X(O_1)
\]
be a weak equivalence for all $m\geq2$.  As an example of how the
Segal condition works 
for $n=2$, the space $X([4]([2],[3],[0],[1]))$ is required to be weakly
equivalent to
\[
\left(X(O_2)\timesover{X(O_1)}X(O_2)\right)\timesover{X(O_0)}
\left(X(O_2)\timesover{X(O_1)}X(O_2)\timesover{X(O_1)}
X(O_2)\right)\timesover{X(O_0)} 
X(O_1) \timesover{X(O_0)} X(O_2).
\]

The completeness condition (iii) says that the space $X(O_k)$ should
behave like the ``moduli space'' of $k$-cells in a
$(\infty,n)$-category.  That is, if the points of $X(O_k)$ correspond
to individual $k$-cells, such points should be connected by a path in
$X(O_k)$ if they represent ``equivalent'' $k$-cells, there should be a
homotopy between paths for every ``equivalence between equivalences'',
and so on.  It turns 
out that the way to enforce this is to require that, for
$k=1,\dots,n$,  the map
$X(i_k)\colon X(O_{k-1})\ra X(O_{k})$ which encodes ``send a $(k-1)$-cell  to
its identity $k$-morphism'' should induce a weak equivalence of spaces
\[
X(O_{k-1}) \ra X(O_{k})^\hequiv.
\]
Here $X(O_{k})^\hequiv$ is the union of those path components of
$X(O_{k})$  which consist of $k$-morphisms which are
``$k$-equivalences''. 
Thus, the completeness condition asserts that the moduli space of $(k-1)$-cells
is weakly equivalent to the moduli space of
$k$-equivalences.   

The category $\thetaspk{n}{k}$ of $(n+k,n)\mbox{-}\Theta$-spaces is
obtained by imposing an additional
\begin{enumerate}
\item [(iv)] \emph{$k$-truncation} condition.
\end{enumerate}
To state this, we need the moduli space
$X(\partial O_m)$ of ``parallel pairs of $(m-1)$-morphisms'' in $X$.  
This space is defined inductively as an inverse limit of the spaces
$X(O_m)$, so that
\[
X(\partial O_m)\defeq \lim \bigl(X(O_{m-1})\ra X(\partial O_{m-1}) \la
X(O_{m-1})\bigr),
\]
with $X(\partial O_0)=1$.  Then the $k$ truncation condition asserts
that the fibers of $X(O_n)\ra X(\partial O_n)$ are
$k$-types, i.e., have vanishing homotopy groups in all dimensions
greater than $k$.

The above definition is examined in detail in \S\ref{sec:theta-spaces}.

\subsection{Presentations and enriched model categories}

Our construction of a cartesian model category structure is a special
case of a general procedure, which associates to certain kinds of model
categories $M$ a new model category $\thetaspenr{M}$; we may regard  this
as being analogous to the procedure which associates to a category $V$
with finite products the category $\enrcat{V}$ of categories enriched
over $V$.

Specifically,
suppose we are given a pair $(C,\mathscr{S})$ consisting
of a small category $C$ and a set $\mathscr{S}$ of morphisms in
$\sPsh(C)$;
this data is called a \dfn{presentation}, following the
treatment of \cite{dugger-universal-homotopy-theories}.   (Here,
$\sPsh(C)$ denotes the category of presheaves of simplicial sets on
$C$.) 
Let
$M=\sPsh(C)^\inj_{\mathscr{S}}$ denote the model 
category structure on $\sPsh(C)$ obtained by Bousfield localization of
the injective model structure with respect to $\mathscr{S}$.  We
define a new presentation $(\Theta C,
\mathscr{S}_\Theta)$, and thus obtain a model category
$\thetaspenr{M}\defeq \sPsh(\Theta
C)^\inj_{\mathscr{S}_\Theta}$.  The category $\Theta C$ is a ``wreath
product'' of $\Delta$ with $C$, as defined by
\cite{berger-iterated-wreath} (see \S\ref{sec:theta-construction}),
while the set $\mathscr{S}_\Theta$ 
consists of some maps 
built from elements of $\mathscr{S}$, together with certain
``Segal'' and ``completeness'' maps (this set is described in
\S\ref{sec:presentation-theta-css}). 

Our main result is the following
theorem.
\begin{thm}\label{thm:main-intro}
Let $M=\sPsh(C)^\inj_{\mathscr{S}}$ for some presentation
$(C,\mathscr{S})$.  If $M$ is a cartesian 
model category, then $\thetaspenr{M}$ is also a cartesian model
category.
\end{thm}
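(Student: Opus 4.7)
The plan is to verify the two conditions defining a cartesian model category for $\thetaspenr{M}$: cartesian closure of the underlying category, and the pushout-product axiom. Cartesian closure of $\sPsh(\Theta C)$ is automatic, since presheaf categories are toposes. The unlocalized injective model structure $\sPsh(\Theta C)^\inj$ is already cartesian, because its cofibrations are precisely the monomorphisms, and monomorphisms are stable under pushout-product. Thus the substantive content of the theorem is that Bousfield localization at $\mathscr{S}_\Theta$ preserves the pushout-product axiom for the trivial-cofibration condition.

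By the standard recognition criterion for cartesian Bousfield localizations, it suffices to verify that for every $f\in\mathscr{S}_\Theta$ and every generating cofibration $g\colon \partial[\theta]\hookrightarrow[\theta]$, with $\theta\in\ob\Theta C$ a representable, the pushout-product $f\Box g$ is a weak equivalence in $\thetaspenr{M}$. I would then stratify the verification according to the three kinds of maps comprising $\mathscr{S}_\Theta$: (a) maps lifted from $\mathscr{S}$ through the wreath-product construction; (b) Segal maps; (c) completeness maps.

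For case (a), the hypothesis that $M$ itself is cartesian provides the analogous pushout-product property in $\sPsh(C)$, and the explicit description of products in $\sPsh(\Theta C)$ via Berger's wreath product lets one propagate the property upward to maps of the first type. For (b) and (c), the analysis parallels the original treatment of complete Segal spaces in \cite{rezk-ho-theory-of-ho-theory}: viewed through the $\Delta$-direction of the wreath product, the Segal and completeness maps are local equivalences stable under cartesian product with representables, and the extra $C$-coordinate causes no essential obstruction because products in $\Theta C$ respect the wreath decomposition levelwise.

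The principal obstacle is the combinatorial bookkeeping required for case (b). The pushout-product of a Segal map with a representable $[\theta]=[m](\theta_1,\dots,\theta_m)$ produces an intricate diagram whose analysis demands an induction on the cell structure of $\theta$, exploiting the recursive definition of $\Theta C$ as an iterated wreath product and the fact, from the Segal condition itself, that sufficient colimits of representables agree up to local equivalence with iterated fibre products over $O_0$. Once this technical heart is in place, the completeness case reduces to analogous manipulations involving the ``interval'' used to define $\mathscr{S}_\Theta$, and case (a) follows from the cartesian hypothesis on $M$, completing the theorem.
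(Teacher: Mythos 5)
Your overall strategy matches the paper's: reduce to showing that each map of $\mathscr{S}_\Theta$ stays a local equivalence after taking products with representables, and stratify by the three classes of maps (Segal, completeness, and $V[1](\mathscr{S})$). But as written the proposal has two genuine gaps, both at the points you defer to ``bookkeeping'' or ``analogous manipulations.''

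For the Segal case, ``an induction on the cell structure of $\theta$'' is not enough; you have not named the idea that makes the product of a Segal inclusion with a representable tractable. The paper introduces \emph{covers} $K\subseteq F[m]$ in $\sPsh(\Delta)$, shows each inclusion $V_K(A_1,\dots,A_m)\to V[m](A_1,\dots,A_m)$ is a Segal equivalence by induction on $m$, and then -- this is the crux -- proves that a product of two covers maps to $V[m](\vec A)\times V[n](\vec B)$ by a Segal equivalence. That last step requires writing both source and target as homotopy colimits over the poset $\pQ_{m,n}$ of shuffles $(\delta_1\colon[p]\to[m],\delta_2\colon[p]\to[n])$ and proving the relevant nerves (of $\pQ_{m,n}$ and of its subposets $\pQ_{m,n,\alpha}$) are contractible via explicit poset retractions. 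This is precisely the step whose published proof was wrong and required a correction, so the danger that a naively organized induction fails here is not hypothetical; without the shuffle-poset decomposition your case (b) has no argument.

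For the completeness case, the claim that it ``reduces to analogous manipulations involving the interval'' mischaracterizes what is needed: the argument is not parallel to the Segal one. The paper first proves a reduction (using the already-established Segal cartesianness) that one need only check that $X^{F[1](c)}$ is $\Cpt_C$-local, because every $X^{F\theta}$ is a homotopy fiber product of such objects. It then runs a separate categorical analysis: defining the homotopy category and equivalences of a Segal object, showing $X\to X^{T_\#E}$ is fully faithful (via a homotopy-monomorphism argument on $X^{T_\#E}\to X^{T_\#F[1]}$ together with essential surjectivity), and deducing that completeness of $X$ forces $(X^{T_\#F[0]})[1](c)\to(X^{T_\#E})[1](c)$ to be an equivalence. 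Your case (a) is closest to being right in spirit -- the cartesian hypothesis on $M$ enters exactly to show the function objects $(M_X(x,y))^{Fd}$ remain $\mathscr{S}$-fibrant -- but it too depends on the explicit decomposition $V[1](A)\times V[1](B)\approx\colim\bigl(V[2](A,B)\la V[1](A\times B)\ra V[2](B,A)\bigr)$ and a fiberwise identification of mapping objects, which you should make explicit rather than leave to ``propagation upward.''
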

This theorem is a straightforward generalization of the main theorem of
\cite{rezk-ho-theory-of-ho-theory}, which proves the theorem for the
special case $(C,\mathscr{S})=(1,\varnothing)$ (in which case $M=\Sp$,
and thus $\thetaspenr{M}$ is the category of simplicial spaces with the
complete Segal space model structure.)

The model categories for $(n+k,n)$-$\Theta$-spaces are obtained
iteratively, so that
\[
\thetaspk{n+1}{k} \defeq \thetaspenr{(\thetaspk{n}{k})},
\]
starting with $\thetaspk{0}{k}=\Sp_k$, where $\Sp_k$ is the Bousfield
localization of $\Sp$ whose fibrant objects are Kan complexes which
are $k$-types.  Applying the theorem inductively shows that
$\thetaspk{n}{k}$ are cartesian model categories.  The category
$\thetaspkfib{n}{k}$ is defined to be the full subcategory of fibrant
objects in the model category $\thetaspk{n}{k}$.

\subsection{The relationship between $\thetaspenr{M}$ and $\enrcat{M}$}
\label{subsec:conjectural-relationship}

If $M$ is a cartesian model category, then we may certainly consider
$\enrcat{M}$, the category of small categories enriched over $M$.
Given an object $X$ of $\enrcat{M}$, let $hX$ denote the ordinary
category whose objects are the same as $X$, and whose morphism sets
are given by $hX(a,b) \defeq 
hM(1,X(a,b))$, where $hM$ denotes the homotopy category of $M$. 
Let
us say that a morphism $f\colon X\ra Y$ of objects of $\enrcat{M}$ is
a \dfn{weak equivalence} if 
\begin{enumerate}
\item  for each pair of objects $a,b$ of $X$, the induced map
  $X(a,b)\ra Y(fa,fb)$ is a weak equivalence in $M$, and
\item  the induced functor $hX\ra hY$ is an equivalence of
  $1$-categories.
\end{enumerate}
We can make the  following conjecture.
\begin{conj}
Let $M=\sPsh(C)^\inj_{\mathscr{S}}$ for some presentation
$(C,\mathscr{S})$, and suppose that $M$ is a cartesian model category.
Then there is a model
category structure on $\enrcat{M}$ with the above weak equivalences, 
and a Quillen equivalence
\[
\enrcat{M} \approx \thetaspenr{M}.
\]
\end{conj}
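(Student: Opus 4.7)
The plan is to adapt the strategy used by Bergner and Joyal--Tierney for the classical case $(C,\mathscr{S})=(1,\varnothing)$, where one establishes a Quillen equivalence between simplicial categories and complete Segal spaces, and extend it to the more general enriched setting. There are three main steps: construct the model structure on $\enrcat{M}$, construct a Quillen adjunction with $\thetaspenr{M}$, and then upgrade it to a Quillen equivalence.

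For the first step, I would follow Bergner's construction of the Dwyer--Kan model structure on $\enrcat{\Sp}$, replacing $\Sp$ by $M$ throughout. The fibrations should be functors $F\colon X\ra Y$ in $\enrcat{M}$ such that each $X(a,b)\ra Y(Fa,Fb)$ is a fibration in $M$ and such that $F$ satisfies a path-lifting property for equivalences in $hY$. One generates trivial cofibrations from: (a) maps of the form $U[A\ra B]$ sending a generating trivial cofibration of $M$ to a ``one-arrow'' enriched category, and (b) a generalized ``interval'' $\{0\}\ra E$, where $E$ is a suitable cofibrant replacement of the ``walking isomorphism'' enriched category. One then applies Kan's recognition theorem, with the hard verification being that pushouts along these generating trivial cofibrations are weak equivalences. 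This argument hinges on having a good universal path object encoding equivalences in $M$, which must be constructed from the cartesian model structure on $M$.

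For the second step, I would define the nerve $N\colon \enrcat{M}\ra \sPsh(\Theta C)$ by
\[
N(X)([m](c_1,\dots,c_m)) \defeq \coprod_{a_0,\dots,a_m\in \ob X} \prod_{i=1}^m X(a_{i-1},a_i)(c_i),
\]
and let $F$ denote its left adjoint, determined on representables by the ``free enriched category on a string of composable morphisms'' construction. I would verify $F\dashv N$ is a Quillen adjunction by checking that $N$ carries Bergner-type fibrations to injective fibrations in $\sPsh(\Theta C)$ that satisfy the Segal and completeness relations defining $\mathscr{S}_\Theta$. Here the Segal condition reflects that $N(X)$ genuinely computes products of hom-objects, and the completeness condition reflects that the space of self-equivalences of an object of $X$ agrees with its space of identities; this uses the lifting property for equivalences built into the definition of fibration in $\enrcat{M}$.

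For the third step, I would show the derived unit and counit are weak equivalences. The counit $FNX\ra X$ on a fibrant $X\in \enrcat{M}$ can be analyzed hom-object by hom-object using the Segal condition, while for the unit one uses that a fibrant object of $\thetaspenr{M}$ can be ``strictified'' to an honest $M$-enriched category, possibly after passing to a cofibrant replacement, via an explicit rigidification analogous to the simplicial rigidification $\mathfrak{C}$ of Lurie and Dugger--Spivak. The main obstacle is step one: the Bergner proof uses concrete interval arguments in $\Sp$, and generalizing them requires that the cartesian model category $M$ admit a sufficiently rigid ``walking equivalence'' object together with enough control over its cofibrancy and homotopical behavior. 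In particular, one must show that pushouts in $\enrcat{M}$ along generating trivial cofibrations preserve levelwise equivalences in $M$, which is exactly the point where Bergner's calculus of free resolutions must be translated into the abstract framework of localized presheaf categories.
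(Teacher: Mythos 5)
First, a point of orientation: the statement you are proving is stated in the paper as a \emph{conjecture}, not a theorem. The paper offers no proof of it; it only remarks that the case $M=\Sp$ follows from work of Bergner and Joyal--Tierney. So there is no argument in the paper to compare yours against, and what you have written should be judged as a proposed proof of an open statement. As such it is a reasonable program, but it is not a proof: each of your three steps conceals a genuine difficulty, and step two as written contains an actual error. The existence of the Dwyer--Kan-type model structure on $\enrcat{M}$ is not obtained by ``replacing $\Sp$ by $M$ throughout'': Bergner's verification that pushouts along the generating trivial cofibrations are weak equivalences uses specific features of simplicial sets, and the known general versions (Lurie, Berger--Moerdijk) require additional hypotheses on $M$ (an admissible interval, an invertibility hypothesis) that you would have to verify for an arbitrary cartesian localization $\sPsh(C)^\inj_{\mathscr{S}}$. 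You correctly flag this as ``the main obstacle,'' but flagging it does not discharge it.

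The more concrete gap is in step two. The levelwise nerve $N(X)([m](c_1,\dots,c_m))=\coprod \prod X(a_{i-1},a_i)(c_i)$ has \emph{discrete} value at $[0]$, namely the set $\ob X$. For $(F,N)$ to be a Quillen pair into $\thetaspenr{M}=\sPsh(\Theta C)^\inj_{\mathscr{S}_\Theta}$ with $N$ the right adjoint, $N$ must carry fibrant objects of $\enrcat{M}$ to $\mathscr{S}_\Theta$-fibrant, hence $\Cpt_C$-local, objects. But completeness of $N(X)$ would force the discrete space $\ob X\approx N(X)[0]$ to be weakly equivalent to $N(X)^\hequiv$, which fails whenever $X$ has a non-identity equivalence between distinct objects (compare the paper's characterization of discrete complete objects as nerves of \emph{rigid} $n$-categories). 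This is precisely why the classical comparison does not use the naive nerve: one either interposes the model of Segal precategories (Bergner's three-model chain) or replaces $N$ by a classification-diagram/completion construction. Your ``completeness reflects that the space of self-equivalences agrees with the space of identities'' is the assertion that needs to be \emph{arranged}, not a property the naive nerve has. Finally, even granting corrected adjoints, step three (that the derived unit and counit are equivalences) is the entire content of the theorem already for $M=\Sp$; a rigidification functor for $\Theta C$-objects with $C$ nontrivial is not known to exist in the generality you invoke, and ``analogous to the simplicial rigidification'' is not an argument.
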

For the case of $M=\Sp$, the conjecture follows from 
theorems of Bergner \citelist{\cite{bergner-model-cat-simplicial-cat},
  \cite{bergner-three-models}}.

\subsection{Why is this a good notion of weak $n$-category?}

We propose that $(n+k,n)\mbox{-}\Theta$-spaces are a model for
weak $(n+k,n)$-categories.  Some points in its favor are the
following.
\begin{enumerate}
\item Our notion of $(\infty,1)\mbox{-}\Theta$-spaces is precisely
  what we called a \emph{complete Segal space} in
  \cite{rezk-ho-theory-of-ho-theory}.  This is recognized as a
  suitable model for 
  $(\infty,1)$-categories, due to work of Bergner
  \cite{bergner-three-models} and
  Joyal-Tierney \cite{joyal-tierney-quasicats-segal-spaces}.
\item  As noted above \eqref{subsec:conjectural-relationship}, the
  definition of $(n+k,n)$-$\Theta$-spaces is 
  a special case of a more general construction, which conjecturally
  models ``homotopy theories enriched over a cartesian model
  category''.   In particular, a consequence of our conjecture would
  be a Quillen equivalence 
\[
\enrcat{(\thetaspk{n}{k})} \approx \thetaspenr{(\thetaspk{n}{k})} =
\thetaspk{n+1}{k}.
\]
That is, $(n+1+k,n+1)$-$\Theta$-spaces are (conjecturally) ``the
same'' as categories enriched over $(n+k,n)$-$\Theta$-spaces.

\item Our notion satisfies the ``homotopy hypothesis''.  There is an
  evident notion of groupoid object in $\thetaspk{n}{k}$, and one can
  show that the full subcategory of such groupoid objects models
  $\Sp_{n+k}$, the homotopy theory of $(n+k)$-types
  \eqref{subsec:homotopy-hypothesis}. 

\item More generally, it is understood that $n$-tuply monoidal
  $k$-groupoids should   correspond to ``$k$-types of
  $E_n$-spaces'', where $E_n$ is a version of the little $n$-cubes
  operad; furthermore, $n$-tuply \emph{groupal} $k$-groupoids should
  correspond to ``$k$-types of $n$-fold loop spaces'' (see, for
  instance, \cite{baez-dolan-categorification}*{\S3}).  In terms of
  our models, $n$-tuply groupal $k$-groupoids are objects
  $X$ of $\thetaspk{n}{k}$ for which (i)
  $X(O_j)\approx 1$ for $j<n$, and (ii) $X(O_n)^\hequiv\approx
  X(O_n)$, and one would conjecture that the full subcategory of such
  objects in $\thetaspk{n}{k}$ should model $k$-types of $n$-fold
  loop spaces.  That this is in fact the case is apparent from the
  results of Berger 
  \cite{berger-iterated-wreath}. 
\end{enumerate}

As noted above, the theory of $\Theta$-spaces is consciously a
generalization of the theory of complete Segal spaces, which is one of
a family of models for $(\infty,1)$-categories based on simplicial
objects.  
A reasonable approach to producing a generalization of these
ideas is to use multi-simplicial objects; proposals for this include
Tamsamani's theory of weak $n$-categories \cite{tamsamani-n-cat}, the
Segal $n$-categories of 
Hirschowitz and Simpson
\cite{hirschowitz-simpson-descente-pour-n-champs}, and more recent work
by Barwick and Lurie on multi-simplicial generalizations of complete Segal
spaces (see for instance \cite{barwick-extended-research-statement}
and \cite{lurie-classification-tft}).  Although all these
constructions appear to give good models for $(\infty,n)$-categories,
it is not clear to me  that any of them result in a Quillen model
category which satisfies all of the following:
(i) it models the homotopy theory of $(\infty,n)$-categories with
the correct notion of equivalence, (ii) it is a cartesian model
category, and (iii) it is a simplicial model category.    It does
appear that the Hirschowitz-Simpson model satisfies (i) and (ii), but
it does not satisfy (iii).  The multi-simplicial complete Segal space
model of Barwick and Lurie does satisfy (i) and (iii), but does not
appear to satisfy (ii) (when $n>1$.)

\subsection{Corrections}
\label{subsec:corrections}

The published version of this paper \cite{rezk-theta-n-spaces},
contains a false statement 
\eqref{prop:hocolim-poset}, and an incorrect proof of
\eqref{prop:product-of-covers} based on that false statement.  A
correction was published in \cite{rezk-theta-n-spaces-correction}.   I
have incorporated the corrections into the body of this online
version; the numbering of theorems has not changed from the
published version.

\section{Cartesian model categories and cartesian presentations}

\subsection{Cartesian closed categories}

A category $V$ is said to be \dfn{cartesian closed} if it
has finite products, and if for all $X,Y\in \ob V$ there is an
\dfn{internal function object} $Y^X$, which comes equipped with a
natural isomorphism
\[
V(T, Y^X)\approx V(T\times X, Y).
\]
Examples of cartesian categories include the category of sets, and the
category of presheaves of sets on a small category $C$.

We will write $\varnothing$ for some chosen initial object in a
cartesian closed category $V$.

\subsection{Cartesian model categories}
\label{subsec:cartesian-model-categories}

We will say that a Quillen model category $M$ is \dfn{cartesian} if it is
cartesian closed as a category, if the terminal object is cofibrant, 
and if the following equivalent
properties hold.
\begin{enumerate}
\item If $f\colon A\ra A'$ and $g\colon B\ra B'$ are cofibrations in
  $M$, then the induced map $h\colon A\times B'\amalg_{A\times B} A'\times B\ra
  A'\times B'$ is a cofibration; if in addition either $f$ or $g$ is a
  weak equivalence then so is $h$.
\item If $f\colon A\ra A'$ is a cofibration and $p\colon X\ra X'$ is a
  fibration in $M$, then the induced map $q\colon (X')^{A'}\ra
  (X')^A\times_{X^A} X^{A'}$ is a fibration; if in addition either $f$ or
  $p$ is a weak equivalence then so is $q$.
\end{enumerate}

(This notion is a bit stronger than Hovey's notion of symmetric
monoidal model category as applied to cartesian closed categories
\cite{hovey-model-categories}*{4.2}; he does not require the unit
object to be cofibrant, but rather imposes a weaker condition.)  

\subsection{Spaces}

Let $\Sp$ denote the category of simplicial sets, equipped with the
standard Quillen model structure.  We call this the model category of
\dfn{spaces}.  It is standard that $\Sp$ is a cartesian model
category.

We will often use topological flavored language when discussing
objects of $\Sp$, even though such are objects are not topological
spaces but simplicial sets.  Thus, a ``point'' in a ``space'' is
really a $0$-simplex of a simplicial set, a ``path'' is a $1$-simplex,
and so on.

\subsection{Simplicial presheaves}

Let $C$ be a small category, and let $\sPsh(C)$ denote the category of
\dfn{simplicial presheaves} on $C$, i.e., the category of
contravariant functors $C^\op\ra \Sp$.  

A simplicial presheaf $X$ is said to be \dfn{discrete} if each $X(c)$
is a discrete simplicial set; the full subcategory of discrete objects
in $\sPsh(C)$ is equivalent to the category of presheaves of
\emph{sets} on $C$, and we will implicitly identity the two.

Let $F_C\colon C\ra \sPsh(C)$ denote the \dfn{Yoneda functor}; thus $F_C$
sends an object $c\in \ob C$ to the presheaf $F_Cc=C(\mbox{-},c)$.  Observe
that the presheaf $F_Cc$ is discrete.  When the context is understood
we may write $F$ for $F_C$.

Let $\Gamma_C\colon \sPsh(C)\ra \Sp$ denote the \dfn{global sections
  functor}, which sends a functor $X\colon C^\op\ra \Sp$ to its
limit.  The functor $\Gamma$ is right adjoint to the functor $\Sp\ra
\sPsh(C)$ which sends a simplicial set $K$ to the constant presheaf 
with value $K$ at each object of $C$.
Note that if $C$ has a terminal object $[0]$, then $\Gamma
X\approx X([0])$.  
When the context is understood we may write $\Gamma X$ for $\Gamma_CX$.

For $X,Y$ in $\sPsh(C)$, we write $\Map_C(X,Y)\defeq \Gamma(Y^X)$; this
is called the \dfn{mapping space}.  Thus, $\sPsh(C)$ is enriched over $\Sp$.
Note that if $c\in \ob C$, then we have
\[
X(c) \approx \Gamma(X^{F(c)}) \approx \Map_C(F(c),X).
\]
When the context is understood we may write $\Map(X,Y)$ for
$\Map_C(X,Y)$.

\subsection{Model categories for simplicial presheaves}

Say that a map $f\colon X\ra Y\in \sPsh(C)$ is a \dfn{levelwise weak
  equivalence} if each map $f(c)\colon X(c)\ra Y(c)$ is a weak
equivalence in $\Sp$ for all $c\in \ob C$.
There are two standard model
category structures we can put on $\sPsh(C)$ with these weak
equivalences, called the projective and
injective structures; they are Quillen equivalent to each other.

The \dfn{projective} structure is characterized
by requiring that $f\colon X\ra Y\in \sPsh(C)$ be a fibration if and
only if $f(c)\colon X(c)\ra Y(c)$ is one in $\Sp$ 
for all $c\in \ob C$.  We write $\sPsh(C)^\proj$ for the category of
presheaves of simplicial sets on $C$ equipped with the projective
model structure.  

The \dfn{injective} structure is characterized by
requiring that $f\colon X\ra Y\in \sPsh(C)$ is a cofibration if and
only if $f(c)\colon X(c)\ra Y(c)$ is one in $\Sp$ 
for all $c\in \ob C$.  We write $\sPsh(C)^\inj$ for the category of
presheaves of simplicial sets on $C$ equipped with the injective model
structure.  

The identity functor provides a Quillen equivalence $\sPsh(C)^\proj
\rightleftarrows \sPsh(C)^\inj$.

Both the projective and injective model category structures are
cofibrantly generated and proper, and have functorial factorizations.
They are also both simplicial model categories.

Given object $X,Y$ in $\sPsh(C)$, we write $\hMap_C(X,Y)$ for the
\dfn{derived mapping space} of maps from $X$ to $Y$.  This is a
homotopy type in $\Sp$, defined so that for any cofibrant
approximation $X^c\ra X$ and fibrant approximation $Y\ra Y^f$, the
derived mapping space $\hMap_C(X,Y)$ is weakly equivalent to the space
of maps $\Map_C(X^c,Y^f)$.  Note that in the above, we may pick either
the injective or projective model category structures in order to make
our replacements.

\subsection{The injective model structure}

The injective model structure has a few additional properties which
are of importance to us.  In particular, 
\begin{enumerate}
\item every object of $\sPsh(C)^\inj$ is
cofibrant, and
\item every \emph{discrete} object of $\sPsh(C)^\inj$ is fibrant.
\end{enumerate}

Furthermore, we have the following.
\begin{prop}
The model category $\sPsh(C)^\inj$ is a cartesian model category.
\end{prop}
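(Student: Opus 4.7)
The plan is to verify the three requirements of \eqref{subsec:cartesian-model-categories}: cartesian closure of the underlying category, cofibrancy of the terminal object, and the pushout-product axiom. Cartesian closure follows from the standard fact that any presheaf category on a small category is cartesian closed; concretely, the product of simplicial presheaves is computed levelwise, and the internal function object can be given by the formula $(Y^X)(c) \defeq \Map_C(X \times F_Cc, Y)$, which is right adjoint to $(-)\times X$ by the Yoneda lemma. The terminal object is the constant presheaf with value a one-point simplicial set, and it is cofibrant because every object of $\sPsh(C)^\inj$ is.

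The heart of the matter is the pushout-product axiom. The key observation is that all finite limits and colimits (in particular products and pushouts) in $\sPsh(C)$ are computed pointwise, and that cofibrations, weak equivalences, and hence acyclic cofibrations in the injective model structure are all detected pointwise. Therefore, given cofibrations $f\colon A\ra A'$ and $g\colon B\ra B'$ in $\sPsh(C)^\inj$, the pushout-product map $h\colon A\times B'\amalg_{A\times B} A'\times B \ra A'\times B'$ evaluates at each $c\in\ob C$ to exactly the pushout-product in $\Sp$ of $f(c)$ and $g(c)$. Since $\Sp$ is already known to be a cartesian model category, each such pointwise map is a cofibration of simplicial sets, and a weak equivalence as well whenever either $f(c)$ or $g(c)$ is. Reassembling pointwise, $h$ is a cofibration in $\sPsh(C)^\inj$, and it is acyclic whenever $f$ or $g$ is.

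There is no genuine obstacle to address here: both the model-categorical data and the cartesian structure on $\sPsh(C)^\inj$ are pointwise by construction, so the pushout-product axiom for $\sPsh(C)^\inj$ reduces immediately to the corresponding axiom for $\Sp$. The only matter requiring a modicum of care is keeping track of which cartesian structure (levelwise product of presheaves) is being used when unwinding $h(c)$, but this is a routine verification.
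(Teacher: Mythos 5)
Your proof is correct and follows exactly the route the paper intends: the paper's own proof is the one-line remark that the claim is ``immediate from characterization (1)'' of a cartesian model category, and your argument simply spells out why --- products, pushouts, cofibrations, and weak equivalences in $\sPsh(C)^\inj$ are all levelwise, so the pushout-product axiom reduces pointwise to the cartesian property of $\Sp$.
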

\begin{proof}
This is immediate from characterization (1) of cartesian model category.
\end{proof}

\subsection{Presentations}

A \dfn{presentation} is a pair $(C,\mathcal{S})$ consisting of a small
category $C$ and a set $\mathscr{S}=\{s\colon S\ra S'\}$ of morphisms
in $\sPsh(C)$.  We say that an object $X$ of $\sPsh(C)$ is
\dfn{$\mathscr{S}$-local} if for all morphisms $s\colon S\ra S'$ in
$\mathscr{S}$, the induced map
\[
\hMap(s,X)\colon \hMap(S',X) \ra \hMap(S,X)
\]
is a weak equivalence of spaces.  We say that a morphism $f\colon A\ra
B$ in $\sPsh(C)$ is an \dfn{$\mathscr{S}$-equivalence} if the induced
map
\[
\hMap(f,X)\colon \hMap(B,X)\ra \hMap(A,X)
\]
is a weak equivalence of spaces for all $\mathscr{S}$-local objects
$X$.  The collection 
of $\mathscr{S}$-equivalences is denoted $\overline{\mathscr{S}}$; we
have that $\mathscr{S}\subset \overline{\mathscr{S}}$. 

Let $(C,\mathscr{S})$ be a presentation, let $X$ be an object of
$\sPsh(C)$, and let $X\ra X^f$ denote a fibrant replacement of $X$ in the
\emph{injective} model structure.  Since every object is cofibrant in
the injective model structure, we have that $X$ is $\mathscr{S}$-local
if and only if $\Map(S',X^f)\ra \Map(S,X^f)$ is a weak equivalence for
all $s\in \mathscr{S}$.

\subsection{Cartesian presentations}

Let $(C,\mathscr{S})$ be a presentation.
Given an object in $X$ of $\sPsh(C)$, we say it is
\dfn{$\mathscr{S}$-cartesian local} if for all $s\colon S\ra S'$ in
$\mathscr{S}$, the induced map
\[
Y^s\colon Y^{S'}\ra Y^{S}
\]
is a levelwise weak equivalence, where $X\ra Y$ is some choice of
fibrant replacement in $\sPsh(C)^\inj$.

\begin{prop}\label{prop:cartesian-local-object-charac}
Let $X$ be an object of $\sPsh(C)$, and choose some fibrant
replacement $X\ra Y$ in $\sPsh(C)^\inj$.  Then $X$ is
$\mathscr{S}$-cartesian local if and only if for all $c\in \ob C$, the
function object $Y^{F(c)}$ is $\mathscr{S}$-local.
\end{prop}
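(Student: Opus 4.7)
The plan is to reduce both conditions to the same statement: for every $s\colon S\to S'$ in $\mathscr{S}$ and every $c\in \ob C$, the map of spaces $\Map(s,Y^{F(c)})\colon \Map(S',Y^{F(c)})\to \Map(S,Y^{F(c)})$ is a weak equivalence. The essential tool is the adjunction $\Map(A\times B,Y)\approx \Map(A,Y^B)$ together with the fact that $\sPsh(C)^\inj$ is a cartesian model category in which every object is cofibrant.

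First, I would check that the relevant function objects are injectively fibrant. Since $Y$ is fibrant, $F(c)$ is cofibrant (every object is), $S$ and $S'$ are cofibrant, and $\sPsh(C)^\inj$ is cartesian, the pullback-hom characterization of cartesian model categories shows that $Y^{F(c)}$, $Y^S$, and $Y^{S'}$ are all fibrant. Because every object is cofibrant, $\Map(-,Y^{F(c)})$ already computes the derived mapping space $\hMap(-,Y^{F(c)})$; so $Y^{F(c)}$ is $\mathscr{S}$-local if and only if $\Map(s,Y^{F(c)})$ is a weak equivalence for every $s\in \mathscr{S}$.

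Next, I compute the values of $Y^{S'}$ and $Y^S$ at an object $c$. Using the identity $Z(c)\approx \Map(F(c),Z)$ and the product-exponential adjunction, I get
\[
(Y^{S'})(c) \approx \Map(F(c),Y^{S'}) \approx \Map(F(c)\times S',Y) \approx \Map(S',Y^{F(c)}),
\]
and analogously for $S$; moreover these isomorphisms identify the map $(Y^s)(c)$ with $\Map(s,Y^{F(c)})$. Therefore $Y^s$ is a levelwise weak equivalence if and only if, for every $c\in \ob C$, the map $\Map(s,Y^{F(c)})$ is a weak equivalence. Quantifying over all $s\in \mathscr{S}$ and swapping the order of the quantifiers yields the equivalence claimed in the proposition.

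The only subtle point is the fibrancy of $Y^{F(c)}$ and of $Y^{S}$, $Y^{S'}$, since a levelwise weak equivalence between non-fibrant objects would not automatically correspond to a weak equivalence of derived mapping spaces. This is exactly where the cartesian property of $\sPsh(C)^\inj$, recorded in the preceding subsection, does the real work; once that is invoked, the rest is a straightforward manipulation of the tensor-hom adjunction.
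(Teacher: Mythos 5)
Your proof is correct and follows essentially the same route as the paper, whose entire argument is the isomorphism $Y^S(c)\approx \Map(F(c),Y^S)\approx \Map(S,Y^{F(c)})$. The only difference is that you explicitly verify the fibrancy of $Y^{F(c)}$, $Y^S$, and $Y^{S'}$ using the cartesian property of $\sPsh(C)^\inj$, a point the paper leaves implicit; this is a worthwhile detail but not a different method.
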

\begin{proof}
Immediate from the isomorphism
\[
Y^S(c) \approx \Map(F(c),Y^S)\approx \Map(S,Y^{F(c)}).
\]
\end{proof}

Observe that every $\mathscr{S}$-cartesian local object is necessarily
$\mathscr{S}$-local, since $\Map(S,Y)\approx \Map(1,Y^S)$; however,
the converse need not hold. 
We say that a presentation $(C,\mathscr{S})$ is a \dfn{cartesian
  presentation} if every $\mathscr{S}$-local object is
$\mathscr{S}$-cartesian local.

\begin{prop}\label{prop:cart-pres-equiv-charac}
Let $(C,\mathscr{S})$ be a presentation.  The following are
equivalent.
\begin{enumerate}
\item $(C,\mathscr{S})$ is a cartesian presentation.
\item For all $\mathscr{S}$-fibrant $X$ in $\sPsh(C)$ and all $c\in \ob
  C$, the object $X^{F(c)}$ is $\mathscr{S}$-local.
\item For all $s\colon S\ra S'\in \mathscr{S}$ and all $c\in \ob C$,
  the map $s\times \id \colon S\times Fc \ra S'\times Fc$ is in
  $\overline{\mathscr{S}}$. 
\end{enumerate}
\end{prop}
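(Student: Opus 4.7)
The plan is to establish the circular chain (1) $\Rightarrow$ (2) $\Rightarrow$ (3) $\Rightarrow$ (1). Throughout, the engine is the exponential adjunction $\Map(A\times B,X)\approx \Map(A,X^B)$, combined with the fact (already proved in the excerpt) that $\sPsh(C)^\inj$ is cartesian. The cartesian property of the injective structure has one crucial consequence that I will use repeatedly: if $Y$ is injectively fibrant and $Fc$ is cofibrant (which it always is, since every object of $\sPsh(C)^\inj$ is cofibrant), then $Y^{Fc}$ is again injectively fibrant, and hence $\Map(-,Y^{Fc})$ already computes the derived mapping space $\hMap(-,Y^{Fc})$.

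For (1) $\Rightarrow$ (2): an $\mathscr{S}$-fibrant $X$ is, by construction of the localized model structure, both injectively fibrant and $\mathscr{S}$-local. So we may take the fibrant replacement $Y=X$ in \eqref{prop:cartesian-local-object-charac}. Since (1) says $X$ is $\mathscr{S}$-cartesian local, that proposition delivers (2) directly. For (2) $\Rightarrow$ (3): fix $s\colon S\ra S'$ in $\mathscr{S}$ and $c\in \ob C$, and let $X$ be any $\mathscr{S}$-local object. Choose an injective-fibrant replacement $X\ra X^f$, which remains $\mathscr{S}$-local. Compute
\[
\hMap(S'\times Fc,X)\approx \Map(S'\times Fc, X^f) \approx \Map(S',(X^f)^{Fc}) \approx \hMap(S',(X^f)^{Fc}),
\]
where the last step uses that $(X^f)^{Fc}$ is injectively fibrant. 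The same identification holds for $S$ in place of $S'$, and by hypothesis $(X^f)^{Fc}$ is $\mathscr{S}$-local, so $\hMap(s,(X^f)^{Fc})$ is a weak equivalence. Thus $s\times\id$ is an $\mathscr{S}$-equivalence.

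For (3) $\Rightarrow$ (1): let $X$ be $\mathscr{S}$-local, choose an injective-fibrant replacement $X\ra Y$ (still $\mathscr{S}$-local), and fix $c\in \ob C$. By \eqref{prop:cartesian-local-object-charac}, it suffices to show that $Y^{Fc}$ is $\mathscr{S}$-local. For $s\colon S\ra S'$ in $\mathscr{S}$, the map $\hMap(s,Y^{Fc})$ is computed, by the same adjunction and fibrancy argument, as $\hMap(s\times\id_{Fc},Y)$; since $s\times\id$ is in $\overline{\mathscr{S}}$ by hypothesis and $Y$ is $\mathscr{S}$-local, this map is a weak equivalence, as required. There is no serious obstacle here — the entire argument is a bookkeeping exercise in adjunction. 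The only point demanding care is making sure that the objects one is mapping into are injectively fibrant before invoking the underived $\Map$; this is where the cartesian property of $\sPsh(C)^\inj$ is essential, and is the step most likely to harbor a subtle error if written carelessly.
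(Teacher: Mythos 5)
Your proof is correct and follows the same route the paper intends: the paper's proof is simply ``Immediate from \eqref{prop:cartesian-local-object-charac},'' and your argument is a careful unwinding of exactly that, using the exponential adjunction $\Map(A\times Fc,X)\approx\Map(A,X^{Fc})$ together with the fact that $Y^{Fc}$ is injectively fibrant when $Y$ is. Your attention to replacing objects by injective-fibrant ones before using the underived $\Map$ is the right point of care, and nothing is missing.
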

\begin{proof}
Immediate from \eqref{prop:cartesian-local-object-charac}.
\end{proof}

\begin{prop}
If $(C,\mathscr{S})$ is a cartesian presentation, then $f,g\in
\overline{\mathscr{S}}$ imply $f\times g\in \overline{\mathscr{S}}$.  
\end{prop}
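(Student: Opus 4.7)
The plan is to prove the more general fact: for any $Y\in\sPsh(C)$ and any $h\in\overline{\mathscr{S}}$, the map $h\times\id_Y$ is in $\overline{\mathscr{S}}$. Granting this, the proposition follows by writing $f\times g=(f\times \id_{B'})\circ(\id_A\times g)$ (where $f\colon A\to A'$, $g\colon B\to B'$) and applying the 2-out-of-3 property for $\overline{\mathscr{S}}$, which is available because $\overline{\mathscr{S}}$ is the class of weak equivalences in a Bousfield localization.

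To verify the general fact, I would let $X$ be $\mathscr{S}$-local and $X\to X'$ an injective fibrant replacement (still $\mathscr{S}$-local). Since every object of $\sPsh(C)^\inj$ is cofibrant and $(-)\times Y$ is left Quillen on the cartesian injective model structure, one has
\[
\hMap(h\times\id_Y,X) \approx \Map(h\times\id_Y,X') \approx \Map(h,(X')^Y),
\]
using the product/function-object adjunction for the second identification. Thus if one can show $(X')^Y$ is $\mathscr{S}$-local, then because $h\in\overline{\mathscr{S}}$ the last mapping space is a weak equivalence, and so $h\times\id_Y\in\overline{\mathscr{S}}$. The whole argument therefore reduces to the claim: for every $\mathscr{S}$-fibrant $X'$ and every $Y\in\sPsh(C)$, the object $(X')^Y$ is $\mathscr{S}$-local.

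To prove this reduced claim, fix $s\colon S\to S'$ in $\mathscr{S}$; I need $\Map(S',(X')^Y)\to\Map(S,(X')^Y)$ to be a weak equivalence. Adjointly this map is $\Map(Y,(X')^{S'})\to\Map(Y,(X')^{S})$, induced by $(X')^s$. Since $(C,\mathscr{S})$ is a cartesian presentation, the $\mathscr{S}$-local object $X'$ is $\mathscr{S}$-cartesian local, and since $X'$ is already its own fibrant replacement this precisely means $(X')^s\colon (X')^{S'}\to (X')^{S}$ is a levelwise weak equivalence. Both its source and target are injectively fibrant by cartesianness of $\sPsh(C)^\inj$, so this is a weak equivalence between fibrant objects; and $Y$ is cofibrant. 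The simplicial model structure on $\sPsh(C)^\inj$ then gives that the induced map of mapping spaces is a weak equivalence, as required.

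The argument is essentially formal and there is no serious technical obstacle. The key input beyond the definition of cartesian presentation is simply that in a simplicial model category, mapping out of a cofibrant object preserves weak equivalences between fibrant objects. The main thing to be careful about is correctly distinguishing derived from underived mapping spaces under the relevant cofibrancy/fibrancy hypotheses, which is straightforward here because every object of $\sPsh(C)^\inj$ is cofibrant.
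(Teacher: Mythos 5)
The paper states this proposition without proof, so there is nothing to compare against line by line; your argument is a correct and complete proof, and it is the one the surrounding machinery clearly intends. Your reduction to the claim that $(X')^Y$ is $\mathscr{S}$-local for every $\mathscr{S}$-fibrant $X'$ and \emph{every} $Y$ (not just the representables $F(c)$ appearing in \eqref{prop:cart-pres-equiv-charac}) is exactly the right strengthening, and your verification of it via the adjunction $\Map(S,(X')^Y)\approx\Map(Y,(X')^S)$ together with the definition of $\mathscr{S}$-cartesian locality and SM7 is sound. All the cofibrancy/fibrancy bookkeeping you flag does check out: every object is injectively cofibrant, and $(X')^Y$, $(X')^S$, $(X')^{S'}$ are injectively fibrant because $\sPsh(C)^\inj$ is cartesian, so all the mapping spaces you write down do compute derived mapping spaces.
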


\subsection{Localization}

Given a presentation $(C,\mathscr{S})$, we write
$\sPsh(C)^\proj_{\mathscr{S}}$ and 
$\sPsh(C)^\inj_{\mathscr{S}}$ for the model category structures on
$\sPsh(C)$ obtained by Bousfield localization of the
original projective and injective model structures on $\sPsh(C)$.
These model categories are again Quillen equivalent to each other.
We
will set out the details in the case of the injective model structure.
\begin{prop}
Given a presentation $(C,\mathscr{S})$ there exists a cofibrantly
generated, left proper, simplicial model category structure
on
$\sPsh(C)$ which is characterized by the following properties.
\begin{enumerate}
\item [(1)] The cofibrations are exactly the monomorphisms.
\item [(2)] The fibrant objects are precisely the injective fibrant
  objects which are $\mathscr{S}$-local.  (We call these the
  \dfn{$\mathscr{S}$-fibrant} objects.)
\item [(3)] The weak equivalences are precisely the
  $\mathscr{S}$-equivalences. 
\end{enumerate}
Furthermore, we have that
\begin{enumerate}
\item [(4)] A levelwise weak equivalence $g\colon X\ra Y$ is an
  $\mathscr{S}$-equivalence, and the converse holds if both $X$ and $Y$ are
  $\mathscr{S}$-local.
\item [(5)] An $\mathscr{S}$-fibration $g\colon X\ra Y$ is an
  injective fibration, and the converse holds if both $X$ and $Y$ are
  $\mathscr{S}$-fibrant. 
\end{enumerate}
\end{prop}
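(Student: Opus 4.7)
The plan is to obtain the desired model structure as the left Bousfield localization of $\sPsh(C)^\inj$ at the set $\mathscr{S}$, applying a standard localization theorem (in Hirschhorn's or in Jeff Smith/Lurie's combinatorial formulation). To invoke such a theorem one needs that $\sPsh(C)^\inj$ is left proper, simplicial, and either cellular or combinatorial. Left properness is automatic since every object is cofibrant; the simplicial enrichment is standard; and the injective model structure on presheaves of simplicial sets is known to be combinatorial (equivalently cellular in the presheaf setting), so the hypotheses hold.

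The localization theorem then produces a simplicial, cofibrantly generated, left proper model structure on $\sPsh(C)$ in which the cofibrations are unchanged, the weak equivalences are by definition the $\mathscr{S}$-equivalences, and the fibrant objects are precisely the injectively fibrant objects which are $\mathscr{S}$-local. This immediately yields (1), (2), (3), and the structural properties asserted in the first clause.

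For (4), note that a levelwise weak equivalence $g\colon X\ra Y$ is by definition a weak equivalence in $\sPsh(C)^\inj$, so it induces an equivalence on $\hMap(\mbox{-},Z)$ for any $Z$, and in particular for any $\mathscr{S}$-local $Z$; hence $g$ is an $\mathscr{S}$-equivalence. Conversely, if both $X$ and $Y$ are $\mathscr{S}$-local and $g$ is an $\mathscr{S}$-equivalence, then taking injective fibrant replacements $X\ra X^f$, $Y\ra Y^f$ (which remain $\mathscr{S}$-local) and using that $\hMap(\mbox{-},X^f)\colon \hMap(Y^f,X^f)\ra \hMap(X^f,X^f)$ is an equivalence, a Yoneda/Whitehead argument produces a homotopy inverse to $g$ at the level of $\hMap(\mbox{-},Z)$ for all $Z$, so that $g$ is an injective weak equivalence, i.e. levelwise. (This step is exactly the content of Hirschhorn's identification of weak equivalences between local objects.)

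For (5), the fibrations in the localized structure are characterized by the right lifting property with respect to all $\mathscr{S}$-trivial cofibrations; since every injective trivial cofibration is also $\mathscr{S}$-trivial, an $\mathscr{S}$-fibration automatically has the RLP against all injective trivial cofibrations, so it is an injective fibration. The converse, that between $\mathscr{S}$-fibrant objects every injective fibration is an $\mathscr{S}$-fibration, is a general property of left Bousfield localization: one factors an arbitrary $\mathscr{S}$-trivial cofibration $A\ra B$ into an injective trivial cofibration followed by an injective fibration with $\mathscr{S}$-fibrant codomain, uses (4) to identify the latter map's behavior, and extracts the desired lift. The main technical obstacle is essentially the verification that $\sPsh(C)^\inj$ satisfies the cellular/combinatorial hypothesis needed to invoke the localization machine; everything else is a direct consequence of that machine and the characterization of equivalences and fibrations between local/fibrant objects.
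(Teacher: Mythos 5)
Your proposal is correct and follows essentially the same route as the paper, which simply invokes Hirschhorn's localization theorem (Thm.\ 4.1.1 of his book) after noting that $\sPsh(C)^\inj$ is a left proper cellular (equivalently here, combinatorial) simplicial model category; properties (4) and (5) are, as you say, standard consequences of that machinery. The extra detail you supply for (4) and (5) is accurate but not needed beyond the citation.
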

\begin{proof}
This is an example of \cite{hirschhorn-localization}*{Thm.\ 4.1.1},
since $\sPsh(C)^\inj$ is a left proper cellular model category.
\end{proof}
We will write $\sPsh(C)^\inj_{\mathscr{S}}$ for the above model
structure, which is called the \dfn{$\mathscr{S}$-local injective
  model structure}.

Observe that if $(C,\mathscr{S})$ and $(C,\mathscr{S}')$ are two
presentations on $C$ such that the $\mathscr{S}$-local objects are
precisely the same as the $\mathscr{S}'$-local objects, then
$\sPsh(C)^\inj_{\mathscr{S}}= \sPsh(C)^\inj_{\mathscr{S}'}$. 

\subsection{Quillen pairs between localizations}

\begin{prop}\label{prop:quillen-pair-loc}
Suppose that $(C,\mathscr{S})$ and $(D,\mathscr{T})$ are
presentations, and that we have a Quillen pair $G_\#\colon
\sPsh(C)^\inj\rightleftarrows 
\sPsh(D)^\inj$  (with $G_\#$ the left adjoint).  Then
\[
G_\#\colon \sPsh(C)^\inj_{\mathscr{S}} \rightleftarrows
\sPsh(D)^\inj_{\mathscr{T}}\noloc G^*
\]
is a Quillen pair if and only if either of the two following
equivalent statements hold.
\begin{enumerate}
\item For all $s\in \mathscr{S}$, $G_\#s\in \overline{\mathscr{T}}$.
\item For all $\mathscr{T}$-fibrant objects $Y$ in $\sPsh(D)$, $G^*Y$
  is $\mathscr{S}$-fibrant.
\end{enumerate}
\end{prop}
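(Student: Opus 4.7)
The plan is to reduce everything to the ordinary adjunction $\Map_D(G_\# S, Y) \cong \Map_C(S, G^* Y)$, exploiting the fact that in the injective model structures every object is cofibrant, so mapping spaces into injective-fibrant targets automatically compute derived mapping spaces.

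First I would record the key identity. For any $S \in \sPsh(C)$ and any injective-fibrant $Y \in \sPsh(D)$, the object $G^*Y$ is injective fibrant (since $G^*$ is right Quillen between the injective structures) and $S$, $G_\# S$ are automatically cofibrant; hence the ordinary adjunction isomorphism promotes to a weak equivalence $\hMap_D(G_\# S, Y) \simeq \hMap_C(S, G^* Y)$ between derived mapping spaces, functorially in $S$.

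Next I would prove $(1) \iff (2)$. Assuming (1), take a $\mathscr{T}$-fibrant $Y$: then $G^*Y$ is injective fibrant, and for any $s\colon S \to S'$ in $\mathscr{S}$ the identification above turns $\hMap(s, G^*Y)$ into $\hMap(G_\# s, Y)$, which is a weak equivalence since $G_\# s \in \overline{\mathscr{T}}$ and $Y$ is $\mathscr{T}$-local, so $G^*Y$ is $\mathscr{S}$-fibrant. Conversely, assuming (2), for each $s \in \mathscr{S}$ and each $\mathscr{T}$-fibrant $Y$ the map $\hMap(G_\# s, Y) \simeq \hMap(s, G^* Y)$ is a weak equivalence because $G^*Y$ is $\mathscr{S}$-local; as this holds for every $\mathscr{T}$-fibrant $Y$, we conclude $G_\# s \in \overline{\mathscr{T}}$.

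Finally I would show that (2) is equivalent to the Quillen pair assertion. One direction is automatic, since right Quillen functors preserve fibrant objects and the fibrant objects of the localized structures are by definition the $\mathscr{S}$-fibrant (resp.\ $\mathscr{T}$-fibrant) ones. For the converse, the cofibrations of $\sPsh(C)^\inj_{\mathscr{S}}$ and $\sPsh(D)^\inj_{\mathscr{T}}$ are still precisely the monomorphisms, so $G_\#$ already preserves cofibrations; it remains to check that it preserves trivial cofibrations. If $f$ is a monomorphism and an $\mathscr{S}$-equivalence, then $G_\# f$ is a monomorphism, and for any $\mathscr{T}$-fibrant $Y$ the identification $\hMap(G_\# f, Y) \simeq \hMap(f, G^* Y)$ is a weak equivalence since $G^*Y$ is $\mathscr{S}$-fibrant and $f$ is an $\mathscr{S}$-equivalence, whence $G_\# f$ is a $\mathscr{T}$-equivalence. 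I anticipate no serious obstacle: this is the standard reduction of Quillen-pair questions about left Bousfield localizations to derived mapping space computations, made particularly clean by the injective structure because cofibrant replacement is the identity.
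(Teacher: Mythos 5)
Your argument is correct and is exactly the ``straightforward from the definitions'' argument the paper intends (the paper offers no written proof beyond that remark): reduce everything to the adjunction on mapping spaces, use that all objects are injective-cofibrant and that cofibrations are unchanged by localization, and check trivial cofibrations against $\mathscr{T}$-fibrant targets. The one point to phrase carefully is that the hom-set adjunction upgrades to an isomorphism of \emph{simplicial} mapping spaces only when the adjunction is simplicial; in general you should appeal to the derived adjunction $\hMap_D(\mathbb{L}G_\#S,Y)\simeq \hMap_C(S,\mathbb{R}G^*Y)$ of the given Quillen pair, which collapses to the underived statement you use precisely because every object is cofibrant in the injective structures and the targets in question are injective fibrant.
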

\begin{proof}
This is 
straightforward from the definitions.
\end{proof}

\subsection{$\mathscr{S}$-equivalences and homotopy colimits}

The following proposition says that the class of
$\mathscr{S}$-equivalences is closed under homotopy colimits.  We
refer to \cite{hirschhorn-localization} for background on homotopy
colimits. 

\begin{prop}\label{prop:saturations-hocolims}
Let $D$ be a small category, and let $(C,\mathscr{S})$ be a
presentation.  Suppose that $\alpha\colon G\ra H$ is a natural
transformation of functors $D\ra \sPsh(C)^\inj$, and consider the
induced map 
\[
\hocolim_D \alpha\colon \hocolim_D G \ra \hocolim_D H
\]
on homotopy colimits, where these homotopy colimits are computed in
the injective model structure on $\sPsh(C)$.  If $\alpha(d)\in
\overline{S}$ for all $d\in \ob D$, then $\hocolim_D \alpha \in \overline{S}$.
\end{prop}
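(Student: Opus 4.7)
The plan is to test against $\mathscr{S}$-local objects and convert the homotopy colimit statement into a homotopy limit statement on mapping spaces, where preservation of weak equivalences is standard.

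First, I would unwind the definition: a morphism $f$ in $\sPsh(C)$ is in $\overline{\mathscr{S}}$ if and only if $\hMap(f, X)$ is a weak equivalence of spaces for every $\mathscr{S}$-local object $X$. So it suffices to show, for each $\mathscr{S}$-local $X$, that
\[
\hMap\bigl(\hocolim_D H, X\bigr) \lra \hMap\bigl(\hocolim_D G, X\bigr)
\]
is a weak equivalence. Since homotopy colimits in $\sPsh(C)^\inj$ are computed so that the standard adjunction between homotopy colimits and homotopy limits holds (this is part of the simplicial model category structure, as recorded in \cite{hirschhorn-localization}), for any fibrant replacement $X \ra X^f$ (where $X^f$ is again $\mathscr{S}$-local, and indeed $\mathscr{S}$-fibrant) we have natural weak equivalences
\[
\hMap\bigl(\hocolim_D G, X\bigr) \;\simeq\; \holim_D \hMap(G(-), X),
\]
and similarly for $H$. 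Thus the map above is identified with
\[
\holim_D \hMap(H(-), X) \lra \holim_D \hMap(G(-), X).
\]

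Next I would use the hypothesis that each $\alpha(d) \in \overline{\mathscr{S}}$. By definition this means that for every $\mathscr{S}$-local $X$, the map $\hMap(\alpha(d), X)\colon \hMap(H(d), X) \to \hMap(G(d), X)$ is a weak equivalence of spaces. So the natural transformation $\hMap(\alpha(-), X)$ of diagrams $D^\op \ra \Sp$ is an objectwise weak equivalence.

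Finally, I invoke the fact that $\holim_D$ preserves objectwise weak equivalences between (objectwise fibrant) diagrams of spaces; this is standard for homotopy limits. (One subtle point is that to apply this cleanly one wants the diagrams of mapping spaces to be objectwise fibrant, which is arranged either by taking a fibrant replacement of $X$ in $\sPsh(C)^\inj$ and using that every object of $\sPsh(C)^\inj$ is cofibrant, or more concretely by using $\hMap$, which is homotopy invariant by construction.) This gives that the induced map of homotopy limits is a weak equivalence, completing the proof. The only real issue is keeping the model-categorical bookkeeping straight, namely verifying that $\hMap(-, X^f)$ really does convert the homotopy colimit into a homotopy limit — but this is a standard property of simplicial model categories and requires no work beyond citing \cite{hirschhorn-localization}.
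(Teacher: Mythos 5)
Your proposal is correct and follows essentially the same route as the paper: test against $\mathscr{S}$-local objects, identify $\hMap(\hocolim_D G, X)$ with $\holim_D \hMap(G(-),X)$, and use that homotopy limits preserve objectwise weak equivalences. The paper's proof is a one-line version of exactly this argument.
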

\begin{proof}
In general, the map $\hMap_C(\hocolim_D H, X) \ra \hMap(\hocolim_D G,X)$ is
weakly equivalent to the map $\holim_D \hMap(H,X) \ra \holim_D \hMap(G,X)$;
the result follows by considering the case when $X$ is
$\mathscr{S}$-local. 
\end{proof}

\begin{prop}\label{prop:hocolim-poset}
(As explained in the introduction \S\ref{subsec:corrections}, this
proposition as stated in the published version was incorrect.)  
\end{prop}
%

Finally, we record the following fact, which we use in
\S\ref{sec:segal-objects}.  For a category $C$ and an 
object $A$ in $C$, we write $A\backslash C$ for the slice category of
objects under $A$ in $C$.
\begin{prop}\label{prop:left-quillen-saturations}
Let $C$ be a small category, and let $(D,\mathscr{S})$ be a
presentation.  Suppose that $\alpha\colon G\ra H$ is a natural
transformation of functors $\sPsh(C)\ra \sPsh(D)$.  Suppose the
following hold.
\begin{enumerate}
\item  The functors $X\mapsto (G(\varnothing)\ra G(X))\colon \sPsh(C)^\inj\ra
  G(\varnothing)\backslash \sPsh(D)^\inj$ and $X\mapsto
  (H(\varnothing)\ra H(X))\colon
  \sPsh(C)^\inj\ra H(\varnothing)\backslash \sPsh(D)^\inj$ are left
  Quillen 
  functors. 
\item The map $\alpha(\varnothing)\colon G(\varnothing)\ra
  H(\varnothing)$ is a monomorphism, and is in
  $\overline{\mathscr{S}}$.  
\item The maps $\alpha(Fc)\colon G(Fc)\ra H(Fc)$ are in
  $\overline{\mathscr{S}}$ for all $c\in \ob C$.  
\end{enumerate}
Then $\alpha(X) \in \overline{\mathscr{S}}$ for all objects $X$ of
$\sPsh(C)$. 
\end{prop}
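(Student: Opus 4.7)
The plan is to build $X$ up from $\varnothing$ via a transfinite projective cell complex, and to verify $\alpha(X)\in \overline{\mathscr{S}}$ by induction on the cell structure, applying \eqref{prop:saturations-hocolims} at each step.

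First, using that every object of $\sPsh(C)^\inj$ is cofibrant and that $G$ and $H$ are left Quillen into slice categories by hypothesis~(1), Ken Brown's lemma implies $G$ and $H$ preserve levelwise weak equivalences. Hence whether $\alpha(X)\in \overline{\mathscr{S}}$ depends only on the levelwise weak equivalence class of $X$, so we may replace $X$ with an equivalent projective cofibrant model, taken to be a transfinite composition of pushouts of generating projective cofibrations $F(c)\times \partial\Delta^n\hookrightarrow F(c)\times \Delta^n$ starting from $\varnothing$.

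Now induct on this cell structure. The base case $\alpha(\varnothing)\in \overline{\mathscr{S}}$ is hypothesis~(2). At a successor stage $X_{\alpha+1}=X_\alpha\cup_{F(c)\times \partial\Delta^n} F(c)\times \Delta^n$: since $G$ and $H$ preserve pushouts and send the cofibration $F(c)\times \partial\Delta^n\to F(c)\times \Delta^n$ to cofibrations (both consequences of~(1)), the corresponding pushouts in $\sPsh(D)^\inj$ are homotopy pushouts. Applying \eqref{prop:saturations-hocolims} to the natural transformation of span diagrams induced by $\alpha$, it suffices to verify that $\alpha(X_\alpha)$, $\alpha(F(c)\times \partial\Delta^n)$, and $\alpha(F(c)\times \Delta^n)$ all lie in $\overline{\mathscr{S}}$. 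The first holds by the outer inductive hypothesis. The second follows by a secondary induction on $n$, decomposing $F(c)\times \partial\Delta^n$ as a finite cell complex built from $F(c)\times \Delta^k$ and $F(c)\times \partial\Delta^k$ for $k<n$ and iterating the pushout argument; the base case $n=0$ reduces to hypothesis~(2). The third follows from hypothesis~(3) via Ken Brown applied to the levelwise equivalence $F(c)\times \Delta^n\to F(c)$. Transfinite limit stages are handled analogously, as filtered homotopy colimits of cofibrations preserved by $G$ and $H$.

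The main obstacle is managing the interplay between the cell decomposition and the left Quillen structures on the slice categories. The monomorphism condition in hypothesis~(2) is essential here: it guarantees that $\alpha(\varnothing)$ is a cofibration in $\sPsh(D)^\inj$, which propagates cofibrancy through successive pushouts and ensures that the pushout squares formed to compute $G(X_{\alpha+1})$ and $H(X_{\alpha+1})$ remain homotopy pushouts, so that \eqref{prop:saturations-hocolims} applies to honest natural transformations between homotopy colimits.
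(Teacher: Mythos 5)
Your proof is correct, but it takes a genuinely different route from the paper's. The paper argues in two steps: first it treats the special case where $\alpha(\varnothing)$ is an isomorphism, using that every presheaf is levelwise equivalent to a homotopy colimit of representables, that left Quillen functors preserve homotopy colimits, and then \eqref{prop:saturations-hocolims}; for the general case it factors $\alpha$ as $G\ra K\ra H$ with $K(X)=G(X)\cup_{G(\varnothing)}H(\varnothing)$, notes that $G(X)\ra K(X)$ is a pushout of the $\mathscr{S}$-equivalence $\alpha(\varnothing)$ along the cofibration $G(\varnothing)\ra G(X)$ (hence an $\mathscr{S}$-equivalence by left properness), and applies the special case to $K\ra H$. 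Your cellular induction on a projective cofibrant replacement replaces the canonical ``hocolim of representables'' decomposition by attaching cells one at a time, reducing everything to $\alpha(F(c)\times\Delta^n)$ (hypothesis~(3) plus Ken Brown) and $\alpha(F(c)\times\partial\Delta^n)$ (a secondary induction bottoming out at hypothesis~(2)). What your version buys is that you only ever form \emph{connected} colimits (pushouts and transfinite compositions), for which colimits in the slice categories $G(\varnothing)\backslash\sPsh(D)$ and $H(\varnothing)\backslash\sPsh(D)$ agree with those in $\sPsh(D)$; the paper's special case implicitly has to reconcile homotopy colimits in the slice with those downstairs over a non-connected index category, which is exactly the subtlety the factorization through $K$ is designed to manage. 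The cost is a longer double induction. One small correction to your closing remarks: your argument does not actually use the monomorphism half of hypothesis~(2), and the reason you give for its necessity (propagating cofibrancy through the pushouts) is vacuous, since every object of $\sPsh(D)^\inj$ is cofibrant and your homotopy-pushout squares are justified by $G$ and $H$ carrying the generating cofibrations to monomorphisms, not by any property of $\alpha(\varnothing)$. This is a misstatement about your own proof rather than a gap in it.
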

\begin{proof}
In the special case in which $\alpha(\varnothing)\colon
G(\varnothing)\ra H(\varnothing)$ is an isomorphism, note that since
(i) every object of $\sPsh(C)$ is levelwise weakly equivalent to a
homotopy colimit of some diagram of free objects, and (ii) left
Quillen functors preserve homotopy colimits, the result follows using
\eqref{prop:saturations-hocolims}. 

For the general case, factor $\alpha$ into $G\xra{\beta} K\xra{\gamma}
H$ where $K(X) =  
G(X)\cup_{G(\varnothing)} H(\varnothing)$.   The map $\beta(X)$ is a
pushout of the $\mathscr{S}$-local equivalence $\alpha(\varnothing)$
along the map $G(\varnothing)\ra G(X)$ which is an injective
cofibration by (i); thus $\beta(X)\in \overline{\mathscr{S}}$.  
The special case described above applies to show that $\gamma(X)\in
\overline{\mathscr{S}}$.  Thus, the composite $\alpha(X)\in
\overline{\mathscr{S}}$, as desired. 
\end{proof}

\subsection{Cartesian presentations give cartesian model categories}

\begin{prop}
The model category $\sPsh(C)^\inj_{\mathscr{S}}$ is cartesian if and
only if $(C,\mathscr{S})$ is a cartesian presentation.  In particular,
$\sPsh(C)^\inj_{\mathscr{S}}$  is a cartesian model category if for all 
$\mathscr{S}$-fibrant $Y$ and all $c\in \ob C$, the object $Y^{F(c)}$
is $\mathscr{S}$-fibrant.
\end{prop}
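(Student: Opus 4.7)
The plan is to verify characterization (1) from the definition of a cartesian model category, leveraging the fact that $\sPsh(C)^\inj$ is already cartesian so that the monomorphism aspect is automatic and only the $\mathscr{S}$-equivalence aspect requires the cartesian presentation hypothesis.

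For the forward implication, I would apply characterization (2) to the cofibration $\varnothing \to Fc$ and to the fibration $Y\to 1$ for an arbitrary $\mathscr{S}$-fibrant $Y$; the resulting $\mathscr{S}$-fibration $Y^{Fc}\to 1$ says exactly that $Y^{Fc}$ is $\mathscr{S}$-fibrant, whence $(C,\mathscr{S})$ is a cartesian presentation by Proposition \ref{prop:cart-pres-equiv-charac}. The ``in particular'' clause is handled by the same observation, since its hypothesis is literally condition (2) of that proposition.

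For the backward implication, the core step is to promote the cartesian presentation condition from representables to arbitrary presheaves: for every $s\colon S\to S'$ in $\mathscr{S}$ and every $A\in \sPsh(C)$, the map $s\times A$ lies in $\overline{\mathscr{S}}$. I would obtain this from Proposition \ref{prop:left-quillen-saturations} applied to the natural transformation $\alpha(A)=s\times A$ between the functors $G(A)=S\times A$ and $H(A)=S'\times A$: both $G$ and $H$ send $\varnothing$ to $\varnothing$ (so the slice categories in hypothesis (i) of that proposition collapse to $\sPsh(C)^\inj$ itself), $G$ and $H$ are left Quillen on $\sPsh(C)^\inj$ by its cartesianness, $\alpha(\varnothing)$ is the identity, and $\alpha(Fc)=s\times Fc\in \overline{\mathscr{S}}$ by condition (3) of Proposition \ref{prop:cart-pres-equiv-charac}. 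Proposition \ref{prop:quillen-pair-loc} then gives that for every $A$ the adjunction $-\times A\dashv (-)^A$ is a Quillen pair on $\sPsh(C)^\inj_\mathscr{S}$, so $-\times A$ preserves $\mathscr{S}$-trivial cofibrations.

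To complete characterization (1), let $f\colon A\to A'$ and $g\colon B\to B'$ be cofibrations; the pushout-product $f\square g$ is a monomorphism automatically from the cartesian structure on $\sPsh(C)^\inj$. If $g$ is additionally an $\mathscr{S}$-equivalence, I factor $A'\times g$ through the pushout $P=(A\times B')\cup_{A\times B}(A'\times B)$ as
\[
A'\times B \longrightarrow P \xra{f\square g} A'\times B';
\]
the first map is the pushout of the $\mathscr{S}$-trivial cofibration $A\times g$ along $f\times B$, hence itself an $\mathscr{S}$-trivial cofibration, while the composite $A'\times g$ is an $\mathscr{S}$-trivial cofibration by the previous paragraph applied to $A'$, so two-out-of-three forces $f\square g\in \overline{\mathscr{S}}$. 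The main obstacle is the representable-to-general extension in the preceding paragraph; once that is set up via Proposition \ref{prop:left-quillen-saturations}, the remainder of the argument is a formal pushout-product calculation.
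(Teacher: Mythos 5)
Your argument is correct, but it takes a genuinely different route from the paper's. The paper proves the equivalence in one stroke by adjunction: since $\sPsh(C)^\inj$ is cartesian, for a cofibration $g$ and injective fibrant $W$ the map $W^g$ is a levelwise weak equivalence if and only if $\Map(h,W)$ is a weak equivalence for every pushout-product $h$ of $g$ with a cofibration $f$; specializing to $\mathscr{S}$-fibrant $W$ identifies ``$W$ $\mathscr{S}$-fibrant implies $W$ $\mathscr{S}$-cartesian local'' with ``$g\in\overline{\mathscr{S}}$ implies $h\in\overline{\mathscr{S}}$'', so both directions of the equivalence come out simultaneously as a formal duality between the pushout-product and the pullback-power. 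You instead split the two implications: the forward one by evaluating the pullback-power condition on $\varnothing\ra Fc$ and $Y\ra 1$, and the backward one by first bootstrapping condition (3) of \eqref{prop:cart-pres-equiv-charac} from representables to all presheaves via \eqref{prop:left-quillen-saturations} (a legitimate application, since $S\times\varnothing\approx\varnothing$ collapses the slice categories and $S\times{-}$ is left Quillen on $\sPsh(C)^\inj$ by cartesianness), then invoking \eqref{prop:quillen-pair-loc} to make ${-}\times A$ left Quillen on the localization, and finishing with the standard factorization $A'\times B\ra P\ra A'\times B'$ of $A'\times g$ through the pushout and two-out-of-three. What the paper's route buys is brevity and symmetry; what yours buys is a fully explicit verification whose key step (representables to general presheaves via homotopy colimits of free objects) is the same mechanism the paper deploys elsewhere, e.g.\ in \eqref{prop:generalized-segal-maps}, so it is arguably easier to audit. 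The only cosmetic omission is the case where $f$ rather than $g$ is the $\mathscr{S}$-equivalence, which follows by the evident symmetry of the product.
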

\begin{proof}
It is clear that the terminal object is cofibrant in
$\sPsh(C)^\inj_{\mathscr{S}}$, so it suffices to show that
$(C,\mathscr{S})$ is a cartesian presentation if and only if condition
(1) of \S\ref{subsec:cartesian-model-categories} holds.  Let $f\colon
A\ra A'$ and $g\colon B\ra B'$ be cofibrations in
$\sPsh(C)^\inj_{\mathscr{S}}$.  It is clear that the map
\[
h\colon A\times B'\amalg_{A\times B} A'\times B\ra A'\times B'
\]
is a cofibration in any case, so it suffices to show that
``$W$ is $\mathscr{S}$-fibrant implies $W$ is $\mathscr{S}$-cartesian
fibrant'' is equivalent to
``$g\in \overline{\mathscr{S}}$ implies $h\in
\overline{\mathscr{S}}$''.  Since $\sPsh(C)^\inj$ is a cartesian model
category, for a cofibration $g$ as above and all
injective fibrant $W$ we have that $W^g$ is a levelwise weak
equivalence if and only if $\Map(h,W)$ is a  weak equivalence.
The result follows by considering the case of $\mathscr{S}$-fibrant $W$.

\end{proof}

\subsection{$k$-types}

Observe that $\Sp\approx \sPsh(1)^\inj$.  For any integer $k\geq-2$, let
\[
\Sp_k \defeq \sPsh(1)^\inj_{\{\partial \Delta^{k+2}\ra \Delta^{k+2}\}}.
\]
This is called the model category of \dfn{$k$-types}.  The fibrant
objects are precisely the fibrant simplicial sets whose homotopy
groups vanish in dimensions greater than $k$.  This is a cartesian
model category.

\section{The $\Theta$ construction}
\label{sec:theta-construction}

The $\Theta$ construction was introduced by Berger in
\cite{berger-iterated-wreath}, where, with good cause, he calls it the
``categorical wreath product over $\Delta$''; what we are calling
$\Theta C$, he calls $\Delta \wr C$.

\subsection{The category $\Delta$}

We write $\Delta$ for the standard category of finite ordinals; the
objects are $[m]=\{0,1,\dots,m\}$ for $m\geq0$, and the morphisms are
weakly monotone maps.  We will use the following transparent notation
to describe 
particular maps in $\Delta$; we write 
\[
\delta^{k_0k_1\cdots k_m}\colon [m]\ra [n]
\]
for the function defined by $i\mapsto k_i$.

We call a morphism $\delta\colon [m]\ra [n] \in \Delta$ an
\dfn{injection} or \dfn{surjection} if it is so as a map of sets.  We
say that $\delta$ is \dfn{sequential} if 
\[
\delta(i-1)+1 \geq \delta(i)\qquad \text{for all $i=1,\dots,m$.}
\]
Observe that every surjection is sequential.

\subsection{The category $\Theta C$}

Let $C$ be a category.  We define a new category $\Theta C$ as
follows.  The objects of $\Theta C$ are tuples of the form
$([m],c_1,\dots,c_m)$, where $[m]$ is an object of $\Delta$ and
$c_1,\dots,c_m$ are objects of $C$.  It will be convenient to write
$[m](c_1,\dots,c_m)$ for this object, and to write $[0]$ for the
unique object with $m=0$.

Morphisms $[m](c_1,\dots,c_m)\ra
[n](d_1,\dots,d_m)$ are tuples $(\delta, \{f_{ij}\})$ consisting of
\begin{enumerate}
\item [(i)] a morphism $\delta \colon [m]\ra [n]$ of $\Delta$, and
\item [(ii)] for each pair $i,j$ of integers such that $1\leq i\leq
  m$, $1\leq j\leq n$, and $\delta(i-1)< j\leq \delta(i)$, a
  morphism $f_{ij}\colon c_i\ra d_j$ of $C$.
\end{enumerate}
In other words,
\[
(\Theta C)([m](c_1,\dots,c_m), [n](d_1,\dots,d_n)) \approx
\coprod_{\delta\colon [m]\ra [n]}\; \prod_{i=1}^m \;
\prod_{j=\delta(i-1)+1}^{\delta(i)} C(c_i,d_j).
\]

The composite
\[
[m](c_1,\dots,c_m)\xra{(\delta, \{f_{ij}\})} [n](d_1,\dots,d_n)
\xra{(\epsilon, \{g_{jk}\})} [p](e_1,\dots,e_p)
\]
is the pair $(\epsilon \delta, \{h_{ik}\})$, where
$h_{ik}=g_{jk}f_{ij}$ for the unique value of $j$ for which $f_{ij}$
and $g_{jk}$ are both defined.  

Pictorially, it is convenient to represent an object of $\Theta C$ as
a sequence of arrows labelled by objects of $C$.  For instance,
$[3](c_1,c_2,c_3)$ would be drawn
\[\xymatrix{
{0} \ar[r]^{c_1} & {1} \ar[r]^{c_2} & {2} \ar[r]^{c_3} & {3}
}\]
An  example of a morphism $[3](c_1,c_2,c_3)\ra [4](d_1,d_2,d_3,d_4)$
is the picture
\[\xymatrix{
& {0} \ar[r]^{c_1}="c1" \ar@{|->}[dl] & {1} \ar[r]^{c_2} \ar@{|->}[d] & {2}
\ar[r]^{c_3}="c3" \ar@{|->}[dl] & {3} \ar@{|->}[dl]
\\
{0} \ar[r]_{d_1}="d1" \ar@{~>}"c1";"d1"|{f_{11}} & {1}
\ar[r]_{d_2}="d2" \ar@{~>}"c1";"d2"|{f_{12}} & {2} \ar[r]_{d_3}="d3" \ar@{~>}"c3";"d3"|{f_{33}} & {3} 
\ar[r]_{d_4} & {4}
}\]
where the dotted arrows describe the map $\delta^{0223}\colon [3]\ra
[4]$, and the squiggly arrows represent  morphisms $f_{11}\colon
c_1\ra d_1, f_{12}\colon c_1\ra 
d_2, f_{33}\colon c_3\ra d_3$ in $C$.

Observe that (as suggested by our notation) there are functors
\[
[m]\colon C^{\times m} \ra \Theta C
\]
for $m\geq0$, defined in the evident way on objects, and which to a
morphism $(g_i\colon c_i\ra d_i)_{i=1,\dots, m}$ assign the morphism
$(\id,\{f_{ij}\})$ where $f_{ii}=g_i$.

If $C$ is a small category, then so is $\Theta C$, and it is apparent
that $\Theta$ describes a $2$-functor $\Cat\ra \Cat$.  

\subsection{A notation for morphisms in $\Theta C$}

We use the following notation for certain maps in $\Theta C$.  
Suppose $(\delta,\{f_{ij}\})\colon
[m](c_1,\dots,c_m)\ra [n](d_1,\dots,d_n)$ is a morphism in $\Theta C$
such that for each $i=1,\dots,m$, the sequence of maps
$(f_{ij}\colon c_i\ra d_j)_{j=\delta(i-1)+1,\dots,\delta(i)}$
identifies $c_i$ as the product of the $d_j$'s in $C$.  Then we simply
write $\delta$ for this morphism.  Note that even if $C$ is a category
which does not have all products, this notation is always sensible if
$\delta\in \Delta$ is injective and sequential.

\begin{rem}\label{rem:theta-morphisms-products}
If $C$ is a category with finite products, morphisms in $\Theta C$
amount to pairs $(\delta,\{f_i\}_{i=1,\dots,m})$, where 
\[
f_i\colon c_i \ra d_{\delta(i-1)+1}\times \cdots \times d_{\delta(i)}.
\]
In this case, our special notation is to write $\delta$ for
$(\delta,\{\id\}_{i=1,\dots,m})$.  

There is a variant of the $\Theta$ construction which works when $C$
is a monoidal category.  If $C$ is a monoidal category, we can define
a category $\Theta^{\mathrm{mon}}C$ with the same objects as $\Theta
C$, but with morphisms $[m](c_1,\dots,c_m)\ra [n](d_1,\dots,d_n)$
corresponding to tuples $(\delta, \{f_i\}_{i=1,\dots,m})$ where
\[
f_i\colon c_i \ra d_{\delta(i-1)+1}\otimes \cdots \otimes
d_{\delta(i)}.
\]
It seems likely that this variant notion should be useful for producing
presentations of categories enriched over general monoidal model categories.
\end{rem}

\subsection{The categories $\Theta_n$}

For $n\geq0$ we define categories $\Theta_n$ by setting $\Theta_0 =
1$ (the terminal category), and defining $\Theta_n \defeq \Theta
\Theta_{n-1}$.  One sees immediately that $\Theta_1$ is isomorphic
to $\Delta$.  

\begin{rem}
The category $\Theta_n$ can be identified as a category of finite
planar trees of level $\leq n$ \cite{joyal-theta-note}.   The opposite
category $\Theta_n^\op$ is isomorphic to the category of
``combinatorial $n$-disks'' in the sense of Joyal
\cite{joyal-theta-note}; see
\cite{cheng-lauda-illustrated-guide}*{Ch.\ 7},
  \cite{berger-iterated-wreath}. 
\end{rem}

\subsection{$\Theta$ and enriched categories}

If $V$ is a cartesian closed category and $\varnothing$ is an initial object of
$V$, it is straightforward to show that 
\begin{enumerate}
\item for every object $v\in \ob V$, the product $\varnothing\times v$ is an
  initial object of $V$, and 
\item for an object $v\in \ob V$, the set $\hom_V(v,\varnothing)$ is non-empty
  if and only if $v$ is initial.
\end{enumerate}

Suppose that $V$ is a cartesian closed category with a chosen
initial object $\varnothing$.
The \dfn{tautological
  functor} 
\[
\tau\colon \Theta V\ra \enrcat{V}
\]
is defined as follows.  For an object $[m](v_1,\dots,v_m)$, we let
$C=\tau([m](v_1,\dots,v_m))$ be the $V$-category with object set
$C_0=\{0,1,\dots,m\}$, and with morphism objects
\[
C(p,q) =
\begin{cases}
  \varnothing & \text{if $p>q$,} \\
  1 & \text{if $p=q$,} \\
  v_{p+1}\times \cdots \times v_q & \text{if $p<q$.}
\end{cases}
\]
The unique maps $1\ra C(p,p)$ define ``identity maps'', and
composition $C(p,q)\times C(q,r)\ra C(p,r)$ is defined in the evident
way.  It is clear how to define $\tau$ on morphisms.

\begin{rem}
The functor $\tau$ is not fully faithful.  For instance,
there is a $V$-functor $f\colon\tau([1](\varnothing))\ra
\tau([1](\varnothing))$ 
which on 
objects sends $0\in [1]$ to $1\in [1]$ and vice versa; this map $f$ is
not in the image of $\tau$.
\end{rem}

For a full subcategory $W$ of $V$, we will write $\tau\colon \Theta
W\ra \enrcat{V}$ for the evident composite $\Theta W\ra \Theta
V\xra{\tau} \enrcat{V}$. 
\begin{prop}[\cite{berger-iterated-wreath}*{Prop.\ 3.5}]
If $W$ is a full subcategory of $V$ which does not contain any initial
objects of $V$, then $\tau\colon \Theta W\ra \enrcat{V}$ is fully
faithful. 
\end{prop}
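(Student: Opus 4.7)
The plan is to construct an explicit two-sided inverse to the action of $\tau$ on hom-sets. Fix objects $X = [m](v_1,\dots,v_m)$ and $Y = [n](w_1,\dots,w_n)$ of $\Theta W$, and let $C=\tau(X)$, $D=\tau(Y)$. A $V$-functor $F\colon C\ra D$ consists of a function $F\colon \{0,\dots,m\}\ra \{0,\dots,n\}$ together with morphisms $F_{p,q}\colon C(p,q)\ra D(F(p),F(q))$ satisfying the unit and composition axioms.

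The key step is to show $F$ on objects must be monotone. For each $p=0,\dots,m-1$ we have a structure map $F_{p,p+1}\colon v_{p+1}\ra D(F(p),F(p+1))$; if $F(p) > F(p+1)$ then $D(F(p),F(p+1))=\varnothing$, which by the second of the two facts recorded just before the definition of $\tau$ forces $v_{p+1}$ to be initial in $V$. Since $v_{p+1}\in W$ and $W$ contains no initial objects, this is impossible. Hence $F(p)\le F(p+1)$ for every $p$, and setting $\delta \defeq F$ on objects defines a morphism $\delta\colon [m]\ra [n]$ in $\Delta$. This is essentially the only place the hypothesis on $W$ is used, and is the main (really the only nontrivial) obstacle.

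Given such a $\delta$, extract the rest of the data as follows. For each $i=1,\dots,m$, the morphism $F_{i-1,i}\colon v_i \ra w_{\delta(i-1)+1}\times\cdots\times w_{\delta(i)}$ corresponds under the universal property of products to a unique tuple of morphisms $f_{ij}\colon v_i\ra w_j$ for $\delta(i-1)<j\le\delta(i)$ (the empty tuple when $\delta(i-1)=\delta(i)$, since then the target is the empty product, i.e., the terminal object). This produces an element $(\delta,\{f_{ij}\})$ of $(\Theta W)(X,Y)$, giving an inverse map $\Phi\colon \enrcat{V}(\tau X, \tau Y)\ra (\Theta W)(X,Y)$.

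Finally check that $\Phi$ and $\tau$ are mutually inverse. The crucial observation is that $F$ is determined by its values on objects together with the components $F_{p,p+1}$: for any $p<q$, composition in $C$ provides a canonical isomorphism $C(p,q)\cong C(p,p+1)\times\cdots\times C(q-1,q)$, so the $V$-functor composition axiom forces $F_{p,q}$ to equal the composite of $F_{p,p+1}\times\cdots\times F_{q-1,q}$ with iterated composition in $D$. Conversely, any choice of $\delta$ and components $F_{p,p+1}$ extends uniquely to a $V$-functor (compatibility with composition is automatic from associativity in $D$, and compatibility with units follows since $F_{p,p}=\id$). From this it follows that $\Phi\circ\tau = \id$ (by construction the $j$-th component of $F_{i-1,i}$ recovers $f_{ij}$) and $\tau\circ\Phi = \id$ (both $F$ and $\tau\Phi(F)$ agree on objects and on every $F_{p,p+1}$).
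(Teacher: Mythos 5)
Your proposal is correct and follows essentially the same route as the paper: monotonicity of the object map is forced by the fact that only initial objects admit maps to $\varnothing$ together with the hypothesis that $W$ contains no initial objects, and the remaining data is recovered from the consecutive components $F_{i-1,i}$ via the universal property of the product $w_{\delta(i-1)+1}\times\cdots\times w_{\delta(i)}$. The paper's proof is just a terser statement of exactly these two steps.
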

\begin{proof}
The fact that only an initial object can map to an initial object in
$V$ implies that for $c_i,d_j\in \ob W$, a functor
$F\colon \tau([m](c_1,\dots,c_m))\ra \tau([n](d_1,\dots,d_n))$ is necessarily
given on objects by a weakly monotone function $\delta\colon
\{0,\dots,m\} \ra \{0,\dots,n\}$.  Given $\delta$, the functor $F$
determines and is determined by morphisms $f_{ij}\colon c_i\ra d_j$
for $i=1,\dots,m$, $j=\delta(i-1)+1,\dots,\delta(i)$.  
\end{proof}

\begin{cor}\label{cor:tau-n-fully-faithful}
For each $n\geq0$, the functor $\tau_n\colon \Theta_n\ra \stcat{n}$
defined inductively as the composite
\[
\Theta_n \xra{\Theta \tau_{n-1}} \Theta (\stcat{(n\mbox{-}1)})
\xra{\tau} \stcat{n} 
\]
is fully faithful.
\end{cor}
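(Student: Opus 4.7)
The plan is to argue by induction on $n$, applying the preceding proposition at each stage.

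For the base case $n=0$, the functor $\tau_0\colon \Theta_0 \to \stcat{0} = \Set$ sends the unique object of the terminal category to the terminal set $1$, and this is trivially fully faithful.

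For the inductive step, assume $\tau_{n-1}\colon \Theta_{n-1}\to \stcat{(n-1)}$ is fully faithful. First I would observe that the $\Theta$ construction preserves fully faithful functors: this is immediate from the explicit description of the hom-sets of $\Theta C$ as a coproduct of products of hom-sets of $C$, since a fully faithful $F\colon C\to D$ induces a bijection on each factor $C(c_i,d_j)\to D(Fc_i, Fd_j)$. Hence $\Theta\tau_{n-1}\colon \Theta_n = \Theta\Theta_{n-1}\to \Theta\stcat{(n-1)}$ is fully faithful, and it factors through $\Theta W$, where $W\subseteq \stcat{(n-1)}$ is the full subcategory given by the essential image of $\tau_{n-1}$.

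Next I would apply the previous proposition to $V=\stcat{(n-1)}$ and the full subcategory $W$, which requires checking that $W$ contains no initial objects of $\stcat{(n-1)}$. The initial object of $\stcat{(n-1)}$ is the empty strict $(n-1)$-category, whereas every object of $\Theta_{n-1}$, namely a tuple $[m](\theta_1,\dots,\theta_m)$, is sent by $\tau_{n-1}$ to a strict $(n-1)$-category with object set $\{0,\dots,m\}$, which is non-empty. Thus $W$ avoids initial objects, and so the proposition gives that $\tau\colon \Theta W\to \enrcat{\stcat{(n-1)}} = \stcat{n}$ is fully faithful.

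Finally, $\tau_n$ factors as
\[
\Theta_n \xra{\Theta\tau_{n-1}} \Theta W \xra{\tau} \stcat{n},
\]
a composite of fully faithful functors, hence is itself fully faithful. The only genuine content is the observation that $\Theta$ preserves full-faithfulness and that the image of $\tau_{n-1}$ misses the initial object of $\stcat{(n-1)}$; both are routine, so there is no serious obstacle beyond setting up the induction correctly.
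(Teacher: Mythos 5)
Your proof is correct and is essentially the argument the paper intends: induct on $n$, use that $\Theta$ preserves fully faithful functors (clear from the coproduct-of-products description of hom-sets in $\Theta C$), and apply the preceding proposition to the full subcategory $W=\tau_{n-1}(\Theta_{n-1})$ of $\stcat{(n\mbox{-}1)}$, which contains no initial objects since every $\tau_{n-1}(\theta)$ has non-empty object set. The paper leaves these routine verifications implicit, and you have filled them in correctly.
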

Thus, we can identify $\Theta_n$ with a full subcategory of $\stcat{n}$.

\section{Presheaves of spaces over $\Theta C$}

In the next few sections we will be especially concerned with the
category $\sPsh(\Theta C)$ of simplicial presheaves on $\Theta C$.
In this section we describe two essential constructions.  First, we
describe an adjoint pair of functors $(T_\#,T^*)$ between simplicial
presheaves on $\Theta C$ and simplicial presheaves on $\Delta=\Theta
1$.  Next, we describe a functor $V$, called the ``intertwining
functor'', which relates $\Theta(\sPsh(C))$ and $\sPsh(\Theta C)$.  

\subsection{The functors $T^*$ and $T_\#$}
\label{subsec:the-functors-t}

Let $T\colon \Delta\ra \sPsh(\Theta C)$ be the functor defined
by
\[
(T[n])([m](c_1,\dots,c_m)) \defeq \Delta([m],[n]),
\]
Observe that if $C$ has a
terminal object $t$, then $T[n] \approx F_{\Theta C}[n](t,\dots,t)$.  

Let $T^*\colon \sPsh(\Theta C)\ra \sPsh(\Delta)$ denote the functor
defined by $(T^*X)[m] \defeq \Map_{\sPsh(\Theta C)}(T[m],X)$.  The
functor $T^*$ preserves limits, and has a left adjoint $T_\#\colon
\sPsh(\Delta)\ra 
\sPsh(\Theta C)$. 
\begin{prop}\label{prop:t-hash-formula}
On objects $X$ in $\sPsh(\Delta)$, the object $T_\#X$ is given by
\[
(T_\#X)[m](c_1,\dots,c_m) \approx  X[m].
\]
\end{prop}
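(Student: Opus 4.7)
The plan is to recognize the asserted formula as saying that $T_\#$ coincides with restriction along a natural projection. Let $p\colon \Theta C\to\Delta$ denote the forgetful functor $[m](c_1,\dots,c_m)\mapsto[m]$, $(\delta,\{f_{ij}\})\mapsto \delta$, and let $p^*\colon \sPsh(\Delta)\to\sPsh(\Theta C)$ be the restriction functor it induces. The proposed formula amounts to a natural isomorphism $T_\# X\approx p^*X$, so I will exhibit such an isomorphism.

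First, observe that both $T_\#$ and $p^*$ preserve colimits: $T_\#$ as a left adjoint by construction, and $p^*$ because pullback along a functor of small categories has both a left and a right Kan extension as adjoints, hence in particular preserves colimits. Next, verify the two functors agree on representables $F_\Delta[n]$. On the one hand, by Yoneda and the defining adjunction $(T_\#,T^*)$,
\[
\Map(T_\# F_\Delta[n], Y) \approx \Map(F_\Delta[n], T^*Y) \approx (T^*Y)[n] = \Map(T[n], Y)
\]
naturally in $Y$, so $T_\# F_\Delta[n]\approx T[n]$. On the other hand, by definition of $p^*$ and of $T[n]$,
\[
(p^*F_\Delta[n])([m](c_1,\dots,c_m)) = F_\Delta[n]([m]) = \Delta([m],[n]) = T[n]([m](c_1,\dots,c_m)),
\]
and this identification is visibly compatible with the functoriality in $[m](c_1,\dots,c_m)$ (morphisms in $\Theta C$ act via their underlying $\delta$ in $\Delta$). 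Thus $p^*F_\Delta[n]\approx T[n]\approx T_\# F_\Delta[n]$, naturally in $[n]\in\Delta$.

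Finally, since every object $X\in\sPsh(\Delta)$ is canonically a colimit of representables $F_\Delta[n_i]$, and both $T_\#$ and $p^*$ preserve colimits, the natural isomorphism on representables extends to a natural isomorphism $T_\# X\approx p^*X$. Evaluating at $[m](c_1,\dots,c_m)$ gives exactly $(T_\#X)[m](c_1,\dots,c_m)\approx X[m]$, as claimed.

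There is no serious obstacle: the argument is a routine left-Kan-extension computation, and the only thing needing care is checking that the comparison isomorphism on representables is natural in $[n]\in\Delta$ and in $[m](c_1,\dots,c_m)\in\Theta C$, which is immediate from the definition of $T$ and from the fact that morphisms in $\Theta C$ act on the $\Delta([m],[n])$ component through their underlying $\delta$-part.
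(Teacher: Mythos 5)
The paper dismisses this as ``a straightforward calculation,'' so there is no argument to compare against; your route --- identifying $T_\#$ with restriction along the forgetful functor $p\colon \Theta C\ra \Delta$ --- is a perfectly good, and rather more illuminating, way to carry out that calculation. The identification $T[n]=p^*F_\Delta[n]$ and the Yoneda computation $T_\#F_\Delta[n]\approx T[n]$ are both correct.

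The one step that is literally false as written is the density claim: in $\sPsh(\Delta)$, an \emph{ordinary} (conical) colimit of the discrete presheaves $F_\Delta[n]$ is computed levelwise and is therefore again discrete, so a non-discrete simplicial presheaf is \emph{not} a colimit of representables $F_\Delta[n_i]$ in the unenriched sense. To repair this you must work in the simplicially enriched setting: either invoke the enriched co-Yoneda lemma $X\approx \int^{[n]}X[n]\otimes F_\Delta[n]$ and check that both $T_\#$ and $p^*$ preserve weighted colimits (equivalently, ordinary colimits \emph{and} tensors with simplicial sets --- true for both, since both are simplicial left adjoints), or equivalently enlarge your generating family to the objects $F_\Delta[n]\otimes \Delta^k$, i.e.\ the representables of $\Delta\times\Delta$. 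This is a routine fix and not a new idea, but as stated the final paragraph does not go through. Alternatively, your own observation gives a cleaner ending that avoids density entirely: since $T[n]=p^*F_\Delta[n]$, one has $(T^*Y)[n]=\Map(p^*F_\Delta[n],Y)\approx \Map(F_\Delta[n],p_*Y)\approx (p_*Y)[n]$ naturally, where $p_*$ is right Kan extension along $p$; hence $T^*\approx p_*$, and $T_\#\approx p^*$ by uniqueness of left adjoints.
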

\begin{proof}
A straightforward calculation.
\end{proof}
\begin{cor}\label{cor:t-hash-properties}
The functor $T_\#\colon \sPsh(\Delta)^\inj \ra \sPsh(\Theta C)^\inj$
preserves small limits, cofibrations and weak equivalences; in
particular, it is the left adjoint 
of a Quillen pair.
\end{cor}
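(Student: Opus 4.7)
The plan is to leverage the explicit formula $(T_\#X)([m](c_1,\dots,c_m)) \approx X[m]$ established in \eqref{prop:t-hash-formula}. The key observation is that this formula shows $T_\#$ to be, informally, a ``constant extension'' along the $c_i$ decorations: the value at an object $[m](c_1,\dots,c_m)$ of $\Theta C$ depends only on the underlying shape $[m]\in \Delta$. Everything the corollary claims will then reduce to the fact that limits, monomorphisms, and weak equivalences in the injective structures are all computed levelwise.

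First I would verify preservation of small limits. Since limits in $\sPsh(\Delta)$ are computed objectwise and pointwise (values in $\Sp$), the formula gives
\[
(T_\# \lim_i X_i)([m](c_1,\dots,c_m)) \approx (\lim_i X_i)[m] \approx \lim_i (X_i[m]) \approx \lim_i (T_\# X_i)([m](c_1,\dots,c_m)),
\]
and similarly limits in $\sPsh(\Theta C)$ are levelwise. So $T_\#$ commutes with limits; combined with its being a left adjoint (hence preserving colimits), this settles the ``preserves small limits'' assertion.

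Next I would check cofibrations and weak equivalences. Cofibrations in $\sPsh(\Delta)^\inj$ are precisely monomorphisms, equivalently, levelwise monomorphisms of simplicial sets; similarly in $\sPsh(\Theta C)^\inj$. For a map $f$ in $\sPsh(\Delta)$, the formula shows $(T_\# f)([m](c_1,\dots,c_m)) = f[m]$, so if each $f[m]$ is a monomorphism of simplicial sets then so is every value of $T_\# f$, hence $T_\# f$ is a cofibration. The identical argument applies with ``monomorphism'' replaced by ``weak equivalence'' (weak equivalences in both model structures being levelwise), yielding preservation of weak equivalences.

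The Quillen pair assertion is then immediate: since $T_\#$ preserves cofibrations and weak equivalences, it preserves trivial cofibrations as well, so it is a left Quillen functor, and $(T_\#, T^*)$ is a Quillen pair. There is no real obstacle in this proof; the work is entirely in setting up the formula \eqref{prop:t-hash-formula}, after which the corollary reduces to an observation about levelwise computations.
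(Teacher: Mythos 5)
Your proposal is correct and is essentially the paper's own argument: the corollary is stated as an immediate consequence of the formula $(T_\#X)[m](c_1,\dots,c_m)\approx X[m]$ from \eqref{prop:t-hash-formula}, with all three preservation claims reduced to the fact that limits, monomorphisms, and weak equivalences in the injective structures are computed levelwise. The paper offers no further proof beyond this observation, so there is nothing to add.
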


We will regard $T^*X$ as the ``underlying simplicial space'' of the
object $X$ in $\sPsh(\Theta C)$.

\subsection{The intertwining functor $V$}
\label{subsec:intertwining}

The \dfn{intertwining functor} 
\[
V\colon \Theta(\sPsh(C)) \ra \sPsh(\Theta C)
\]
is a functor which extends the Yoneda functor $F_{\Theta C}\colon \Theta C \ra
\sPsh(\Theta C)$; it will play a crucial role in what follows.

Recall \eqref{rem:theta-morphisms-products} that since $\sPsh(C)$ has
finite 
products, a morphism $[m](A_1,\dots,A_m)\ra [n](B_1,\dots,B_n)$
in $\Theta(\sPsh(C))$ amounts to a a pair
$(\delta,\{f_j\}_{j=1,\dots,m})$ where $\delta\colon [m]\ra [n]$ in
$\Delta$, and
\[
f_j \colon A_j \ra \prod_{j=\delta(k-1)+1}^{\delta(k)} B_k \qquad
\text{in $\sPsh(C)$}.
\]

On objects $[m](A_1,\dots,A_m)$ in $\Theta(\sPsh(C))$ the functor $V$
is defined by  
\[
\bigl(V[m](A_1,\dots,A_m)\bigr)([q](c_1,\dots,c_q)) =
\coprod_{\delta\in \Delta([q],[m])\,}\,
\prod_{i=1}^q\, \prod_{j=\delta(i-1)+1}^{\delta(i)} A_j(c_i).
\]
To a morphism $(\sigma,\{f_{j}\})\colon [m](A_1,\dots,A_m)\ra
[n](B_1,\dots,B_n)$ we associate the map of presheaves defined by
\[
\coprod_{\delta\in \Delta([q],[m])} \,\prod_{i=1}^q\,
\prod_{j=\delta(i-1)+1}^{\delta(i)} A_j(c_i)
\ra \coprod_{\delta'\in \Delta([q],[n])} \,\prod_{i=1}^q\,
\prod_{k=\delta'(i-1)+1}^{\delta'(i)} B_k(c_i)
\]
which sends the summand associated to $\delta$ to the summand
associated to $\delta'=\sigma\delta$ by a map which is a product of
maps of the form $f_j(c_i)$.
 
Observe that
\begin{align*}
(V[m](Fd_1,\dots,Fd_m))([q](c_1,\dots,c_q)) &\approx 
\coprod_{\delta\colon [q]\ra [m]} \,\prod_{i=1}^q\,
\prod_{j=\delta(i-1)+1}^{\delta(i)} C(c_i,d_j) 
\\
&\approx (\Theta
C)([q](c_1,\dots,c_q),[m](d_1,\dots,d_m) )
\\
& \approx F_{\Theta C}[m](d_1,\dots,d_m)([q](c_1,\dots,c_q)).
\end{align*}
Thus we obtain a natural isomorphism $\nu\colon F_{\Theta C} \ra
V(\Theta F_C)$ of functors $\Theta C\ra \sPsh(\Theta C)$. 

In this paper, we are proposing the category $\sPsh(\Theta C)$ as a
model for $\sPsh(C)$-enriched categories.  In this light, the object
$V[m](A_1,\dots,A_m)$ of $\sPsh(\Theta C)$ may be 
thought of as a model of the $\sPsh(C)$-enriched category freely
generated by the $\sPsh(C)$-enriched graph
\[
(0)\xra{A_1} (1) \xra{A_2}\cdots \xra{A_{m-1}} (m-1) \xra{A_m} (m).
\]

The following proposition describes how the intertwining functor
interacts with colimits.  Recall that for an object $X$ of a category
$C$, $A\backslash X$ denotes the slice category of objects under $X$
in $C$.
\begin{prop}\label{prop:intertwining-functor-colimit-properties}
The intertwining functor $V\colon \Theta(\sPsh(C))\ra \sPsh(\Theta C)$
has the following properties.  Fix $m,n\geq0$ and objects
$A_1,\dots,A_m, B_1,\dots,B_n$ of $\sPsh(C)$.
\begin{enumerate}
\item
The map
$(V\delta^{0,\dots, m}, V\delta^{m+1+1,\dots,m+1+n})$ which sends
\[
V[m](A_1,\dots,A_m)\amalg V[n](B_1,\dots,B_n) \ra
V[m+1+n](A_1,\dots,A_{m},\varnothing,B_1,\dots,B_n)
\]
is an isomorphism.
\item
The functor
\begin{multline*}
X\mapsto V[m+1+n](A_1,\dots,A_m,X,B_1,\dots,B_n)\colon 
\\
\sPsh(C) \ra
V[m+1+n](A_1,\dots,A_m,\varnothing,B_1,\dots,B_n)\backslash
\sPsh(\Theta C)
\end{multline*}
is a left adjoint.
\end{enumerate}
\end{prop}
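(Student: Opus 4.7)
The plan is to prove both parts by unwinding the explicit formula for the intertwining functor $V$ given in \S\ref{subsec:intertwining}. For part (1), I would evaluate both sides at an arbitrary object $[q](c_1,\dots,c_q)$ of $\Theta C$. The right-hand side is the coproduct
\[
\coprod_{\delta\in\Delta([q],[m+1+n])}\;\prod_{i=1}^q\;\prod_{j=\delta(i-1)+1}^{\delta(i)} E_j(c_i),
\]
where $(E_1,\dots,E_{m+1+n})=(A_1,\dots,A_m,\varnothing,B_1,\dots,B_n)$. The key observation is that $\varnothing(c)$ is the empty simplicial set for every $c$, so any summand indexed by a $\delta$ with $\delta(i-1)<m+1\leq\delta(i)$ for some $i$ is initial (its inner product contains the empty factor $\varnothing(c_i)$). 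By monotonicity of $\delta$, the remaining summands split into those with image contained in $\{0,\dots,m\}$ and those with image contained in $\{m+1,\dots,m+1+n\}$; after the reindexing $\delta'(i)\defeq\delta(i)-(m+1)$ in the second case, these two collections are in natural bijection with the summands of $V[m](A_1,\dots,A_m)([q](c_1,\dots,c_q))$ and $V[n](B_1,\dots,B_n)([q](c_1,\dots,c_q))$ respectively. A direct check identifies this levelwise bijection with the map induced by $V\delta^{0,\dots,m}$ and $V\delta^{m+1,\dots,m+1+n}$.

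For part (2), write $U\colon\sPsh(C)\ra V[m+1+n](A_1,\dots,A_m,\varnothing,B_1,\dots,B_n)\backslash\sPsh(\Theta C)$ for the functor in question. The plan is to show $U$ preserves all small colimits; since both source and target are locally presentable (the target is a slice of a presheaf category), the adjoint functor theorem then supplies a right adjoint. Applying the same formula as in part (1) but now with $E_{m+1}=X$, the summands of $U(X)([q](c_1,\dots,c_q))$ separate into two classes: (a) those indexed by $\delta$ not crossing position $m+1$, whose values do not involve $X$ and which reassemble by part (1) to $V[m+1+n](A_1,\dots,A_m,\varnothing,B_1,\dots,B_n)([q](c_1,\dots,c_q))$, providing the slice structure map; and (b) those indexed by $\delta$ with a unique index $i(\delta)$ for which $\delta(i(\delta)-1)\leq m$ and $\delta(i(\delta))\geq m+1$, each contributing a summand of the form $K_\delta\times X(c_{i(\delta)})$ for a simplicial set $K_\delta$ independent of $X$. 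Each (b)-summand is colimit-preserving in $X$, since evaluation $X\mapsto X(c)$ preserves all colimits and $K_\delta\times(-)$ distributes over coproducts of simplicial sets; coproducts of colimit-preserving functors are colimit-preserving; and colimits in the slice are computed in $\sPsh(\Theta C)$. Hence $U$ preserves all small colimits.

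The main obstacle I expect is the bookkeeping in part (1): keeping the indexing straight and verifying that the identification of surviving summands is actually induced by the stated structure maps $V\delta^{0,\dots,m}$ and $V\delta^{m+1,\dots,m+1+n}$, rather than merely an abstract bijection of sets. Once the explicit decomposition of part (1) is in hand, part (2) follows by a colimit-preservation argument plus the adjoint functor theorem, without any need to exhibit the right adjoint by hand.
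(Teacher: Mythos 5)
Your proposal is correct and follows essentially the same route as the paper: the paper's proof also partitions the indexing set $\Delta([q],[m+1+n])$ according to where (if anywhere) $\delta$ crosses the position $m+1$, reads off part (1) by observing that the crossing summands acquire an empty factor $\varnothing(c_i)$, and deduces part (2) from the resulting colimit-preservation of the functor. The only cosmetic difference is that the paper carries out the decomposition once with a general $X$ in the middle slot and extracts both statements from it, whereas you do the $X=\varnothing$ case first and then reuse it.
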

\begin{proof}
For $p=0,\dots,q+1$, let $G(p)\subseteq \Delta([q],[m+1+n])$ be
defined by
\[
G(p) =
\begin{cases}
  \set{\delta}{\delta(0)\geq m+1} & \text{if $p=0$,}
\\ 
  \set{\delta}{\delta(p-1)\leq m,\, \delta(p)\geq m+1} & \text{if $1\leq p\leq
    q$,}
\\
  \set{\delta}{\delta(q)\leq m} & \text{if $p=q+1$.}
\end{cases}
\]
Thus the $G(p)$ determine a partition of the set $\Delta([q],[m+1+n])$.  The
coproduct which defines
$V[m](A_1,\dots,A_m,X,B_1,\dots,B_n)(\theta)$ for
$\theta=[q](c_1,\dots,c_q)$ decomposes 
into factors according to this partition of $\Delta([q],[m+1+n])$.
Under this decomposition, the factor corresponding to $p=0$ is
\[
\coprod_{\delta\in G(0)}\,
\prod_{i=1}^q\,\prod_{j=\delta(i-1)+1}^{\delta(i)} B_{j-(m+1)}(c_i) \approx
V[n](B_1,\dots,B_n)(\theta),
\]
the factor corresponding to $p=q+1$ is 
\[
\coprod_{\delta\in G(q+1)}\,
\prod_{i=1}^q\, \prod_{j=\delta(i-1)+1}^{\delta(i)} A_j(c_i) \approx
  V[m](A_1,\dots, A_m)(\theta),
\]
while the factor corresponding to $p$ where $1\leq p\leq q$ is
\[
\coprod_{\delta\in G(p)}
\left(\prod_{i=1}^{p} \,\prod_{j=\delta(i-1)+1}^{\min(\delta(i),m)} A_j(c_i)
\right) \times X(c_p) \times \left(\prod_{i=p}^q \,
  \prod_{j=\max(\delta(i-1),m)+2}^{\delta(i)} B_{j-(m+1)}(c_i)\right).
\]
From this claim (1) is immediate, as is the observation that the
functor described in (2) preserves colimits, and so has a right
adjoint.  
\end{proof}

\begin{prop}
For all $m,n\geq0$ and objects $A_1,\dots,A_m$, $B_1,\dots,B_n$ in
$\sPsh(C)$, the functor
\begin{multline*}
X\mapsto V[m+1+n](A_1,\dots,A_m,X,B_1,\dots,B_n)\colon 
\\
\sPsh(C) \ra
V[m+1+n](A_1,\dots,A_m,\varnothing,B_1,\dots,B_n)\backslash
\sPsh(\Theta C)
\end{multline*}
preserves cofibrations and weak equivalences, and thus is the left
adjoint of a Quillen pair.
\end{prop}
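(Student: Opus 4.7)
The plan is to read off the explicit structure of $F \colon X \mapsto V[m+1+n](A_1,\dots,A_m,X,B_1,\dots,B_n)$ using the decomposition that appears in the proof of Proposition \ref{prop:intertwining-functor-colimit-properties}, then reduce to basic facts about the cartesian model structure on $\Sp$. Since the injective model structures on $\sPsh(C)$ and $\sPsh(\Theta C)$ have cofibrations and weak equivalences detected objectwise, and the slice model structure inherits these from $\sPsh(\Theta C)^\inj$, it suffices to verify that $F$ preserves monomorphisms and levelwise weak equivalences of simplicial presheaves.

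For $\theta = [q](c_1,\dots,c_q)$, I would reuse the partition $\Delta([q],[m+1+n]) = \coprod_{p=0}^{q+1} G(p)$ from that earlier proof. The summands for $p=0$ and $p=q+1$ are independent of $X$ and together recover $V[m+1+n](A_1,\dots,A_m,\varnothing,B_1,\dots,B_n)(\theta)$; for $1 \le p \le q$, each summand indexed by $\delta \in G(p)$ has the form $L_{\theta,p,\delta} \times X(c_p)$, where $L_{\theta,p,\delta}$ is a fixed simplicial set built from the $A_j(c_i)$ and $B_k(c_i)$. Collecting gives
\[
F(X)(\theta) \;\cong\; F(\varnothing)(\theta) \;\sqcup\; \coprod_{p=1}^q L_{\theta,p} \times X(c_p),
\]
with $L_{\theta,p}$ a fixed simplicial set depending only on $\theta$, the $A_j$, and the $B_k$.

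From this expression the preservation claims are immediate. Monomorphisms in $\Sp$ are stable under products with arbitrary simplicial sets and under coproducts, so $F$ sends levelwise monomorphisms to levelwise monomorphisms. Since $\Sp$ is a cartesian model category in which every object is cofibrant, each functor $L_{\theta,p} \times (-)$ is left Quillen and therefore preserves all weak equivalences by Ken Brown's lemma; coproducts of weak equivalences in $\Sp$ are again weak equivalences, so $F$ preserves levelwise weak equivalences. Combined with Proposition \ref{prop:intertwining-functor-colimit-properties}(2), which already exhibits $F$ as a left adjoint, this yields the Quillen pair. The only real obstacle is the clerical unpacking of the decomposition; once that is in hand, the model-theoretic verification is routine cartesian-model-category bookkeeping.
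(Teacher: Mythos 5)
Your proposal is correct and is essentially the paper's argument: the paper's proof reads, in full, ``A straightforward calculation, using the decomposition given in the proof of \eqref{prop:intertwining-functor-colimit-properties},'' and you have simply carried out that calculation, identifying the $X$-dependent summands as $L_{\theta,p,\delta}\times X(c_p)$ and reducing to stability of monomorphisms and weak equivalences in $\Sp$ under products with a fixed simplicial set and under coproducts. No gaps.
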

\begin{proof}
A straightforward calculation, using the decomposition given in the
proof of \eqref{prop:intertwining-functor-colimit-properties}.
\end{proof}

\begin{rem}
It can be shown that $V$ is the left Kan extension of $F_{\Theta C}$
along $\Theta F_C$.
\end{rem}

\subsection{A product decomposition}

We will need to make use of the following description of the product
$V[1](A)\times V[1](B)$ in $\sPsh(\Theta C)$.
\begin{prop}\label{prop:product-decomposition}
The map
\[
\colim\bigl( V[2](A,B) \xla{V\delta^{02}} V[1](A\times B)
\xra{V\delta^{02}} V[2](B,A) \bigr) \ra V[1](A)\times V[1](B),
\]
induced by $(V\delta^{011},V\delta^{001})\colon V[2](A,B)\ra
V[1](A)\times V[1](B)$ and $(V\delta^{001},V\delta^{011})\colon
V[2](B,A)\ra V[1](A)\times V[1](B)$,
is an isomorphism.
\end{prop}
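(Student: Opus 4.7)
The plan is to verify the claim pointwise. Since both sides are presheaves of sets and colimits in $\sPsh(\Theta C)$ are computed objectwise, it suffices to fix $\theta=[q](c_1,\dots,c_q)\in\ob\Theta C$ and show that the induced map of sets is a bijection.

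I will use the explicit formula defining $V$ to decompose each summand by its sequence of thresholds. A weakly monotone map $\delta\colon[q]\ra[m]$ is determined by the tuple $(p_j)_{j=1,\dots,m}$ with $p_j=\min\set{i}{\delta(i)\geq j}\in\{0,\dots,q+1\}$, and a short computation shows that the $\delta$-summand of $V[m](A_1,\dots,A_m)(\theta)$ reduces to a product of factors $A_j(c_{p_j})$, where any $p_j\in\{0,q+1\}$ contributes a trivial factor. Explicitly, for $m=1$ the summand at threshold $p$ is $A(c_p)$ when $1\leq p\leq q$ and a one-element set otherwise; for $m=2$ with thresholds $(p_1,p_2)$, $p_1\leq p_2$, the summand is $A(c_{p_1})\times B(c_{p_2})$ when $p_1<p_2$ and $A(c_p)\times B(c_p)$ when $p_1=p_2=p$.

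By distributivity, $V[1](A)(\theta)\times V[1](B)(\theta)$ is a coproduct over all pairs $(p_A,p_B)\in\{0,\dots,q+1\}^2$, and I split this coproduct according to whether $p_A<p_B$, $p_A=p_B$, or $p_A>p_B$. Since $\delta^{011}\circ\delta$ has threshold $p_1$ and $\delta^{001}\circ\delta$ has threshold $p_2$, the map $(V\delta^{011},V\delta^{001})$ carries the $(p_1,p_2)$-summand of $V[2](A,B)(\theta)$ to the $(p_1,p_2)$-summand of the product, establishing a bijection onto the region $p_A\leq p_B$; symmetrically $(V\delta^{001},V\delta^{011})$ gives a bijection from $V[2](B,A)(\theta)$ onto the region $p_A\geq p_B$. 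The two $V\delta^{02}$ maps land precisely in the diagonal summands $p_1=p_2$, and the $(p,p)$-summand $A(c_p)\times B(c_p)$ of $V[2](A,B)$ is identified with the $(p,p)$-summand $B(c_p)\times A(c_p)$ of $V[2](B,A)$ via the swap intrinsic to the two interpretations of $V\delta^{02}$ as a morphism in $\Theta(\sPsh(C))$. The pushout is therefore the disjoint union of the summands for all $(p_A,p_B)$ with compatible identifications on the diagonal, and the induced map to $V[1](A)(\theta)\times V[1](B)(\theta)$ is a summandwise bijection. The main obstacle is the bookkeeping of which threshold pairs index which summands and the verification that the diagonal summands are identified via the correct swap; no substantive theorem beyond the defining formula of $V$ is required.
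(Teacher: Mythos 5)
Your proof is correct and is essentially the calculation the paper has in mind: the paper's own proof simply declares the statement ``a straightforward calculation'' (accompanied by a suggestive picture of the square $[1]\times[1]$), and your objectwise decomposition of each coproduct by thresholds, with the region $p_A\leq p_B$ covered by $V[2](A,B)$, the region $p_A\geq p_B$ by $V[2](B,A)$, and the diagonal glued along $V[1](A\times B)$ via the projection/swap maps, is exactly that calculation carried out in detail. The only cosmetic point is to fix the convention $p_j=q+1$ when $\set{i}{\delta(i)\geq j}$ is empty, which you clearly intend.
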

\begin{proof}
This is a straightforward calculation.  To grok the argument, it may
be helpful to contemplate the following diagram.
\[
\xymatrix{
{\bullet} \ar[r]^A \ar[d]_B \ar[dr]|{A\times B}
& {\bullet} \ar[d]^B
\\
{\bullet} \ar[r]_A 
& {\bullet}
}\]
\end{proof}

\subsection{Subobjects of $V[m](c_1,\dots,c_m)$}
\label{subsec:v-subobjects}

Observe that $V[m](1,\dots,1)\approx T_\#F[m]$; we write $\pi\colon
V[m](A_1,\dots,A_m)\ra T_\#F[m]$ for the map induced by projection to
the terminal object in $\sPsh(C)$.  Given a subobject $f\colon K\subseteq
F[m]$ in $\sPsh(\Delta)$, we define $V_K(A_1,\dots,A_m)$ to be the
inverse limit of the diagram
\[
V[m](A_1,\dots,A_m) \xra{\pi} T_\#F[m] \xla{T_\#f} T_\#K.
\]
Explicitly, $V_K(A_1,\dots,A_m)([q](c_1,\dots,c_q))$ is the subobject
of $V[m](A_1,\dots,A_m)([q](c_1,\dots,c_q))$ coming from summands
associated to $\delta\colon [q]\ra [m]$ such that $F\delta\colon
F[q]\ra F[m]$ factors through $K\subseteq F[m]$ in $\sPsh(\Delta)$.
Observe that $V_{F[m]}(A_1,\dots,A_m)\approx V[m](A_1,\dots,A_m)$.

These subobjects will be used in \S\ref{sec:segal-is-cartesian}.

\subsection{Mapping objects}
\label{subsec:mapping-objects}

Given an object $X$ in $\sPsh(\Theta C)$, an ordered sequence
$x_0,\dots,x_m$ of points in $X[0]$, and a sequence $c_1,\dots,c_m\in
\ob C$, we define $M_X(x_0,\dots,x_m)(c_1,\dots,c_m)$ to be the
pullback of the diagram
\[
\{(x_0,\dots,x_m)\} \ra X[0]^{\times m+1} \xla{(X\delta^0,\dots,X\delta^m)}
X[m](c_1,\dots,c_m).
\]
Allowing the objects $c_1,\dots,c_m$ to vary gives us a presheaf
$M_X(x_0,\dots,x_m)$ 
in $\sPsh(C^{\times m})$.   We will be especially interested in
$M_X(x_0,x_1)$, an object of $\sPsh(C)$, which we will refer to as a
\dfn{mapping object} for $X$.

We can use the intertwining functor $V$ to get a fancier version of the
mapping objects, as follows.  Again, given $X$ in $\sPsh(\Theta C)$
and $x_0,\dots,x_m\in X[0]$, and also given objects $A_1,\dots,A_m$ in
$\sPsh(C)$, we define $\widetilde{M}_X(x_0,\dots,x_q)(A_1,\dots,A_q)$
to be the pullback of the diagram
\[
\{(x_0,\dots,x_m)\} \ra X[0]^{\times m+1} \approx
\Map(V[m](\varnothing,\dots,\varnothing),X) \la \Map(V[m](A_1,\dots,A_m),X)
\]
where the right-hand map is induced by the maps $X\delta^i_*$.  
Allowing the objects $A_1,\dots,A_m$ to vary gives us a functor
$\widetilde{M}_X(x_0,\dots,x_m)\colon \sPsh(C)^{\times m}\ra
\sPsh(\Theta C)$.
Observe that 
\[M_X(x_0,\dots,x_m)(c_1,\dots,c_m)\approx
\widetilde{M}_X(x_0,\dots,x_m)(Fc_1,\dots,Fc_m),
\] and that
\begin{multline*}
\widetilde{M}_X(x_0,\dots,x_{m+1+n})(A_1,\dots,A_m, \varnothing,
B_1,\dots,B_n) 
\\
\approx \widetilde{M}_X(x_0,\dots,x_m)(A_1,\dots,
A_m)\times
\widetilde{M}_X(x_{m+1+1},\dots,x_{m+1+n})(B_1,\dots,B_n).
\end{multline*}

\begin{lemma}\label{lemma:mapping-space-identification}
There is a natural isomorphism
\[
\widetilde{M}_X(x_0,x_1)(A) \approx \Map_C(A, M_X(x_0,x_1)).
\]
\end{lemma}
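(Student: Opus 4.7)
The plan is to apply Proposition \ref{prop:intertwining-functor-colimit-properties}(2) in the case $m = n = 0$: the functor $A \mapsto V[1](A)$ is a left adjoint from $\sPsh(C)$ to $V[1](\varnothing)\backslash\sPsh(\Theta C)$, with some right adjoint $R$. By Proposition \ref{prop:intertwining-functor-colimit-properties}(1) (also with $m=n=0$) we have $V[1](\varnothing) \approx V[0] \amalg V[0] \approx F_{\Theta C}[0] \amalg F_{\Theta C}[0]$, so an object $(X, V[1](\varnothing) \ra X)$ of this slice category is the same as an object $X$ of $\sPsh(\Theta C)$ equipped with an ordered pair of points $x_0, x_1 \in X[0]$. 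Directly from the defining pullback, $\widetilde{M}_X(x_0,x_1)(A) \approx \Map_{V[1](\varnothing)\backslash\sPsh(\Theta C)}(V[1](A), (X,(x_0,x_1)))$. The enriched adjunction $V[1] \dashv R$ will therefore give $\widetilde{M}_X(x_0,x_1)(A) \approx \Map_C(A, R(X,x_0,x_1))$, reducing the lemma to identifying $R(X,x_0,x_1) \approx M_X(x_0,x_1)$ as a simplicial presheaf.

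Next I would identify $R$ by evaluating on representables. By enriched Yoneda in $\sPsh(C)$, $R(X,x_0,x_1)(c) \approx \Map_C(F(c), R(X,x_0,x_1))$, which by the adjunction equals $\widetilde{M}_X(x_0,x_1)(F(c))$. The natural isomorphism $\nu\colon F_{\Theta C} \approx V(\Theta F_C)$ from \S\ref{subsec:intertwining} specializes to $V[1](F(c)) \approx F_{\Theta C}[1](c)$, so by Yoneda $\Map_{\sPsh(\Theta C)}(V[1](F(c)), X) \approx X[1](c)$, and the fiber over $(x_0,x_1)$ is by definition $M_X(x_0,x_1)(c)$. Naturality in $c$ upgrades this to an isomorphism $R(X,x_0,x_1) \approx M_X(x_0,x_1)$.

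The main technical point will be upgrading the set-level adjunction of Proposition \ref{prop:intertwining-functor-colimit-properties}(2) to a simplicially enriched one. Unpacking definitions, an $n$-simplex of $\widetilde{M}_X(x_0,x_1)(A)$ is a map $V[1](A) \otimes \Delta^n \ra X$ whose restriction to $V[1](\varnothing) \otimes \Delta^n$ factors through $V[1](\varnothing) \xra{(x_0,x_1)} X$; by the universal property of pushouts this corresponds to a map from $(V[1](A) \otimes \Delta^n) \cup_{V[1](\varnothing) \otimes \Delta^n} V[1](\varnothing)$ to $X$ sending $V[1](\varnothing)$ to $(x_0,x_1)$. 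The key identification is then
\[
(V[1](A) \otimes \Delta^n) \cup_{V[1](\varnothing) \otimes \Delta^n} V[1](\varnothing) \;\approx\; V[1](A \times \Delta^n),
\]
which I would verify by a direct objectwise calculation from the formula for $V$: at $\theta = [q](c_1,\dots,c_q)$, the summands of $V[1](A)(\theta)$ indexed by the constant maps $\delta \equiv 0, \delta \equiv 1 \in \Delta([q],[1])$ are the two $\{*\}$-summands that absorb the $\Delta^n$-factor upon pushout, while the remaining summands $A(c_{k+1})$ become $A(c_{k+1}) \times \Delta^n$, matching exactly the formula for $V[1](A \times \Delta^n)(\theta)$. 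Applying the set-level adjunction to $A \times \Delta^n$ then yields $\widetilde{M}_X(x_0,x_1)(A)_n \approx \Hom_{\sPsh(C)}(A \times \Delta^n, M_X(x_0,x_1)) = \Map_C(A, M_X(x_0,x_1))_n$, naturally in $n$, completing the proof.
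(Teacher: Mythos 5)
Your proof is correct and takes essentially the same route as the paper's: both arguments rest on the identification $\widetilde{M}_X(x_0,x_1)(Fc)\approx M_X(x_0,x_1)(c)$ on representables together with the fact that $V[1]\colon \sPsh(C)\ra V[1](\varnothing)\backslash \sPsh(\Theta C)$ preserves colimits \eqref{prop:intertwining-functor-colimit-properties}, so that $A\mapsto \widetilde{M}_X(x_0,x_1)(A)$ is represented by an object of $\sPsh(C)$ determined by its values on representables. The only difference is one of explicitness: you name the representing object as a right adjoint $R$ and verify the simplicial enrichment directly via the (correct) objectwise identity $(V[1](A)\otimes\Delta^n)\cup_{V[1](\varnothing)\otimes\Delta^n}V[1](\varnothing)\approx V[1](A\times\Delta^n)$, a detail the paper's one-line proof leaves implicit.
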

\begin{proof}
This follows using the natural isomorphisms
\[
\widetilde{M}_X(x_0,x_1)(Fc) \approx M_X(x_0,x_1)(c) \approx
\Map_C(Fc, M_X(x_0,x_1))
\]
and the fact that $V[1]\colon \sPsh(C) \ra V[1](\varnothing)\backslash
\sPsh(\Theta C)$ preserves colimits
\eqref{prop:intertwining-functor-colimit-properties}, which implies
that $A\mapsto \tilde{M}_X(x_0,x_1)(A)$ takes colimits to limits. 
\end{proof}

\section{Segal objects}
\label{sec:segal-objects}

In this section, we examine the properties of a certain class of
objects in $\sPsh(\Theta C)$, called \dfn{Segal objects}.  In the case
that $C=1$, these are the \dfn{Segal spaces} of
\cite{rezk-ho-theory-of-ho-theory}.   We work with a fixed small
category $C$.

\subsection{Segal maps and Segal objects}
\label{subsec:the-set-se}

Let $\Se_C$ denote the set of morphisms in $\sPsh(\Theta C)$  of the form 
\[
\se^{(c_1,\dots,c_m)}\defeq (F\delta^{01},\dots, F\delta^{m-1,m})\colon
G[m](c_1,\dots,c_m)\ra 
F[m](c_1,\dots, c_m)
\]
where
\[
G[m](c_1,\dots,c_m)\defeq \colim\bigl( F[1](c_1)\xla{F\delta^1} {F[0]}
\xra{F\delta^0} \cdots 
\xla{F\delta^1} {F[0]} \xra{F\delta^0} F[1](c_m)  \bigr)
\]
for $m\geq 2$ and $c_1,\dots,c_m\in \ob C$.  It is straightforward
to check that an injective fibrant $X$ in $\sPsh(\Theta C)$ is
$\Se_C$-fibrant if and only if each of the induced maps
\[
X[m](c_1,\dots,c_m) \ra
\lim\bigl( X[1](c_1) \xra{X\delta^1} X[0]
\xla{X\delta^0} \cdots \xra{X\delta^1} X[0] \xla{X\delta^0}
X[1](c_m)\bigr)
\]
is a weak equivalence.  Equivalently, an injective fibrant $X$ is
$\Se_C$-fibrant if and only if the evident maps
\[
M_X(x_0,\dots,x_m)(c_1,\dots,c_m) \ra M_X(x_0,x_1)(c_1)\times \cdots
\times 
M_X(x_{m-1},x_m)(c_m)
\]
are weak equivalences.

A \dfn{Segal object} is a $\Se_C$-fibrant object in $\sPsh(\Theta
C)^\inj$, i.e., a fibrant object in $\sPsh(\Theta C)^\inj_{\Se_C}$. 

\subsection{More maps of Segal type}

For an object $[m](A_1,\dots,A_m)$ of $\Theta(\sPsh(C))$, we obtain a map in
$\sPsh(\Theta C)$ of the form
\[
\se^{[m](A_1,\dots,A_m)}\colon V_{G[m]}(A_1,\dots,A_m)  \ra V[m](A_1,\dots,A_m).
\]
induced by $\se^{(1,\dots,1)}\colon G[m]\ra F[m]$ in $\sPsh(\Delta)$,
where $V_{G[m]}$ is as defined in \S\ref{subsec:v-subobjects}.
Observe that 
\[
V_{G[m]}(A_1,\dots,A_m) 
\approx
\colim\bigl( V[1](A_1) \xla{\delta^1} V[0]
\xra{\delta^0} \cdots 
\xla{\delta^1} V[0] \xra{\delta^0} 
V[1](A_m)\bigr).
\]
Note also that if $\delta\colon [m]\ra [n]$ in
$\Delta$ is injective and sequential, then $F\delta\colon F[m]\ra
F[n]$ carries 
$G[m]$ into $G[n]$, and thus we obtain an induced map
\[
\delta_*\colon V_{G[m]}(A_{\delta(1)},\dots,A_{\delta(m)}) \ra
V_{G[n]}(A_1,\dots,A_m).
\]

It is straightforward to check that $V_{G[m]}\colon \sPsh(C)^{\times
  m}\ra \sPsh(\Theta C)$ satisfies formal properties
similar to $V[m]\colon \sPsh(C)^{\times m}\ra \sPsh(\Theta C)$.  Namely,
\begin{enumerate}
\item for all $A_1,\dots,A_m,B_1,\dots,B_n$ objects of $\sPsh(C)$, the
  map $(\delta^{0,\dots,m}_*, \delta^{m+1+1,\dots,m+1+n}_*)$ which
  sends
\[
V_{G[m]}(A_1,\dots,A_m)\amalg V_{G[n]}(B_1,\dots,B_n)\ra
V_{G[m+1+n]}(A_1,\dots,A_m,\varnothing, B_1,\dots,B_n)
\]
is an isomorphism, and
\item
for all $m,n\geq0$ and objects $A_1,\dots,A_m,B_1,\dots,B_n$ in
$\sPsh(C)$, the functor
\begin{multline*}
X\mapsto V_{G[m+1+n]}(A_1,\dots,A_m,X,B_1,\dots, B_n) \colon
\\
\sPsh(C)\ra V_{G[m+1+n]}(A_1,\dots,A_m,\varnothing,
B_1,\dots,B_n)\backslash \sPsh(\Theta C)
\end{multline*}
is a left Quillen functor.
\end{enumerate}

We record the following fact.
\begin{prop}\label{prop:generalized-segal-maps}
For all objects $[m](A_1,\dots, A_m)$ of $\Theta(\sPsh(C))$, we have 
\[
\se^{[m](A_1,\dots,A_m)} \in \overline{\Se_C},
\]
where $\overline{\Se_C}$ is the class of $\Se_C$-local equivalences. 
\end{prop}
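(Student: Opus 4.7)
The plan is to apply \eqref{prop:left-quillen-saturations} via a double induction.  The outer induction is on $m$: with the natural conventions $G[0]=F[0]$ and $G[1](c_1)=F[1](c_1)$, the map $\se^{[m]}$ is an identity for $m\leq 1$, so there is no content.  Fix $m\geq 2$ and assume the proposition for all smaller values.  The inner induction will reduce the case of arbitrary $A_1,\dots,A_m\in\sPsh(C)$ to the case in which every argument is representable, where the map is exactly a generator $\se^{(c_1,\dots,c_m)}\in\Se_C\subseteq\overline{\Se_C}$.

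Precisely, I would prove by downward induction on $i$ from $m$ to $0$ that
\[
\se^{[m]}(Fc_1,\dots,Fc_i,A_{i+1},\dots,A_m)\in\overline{\Se_C}
\]
for every choice of $c_1,\dots,c_i\in\ob C$ and $A_{i+1},\dots,A_m\in\sPsh(C)$.  The base $i=m$ is a tautology and $i=0$ is the proposition.  For the step from $i+1$ to $i$, fix the data $c_1,\dots,c_i$ and $A_{i+2},\dots,A_m$, and consider the natural transformation $\alpha\colon G\Rightarrow H$ of functors $\sPsh(C)\ra\sPsh(\Theta C)$ whose value at $X$ is
\[
\alpha(X)=\se^{[m]}(Fc_1,\dots,Fc_i,X,A_{i+2},\dots,A_m),
\]
where $G(X)=V_{G[m]}(Fc_1,\dots,Fc_i,X,A_{i+2},\dots,A_m)$ and $H(X)=V[m](Fc_1,\dots,Fc_i,X,A_{i+2},\dots,A_m)$.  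I would then apply \eqref{prop:left-quillen-saturations} to $\alpha$.

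Condition (i) is the ``insert a variable in one slot'' left Quillen property, available for both $V[m]$ and $V_{G[m]}$ by the propositions recorded above.  For condition (ii), the coproduct decompositions at $\varnothing$ (part (1) of \eqref{prop:intertwining-functor-colimit-properties} and its counterpart for $V_{G[m]}$) identify $\alpha(\varnothing)$ with the coproduct
\[
\se^{[i]}(Fc_1,\dots,Fc_i)\amalg \se^{[m-i-1]}(A_{i+2},\dots,A_m),
\]
each summand of which lies in $\overline{\Se_C}$ by the outer inductive hypothesis, since both $i$ and $m-i-1$ are strictly smaller than $m$; the coproduct then also lies in $\overline{\Se_C}$ via \eqref{prop:saturations-hocolims}, because binary coproducts in $\sPsh(\Theta C)^\inj$ are homotopy coproducts (every object being cofibrant).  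The map is moreover a monomorphism, since $V_{G[\ell]}$ is a subobject of $V[\ell]$ by the construction of \S\ref{subsec:v-subobjects}, and coproducts of monomorphisms are monomorphisms.  Condition (iii), that $\alpha(Fc)\in\overline{\Se_C}$ for all $c\in\ob C$, is precisely the inner inductive hypothesis at step $i+1$ with $c_{i+1}=c$.  The proposition then yields $\alpha(X)\in\overline{\Se_C}$ for all $X$, completing the inner step.

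I do not anticipate a substantive obstacle, since the essential machinery has been packaged into \eqref{prop:left-quillen-saturations} and the rest of the argument is bookkeeping.  The only mild care needed is in the edge cases $i\in\{0,m-1\}$ of the coproduct decomposition at $\varnothing$, where one of the summands reduces to an identity under the conventions $V_{G[0]}=V[0]$ and $V_{G[1]}(A)=V[1](A)$ and so contributes trivially to $\overline{\Se_C}$.
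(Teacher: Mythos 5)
Your proof is correct and follows essentially the same strategy as the paper's: an outer induction on $m$ combined with an inner one-slot-at-a-time induction that replaces representables by arbitrary presheaves, with \eqref{prop:left-quillen-saturations} doing the work at each inner step. The only differences are cosmetic --- you run the inner induction from the right rather than the left, and you verify $\alpha(\varnothing)\in\overline{\Se_C}$ directly via the coproduct decomposition and the outer hypothesis, whereas the paper folds the $\varnothing$-inserted objects into its inductive class $\mathcal{E}_{m,j}$.
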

\begin{proof}
We prove this by induction on $m\geq0$.
Let $\mathcal{D}$ denote the class of objects $[m](A_1,\dots,A_m)$ in
$\Theta(\sPsh(C))$ 
such that $\se^{[m](A_1,\dots,A_m)} \in \overline{\Se}_C$.  

We observe the following.
\begin{enumerate}
\item All objects of the form $[0]$ and $[1](A)$ are in $\mathcal{D}$,
  since $\se^{[0]}$ and $\se^{[1](A)}$ are isomorphisms.
\item All objects of the form  $[m](Fc_1,\dots, Fc_m)$
  for all $c_1,\dots,c_m\in   \ob C$ are in $\mathcal{D}$, since
  $\se^{[m](Fc_1,\dots,Fc_m)}=\se^{(c_1,\dots,c_m)}$.   
\end{enumerate}
For  $m\geq 1$ and $0\leq j\leq m$, let $\mathcal{E}_{m,j}$ denote the
class of 
objects of the form $[m](A_1,\dots,A_{j},Fc_{j+1},\dots, Fc_m)$ or
$[m](A_1,\dots, A_j,\varnothing, Fc_{j+2},\dots, Fc_m)$, where
$A_1,\dots, A_j$ in $\sPsh(C)$ and $c_{j+1},\dots,c_m\in \ob C$.
We need to  prove that $\mathcal{E}_{m,m}\subseteq \mathcal{D}$ for all
$m$.  Observation (1) says that this is so for $m=0$ and $m=1$, while
observation (2) says that 
$\mathcal{E}_{m,0}\subseteq \mathcal{D}$  for all $m$.  The proof will
be completed once we show that for all $m\geq2$ and $1\leq j\leq m$, 
$\mathcal{E}_{m,j-1}\subseteq \mathcal{D}$ implies
  $\mathcal{E}_{m,j}\subseteq \mathcal{D}$.

Consider the transformation $\alpha\colon G\ra H$ of functors
$\sPsh(C)\ra \sPsh(\Theta C)$ defined by the evident inclusion
\[
\alpha\colon V_{G[m]}(A_1,\dots,A_{j-1},{-},Fc_{j+1},\dots,Fc_m) \ra
V[m](A_1,\dots,A_{j-1},{-},Fc_{j+1}, \dots, Fc_m).
\]
The functors $G$ and $H$ produce left Quillen functors $\sPsh(C)^\inj\ra
G(\varnothing)\backslash \sPsh(\Theta C)^\inj$ and $\sPsh(C)^\inj \ra
H(\varnothing) \backslash\sPsh(\Theta C)^\inj$, and it is clear from
the explicit description of $V$ that $\alpha(\varnothing)$ is a
monomorphism.  Since $\mathcal{E}_{m,j-1}\subseteq \mathcal{D}$, we
have that $\alpha(\varnothing)\in \overline{\Se}_C$ and
$\alpha(Fc_j)\in \overline{\Se}_C$ for all objects $c_j$ of $C$.  Thus
\eqref{prop:left-quillen-saturations} applies to show that
$\alpha(A_j)\in \overline{\Se}_C$ for all objects $A_j$ of
$\sPsh(C)$, and thus $[m](A_1,\dots,A_j,Fc_{j+1},\cdots,Fc_m)\in
\mathcal{D}$; that is, $\mathcal{E}_{m,j}\subseteq \mathcal{D}$, as desired.
\end{proof}

\begin{cor}
Let $X$ be a $\Se_C$-fibrant object of $\sPsh(\Theta C)$, let
$x_0,\dots,x_m\in X[0]$, 
and let
$A_1,\dots, A_m$ be objects of $\sPsh(C)$.  Then the map
\[
\Map_{\Theta C}(V[m](A_1,\dots,A_m),X) \ra \Map_{\Theta
  C}(V[1](A_1),X) \times_{X[0]} \cdots \times_{X[0]} \Map_{\Theta
  C}(V[1](A_m),X)\]
induced by $V\delta^{i-1,i}$ for $1\leq i\leq m$ 
is a weak equivalence, and the map
\[
\widetilde{M}_X(x_0,\dots,x_m)(A_1,\dots,A_m) \ra
\widetilde{M}_X(x_0,x_1)(A_1)\times \cdots \times
\widetilde{M}_X(x_{m-1},x_m)(A_m)
\]
is a weak equivalence.
\end{cor}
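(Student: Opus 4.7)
The plan is to deduce both statements by applying $\Map_{\Theta C}(-,X)$ to the generalized Segal map of Proposition \ref{prop:generalized-segal-maps}, namely $\se^{[m](A_1,\dots,A_m)}\colon V_{G[m]}(A_1,\dots,A_m)\to V[m](A_1,\dots,A_m)$. Since this map lies in $\overline{\Se_C}$ and $X$ is $\Se_C$-fibrant (in particular injective fibrant, and every object of $\sPsh(\Theta C)^\inj$ is cofibrant, so $\Map(-,X)$ already computes the derived mapping space), the induced map $\Map(V[m](A_1,\dots,A_m),X)\to \Map(V_{G[m]}(A_1,\dots,A_m),X)$ is a weak equivalence of spaces.

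Next I would identify the right-hand mapping space with the desired iterated fiber product. Using the explicit presentation of $V_{G[m]}(A_1,\dots,A_m)$ as a colimit of the zigzag $V[1](A_1)\xla{\delta^1}V[0]\xra{\delta^0}V[1](A_2)\xla{\delta^1}\cdots\xra{\delta^0}V[1](A_m)$, together with the isomorphism $\Map_{\Theta C}(V[0],X)\approx X[0]$, and the fact that $\Map(-,X)$ carries colimits to limits, we obtain a natural isomorphism
\[
\Map(V_{G[m]}(A_1,\dots,A_m),X) \approx \Map(V[1](A_1),X)\times_{X[0]}\cdots \times_{X[0]}\Map(V[1](A_m),X).
\]
Tracing through the constructions, the composite of this isomorphism with $\Map(\se^{[m](A_1,\dots,A_m)},X)$ is precisely the map in statement (1), so the first claim follows.

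For the second statement, $\widetilde{M}_X(x_0,\dots,x_m)(A_1,\dots,A_m)$ is by construction the fiber over $(x_0,\dots,x_m)$ of the evaluation $\Map(V[m](A_1,\dots,A_m),X)\to X[0]^{\times m+1}$, while the product of the $\widetilde{M}_X(x_{i-1},x_i)(A_i)$ is the fiber over the same point of the analogous evaluation out of the iterated fiber product of the $\Map(V[1](A_i),X)$. Since the inclusion $V[0]^{\amalg m+1}\to V[m](A_1,\dots,A_m)$ is a monomorphism, hence an injective cofibration, cartesianness of $\sPsh(\Theta C)^\inj$ applied together with $X\to *$ shows that each of these evaluation maps is a Kan fibration. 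The weak equivalence furnished by statement (1) lives over $X[0]^{\times m+1}$, so restriction to the fiber over $(x_0,\dots,x_m)$ yields the required weak equivalence.

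The main technical point is the colimit-to-limit computation in the second paragraph, along with the verification that the resulting isomorphism carries $\Map(\se^{[m](A_1,\dots,A_m)},X)$ to the map described in the statement; everything else is formal, using only the previous proposition and the cartesian/simplicial structure of the injective model category already in hand.
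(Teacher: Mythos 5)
Your proposal is correct and follows exactly the route the paper intends: the corollary is stated without proof as an immediate consequence of Proposition \eqref{prop:generalized-segal-maps}, using the colimit description of $V_{G[m]}(A_1,\dots,A_m)$ to identify $\Map(V_{G[m]}(A_1,\dots,A_m),X)$ with the iterated fiber product, and then passing to fibers over $X[0]^{\times m+1}$ (which is legitimate since both evaluation maps are fibrations, as you note). Your write-up simply supplies the details the paper leaves implicit.
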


\section{$(\Theta C, \Se_C)$ is a cartesian presentation}
\label{sec:segal-is-cartesian}

We now prove the following result.
\begin{prop}\label{prop:se-is-cartesian}
For any small category $C$, the pair $(\Theta C,\Se_C)$ is a cartesian
presentation, and thus $\sPsh(C)^\inj_{\Se_C}$ is a cartesian model
category. 
\end{prop}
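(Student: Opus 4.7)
The plan is to verify condition (3) of Proposition \ref{prop:cart-pres-equiv-charac}: for every Segal map $s = \se^{(c_1,\dots,c_m)}\colon G[m](c_1,\dots,c_m)\ra F[m](c_1,\dots,c_m)$ and every object $\theta = [n](d_1,\dots,d_n)$ of $\Theta C$, the map
\[
s\times F\theta \colon G[m](c_1,\dots,c_m)\times V[n](Fd_1,\dots,Fd_n)\ra V[m](Fc_1,\dots,Fc_m)\times V[n](Fd_1,\dots,Fd_n)
\]
lies in $\overline{\Se_C}$. The idea is to decompose both sides as colimits of $V$-shaped pieces so that the map becomes a term-by-term collection of generalized Segal maps, and then invoke Propositions \ref{prop:generalized-segal-maps} and \ref{prop:saturations-hocolims}.

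First, I would establish a generalized product decomposition extending Proposition \ref{prop:product-decomposition}: for all $m,n\geq 0$ and all $A_1,\dots,A_m,B_1,\dots,B_n$ in $\sPsh(C)$, the product $V[m](A_1,\dots,A_m)\times V[n](B_1,\dots,B_n)$ is isomorphic to a colimit indexed by the poset of monotone lattice paths from $(0,0)$ to $(m,n)$, whose vertex corresponding to a given $(m,n)$-shuffle is an object $V[m+n](E_1,\dots,E_{m+n})$ with each $E_i$ equal to the appropriate $A_j$ or $B_k$, and whose edges (corresponding to swapping a single $A_j B_k$ pair to $B_k A_j$) glue along $V[m+n-1](\dots,A_j\times B_k,\dots)$. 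The base case $m=n=1$ is exactly Proposition \ref{prop:product-decomposition}; the general case is a direct calculation from the explicit formula for $V$. A parallel decomposition for $V_{G[m]}(A_1,\dots,A_m)\times V[n](B_1,\dots,B_n)$ follows the same shuffle diagram with $V[m+n]$ replaced by $V_{G[m+n]}$, using that $V_{G[m]}$ shares the initial-object and colimit-preservation properties of $V[m]$ recorded in \S\ref{subsec:the-set-se}.

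Applying these decompositions to $s\times F\theta$ with $A_i = Fc_i$ and $B_j = Fd_j$, the map becomes, vertex by vertex on the shuffle poset, a generalized Segal map $\se^{[k](E_1,\dots,E_k)}$ (for $k = m+n$ or $k = m+n-1$), each of which lies in $\overline{\Se_C}$ by Proposition \ref{prop:generalized-segal-maps}. Since every transition map in the shuffle diagram is a monomorphism, hence an injective cofibration, the colimits of both diagrams compute their homotopy colimits in $\sPsh(\Theta C)^\inj$, and Proposition \ref{prop:saturations-hocolims} then yields that the induced map on total colimits is a $\Se_C$-equivalence, which is what we wanted.

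The main obstacle is the clean formulation and proof of the generalized shuffle decomposition and its $V_{G[m]}$-analogue as honest isomorphisms, together with the verification that the comparison map is indeed a generalized Segal map at each vertex. The bookkeeping (which label $A_j$, $B_k$, or $A_j\times B_k$ sits at each position for each shuffle, and how neighboring shuffles are glued) is routine from the explicit formula for $V$ but combinatorially delicate. An alternative route would be a double induction on $(m,n)$ using Proposition \ref{prop:left-quillen-saturations} to propagate the $\overline{\Se_C}$-property along natural transformations parametrized by $\sPsh(C)$, reducing everything to the $m=n=1$ case handled directly by Proposition \ref{prop:product-decomposition} combined with Proposition \ref{prop:generalized-segal-maps}; this might bypass most of the explicit shuffle combinatorics.
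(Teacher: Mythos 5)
Your general strategy---decompose the product over a poset of shuffles, recognize the vertex maps as Segal-type equivalences, and conclude via \eqref{prop:saturations-hocolims}---is in the same family as the paper's argument, but the execution contains a fatal gap at the step ``since every transition map in the shuffle diagram is a monomorphism, the colimits of both diagrams compute their homotopy colimits.'' This implication is false for your diagram shape. A poset-indexed diagram of subobjects of a fixed object has $\hocolim\simeq\colim$ only when it is projectively (equivalently, Reedy) cofibrant, which for a direct poset requires each latching map $\colim_{\delta'<\delta}X_{\delta'}\ra X_\delta$ to be a monomorphism. In your two-layer poset (maximal shuffles on top, single-diagonal paths below) the one-diagonal paths below a fixed shuffle are pairwise incomparable, so the latching object is their coproduct, and the images of two distinct one-diagonal subpaths inside $V[m+n](E_1,\dots,E_{m+n})$ overlap: already for $m=2$, $n=1$ the subobjects indexed by $\delta^{013}$ and $\delta^{023}$ of $[3]$ share all summands factoring through $\{0,3\}$. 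Moreover the failure is not repairable by computing the homotopy colimit more carefully over the \emph{same} poset: for $m=n=2$, evaluated at $\theta=[0]$, the homotopy colimit of your diagram contains, over the lattice point $(1,1)$, the nerve of the subposet of shuffles and one-diagonal paths passing through $(1,1)$, which is an $8$-cycle, i.e.\ a circle---whereas the product contributes a single point there. So your strict colimit is indeed the product, but the homotopy colimit is genuinely different, and \eqref{prop:saturations-hocolims} tells you nothing about strict colimits of non-cofibrant diagrams. This is essentially the error that appeared in the published version of this paper via the false \eqref{prop:hocolim-poset}, which \S\ref{subsec:corrections} exists to repair.

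The paper's corrected proof of \eqref{prop:product-of-covers} (of which \eqref{prop:se-is-cartesian} is the special case $M=G[m]$, $N=F[n]$) replaces your two-layer poset by the \emph{full} poset $\pQ_{m,n}$ of lattice paths from $(0,0)$ to $(m,n)$ with unit right, up, and diagonal steps---equivalently, jointly monic pairs of surjections $([p]\ra[m],[p]\ra[n])$---and proves directly \eqref{prop:hocolim-wes} that the homotopy colimit over $\pQ_{m,n}$ maps to the product by a \emph{levelwise} weak equivalence. The essential combinatorial input, which is exactly what fails for your truncation, is that the subposets $\pQ_{m,n,\alpha}$ of paths through a prescribed monotone set of lattice points have weakly contractible nerves (\eqref{lemma:hocolim-contractible}, \eqref{lemma:nerve-contr}, proved by explicit poset retractions). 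Two further points you would need even after fixing this: the vertex maps in the $V_{G[m]}(\vec A)\times V[n](\vec B)$ diagram have the form $V_K\ra V[p]$ for covers $K=(\delta_1,\delta_2)^{-1}(G[m]\times F[n])$ that are generally strictly larger than $G[p]$, so you need \eqref{prop:covers-produce-se-equivs} (itself an induction on covers) rather than \eqref{prop:generalized-segal-maps} alone; and your proposed fallback double induction via \eqref{prop:left-quillen-saturations} does not obviously close, since verifying its hypothesis $\alpha(Fc)\in\overline{\Se}_C$ reproduces a product problem of the same shape rather than a strictly smaller one.
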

Our proof is an adaptation of the proof we gave in
\cite{rezk-ho-theory-of-ho-theory}*{\S10} for the case $C=1$; it
follows after \eqref{prop:product-of-covers} below.

\subsection{Covers}

Let $[m]$ be an object of $\Delta$, and let $K\subseteq F[m]$ be a
subobject in $\sPsh(\Delta)$.  We say that $K$ is a \dfn{cover} of
$F[m]$ if 
\begin{enumerate}
\item [(i)] for all sequential $\delta\colon [1]\ra [m]$, the map
  $F\delta\colon F[1]\ra F[m]$ factors 
  through $K$, and
\item [(ii)] there exists a (necessarily unique) dotted arrow making
  the diagram commute in every diagram of the form
\[\xymatrix{
{F[1]} \ar[r] \ar[d]_{F\delta^{0n}} & {K} \ar[d]
\\
{F[n]} \ar[r] \ar@{.>}[ur] & {F[m]}
}\]
\end{enumerate}

It
is immediate that 
\begin{enumerate}
\item [(0)] the identity map $\id\colon F[m]\ra F[m]$ is a cover;
\item [(1)] the subobject $G[m]\subseteq F[m]$ generated by the images
  of the maps $F\delta^{i-1,i}\colon F[1]\ra F[m]$ is a cover (called
  the \dfn{minimal cover});
\item [(2)] if $\delta \colon [p]\ra [m]$ is sequential, and
  $K\subseteq F[m]$ is a cover, then the pullback
  $\delta^{-1}K\subseteq F[p]$ of $K$ along $F\delta$ is a cover of $F[p]$;
\item [(3)] if $\delta\colon [p]\ra [m]$ and $\delta'\colon [p]\ra
  [n]$ are sequential, and $M\subseteq F[m]$ and $N\subseteq F[n]$ are
  covers, then the pullback $(\delta,\delta')^{-1}(M\times N)$ of
  $M\times N$ along $(F\delta,F\delta')\colon F[p]\ra F[m]\times F[n]$
  is a cover of $F[p]$.
\end{enumerate}

\subsection{Covers produce $\Se_C$-equivalences}

Recall that given  a subobject $K\subseteq F[m]$ in $\sPsh(\Delta)$,
and a sequence $A_1,\dots, A_m$ of $\sPsh(C)$, we have defined (in
\S\ref{subsec:v-subobjects}) a subobject
$V_K(A_1,\dots,A_m)$ of $V[m](A_1,\dots,A_m)$ in $\sPsh(\Theta C)$.

\begin{prop}\label{prop:covers-produce-se-equivs}
If $K\subseteq F[m]$ is a cover in $\sPsh(\Delta)$, then
$V_K(A_1,\dots,A_m) \ra V[m](A_1,\dots,A_m)$ is in $\overline{\Se}_C$
for all $A_1,\dots,A_m$ objects of $\sPsh(C)$.
\end{prop}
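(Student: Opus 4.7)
The strategy is to proceed by induction on $m$ and reduce, via two-out-of-three and \eqref{prop:generalized-segal-maps}, to proving the auxiliary claim that $V_{G[m]}(A_1,\dots,A_m)\to V_K(A_1,\dots,A_m)\in\overline{\Se}_C$ for every cover $K\subseteq F[m]$.  Once this auxiliary claim is known, two-out-of-three applied to $V_{G[m]}\to V_K\to V[m](A_1,\dots,A_m)$ (whose composite is in $\overline{\Se}_C$ by \eqref{prop:generalized-segal-maps}) gives the proposition.

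The key technical device is the following pushout.  Given covers $L\subsetneq K\subseteq F[m]$ such that $K$ arises from $L$ by adjoining one extra $1$-simplex $(i,j)$ with $j-i\geq2$, set $H=\iota(F[j-i])\subseteq F[m]$, where $\iota\colon[j-i]\hookrightarrow[m]$ is the inclusion $k\mapsto i+k$.  A direct check (using that a cover is a subcomplex satisfying condition (ii), so its set of $1$-simplices is closed under passage to sub-intervals) shows $K=L\cup H$, and identifies $L\cap H$, under the isomorphism $H\cong F[j-i]$, with a proper sub-cover $\bar L\subsetneq F[j-i]$ (proper because $(0,j-i)$ corresponds to $(i,j)\notin L$).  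Since $K\mapsto V_K(A_1,\dots,A_m)$ sends a subobject of $F[m]$ to the sub-presheaf of $V[m](A_1,\dots,A_m)$ obtained by keeping, at level $\theta=[q](c_1,\dots,c_q)$, only the summands indexed by $\delta\in K[q]$, it preserves unions and intersections, and hence yields a pushout
\[\xymatrix{
V_{\bar L}(A_{i+1},\dots,A_j) \ar[r] \ar[d] & V[j-i](A_{i+1},\dots,A_j) \ar[d] \\
V_L(A_1,\dots,A_m) \ar[r] & V_K(A_1,\dots,A_m)
}\]
in $\sPsh(\Theta C)$ with monomorphism left vertical.  When $j-i<m$, the outer inductive hypothesis applied to the cover $\bar L\subseteq F[j-i]$ makes the top horizontal an $\Se_C$-local equivalence, and left-properness of $\sPsh(\Theta C)^\inj_{\Se_C}$ transfers this to the bottom.

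With this pushout in hand, the induction runs as follows.  Base cases $m\leq1$ are trivial, since then $G[m]=F[m]$ is the only cover.  For $m\geq2$, fix a cover $K\subseteq F[m]$ and distinguish two cases: if $(0,m)$ is a $1$-simplex of $K$, then interval-closure of the edge-set of a cover forces $K=F[m]$, and the auxiliary claim for $K$ is \eqref{prop:generalized-segal-maps} itself.  Otherwise every $1$-simplex of $K$ outside $G[m]$ has length $<m$, and one performs a secondary induction on the number of such extra $1$-simplices: at each step choose a maximal extra edge $(i,j)$, form $L\subsetneq K$ by removing $(i,j)$ from the $1$-simplex set (one checks using maximality that interval-closure is preserved, so $L$ is still a cover), and compose $V_{G[m]}\to V_L\in\overline{\Se}_C$ (from the secondary inductive hypothesis) with $V_L\to V_K\in\overline{\Se}_C$ (from the pushout, which applies since $j-i<m$) to obtain $V_{G[m]}\to V_K\in\overline{\Se}_C$.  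The principal combinatorial obstacle is the verification of the cover-edge-set correspondence and the closure properties needed to see that $L$ remains a cover; these are routine case analyses from the definition of cover, but one must be careful to handle the exceptional edge $(0,m)$ separately as above in order to keep the outer induction on $m$ from stalling.
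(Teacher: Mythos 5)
Your proof is correct, but it replaces the paper's key mechanism with a different one. The paper makes the same initial reduction (via \eqref{prop:generalized-segal-maps} and two-out-of-three, and induction on $m$), but then interpolates from $V_{G[m]}$ to $V_K$ in a single step: it introduces the poset $\mathcal{P}_K$ of injective sequential maps $\delta\colon[p]\ra[m]$ factoring through $K$, writes $V_{G[m]}$ and $V_K$ as homotopy colimits over $\mathcal{P}_K$ of the subobjects $V_{\delta^{-1}K}$ and $V_{F[p]}$ (invoking a hocolim-over-poset lemma, \eqref{prop:hocolim-poset}, to identify these homotopy colimits with the actual unions), applies the inductive hypothesis to each map $V_{\delta^{-1}K}\ra V_{F[p]}$ with $p<m$, and concludes with \eqref{prop:saturations-hocolims}. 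You instead filter the passage from $G[m]$ to $K$ by attaching one ``extra edge'' $(i,j)$ at a time, realizing each attachment as a pushout of the inductively-known equivalence $V_{\bar L}(A_{i+1},\dots,A_j)\ra V[j-i](A_{i+1},\dots,A_j)$ along a monomorphism, and finishing with left properness of the localized injective structure (equivalently: the attached map is a monomorphism in $\overline{\Se}_C$, hence a trivial cofibration, and trivial cofibrations are closed under pushout). The combinatorial inputs you need --- that a cover is determined by its interval-closed edge set, that removing a maximal extra edge leaves a cover, and that $K\mapsto V_K$ turns unions of subobjects into pushouts over intersections --- all check out, including the case split isolating the edge $(0,m)$ so that the outer induction on $m$ applies. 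What your route buys is that it sidesteps the homotopy-colimit-over-a-poset machinery entirely; this is a genuine advantage here, since the lemma \eqref{prop:hocolim-poset} underlying the paper's argument was false as stated in the published version and had to be corrected. What the paper's route buys is reuse: the poset decomposition and \eqref{prop:saturations-hocolims} are the same tools deployed again in the (corrected) proof of \eqref{prop:product-of-covers}, whereas your edge-by-edge filtration does not obviously adapt to products of covers.
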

\begin{proof}
Since $V_{G[m]}(A_1,\dots,A_m) \ra V[m](A_1,\dots,A_m)$ is in
$\overline{\Se}_C$ by \eqref{prop:generalized-segal-maps}, it suffices
to show that 
$V_{G[m]}(A_1,\dots,A_m) \ra V_K(A_1,\dots,A_m)$ is in
$\overline{\Se}_C$ for covers $K\subset F[m]$ which are proper
inclusions. We will prove this using induction on $m$. 

Given a subobject $K\subseteq F[m]$ in $\sPsh(\Delta)$, let
$\mathcal{P}_K$ denote the 
category whose objects are injective sequential maps $\delta\colon
[p]\ra [m]$ such that $F\delta$ factors through $K$, and whose
morphisms $([p]\ra [m])\ra ([p']\ra [m])$ are arrows $[p]\ra [p']$ in
$\Delta$ making the evident triangle commute.   The category
$\mathscr{P}_K$ is a poset.  For each $\delta\colon [p]\ra [m]\in
\mathcal{P}_K$ we have a natural square
\[\xymatrix{
{V_{\delta^{-1}K}(A_1,\dots,A_m)} \ar[r] \ar[d]
& {V_{G[m]}(A_1,\dots,A_m)} \ar[d]
\\
{V_{F[p]}(A_1,\dots,A_m)}  \ar[r]
& {V_K(A_1,\dots,A_m)}
}\]
Observe that since $\delta$ is a monomorphism, the map $F[p]\ra F[m]$ is
a monomorphism; we have abused notation and written $F[p]$ for this
subobject.

We have that the maps 
\[\hocolim_{\mathcal{P}_K} V_{\delta^{-1}K}(A_1,\dots,A_m)\ra
V_{G[m]}(A_1,\dots,A_m)
\]
and
\[\hocolim_{\mathcal{P}_K}
V_{F[p]}(A_1,\dots,A_m) \ra 
V_K(A_1,\dots,A_m)
\] are levelwise weak 
equivalences in $\sPsh(\Theta C)$ by \eqref{prop:hocolim-poset}, since
the corresponding maps from colimits over $\mathcal{P}_K$ are
isomorphisms. 
Since the inclusion $K\subset 
F[m]$ is proper, $p<m$ for all objects of $\mathcal{P}_K$, and so each
$V_{\delta^{-1}K}(A_1,\dots,A_m)\ra V_{F[p]}(A_1,\dots,A_m) \in
\overline{\Se}_C$ by the induction 
hypothesis, the result follows using \eqref{prop:saturations-hocolims}.
\end{proof}

\subsection{Proof that $(\Theta C,\Se_C)$ is cartesian}

\begin{prop}\label{prop:product-of-covers}
If $M\subseteq F[m]$ and $N\subseteq F[n]$ are covers in
$\sPsh(\Delta)$, then 
\[
V_M(A_1,\dots,A_m)\times V_N(B_1,\dots,B_n)\ra
V[m](A_1,\dots,A_m)\times V[n](B_1,\dots,B_m)
\]
is an $\Se_C$-equivalence.
\end{prop}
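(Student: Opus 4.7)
The plan is to prove this proposition by generalizing \eqref{prop:product-decomposition} from the $m = n = 1$ case to arbitrary $m, n$. Specifically, I will exhibit both $V[m](A_1,\dots,A_m) \times V[n](B_1,\dots,B_n)$ and $V_M(A_1,\dots,A_m) \times V_N(B_1,\dots,B_n)$ as colimits, over a common indexing poset of ``staircase paths'' through the grid $[m] \times [n]$, of objects of the form $V[p](C^\sigma)$ and $V_{K^\sigma}(C^\sigma)$ respectively. Each component map $V_{K^\sigma}(C^\sigma) \to V[p](C^\sigma)$ will lie in $\overline{\Se}_C$ by \eqref{prop:covers-produce-se-equivs}, so once these colimits are known to compute homotopy colimits, \eqref{prop:saturations-hocolims} will close the argument.

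Concretely, let $\mathcal{S}(m,n)$ denote the poset whose objects are injective weakly monotone maps $\sigma \colon [p] \to [m] \times [n]$ whose projections $\sigma_1 \colon [p] \to [m]$ and $\sigma_2 \colon [p] \to [n]$ are both sequential, ordered by $\sigma \leq \sigma'$ iff $\sigma$ factors through $\sigma'$ via an injection of ordinals. For each such $\sigma$ set
\[
C_i^\sigma \defeq A_{\sigma_1(i-1)+1} \times \cdots \times A_{\sigma_1(i)} \times B_{\sigma_2(i-1)+1} \times \cdots \times B_{\sigma_2(i)},
\]
and let $K^\sigma \defeq (F\sigma_1, F\sigma_2)^{-1}(M \times N) \subseteq F[p]$, which is a cover of $F[p]$ by properties (2) and (3) of covers. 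These data assemble into diagrams $\Phi, \Phi_{M,N} \colon \mathcal{S}(m,n) \to \sPsh(\Theta C)$ with $\Phi(\sigma) = V[p](C^\sigma)$ and $\Phi_{M,N}(\sigma) = V_{K^\sigma}(C^\sigma)$, related by a natural transformation $\Phi_{M,N} \to \Phi$ whose components are the cover inclusions. A direct calculation at each level $\theta = [q](c_1,\dots,c_q)$, using that any weakly monotone map $[q] \to [m] \times [n]$ factors uniquely through the injective sequential path traced by its image, will show that the canonical comparisons $\colim \Phi \to V[m](A) \times V[n](B)$ and $\colim \Phi_{M,N} \to V_M(A) \times V_N(B)$ are isomorphisms, recovering \eqref{prop:product-decomposition} in the case $m = n = 1$.

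The main obstacle is the final step: upgrading this colimit statement to a homotopy-colimit statement, so that \eqref{prop:saturations-hocolims} applies to the transformation $\Phi_{M,N} \to \Phi$. This is precisely the point at which the published proof appealed to the now-retracted \eqref{prop:hocolim-poset}. The corrected route should be to verify directly that the diagrams $\Phi$ and $\Phi_{M,N}$ are cofibrant in an appropriate Reedy sense relative to the grading of $\mathcal{S}(m,n)$ by codimension $(m+n) - p_\sigma$; since every structure map in these diagrams is an injective cofibration induced by a face inclusion, one expects the relevant latching maps to be monomorphisms built from sub-cover inclusions, which should settle the issue and complete the proof.
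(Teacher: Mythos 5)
Your setup reproduces the paper's: the indexing poset of staircase paths through the $[m]\times[n]$ grid (the paper's $\pQ_{m,n}$, whose objects are pairs $(\delta_1,\delta_2)\colon[p]\ra[m]\times[n]$ of surjections that are jointly monic), the objects $V[p](D_1,\dots,D_p)$ with the $D_i$ given by the appropriate products of $A$'s and $B$'s, the pulled-back covers $(\delta_1,\delta_2)^{-1}(M\times N)$, and the plan to finish with \eqref{prop:covers-produce-se-equivs} and \eqref{prop:saturations-hocolims}. The gap is exactly where you flag it, and the repair you propose does not work: the latching maps of your diagram $\Phi$ are \emph{not} monomorphisms. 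The reason is that the intersection of two staircase subpaths of a given path need not itself be a staircase path, so the subposet under a given object lacks the lower bounds needed for its colimit to compute a union of subobjects. Concretely, take $m=2$, $n=1$ and the length-$3$ path $\delta$ through $(0,0),(1,0),(1,1),(2,1)$, so $\Phi(\delta)=V[3](A_1,B_1,A_2)$. Collapsing either of the two corners to a diagonal step gives paths $\delta'$, $\delta''$ below $\delta$ with $\Phi(\delta')=V[2](A_1\times B_1,A_2)$ and $\Phi(\delta'')=V[2](A_1,A_2\times B_1)$; evaluated at $[1](c)$, each of these hits the summand of $V[3](A_1,B_1,A_2)([1](c))$ indexed by $\delta^{03}\colon[1]\ra[3]$ (the ``long edge'' from $(0,0)$ to $(2,1)$). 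But any element of the poset lying below both $\delta'$ and $\delta''$ has image contained in $\{(0,0),(2,1)\}$, whose first projection $\delta^{02}$ is not sequential, so no such element exists (in $\pQ_{m,n}$) or hits that summand (in your enlarged poset $\mathcal{S}(m,n)$). The latching object therefore carries two copies of that summand which the latching map identifies; it is not a cofibration in $\sPsh(\Theta C)^\inj$, the diagram is not Reedy cofibrant, and the colimit is not exhibited as a homotopy colimit this way. This is precisely the failure mode that invalidated the published \eqref{prop:hocolim-poset}.

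The corrected argument does not attempt to show that the strict colimit is a homotopy colimit. Instead it proves \eqref{prop:hocolim-wes} directly: evaluating at a fixed $\theta=[q](c_1,\dots,c_q)$, the diagram splits as a coproduct over pairs $\alpha=(\alpha_1,\alpha_2)$ of terms $D(\alpha)\times G_\delta(\alpha)$ with $G_\delta(\alpha)$ empty or a singleton, so the homotopy colimit is $\coprod_\alpha D(\alpha)\times\hocolim_{\pQ_{m,n}}G_\delta(\alpha)$ and everything reduces to showing that the nerve of the subposet $\pQ_{m,n,\alpha}$ of paths passing through the prescribed points is weakly contractible \eqref{lemma:hocolim-contractible}. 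That contractibility is obtained by factoring $\pQ_{m,n,\alpha}$ as a product of smaller path posets and then running an explicit chain of simplicial homotopy retractions on $\pQ_{m,n}$ itself \eqref{lemma:nerve-contr}. To complete your proof you need this nerve-contractibility input (or an equivalent); no cofibrancy hypothesis on the diagram will substitute for it.
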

The proof of \eqref{prop:product-of-covers} is deferred to the end of
this section \S\ref{subsec:proof-of-product-of-covers}.

Now we can give the proof of the proposition stated at the beginning
of the section. 

\begin{proof}[Proof of \eqref{prop:se-is-cartesian}]
To prove that $(\Theta C,\Se_C)$ is cartesian, it is enough to show
that $\se^{(c_1,\dots,c_m)}\times F\theta\colon
G[m](c_1,\dots,c_m)\times F\theta \ra F[m](c_1,\dots,c_m)\times
F\theta$ is in $\overline{\Se}_C$ for all $m\geq2$, $c_1,\dots,c_m\in
\ob C$, and $\theta\in \ob \Theta C$.  This is a special case of
\eqref{prop:product-of-covers}. 
\end{proof}

\subsection{Presentations of the form $(\Theta C, \Se_C\cup \mathscr{U})$}

Let $\mathscr{U}$ be a set of morphisms in $\sPsh(\Theta C)$. 

\begin{prop}\label{prop:presentations-segal-plus}
The presentation $(\Theta C,\Se_C \cup \mathscr{U})$ is cartesian if
and only if for all $X$ in $\sPsh(\Theta C)$ which are $(\Se_C\cup
\mathscr{U})$-fibrant, and for all $c\in \ob C$, the function object
$X^{F[1](c)}$ is $\mathscr{U}$-local.
\end{prop}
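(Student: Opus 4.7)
The plan is to apply Proposition \ref{prop:cart-pres-equiv-charac} characterization (2) to the presentation $(\Theta C,\Se_C\cup\mathscr{U})$, and leverage the fact that $(\Theta C,\Se_C)$ is already a cartesian presentation (Proposition \ref{prop:se-is-cartesian}) together with the generalized Segal equivalence from Proposition \ref{prop:generalized-segal-maps}.

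The forward direction should be immediate: if $(\Theta C,\Se_C\cup\mathscr{U})$ is cartesian, then by Proposition \ref{prop:cart-pres-equiv-charac}, for any $(\Se_C\cup\mathscr{U})$-fibrant $X$ and any object $\theta\in\ob\Theta C$, $X^{F_{\Theta C}\theta}$ is $(\Se_C\cup\mathscr{U})$-local. Specializing to $\theta=[1](c)$ for $c\in\ob C$ gives that $X^{F[1](c)}$ is in particular $\mathscr{U}$-local.

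For the converse, I need to show that for each $(\Se_C\cup\mathscr{U})$-fibrant $X$ and each $\theta=[m](c_1,\dots,c_m)\in\ob\Theta C$, the object $X^{F_{\Theta C}\theta}$ is $(\Se_C\cup\mathscr{U})$-local. The $\Se_C$-local half is free: since $X$ is in particular $\Se_C$-fibrant and $(\Theta C,\Se_C)$ is cartesian, $X^{F_{\Theta C}\theta}$ is $\Se_C$-local. The main task is to promote $\mathscr{U}$-locality from $c\in\ob C$ to arbitrary $\theta\in\ob\Theta C$. Using the natural isomorphism $\nu\colon F_{\Theta C}[m](c_1,\dots,c_m)\xrightarrow{\sim} V[m](Fc_1,\dots,Fc_m)$, and the fact (Proposition \ref{prop:generalized-segal-maps}) that
\[
\se^{[m](Fc_1,\dots,Fc_m)}\colon V_{G[m]}(Fc_1,\dots,Fc_m)\to V[m](Fc_1,\dots,Fc_m)
\]
lies in $\overline{\Se_C}$, the $\Se_C$-cartesian locality of $X$ yields a levelwise weak equivalence $X^{F_{\Theta C}\theta}\to X^{V_{G[m]}(Fc_1,\dots,Fc_m)}$.

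Now $V_{G[m]}(Fc_1,\dots,Fc_m)$ is the colimit of a zig-zag of monomorphisms of free objects
\[
F[1](c_1)\xleftarrow{F\delta^1} F[0]\xrightarrow{F\delta^0}\cdots\xleftarrow{F\delta^1} F[0]\xrightarrow{F\delta^0} F[1](c_m),
\]
hence is a homotopy colimit in $\sPsh(\Theta C)^\inj$. Consequently $X^{V_{G[m]}(Fc_1,\dots,Fc_m)}$ is identified with the homotopy limit of the corresponding cotower of function objects. Observing that $F[0]$ is the terminal object of $\sPsh(\Theta C)$ — so $X^{F[0]}\cong X$ is $\mathscr{U}$-local because $X$ itself is $(\Se_C\cup\mathscr{U})$-fibrant — and that each $X^{F[1](c_i)}$ is $\mathscr{U}$-local by hypothesis, the standard fact that $\mathscr{U}$-local objects are closed under homotopy limits shows that $X^{V_{G[m]}(Fc_1,\dots,Fc_m)}$ is $\mathscr{U}$-local. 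A levelwise weak equivalence between injective fibrant objects preserves $\mathscr{U}$-locality, so $X^{F_{\Theta C}\theta}$ is $\mathscr{U}$-local as well. Combined with the $\Se_C$-local half, this verifies condition (2) of Proposition \ref{prop:cart-pres-equiv-charac} for $(\Theta C,\Se_C\cup\mathscr{U})$.

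The only subtle point is the need to identify $X^{V_{G[m]}(\cdots)}$ as a genuine homotopy limit — which requires knowing that the relevant colimit defining $V_{G[m]}$ is a homotopy colimit (true because all arrows are monomorphisms and all objects are cofibrant in the injective model structure), and that $X$ being fibrant lets $X^{(-)}$ convert it to a homotopy limit. Everything else is routine once one notices the useful coincidence $X^{F[0]}=X$.
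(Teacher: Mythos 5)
Your proposal is correct and follows essentially the same route as the paper: reduce via \eqref{prop:cart-pres-equiv-charac} and \eqref{prop:se-is-cartesian} to showing $X^{F\theta}$ is $\mathscr{U}$-local, then use the Segal condition to identify $X^{F\theta}$ up to levelwise weak equivalence with the homotopy fiber product $X^{F[1](c_1)}\times_{X}\cdots\times_{X}X^{F[1](c_m)}$ and conclude by closure of $\mathscr{U}$-local objects under homotopy limits. You have merely spelled out the details (the zig-zag presentation of $G[m]$, the observation $X^{F[0]}\cong X$) that the paper's proof leaves implicit.
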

\begin{proof}
By \eqref{prop:cart-pres-equiv-charac} and
\eqref{prop:se-is-cartesian}, it is enough to show that if 
$X$ is $(\Se_C\cup \mathscr{U})$-fibrant, then 
$X^{F\theta}$ is $\mathscr{U}$-local for all
$\theta \in \ob \Theta C$.  Since $X$ is $\Se_C$-local, every
$X^{F\theta}$ is weakly equivalent to a homotopy fiber product of the
form $X^{F[1](c_1)}\times_{X}\cdots\times_{X} X^{F[1](c_m)}$, and thus
the result follows.
\end{proof}

\subsection{Proof of \eqref{prop:product-of-covers}}
\label{subsec:proof-of-product-of-covers}

The proof of \eqref{prop:product-of-covers} given in the published
version of this paper was incorrect, as was pointed out to me by Bill
Dwyer, who also suggested the idea which forms the basis of the proof
given here.  I thank Bill Dwyer for his help.

Let $\pQ_{m,n}$ denote the
category whose objects are pairs of maps $\delta=(\delta_1\colon [p]\ra
[m],\delta_2\colon [p]\ra [n])$ in $\Delta$ such that $\delta_1$ and
$\delta_2$ are surjective, and $(F\delta_1,F\delta_2)\colon F[p]\ra
F[m]\times F[n]$ is a monomorphism in $\sPsh(\Delta)$. 

Functorially associated to an 
object $\delta$ of $\pQ_{m,n}$ is the following diagram.
\begin{equation}\label{eq:main-square}
\vcenter{\xymatrix{
{V_{(\delta_1,\delta_2)^{-1}M\times N}(D_1,\dots,D_p)} \ar[r]^-{g}
\ar[d]_f 
& {V_M(A_1,\dots,A_m)\times V_N(B_1,\dots,B_n)} \ar[d]^{f'}
\\
{V[p](D_1,\dots,D_p)} \ar[r]_-{g'}
& {V[m](A_1,\dots,A_m)\times V[n](B_1,\dots,B_n)} 
}}
\end{equation}
The objects in the middle row are defined using the ``intertwining
functor'' of \S\ref{subsec:intertwining}, and of the top row
are defined using the construction of \S\ref{subsec:v-subobjects}.
The objects $C_i$ are described by
\[
D_i =
\begin{cases}
  A_{\delta_1(i)} & \text{if $\delta_1(i)>\delta_1(i-1)$ and
    $\delta_2(i)=\delta_2(i-1)$,}
\\
  B_{\delta_2(i)} & \text{if $\delta_1(i)=\delta_1(i-1)$ and
    $\delta_2(i)>\delta_2(i-1)$,} 
\\ 
  A_{\delta_1(i)}\times B_{\delta_2(i)} & \text{if
    $\delta_1(i)>\delta_1(i-1)$ and $\delta_2(i)>\delta_2(i-1)$.}
\end{cases}
\]
The maps marked $f$ and $f'$ are induced by the inclusions $M\times
N\ra F[m]\times F[n]$ and $(\delta_1,\delta_2)^{-1}M\times N\ra F[p]$. 
The maps marked $g$ and $g'$ are induced by the map
$(F\delta_1,F\delta_2)$. 

By \eqref{prop:covers-produce-se-equivs}, the map $f$ is an
$\Se_C$-equivalence, and so \eqref{prop:product-of-covers} follows
from the following 
proposition \eqref{prop:hocolim-wes}, in which homotopy colimits are
computed with respect to the 
level-wise weak equivalences in $\sPsh(\Theta C)$.
\begin{prop}\label{prop:hocolim-wes}
The induced maps $\hocolim_{\pQ_{m,n}} V_{(\delta,\delta')^{-1}M\times
  N} \ra V_M(A_1,\dots,A_m)\times V_N(B_1,\dots,B_n)$ and
$\hocolim_{\pQ_{m,n}} V[p](D_1,\dots,D_p) \ra
V[m](A_1,\dots,A_m)\times V[n](B_1,\dots,B_n)$ are levelwise weak
equivalences. 
\end{prop}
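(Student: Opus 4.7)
The plan is to fix an object $\theta=[q](c_1,\dots,c_q)$ of $\Theta C$ and show that both maps of simplicial sets obtained by evaluation at $\theta$ are weak equivalences. I focus on the second map; the first, which concerns only the summands indexed by $(\alpha,\beta)$ lying in $M\times N\subseteq F[m]\times F[n]$, is entirely parallel.

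First, using the formula for the intertwining functor from \S\ref{subsec:intertwining}, $V[p](D_1,\dots,D_p)(\theta)$ is a coproduct of products of simplicial sets indexed by maps $\gamma\colon [q]\to [p]$, while $(V[m]\times V[n])(\theta)$ is a coproduct indexed by pairs $(\alpha,\beta)\colon [q]\to [m]\times [n]$ of monotone maps; a direct computation identifies the summand at $\gamma$ with the summand at $(\delta_1\gamma,\delta_2\gamma)$ (both equal to a specific product $S_{(\alpha,\beta)}$ of values of the $A_j(c_i)$'s and $B_k(c_i)$'s), and morphisms of $\pQ_{m,n}$ act via the identity under this identification. Fix $(\alpha,\beta)$ and let $\pQ_{m,n}^{(\alpha,\beta)}\subseteq \pQ_{m,n}$ be the full subcategory on those $(\delta_1,\delta_2)$ whose image in $[m]\times [n]$ contains $\im(\alpha,\beta)$; because $(\delta_1,\delta_2)$ is jointly injective, there is a unique $\gamma$ with $(\delta_1\gamma,\delta_2\gamma)=(\alpha,\beta)$ whenever $(\delta_1,\delta_2)\in\pQ_{m,n}^{(\alpha,\beta)}$, and none otherwise. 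Hence the homotopy colimit evaluated at $\theta$ decomposes as $\coprod_{(\alpha,\beta)} S_{(\alpha,\beta)}\times B(\pQ_{m,n}^{(\alpha,\beta)})$, and the map to $(V[m]\times V[n])(\theta)=\coprod_{(\alpha,\beta)} S_{(\alpha,\beta)}$ is the projection on each summand. So everything reduces to showing $B(\pQ_{m,n}^{(\alpha,\beta)})$ is contractible for every $(\alpha,\beta)$.

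View $\pQ_{m,n}$ as the poset of small-step monotone paths from $(0,0)$ to $(m,n)$ in $[m]\times [n]$, where each step moves right, up, or diagonally; morphisms are inclusions of vertex sets. Enumerating $\im(\alpha,\beta)\cup\{(0,0),(m,n)\}$ as $(0,0)=v_0<v_1<\dots<v_k=(m,n)$, a path containing $\im(\alpha,\beta)$ is determined by an independent choice of ``segment'' from $v_i$ to $v_{i+1}$ for each $i$; thus $\pQ_{m,n}^{(\alpha,\beta)}\cong \prod_i \pQ_{M_i,N_i}$ where $(M_i,N_i)=v_{i+1}-v_i$, and it suffices to prove $B(\pQ_{M,N})$ is contractible for all $M,N\geq 0$.

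The main obstacle is this last claim, which I would prove by induction on $M+N$. If $M=0$ or $N=0$ there is a unique path. For $M,N\geq 1$, classify objects of $\pQ_{M,N}$ by their last step: the full subcategories whose objects end with an up, right, or diagonal step are each closed under refinement and are isomorphic, via ``remove the last step'', to $\pQ_{M,N-1}$, $\pQ_{M-1,N}$, and $\pQ_{M-1,N-1}$ respectively. A direct check shows that the only cross-type morphisms go out of the $D$-subcategory, arise from refining the final diagonal step, and correspond under the above identifications to morphisms out of $\iota_U(Q)\defeq Q+R\in\pQ_{M,N-1}$ and $\iota_R(Q)\defeq Q+U\in\pQ_{M-1,N}$. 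This identifies $\pQ_{M,N}$ with the Grothendieck construction of the span
\[
\pQ_{M,N-1}\xla{\iota_U}\pQ_{M-1,N-1}\xra{\iota_R}\pQ_{M-1,N},
\]
so by Thomason's theorem, $B(\pQ_{M,N})$ is the homotopy pushout of the three smaller classifying spaces, each contractible by induction; hence $B(\pQ_{M,N})$ is contractible.
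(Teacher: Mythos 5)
Your proof is correct, and its first two thirds coincide with the paper's: evaluate at $\theta$, decompose both sides into summands indexed by pairs $(\alpha_1,\alpha_2)\colon [q]\to[m]\times[n]$, observe that on each summand the map is the projection off the nerve of the subposet of those $\delta$ through which $(\alpha_1,\alpha_2)$ factors, and split that subposet as a product $\prod_i \pQ_{M_i,N_i}$. (The paper phrases this via the sets $G_\delta(\alpha)$, which are empty or singletons, and first reduces to the case where $(F\alpha_1,F\alpha_2)$ is injective; your enumeration of $\im(\alpha,\beta)\cup\{(0,0),(m,n)\}$ as a chain handles the non-injective case equivalently.) Where you genuinely diverge is the key combinatorial lemma that the nerve of $\pQ_{M,N}$ is weakly contractible. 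The paper inducts on $N$ and interpolates between $\pQ_{M,N}$ and $\pQ_{M,N-1}$ through a chain of subposets $X_k\supseteq Y_k\supseteq X_{k+1}$, each inclusion admitting an explicit order-comparable poset retraction and hence a simplicial homotopy equivalence on nerves. You instead induct on $M+N$, stratify $\pQ_{M,N}$ by the type of its last step, identify the poset with the Grothendieck construction of the span $\pQ_{M,N-1}\xla{\iota_U}\pQ_{M-1,N-1}\xra{\iota_R}\pQ_{M-1,N}$, and apply Thomason's theorem to present the nerve as a homotopy pushout of three spaces that are contractible by induction. Both arguments are sound; the paper's is more elementary, using only the standard fact that a retraction $r$ with $r(p)\leq p$ induces a homotopy on nerves, while yours is shorter and more structural at the price of invoking Thomason's homotopy colimit theorem.

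One small wording issue: the three last-step subcategories are not ``each closed under refinement.'' The up- and right-ending subcategories are (a refinement of a path containing $(M,N-1)$ still contains it), but the diagonal-ending subcategory $D$ is not, and that failure is exactly what produces the cross-type morphisms out of $D$ that you then correctly identify with $\iota_U$ and $\iota_R$. This does not affect the Grothendieck-construction identification or the rest of the argument.
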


Fix an object $\theta=[q](c_1,\dots,c_q)$ of $\Theta C$.  Given a pair
of maps $\alpha=(\alpha_1\colon [q]\ra [m],\alpha_2\colon [q]\ra
[n])$, let 
\[
D(\alpha)= \prod_{i=1}^q\left( \prod_{j=\alpha_1(i-1)+1}^{\alpha_1(i)}
  A_j \times \prod_{k=\alpha_2(i-1)+1}^{\alpha_2(i)} B_k\right),
\]
an object of $\sPsh(\Theta C)$.  
For $\delta$ in $\pQ_{m,n}$ define $G_\delta(\alpha)=
\set{\gamma\colon
  [q]\ra[p]}{\delta_1\gamma=\alpha_1,\delta_2\gamma=\alpha_2}$.  Since
$(F\delta_1,F\delta_2)$ is a monomorphism, the set $G_\delta(\alpha)$
is either empty or is a singleton.  By examinining the definitions, we
see that the map $g'$ of
\eqref{eq:main-square}, evaluated at the object $\theta$, is
is the map
\[
g'(\theta)\colon \coprod_\alpha D(\alpha)\times G_\delta(\alpha) \ra
\coprod_\alpha 
D(\alpha)
\]
induced by the projections $G_\delta(\alpha)\ra 1$.  Both coproducts
are 
taken over all pairs $\alpha=(\alpha_1,\alpha_2)$.  Similarly, the map
$g$ of \eqref{eq:main-square}, evaluated at the object $\theta$ is
either (i) isomorphic to $g'(\theta)$, if
$(F\delta_1,F\delta_2)\colon F[p]\ra F[m]\times F[n]$ factors through
the subobject $M\times N$, or (ii) is an isomorphism between empty
spaces, otherwise. 

Thus we have reduced the proof of \eqref{prop:hocolim-wes} to the
following.
\begin{lemma}\label{lemma:hocolim-contractible}
For each pair $\alpha=(\alpha_1\colon[q]\ra [m],\alpha_2\colon [q]\ra
[n])$, the simplicial set $\hocolim_{\pQ_{m,n}} G_\delta(\alpha)$ is
weakly 
contractible.  
\end{lemma}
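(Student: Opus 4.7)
The plan is to identify $\hocolim_{\pQ_{m,n}} G_\delta(\alpha)$ with the nerve of an explicit poset of lattice paths and then establish contractibility by a Mayer--Vietoris induction. Since each $G_\delta(\alpha)$ is either empty or a singleton, the homotopy colimit is equivalent to the nerve (which I denote by $N$) of the full subposet $\mathcal{C}_\alpha \subseteq \pQ_{m,n}$ on those $\delta$ with $G_\delta(\alpha) \neq \emptyset$, so it suffices to show $N(\mathcal{C}_\alpha)$ is weakly contractible. Geometrically, the surjectivity of $\delta_1,\delta_2$ together with the monomorphism condition forces each object $(\delta_1,\delta_2)\colon [p]\to [m]\times[n]$ to be a strictly monotone chain (in the product order) from $(0,0)$ to $(m,n)$ whose consecutive steps are each right $(1,0)$, up $(0,1)$, or diagonal $(1,1)$; morphisms in $\pQ_{m,n}$ are then forced to be injective, making it a poset whose order is subchain inclusion, refined only by breaking a diagonal step into a right-then-up or up-then-right pair. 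Under this identification, $\mathcal{C}_\alpha$ is precisely the subposet of paths passing through every point of $S_\alpha = \{(\alpha_1(i),\alpha_2(i))\} \subseteq [m]\times[n]$.

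Let $T_\alpha = \{(0,0),(m,n)\}\cup S_\alpha$. Cutting each $\delta\in\mathcal{C}_\alpha$ at the points of $T_\alpha$ yields a poset isomorphism
\[
\mathcal{C}_\alpha \;\cong\; \prod_{(P,P')} \pQ_{P'_1-P_1,\,P'_2-P_2},
\]
where the product runs over consecutive pairs in $T_\alpha$ and $\pQ_{a,b}$ denotes the ``free'' lattice-path poset from $(0,0)$ to $(a,b)$. Since nerve commutes with finite products, it remains to prove that $N(\pQ_{a,b})$ is contractible for all $a,b\geq 0$.

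I prove this by induction on $a+b$; the cases $a=0$ or $b=0$ are trivial, since $\pQ_{a,b}$ is then a singleton. For $a,b\geq 1$ I partition $\pQ_{a,b}=\pQ^R\sqcup\pQ^U\sqcup\pQ^D$ by the type of the first step. Refinement preserves an initial $R$ or $U$ but can break an initial $D$ into $R$-then-$U$ or $U$-then-$R$, so $\pQ^R$ and $\pQ^U$ are upward closed, $\pQ^D$ is downward closed, and no element of $\pQ^R$ is comparable to one of $\pQ^U$ (a path starting with $R$ visits $(1,0)$, which cannot appear in the chain of a path starting with $U$). Moreover $\pQ^R\cong \pQ_{a-1,b}$, $\pQ^U\cong \pQ_{a,b-1}$, and $\pQ^D\cong \pQ_{a-1,b-1}$ by deletion of the first step, each contractible by induction. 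Every chain in $\pQ_{a,b}$ therefore lies entirely in $\pQ^R\cup\pQ^D$ or entirely in $\pQ^U\cup\pQ^D$, with intersection $\pQ^D$, so $N(\pQ_{a,b})$ is the honest pushout of simplicial sets $N(\pQ^R\cup\pQ^D)\cup_{N(\pQ^D)}N(\pQ^U\cup\pQ^D)$ along subcomplex inclusions, hence a homotopy pushout. The order-preserving retraction $r\colon \pQ^R\cup\pQ^D\to\pQ^R$ that replaces an initial $D$ by $R$-then-$U$ on $\pQ^D$ and is the identity on $\pQ^R$ satisfies $\delta\leq \iota r(\delta)$ for all $\delta$, giving a natural transformation $\id\Rightarrow \iota r$ and therefore $N(\pQ^R\cup\pQ^D)\simeq N(\pQ^R)$ is contractible; the same argument applies to $\pQ^U\cup\pQ^D$. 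A homotopy pushout of contractible spaces is contractible, completing the induction. The main obstacle is verifying in this step that $r$ really is a functor: this reduces to the observation that if $\delta\in\pQ^D$ and $\delta'\in\pQ^R$ satisfy $\delta\leq\delta'$, then the initial $D$ of $\delta$ must have been refined precisely to $R$-then-$U$ in $\delta'$ (this is the only way for $\delta'$ to start with $R$), so that $r(\delta)$ and $r(\delta')$ agree on the first two steps and the comparison reduces to the tails.
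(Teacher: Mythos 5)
Your argument is correct. The first half --- replacing the homotopy colimit by the nerve of the full subposet where $G_\delta(\alpha)\neq\varnothing$, identifying that subposet with lattice paths through the prescribed points, and splitting it as a product of smaller path posets so that everything reduces to weak contractibility of $N(\pQ_{a,b})$ --- is essentially the same reduction the paper makes (the paper isolates the last step as a separate lemma, \eqref{lemma:nerve-contr}). Where you genuinely diverge is in proving that $N(\pQ_{a,b})$ is contractible. The paper inducts on $b$ alone: it interpolates between $\pQ_{a,b}$ and $\pQ_{a,b-1}$ by a zigzag of subposets $X_0\supset Y_0\supset X_1\supset\cdots\supset X_a$ obtained by successively forbidding points $(i,b)$ along the top row, and exhibits each inclusion as a simplicial deformation retract via poset retractions that trade a diagonal step for an $RU$ corner or back. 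You instead induct on $a+b$, partition by the type of the first step, check that $\pQ^R$ and $\pQ^U$ are upward closed and mutually incomparable so that the nerve is an honest pushout $N(\pQ^R\cup\pQ^D)\cup_{N(\pQ^D)}N(\pQ^U\cup\pQ^D)$ along subcomplex inclusions, and conclude by contractibility of the three pieces (the outer two via order-preserving retractions with $\id\leq\iota r$, the same mechanism the paper uses locally). Your route trades the paper's explicit global zigzag for a Mayer--Vietoris step, so it needs homotopy invariance of pushouts along cofibrations but avoids having to describe the intermediate posets $X_k, Y_k$; the paper's version is more hands-on and produces actual simplicial homotopy equivalences at every stage. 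Both the comparability check behind the pushout decomposition and the functoriality of your retraction $r$ are verified correctly, so I see no gap.
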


In what follows, it will be useful to view elements of the poset
$\pQ_{m,n}$ as paths $(x_0,y_0),\dots,(x_r,y_r)$ in the set $\Z\times
\Z$, which (i) start at $(x_0,y_0)=(0,0)$, (ii) end at 
$(x_r,y_r)=(m,n)$, and are such that (iii) each step in the path moves
one unit left, one unit up, or diagonally up and left by one unit
(i.e., $(x_k-x_{k-1},y_k-y_{k-1})$ is one of $(1,0)$, $(0,1)$, or
$(1,1)$).  The order relation is given by inclusion of paths.

\begin{proof}[Proof of \eqref{lemma:hocolim-contractible}]
Observe that if $\beta\colon [q']\ra [q]$ is a surjection in $\Delta$,
then $G_\delta(\alpha)\approx G_\delta(\alpha\beta)$.  Thus, 
we may assume without loss of
generality that $(F\alpha_1,F\alpha_2)\colon F[q]\ra F[m]\times F[n]$
is injective.

Let $\pQ_{m,n,\alpha}$ denote the subposet of $\pQ_{m,n}$ consisting
of $\delta$ such that $G_\delta(\alpha)$ is non-empty.  It is
immediate that $\hocolim_{\pQ_{m,n}}G_\delta(\alpha)$ is isomorphic to
the nerve of $\pQ_{m,n,\alpha}$.  

Next, observe that the subposet $\pQ_{m,n,\alpha}$ may be identified
as the subposet of paths in $\pQ_{m,n}$ which pass through the points
$(\alpha_1(j),\alpha_2(j))$ for $j=0,\dots, q$.   Therefore, there is
an isomorhpism of posets $\pQ_{m,n,\alpha} \approx \prod_{j=0}^{q+1}
\pQ_{m_j,n_j}$, where
$(m_j,n_j)=(\alpha_1(j)-\alpha_1(j-1),\alpha_2(j)-\alpha_2(j-1))$ if
$j=1,\dots,q$, and $(m_0,n_0)=(\alpha_1(0),\alpha_2(0))$ and
$(m_{q+1},n_{q+1})=(m-\alpha_1(q),n-\alpha_q(q))$.  Thus, since nerve
commutes with products, the result follows by \eqref{lemma:nerve-contr}.
\end{proof}

\begin{lemma}\label{lemma:nerve-contr}
The simplicial nerve of $\pQ_{m,n}$ is weakly contractible.
\end{lemma}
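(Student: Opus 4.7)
The plan is to proceed by induction on $m+n$, with separate base cases for $m=0$ or $n=0$ (where $\pQ_{m,n}$ has a unique element, so its nerve is a point) and for $(m,n)=(1,1)$ (where $\pQ_{1,1}$ consists of three elements $D$, $RU$, $UR$ with $D\leq RU$ and $D\leq UR$, so $D$ is an initial object and the nerve is contractible).

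For the inductive step with $m,n\geq 1$ and $(m,n)\neq(1,1)$, I define three subposets of $\pQ_{m,n}$:
\[
Q_1 = \{y : (1,0)\in y\},\qquad Q_2 = \{y : (0,1)\in y\},\qquad Q_3 = \{y : (1,1)\in y\}.
\]
Each is upward-closed in $\pQ_{m,n}$ (since $y\leq y'$ means $y\subseteq y'$ as sets of lattice points), and together they cover $\pQ_{m,n}$ because the second lattice point of any path lies in $\{(1,0),(0,1),(1,1)\}$. Splitting a path at its obligatory intermediate point yields $Q_1\cong \pQ_{m-1,n}$ and $Q_2\cong \pQ_{m,n-1}$; splitting at $(1,1)$ yields $Q_3\cong \pQ_{1,1}\times \pQ_{m-1,n-1}$; similarly $Q_1\cap Q_3\cong Q_2\cap Q_3\cong \pQ_{m-1,n-1}$; and $Q_1\cap Q_2 = Q_1\cap Q_2\cap Q_3 = \emptyset$ (since the lex order of $(1,0)$ and $(0,1)$ would require the invalid step $(1,-1)$). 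By induction and the fact that the nerve commutes with finite products, each nonempty $NQ_i$ and each nonempty pairwise intersection is contractible.

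To conclude, I would invoke the nerve theorem for covers of simplicial complexes. Upward-closedness of the $Q_i$ ensures $NQ_i$ is a subcomplex of $N\pQ_{m,n}$ (every chain with minimum element in $Q_i$ lies entirely in $NQ_i$), and that $NQ_i\cap NQ_j = N(Q_i\cap Q_j)$; hence $N\pQ_{m,n}$ is homotopy equivalent to the nerve of the cover. This last simplicial complex has three vertices $Q_1,Q_2,Q_3$, two edges $\{Q_1,Q_3\}$ and $\{Q_2,Q_3\}$, and no higher-dimensional simplex (the triple intersection is empty), so it is a path graph on three vertices and hence contractible.

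The main subtlety lies in handling $\pQ_{1,1}$ as an explicit base case, since the inductive decomposition $Q_3\cong \pQ_{1,1}\times \pQ_{m-1,n-1}$ involves $\pQ_{1,1}$ as a factor and without this separate base case the induction would be circular; together with the verification that the simplicial nerve theorem applies, which reduces to checking the upward-closedness of the $Q_i$ and the contractibility of all nonempty intersections established inductively above.
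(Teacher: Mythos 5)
Your proof is correct, but it takes a genuinely different route from the paper's. The paper argues by induction on $n$ alone: it interpolates between $\pQ_{m,n}$ and $\pQ_{m,n-1}$ through a chain of subposet inclusions $X_m\subset Y_{m-1}\subset X_{m-1}\subset\cdots\subset Y_0\subset X_0=\pQ_{m,n}$ (where $X_k$ consists of paths avoiding the points $(i,n)$ for $i<k$, and $Y_k$ additionally forbids the diagonal step into $(k,n)$), and shows each inclusion induces a simplicial homotopy equivalence on nerves by exhibiting an explicit poset retraction $r$ that is comparable to the identity ($r(p)\leq p$ or $r(p)\geq p$ for all $p$), hence simplicially homotopic to it. You instead run an induction on $m+n$, decomposing $\pQ_{m,n}$ according to the second lattice point of a path and covering the nerve by the three subcomplexes $NQ_1,NQ_2,NQ_3$; since $NQ_1\cap NQ_2=\varnothing$, your appeal to the nerve theorem amounts to writing $N\pQ_{m,n}$ as the homotopy pushout of $NQ_1\sqcup NQ_2 \la (NQ_1\cap NQ_3)\sqcup(NQ_2\cap NQ_3)\ra NQ_3$, in which the left-hand map is a weak equivalence between disjoint unions of contractibles. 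Your identifications $Q_1\cong\pQ_{m-1,n}$, $Q_2\cong\pQ_{m,n-1}$, $Q_3\cong\pQ_{1,1}\times\pQ_{m-1,n-1}$ and $Q_i\cap Q_3\cong\pQ_{m-1,n-1}$ all check out (every point of a path through $(1,1)$ is comparable to $(1,1)$ in the product order, so the splitting is well defined and compatible with inclusion), and you correctly isolate $\pQ_{1,1}$ as a separate base case to keep the induction well founded. The key technical point --- that closure of each $Q_i$ in one direction of the order makes the $NQ_i$ genuinely cover $N\pQ_{m,n}$, every chain being trapped in the $Q_i$ containing its extreme element --- is exactly right, and is insensitive to which direction of path-inclusion one takes as the order on $\pQ_{m,n}$. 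What the paper's approach buys is a self-contained elementary argument producing actual simplicial homotopy equivalences; what yours buys is a shorter structural argument, at the cost of importing the nerve theorem for covers by subcomplexes with contractible intersections.
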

\begin{proof}
When $n=0$, $\pQ_{m,0}$ is the terminal category, and thus clearly has
contractible nerve.  Thus, it will suffice to produce, for each
$n\geq1$, a chain of inclusions
\[
\pQ_{m,n-1}\approx X_m\ra Y_{m-1}\ra X_{m-1}\ra Y_{m-2}\ra \cdots \ra Y_0\ra
X_0=\pQ_{m,n},
\]
such that each inclusion induces a simplicial homotopy equivalence on
nerves. 

For $0\leq k\leq m$ we let $X_k\subset \pQ_{m,n}$ be the subposet of
$\pQ_{m,n}$ consisting of paths that 
do not contain the points $(i,n)$ for $i<k$.  We let $Y_k\subset X_k$
be the subposet consisting of those paths such that \emph{if}
$(k,n)$ is in the path, \emph{then} $(k,n-1)$ is also in the path;
that is, the diagonal move 
from $(k-1,n-1)$ to $(k,n)$ does not occur in the path.  Thus
$X_0=\pQ_{m,n}$, and $X_m$ is isomorphic to $\pQ_{m,n-1}$.

For each $k=0,\dots,m-1$, there is a retraction $r\colon X_k\ra Y_k$,
which sends a path to a modified version of that path, in which the
diagonal move $(k-1,n-1)$ to 
$(k,n)$ (if it is present in the path) is replaced by two moves
$(k-1,n-1)$ to $(k,n-1)$ to $(k,n)$.  Note that for any element $p\in
X_k$, we have $r(p)\leq p$, and for any $p\in Y_k$, $r(p)=p$.  This
defines a simplicial homotopy retraction for the inclusion $NY_k\ra
NX_k$. 

For each $k=0,\dots,m-1$ there is a retraction $r'\colon Y_k\ra
X_{k+1}$, which sends a path to a modified version of that path, in
which the two moves $(k,n-1)$ to $(k,n)$ to $(k+1,n)$ (if present in
the path), are replaced by the diagonal move $(k,n-1)$ to $(k+1,n)$.
Note that for any element $p\in Y_k$, we have $r'(p)\geq p$, and for
any $p\in X_k$, $r'(p)=p$.  Thus $r'$ defines a simplicial homotopy
retraction for the inclusion $NX_{k+1}\ra NY_k$.
\end{proof}

\section{Complete Segal objects}

In this section, we examine the properties of a certain class of Segal
objects in $\sPsh(\Theta C)$, called \dfn{complete Segal objects}.  In
the case that $C=1$, these are precisely the \dfn{complete Segal
  spaces} of \cite{rezk-ho-theory-of-ho-theory}.  We show below that
complete Segal objects are the fibrant objects of a cartesian
model category, generalizing a result of
\cite{rezk-ho-theory-of-ho-theory}*{\S12}.

\subsection{The underlying Segal space of a Segal object}

Recall the Quillen pair $T_\#\colon \sPsh(\Delta)\rightleftarrows
\sPsh(\Theta C)\noloc T^*$ of \S\ref{subsec:the-functors-t}.  Given an
object $X$ of $\sPsh(\Theta C)$, we will call $T^*X$ in
$\sPsh(\Delta)$ its \dfn{underlying simplicial space}; according to
the following proposition, it is reasonable to call $T^*X$ the
\dfn{underlying Segal space} of $X$ if $X$ is itself a Segal object.

\begin{prop}
If $X$ is an $\Se_C$-fibrant object in $\sPsh(\Theta C)$, then $T^*X$
is an $\Se_1$-fibrant object in $\sPsh(\Theta C)=\sPsh(\Delta)$.  That
is, $T^*X$ is a Segal space in the sense of
\cite{rezk-ho-theory-of-ho-theory}. 
\end{prop}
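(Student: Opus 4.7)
The plan is to verify the two defining properties of an $\Se_1$-fibrant object in $\sPsh(\Delta)$ directly: injective fibrancy, and the Segal condition for simplicial spaces.

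First, I would observe that $T^*X$ is injective fibrant in $\sPsh(\Delta)$. This is formal: by \eqref{cor:t-hash-properties}, $T_\#\colon \sPsh(\Delta)^\inj \to \sPsh(\Theta C)^\inj$ is left Quillen, so its right adjoint $T^*$ preserves fibrant objects, and $X$ is injective fibrant by hypothesis.

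Next, for the Segal condition on $T^*X$, I need to show that for each $m \geq 2$ the map
\[
(T^*X)[m] \to (T^*X)[1] \times_{(T^*X)[0]} \cdots \times_{(T^*X)[0]} (T^*X)[1]
\]
is a weak equivalence. By adjunction, the left-hand side is $\Map_{\Theta C}(T[m], X)$, and the right-hand side, since $T_\#$ preserves colimits as a left adjoint, is $\Map_{\Theta C}(T_\# G[m], X)$. The key observation is the identification $T[m] \approx V[m](1,\dots,1)$ and $T_\# G[m] \approx V_{G[m]}(1,\dots,1)$, both of which follow from the direct formula $V[k](1,\dots,1)([q](c_1,\dots,c_q)) = \coprod_{\delta \in \Delta([q],[k])} 1 = \Delta([q],[k])$, together with the preservation of colimits. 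Under these identifications, the Segal map for $T^*X$ is precisely $\Map(\se^{[m](1,\dots,1)}, X)$.

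By \eqref{prop:generalized-segal-maps}, the map $\se^{[m](1,\dots,1)}\colon V_{G[m]}(1,\dots,1) \to V[m](1,\dots,1)$ lies in $\overline{\Se_C}$. Since $X$ is $\Se_C$-fibrant (hence $\Se_C$-local) and injective fibrant, $\Map_{\Theta C}(\se^{[m](1,\dots,1)}, X)$ is a weak equivalence, which is exactly what we wanted.

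There is no real obstacle here; the substantive content was done in \eqref{prop:generalized-segal-maps}. The only subtlety to watch for is the identification of $T[m]$ with $V[m](1,\dots,1)$, which is the mechanism that lets us apply the generalized Segal result with all arguments equal to the terminal object of $\sPsh(C)$ rather than requiring $C$ itself to have a terminal object.
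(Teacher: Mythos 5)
Your proof is correct and takes essentially the same route as the paper: the paper's one-line proof is precisely the identification of the Segal map for $T^*X$ (via the adjunction $\Map_\Delta(K,T^*X)\approx\Map_{\Theta C}(T_\#K,X)$ and the isomorphisms $T_\#F[m]\approx V[m](1,\dots,1)$, $T_\#G[m]\approx V_{G[m]}(1,\dots,1)$) with $\Map_{\Theta C}(\se^{[m](1,\dots,1)},X)$, which is a weak equivalence by \eqref{prop:generalized-segal-maps}. You have simply spelled out the adjunction step and the injective-fibrancy check that the paper leaves implicit.
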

\begin{proof}
The map 
\[
\se^{[m](1,\dots,1)} \colon \colim\bigl( V[1](1)\la V[0]\ra \cdots \la
V[0] \ra V[1](1)\bigr) \ra V[m](1,\dots,1)
\]
is isomorphic to $T_\#\se^{(1,\dots,1)}\colon T_\#G[m]\ra T_\#F[m]$.
\end{proof}

\subsection{The homotopy category of a
  Segal object}
\label{subsec:homotopy-category-segal-object}

Recall that if $X$ in $\sPsh(\Delta)$ is a Segal space, then we define
its homotopy category $hX$ as follows.  The objects of $hX$ are points
of $X[0]$, and morphisms are given by
\[
hX(x_0,x_1) \defeq \pi_0 M_X(x_0,x_1).
\]
It is shown in \cite{rezk-ho-theory-of-ho-theory} that $hX$ is indeed
a category; composition is defined and its properties verified using
the isomorphisms $\pi_0 M_X(x_0,\dots,x_m)\approx hX(x_0,x_1)\times
\dots \times hX(x_{m-1},x_m)$ which hold for a Segal space.

For an $\Se_C$-fibrant object $X$ in $\sPsh(\Theta C)$, we define its
\dfn{homotopy category} $hX$ to be the homotopy category of $T^*X$.
Explicitly, objects of $hX$ are points in $X[0]$, and morphisms are
\[
hX(x_0,x_1) \defeq \pi_0 \Gamma_C M_X(x_0,x_1).
\]

\subsection{The enriched homotopy category of a Segal object}
\label{subsec:enriched-homotopy-cat-segal-obj}

The homotopy category $hX$ described above can be refined to a
homotopy category enriched over the homotopy category $h\sPsh(C)$ of
presheaves of spaces on $C$.  This \dfn{$h\sPsh(C)$-enriched homotopy
  category} is denoted $\underline{hX}$, and is defined as follows.

We take $\ob \underline{hX}=\ob hX$.  Given objects $x_0,x_1$ of $hX$,
recall that the \dfn{function object} of maps 
$x_0\ra x_1$ is the object $M_X(x_0,x_1)$ of $\sPsh(C)$.  
For objects $x_0,x_1$
in $hX$, we set $\underline{hX}(x_0,x_1) \defeq M_X(x_0,x_1)$ viewed
as an object in the homotopy category $h\sPsh(C)$.  To make this a category, let
$\Delta_m\colon C\ra C^{\times m}$ denote the ``diagonal'' functor, and
let $\Delta^*_m\colon \sPsh(C^{\times m}) \ra \sPsh(C)$ denote the
functor which sends $F\mapsto F\Delta_m$.   The functor $\Delta^*_m$
preserves weak equivalences and products.  Observe that since $X$ is a
Segal object, there are evident weak equivalences
\[
\Delta_m^*M_X(x_0,\dots,x_m) \ra M_X(x_0,x_1)\times \cdots \times
M_X(x_{m-1},x_m)
\]
in $\sPsh(C)$.  Thus we obtain ``identity'' and ``composition'' maps
\[
1\approx \Delta_0^*M_X(x_0)\ra M_X(x_0,x_0), 
\]
\[
M_X(x_0,x_1)\times M_X(x_1,x_2)\xla{\sim} \Delta^*_2 M_X(x_0,x_1,x_2)
\ra M_X(x_0,x_2)
\]
in $h\sPsh(C)$, and it is straightforward to check that these make
$\underline{hX}$ into 
an $h\sPsh(C)$-enriched category.  Furthermore, we see that
$h\sPsh(C)(1, \underline{hX}(x_0,x_1))\approx hX(x_0,x_1)$.

\subsection{Equivalences in a Segal object}

Recall that if $X$ in $\sPsh(\Delta)$ is a Segal space, then we say
that a point in $X[1]$ is an \dfn{equivalence} if it projects to an
isomorphism in the homotopy category $hX$.  We write
$M_X^\hequiv(x_0,x_1)$ for the subspace of $M_X(x_0,x_1)$ consisting
of path components which project to isomorphisms in $hX$, and we let
$X^\hequiv$ denote the subspace of $X[1]$ consisting of path
components which contain points from $M_X^\hequiv(x_0,x_1)$ for some
$x_0,x_1\in \ob hX$.  Thus $M_X^\hequiv(x_0,x_1)$ is the fiber of
$(X\delta^0,X\delta^1)\colon X^\hequiv\ra X[0]\times X[0]$ over
$(x_0,x_1)$; observe that the map $X\delta^{00}\colon X[0]\ra X[1]$
factors through $X^\hequiv\subseteq X[1]$.

These definitions transfer to Segal objects.  Thus, if $X$ is a Segal
object in $\sPsh(\Theta C)$, we say that a point in $\Gamma_C M_X(x_1,x_2)$
is an \dfn{equivalence} if 
it projects to an isomorphism in the homotopy category $hX$.  We
define $M^\hequiv_X(x_0,x_1)$ to be the subspace of
$\Gamma_CM_X(x_0,x_1)$ consisting of path components which project to
isomorphisms in $hX$.
The
space of equivalences $X^\hequiv$ is defined to be the subspace of
$\Gamma_C X[1]$ 
consisting of
path components which contain points from $M_X^\hequiv(x_0,x_1)$ for
some $x_0,x_1\in X[0]$; thus $M^\hequiv_X(x_0,x_1)$ is the fiber of
$X^\hequiv\ra X[0]\times X[0]$ over $(x_0,x_1)$.  Observe that the map
$X\delta^{00}\colon X[0]\ra \Gamma_C X[1]$ 
factors through $X^\hequiv\subseteq \Gamma_C X[1]$.

\subsection{The set $\Cpt_C$}
\label{subsec:the-set-cpt}

Let $E$ be the object in $\sPsh(\Delta)$ which is the ``discrete nerve
of the free-standing isomorphism'', as in
\cite{rezk-ho-theory-of-ho-theory}*{\S6}.  Let $p\colon E\ra F[0]$ be
the 
evident projection map, and let $i \colon F[1]\ra E$ be the inclusion
of one of the non-identity arrows.  We recall the following result.
\begin{prop}[\cite{rezk-ho-theory-of-ho-theory}*{Thm.\ 6.2}]
\label{prop:e-represents-equiv-segal-space}
If $X$ in $\sPsh(\Delta)$ is a Segal space, then the map
$\Map(i,X)\colon \Map(E,X)\ra
\Map(F[1],X)\approx X[1]$ factors through $X^\hequiv\subseteq X[1]$
and induces a weak equivalence $\Map(E,X)\ra X^\hequiv$.
\end{prop}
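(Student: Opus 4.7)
The plan is to prove the two assertions of the proposition separately: (i) the factorization through $X^\hequiv$, and (ii) the weak equivalence onto $X^\hequiv$.

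For (i), I will argue as follows. A map $\varphi\colon E\ra X$ restricts along $i\colon F[1]\ra E$ to a $1$-simplex $f\in X[1]$. Because $E$ is the nerve of the free-standing isomorphism, it also contains a second non-identity $1$-simplex $j\colon F[1]\ra E$ (the inverse arrow), giving $g = \varphi\circ j\in X[1]$, together with two $2$-simplices $F[2]\ra E$ whose $d_1$ edge is a degeneracy. Pushing these $2$-simplices to $hX$ (via the Segal condition, which identifies $\pi_0$ of $X[2]$ with pairs of composable morphisms in $hX$) shows that $[f]\cdot [g]=\mathrm{id}$ and $[g]\cdot [f]=\mathrm{id}$ in $hX$, so $[f]$ is an isomorphism, i.e. $f$ lies in a component of $X^\hequiv$.

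For (ii), I would build $E$ as a colimit of an exhaustive filtration $F[1] = E_0\hookrightarrow E_1\hookrightarrow E_2\hookrightarrow \cdots$, where $E_{n+1}$ is obtained from $E_n$ by attaching a single non-degenerate simplex of $E$ (an ``inverse arrow'' at step $1$, then $2$-simplices witnessing the compositions are identities, then higher simplices giving coherences). Each attachment is a pushout of the form $F[k]\cup_{\Lambda^k_\ell} F[k]\to E_n$ for an appropriate horn inclusion, or more precisely a pushout along an inclusion $F[k]\cup_K L \hookrightarrow F[k]$ where $K, L$ are built from faces already present in $E_n$. I will then analyze the tower
\[
\cdots \ra \Map(E_{n+1},X) \ra \Map(E_n,X) \ra \cdots \ra \Map(E_0,X) = X[1],
\]
and show that over the subspace $X^\hequiv\subseteq X[1]$, each map in the tower is a trivial fibration. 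The Segal condition ensures that horn-filling of the relevant $k$-simplex reduces to finding a solution to a homotopically meaningful equation in $hX$, and the equivalence condition guarantees a unique (up to contractible choice) solution. Taking the limit over $n$ gives that $\Map(E,X)\ra X^\hequiv$ is a weak equivalence.

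The main obstacle is setting up the filtration of $E$ so that each attachment is a ``nice'' pushout and the corresponding lifting problem really is governed by the Segal and equivalence conditions. Conceptually, the proposition expresses that $E$ is the ``free Segal space on an equivalence'', and the hard technical work lies in making this universal property precise by an inductive trivial-fibration argument. Once the filtration and the base case (extending $f\in X^\hequiv$ to a map out of $F[1]\cup_{F[0]} F[1]\cup_{F[0]} F[2]\cup\cdots$ exhibiting a coherent inverse) are handled, the higher-dimensional coherences are filled by repeated use of the Segal condition in exactly the same style as the proof of the analogous statement for complete Segal spaces in \cite{rezk-ho-theory-of-ho-theory}*{\S6}.
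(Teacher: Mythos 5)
You should first be aware that the paper does not prove this proposition: it is quoted from \cite{rezk-ho-theory-of-ho-theory}*{Thm.\ 6.2}, and the author explicitly remarks in \S\ref{sec:alternate-cs} that the proof given there is ``long and technical.'' So there is no in-paper argument to compare against; the honest comparison is with the cited proof (whose overall strategy your sketch does resemble) and with the variant that this paper actually proves.

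Your part (i) is correct and essentially complete: $E$ contains the inverse arrow and the two $2$-simplices $(x,y,x)$ and $(y,x,y)$ whose middle faces are degenerate, and pushing these into $hX$ via the Segal identification of $\pi_0$ of the spaces of $2$-simplices shows that $[f]$ is invertible. Part (ii), however, is a plan rather than a proof, and the plan conceals exactly the hard content. Two concrete gaps. First, the filtration: $E$ has precisely two nondegenerate simplices in every dimension (the strictly alternating sequences in the two objects), so the skeletal filtration attaches cells along boundary inclusions $\partial F[k]\subset F[k]$, not along horns; reducing each such attachment to a lifting problem ``governed by the Segal condition'' requires checking, for each $k$, that the missing face is determined up to contractible choice by the faces already present, and this is a genuine combinatorial argument, not a formality. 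Second, and more importantly, the first stage of the tower --- showing that adjoining the inverse arrow and the two witnessing $2$-cells gives a trivial fibration onto $X^\hequiv$, i.e.\ that the space of inverses-with-homotopies of a given equivalence is contractible --- is precisely where the equivalence hypothesis must enter, and you assert it rather than prove it; that step is the mathematical heart of the theorem. If you want a version you can prove in finitely many steps, note that \S\ref{sec:alternate-cs} replaces $E$ by a finite cell object $Z$ (a quotient of $F[3]$) and gives a complete, self-contained proof of the analogous factorization and weak-equivalence statement; the key input there is exactly the contractibility of the moduli space of data $(f,h,\alpha,\beta)$ exhibiting a map as a homotopy equivalence, which is the precise form of the assertion your tower argument needs at its base case.
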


We define $\Cpt_C$ to be the set consisting of the single map 
\[
T_\#p\colon T_\#E\ra T_\# F[0].
\]
We say that $X$ in $\sPsh(C)$ is a \dfn{complete Segal object} if it
is $(\Se_C\cup \Cpt_C)$-fibrant.  As a consequence of
\eqref{prop:e-represents-equiv-segal-space}, we have the following.

\begin{prop}\label{prop:e-represents-equiv-segal-object}
Let $X$ be a Segal object of $\sPsh(\Theta C)$.
The map $\Map(T_\#E,X) \ra \Map(T_\#F[1],X)\approx T^*X[1]$ factors
through $X^\hequiv\subseteq T^*X[1]$, and induces a weak equivalence
$\Map(T_\#E,X)\ra X^\hequiv$ of spaces.  Thus, a Segal object $X$ is a
complete Segal object if and only if $X[0]\ra X^\hequiv$ is a weak
equivalence of spaces.
\end{prop}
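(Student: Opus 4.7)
The plan is to reduce this statement to the Segal-space analogue \eqref{prop:e-represents-equiv-segal-space} via the adjunction $T_\# \dashv T^*$. By adjunction, the map $\Map(T_\# E, X) \to \Map(T_\# F[1], X) = (T^*X)[1]$ is naturally identified with the map $\Map(E, T^*X) \to (T^*X)[1]$ induced by $i\colon F[1] \to E$. Since $X$ is a Segal object, the preceding proposition gives that $T^*X$ is a Segal space, so \eqref{prop:e-represents-equiv-segal-space} applied to $T^*X$ shows that this map factors through $(T^*X)^\hequiv \subseteq (T^*X)[1]$ and is a weak equivalence onto it.

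It remains to identify $(T^*X)^\hequiv$ with $X^\hequiv$ as defined in \S\ref{subsec:enriched-homotopy-cat-segal-obj}. Since by construction the homotopy categories agree ($hX = hT^*X$), it suffices to produce natural isomorphisms $(T^*X)[1] \approx \Gamma_C X[1]$ and $M_{T^*X}(x_0,x_1) \approx \Gamma_C M_X(x_0,x_1)$ compatible with the source/target maps to $X[0]$. These come from a cofinality computation in the category of elements $\int T[1]$ of the presheaf $T[1]$: the subdiagram consisting of the two objects $([0], \delta^0), ([0], \delta^1)$ together with $([1](c), \id)$ for each $c \in \ob C$ is initial in $\int T[1]$, so the limit $\Map(T[1], X) = \lim_{(\theta,\alpha)} X(\theta)$ collapses to $\lim_c X([1](c)) = \Gamma_C X[1]$ equipped with its two maps to $X[0]$; the fiber identification for mapping spaces follows by the same reasoning.

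For the final assertion, $X$ is a complete Segal object precisely when it is $\Cpt_C$-local, i.e., when $\Map(T_\# F[0], X) \to \Map(T_\# E, X)$ is a weak equivalence; under the identifications above this becomes exactly the map $X[0] \to X^\hequiv$. I expect the main technical obstacle to be the cofinality computation matching $(T^*X)[1]$ with $\Gamma_C X[1]$, which requires a short case analysis in $\int T[1]$ according to whether the underlying map $[p] \to [1]$ is constant (in which case the object is absorbed into $([0], \delta^0)$ or $([0], \delta^1)$) or non-constant with jump at $k$ (in which case the undercategory retracts onto $([1](c_k), \id)$ via $c_k/C$). Everything else is a direct translation of the Segal-space argument.
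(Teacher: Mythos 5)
Your argument is correct and is exactly the route the paper intends: the paper gives no proof beyond citing \eqref{prop:e-represents-equiv-segal-space}, so the whole content is your adjunction step $\Map(T_\#E,X)\cong\Map(E,T^*X)$, the fact that $T^*X$ is a Segal space, and the identification of $(T^*X)[1]$ and $M_{T^*X}(x_0,x_1)$ with $\Gamma_C X[1]$ and $\Gamma_C M_X(x_0,x_1)$, which your cofinality analysis of $\int T[1]$ supplies. The only caveat is that the final collapse of the limit onto $\lim_c X([1](c))$ (discarding the two objects $([0],\delta^0),([0],\delta^1)$) uses that $C$ is connected and nonempty; this is harmless, since the paper's own definition of $X^\hequiv$ as a subspace of $\Gamma_C X[1]$ already presupposes the same identification, and $C$ has a terminal object in every application.
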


\begin{rem}
In \S\ref{sec:alternate-cs} we give another formulation of the
completeness condition, in which 
the simplicial space $E$ is replaced by a smaller one $Z$, so that
variants of \eqref{prop:e-represents-equiv-segal-space} and
\eqref{prop:e-represents-equiv-segal-object} hold with $E$ replaced by
$Z$.  Either formulation works just as well for our purposes.
\end{rem}

\subsection{Fully faithful maps}

If $X$ and $Y$ are Segal objects in $\sPsh(\Theta C)$, we
say that a map $f\colon X\ra Y$ is \dfn{fully faithful} if for all
$c\in \ob C$ the square
\[\xymatrix{
{X[1](c)} \ar[r] \ar[d] & {Y[1](c)} \ar[d]
\\
{X[0]\times X[0]} \ar[r] & {Y[0]\times Y[0]}
}\]
is a homotopy pullback square.
\begin{prop}
Let $f\colon X\ra Y$ be a map between Segal objects in $\sPsh(\Theta
C)$.  The following are equivalent.
\begin{enumerate}
\item $f$ is fully faithful.
\item For all $c\in \ob C$ and all $x_0,x_1$ points of $X[0]$, the map
  $M_X(x_0,x_1)(c)\ra   M_X(fx_0,fx_1)(c)$ induced by $f$ is a weak
  equivalence of spaces
\item The induced map $\underline{hX}\ra \underline{hY}$ of enriched
  homotopy categories is fully faithful, i.e.,
  $\underline{hX}(x_0,x_1)\ra \underline{hY}(x_0,x_1)$ is an
  isomorphism in $h\sPsh(C)$ for all points $x_0,x_1$ of $X[0]$.
\end{enumerate}
\end{prop}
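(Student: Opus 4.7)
The plan is to establish the equivalences (2)$\Leftrightarrow$(3) and (1)$\Leftrightarrow$(2) separately; both are essentially unwindings of the definitions, modulo the one substantive input that the strict fiber $M_X(x_0,x_1)(c)$ computes the homotopy fiber of $X[1](c)\to X[0]\times X[0]$.

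For (2)$\Leftrightarrow$(3), I would argue directly from the construction of $\underline{hX}$ in \S\ref{subsec:enriched-homotopy-cat-segal-obj}. By definition $\underline{hX}(x_0,x_1)$ is the object $M_X(x_0,x_1)$ of $\sPsh(C)$ considered as an object of $h\sPsh(C)$, so the map $\underline{hX}(x_0,x_1)\ra \underline{hY}(fx_0,fx_1)$ induced by $f$ is an isomorphism in $h\sPsh(C)$ if and only if $M_X(x_0,x_1)\ra M_Y(fx_0,fx_1)$ is a weak equivalence in $\sPsh(C)$, i.e., a levelwise weak equivalence. That is exactly condition (2) (where I am reading the target in (2) as $M_Y(fx_0,fx_1)(c)$).

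For (1)$\Leftrightarrow$(2), the point is that $M_X(x_0,x_1)(c)$ is defined as the strict fiber of $(X\delta^0,X\delta^1)\colon X[1](c)\ra X[0]\times X[0]$ over $(x_0,x_1)$, and analogously for $Y$. The map $F_{\Theta C}[0]\amalg F_{\Theta C}[0]\ra F_{\Theta C}[1](c)$ is a monomorphism of representables in $\sPsh(\Theta C)$, hence an injective cofibration, so since a Segal object is in particular injective fibrant and $\sPsh(\Theta C)^\inj$ is a simplicial model category, the induced map $X[1](c)\ra X[0]\times X[0]$ is a Kan fibration of simplicial sets (and similarly for $Y$). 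Therefore the strict fiber $M_X(x_0,x_1)(c)$ agrees with the homotopy fiber over $(x_0,x_1)$. The definition of fully faithful — that the square in question is a homotopy pullback for every $c$ — is then by standard characterization of homotopy pullbacks equivalent to asking that the induced map of (homotopy) fibers be a weak equivalence for every pair $(x_0,x_1)$ and every $c$, which is (2).

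The only nontrivial point is the identification of $M_X(x_0,x_1)(c)$ with a homotopy fiber, and this hinges entirely on the injective fibrancy built into the notion of Segal object; once that is noted, the three conditions reduce to one another by pure bookkeeping, so I do not expect any serious obstacle.
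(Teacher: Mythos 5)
The paper states this proposition without proof, so there is nothing to compare against; your argument is correct and supplies exactly the details the paper treats as immediate. The key identification of $M_X(x_0,x_1)(c)$ with the homotopy fiber --- via injective fibrancy of $X$ and the fact that $F[0]\amalg F[0]\ra F[1](c)$ is a monomorphism, hence an injective cofibration, in the simplicial model category $\sPsh(\Theta C)^\inj$ --- followed by the fiberwise criterion for homotopy pullbacks of fibrations and the saturation of weak equivalences in $h\sPsh(C)$, is the intended reasoning (and you are right that the targets in (2) and (3) should read $M_Y(fx_0,fx_1)(c)$ and $\underline{hY}(fx_0,fx_1)$).
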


\begin{prop}\label{prop:obj-to-mor-fully-faithful}
Suppose $X$ is a Segal object in $\sPsh(C)$.  Then the map $X\approx
X^{T_\#F[0]}\ra 
X^{T_\#F[1]}$ induced by $T_\#F\delta^{00}$ is fully faithful.
\end{prop}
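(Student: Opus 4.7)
The plan is to verify condition (2) of the preceding characterization: for each $c\in \ob C$ and each pair $x_0,x_1\in X[0]$, the induced map
\[
M_X(x_0,x_1)(c) \to M_{X^{T_\#F[1]}}(sx_0,sx_1)(c)
\]
is a weak equivalence, where $s\colon X[0]\to X^{T_\#F[1]}[0]\approx T^*X[1]$ sends each vertex to its degenerate $1$-simplex (since $X\to X^{T_\#F[1]}$ is induced by the degeneracy $T_\#F\delta^{00}\colon T_\#F[1]\to T_\#F[0]$).

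First I would compute $X^{T_\#F[1]}[1](c) \approx \Map(V[1](Fc)\times V[1](1), X)$, using $T_\#F[1]\approx V[1](1)$ (where $1$ is the terminal object of $\sPsh(C)$) and $F_{\Theta C}[1](c)\approx V[1](Fc)$ from the natural isomorphism $\nu$ of \S\ref{subsec:intertwining}. Applying the product decomposition \eqref{prop:product-decomposition} and then the Segal property via \eqref{prop:generalized-segal-maps} rewrites this, up to weak equivalence, as
\[
\bigl(X[1](c)\times_{X[0]} T^*X[1]\bigr)\times_{X[1](c)}\bigl(T^*X[1]\times_{X[0]} X[1](c)\bigr),
\]
where the maps to the middle $X[1](c)$ are induced by $V\delta^{02}$, and the endpoint evaluation to $T^*X[1]\times T^*X[1]$ projects to the two middle factors.

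The crucial step is to take the fiber over $(sx_0,sx_1)$ and identify the map $X(\delta^{02})\colon X[2](c,1)\to X[1](c)$ restricted to the fiber over $sx_1 \in T^*X[1]$. For this I would exhibit the morphism $\sigma\colon [2](c,1)\to [1](c)$ in $\Theta C$ with underlying ordinal map $\delta^{011}$ and sole label $f_{11}=\id_c$; a direct computation shows $\sigma\circ\delta^{02}=\id_{[1](c)}$, so $X(\sigma)$ is a section of $X(\delta^{02})$, and moreover one checks that $X(\sigma)(g)$ has first edge $g$ and second edge the degenerate $s(\mathrm{target}(g))$. Hence $X(\sigma)$ restricts to a weak equivalence from $M_X(-,x_1)(c)$ onto the fiber of $X[2](c,1)\to T^*X[1]$ over $sx_1$, under which $X(\delta^{02})$ becomes the inclusion into $X[1](c)$; a symmetric argument handles the other triangle.

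Assembling these, the fiber $M_{X^{T_\#F[1]}}(sx_0,sx_1)(c)$ is identified with the pullback of the two inclusions $M_X(-,x_1)(c)\hookrightarrow X[1](c)\hookleftarrow M_X(x_0,-)(c)$, which equals $M_X(x_0,x_1)(c)$, and the composite of these equivalences agrees with the natural map. The main obstacle is the identification, under the Segal equivalence, of composition with a degenerate $1$-simplex as the identity on the other factor; the section $\sigma$ in $\Theta C$ supplies this cleanly, after which the homotopy pullback computation is routine.
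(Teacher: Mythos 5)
Your argument is correct, but it takes a genuinely different route from the paper's. Both proofs start the same way --- identify $X^{T_\#F[1]}[1](c)$ with $\Map_{\Theta C}(V[1](Fc)\times V[1](1),X)$ and break the product open with \eqref{prop:product-decomposition} --- but the paper then verifies the homotopy-pullback square of condition (1) globally: it stacks the decompositions of $V[1](Fc)\times V[1](1)$ and $V[1](\varnothing)\times V[1](1)$ into the three-row diagram \eqref{eq:prod-decomp-for-fully-faithful-lemma} and checks, column by column, that $\Map_{\Theta C}(V[1](Fc),X)$ computes the homotopy limit of each column, the nontrivial columns being settled by pasting two homotopy pushouts (coming from $V[2](\varnothing,1)\approx V[0]\amalg V[1](1)$) against one Segal rectangle. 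You instead verify condition (2) fiberwise over $(sx_0,sx_1)$, and your key device is the retraction with underlying ordinal map $\delta^{011}$, which splits $V\delta^{02}$ and exhibits ``composition with a degenerate edge'' as the inclusion of a fiber. Two small points: that retraction is a morphism $[2](Fc,1)\ra [1](Fc)$ of $\Theta(\sPsh(C))$, not of $\Theta C$ (the terminal presheaf $1$ need not be an object of $C$), though $V\delta^{011}$ is exactly the map the paper also uses in its column diagram; and your route leans on the equivalence of conditions (1) and (2) in the fully-faithful characterization, which the paper states without proof but which does hold here because the relevant vertical maps are fibrations by injective fibrancy. What your approach buys is transparency --- the final identification of the fiber with $M_X(x_0,x_1)(c)$ is an honest strict pullback of two subspace inclusions --- at the cost of having to check that your zigzag of equivalences is compatible with the natural map; that check is genuinely needed but easy, since the two components of $X\ra X^{T_\#F[1]}$ on $[1](c)$-spaces are literally $\Map(V\delta^{011},X)$ and $\Map(V\delta^{001},X)$, i.e., your two sections. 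Neither point is a gap.
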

\begin{proof}
  Observe that $T_\#F[1]\approx V[1](1)$, and that the statement will
  be proved if we can show that for all $c\in \ob C$, the square
  obtained by applying $\Map_{\Theta C}(\mbox{-},X)$ to the square
\[\xymatrix{
{V[1](Fc)} 
& {V[1](Fc)\times V[1](1)} \ar[l]_-{\text{proj}}
\\
{V[1](\varnothing)} \ar[u]^{V[1](\text{incl})}
& {V[1](\varnothing)\times V[1](1)} \ar[l]^-{\text{proj}}
\ar[u]_{V[1](\text{incl})\times \id}
}\]
is a homotopy pullback of spaces.  Using the product decomposition
\eqref{prop:product-decomposition}, we 
obtain a diagram
\begin{equation}
\label{eq:prod-decomp-for-fully-faithful-lemma}
\vcenter{
\xymatrix{
{V[2](Fc, 1)} 
& {V[1](Fc\times 1)} \ar[l]_{V\delta^{02}} \ar[r]^{V\delta^{02}}
& {V[2](1,Fc)}
\\
{V[2](\varnothing, 1)} \ar[u] \ar[d]_{V\delta^{011}}
& {V[1](\varnothing\times 1)} \ar[l]_-{V\delta^{02}}
\ar[r]^-{V\delta^{02}} \ar[u] \ar[d] 
& {V[2](1,\varnothing)} \ar[u] \ar[d]^{V\delta^{112}}
\\
{V[1](\varnothing)}
& {V[1](\varnothing)} \ar[l] \ar[r]
& {V[1](\varnothing)}
}}
\end{equation}
in which taking colimits of rows provides the diagram
\[
V[1](Fc)\times V[1](1) \xla{V[1](\text{incl})\times \id}
V[1](\varnothing)\times V[1](1) \xra{\text{proj}} V[1](\varnothing).
\]
The horizontal morphisms of
\eqref{eq:prod-decomp-for-fully-faithful-lemma} are monomorphisms, so
the colimits of the 
rows are in fact homotopy colimits in $\sPsh(\Theta C)^\inj$.  Thus,
it suffices to show that $\Map_{\Theta C}(V[1](Fc), X)$ maps by a
weak equivalence to the homotopy limit of  $\Map_{\Theta
  C}(\mbox{-},X)$ applied to the above diagram.  We claim that in fact
that the evident projection maps induce weak equivalences from
$\Map_{\Theta C}(V[1](Fc),X)$ to the inverse limits of each of the
columns of $\Map_{\Theta C}(\mbox{-},X)$ applied to the diagram. 

This is clear for the middle column: the map $V[1](\varnothing\times
1)\ra V[1](\varnothing)$ is an isomorphism, so the colimit of the
middle column 
is isomorphic to $V[1](Fc)$.  We will show the proof for the
left-hand column, leaving the right-hand column for the reader.
Consider the diagram
\[\xymatrix{
{V[1](Fc)} \ar[r]^-{V\delta^{01}}
& {V[2](Fc,1)} \ar[r]^{V\delta^{011}}
& {V[1](Fc)} 
\\
& {V[2](\varnothing, 1)}  \ar[r] \ar[u]
& {V[1](\varnothing)}\ar[u]
\\
{V[0]} \ar[r]_{V\delta^1} \ar[uu]^{V\delta^1}
& {V[1](1)} \ar[r] \ar[u]^{V\delta^{12}}
& {V[0]}\ar[u]_{V\delta^1}
}\]
We want to show that $\Map_{\Theta C}({-},X)$ carries the upper-right
square to a homotopy pullback.
The lower-right square is a pushout square (use the isomorphism
$V[2](\varnothing, 1)\approx V[0]\amalg V[1](1)$), as is the outer square;
thus they are homotopy pushouts
(of spaces)
since the vertical maps are monomorphisms.  Applying $\Map_{\Theta
  C}({-},X)$ to the diagram takes these two squares to homotopy
pullbacks of spaces; this
operation also takes the left-hand rectangle to a homotopy pullback
of spaces, since $X$ is a Segal object.  Thus we can conclude that
$\Map_{\Theta C}({-},X)$ carries the
upper-right square to a homotopy pullback of spaces, as desired.
\end{proof}

Say that a map $f\colon X\ra Y$ of spaces is a \dfn{homotopy
  monomorphism} if it is injective on $\pi_0$, and induces a weak
equivalence between each path component of $X$ and the corresponding
path component of $Y$.  Say a map $f\colon X\ra Y$ of objects of
$\sPsh(C)$ is a homotopy monomorphism if each $f(c)\colon X(c)\ra
Y(c)$ is a homotopy monomorphism of spaces.

\begin{lemma}\label{lemma:hequiv-to-mor-homono}
If $X$ is a Segal object in $\sPsh(\Theta C)$, then
$X^{T_\#i}\colon X^{T_\#E}\ra X^{T_\#F[1]}$ is a 
homotopy monomorphism in $\sPsh(\Theta C)$. 
\end{lemma}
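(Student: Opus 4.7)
The plan is to reduce the lemma, levelwise, to Proposition \ref{prop:e-represents-equiv-segal-object} applied to a certain derived Segal object. By the definition of homotopy monomorphism for maps in $\sPsh(\Theta C)$, it suffices to show that for each object $\theta$ of $\Theta C$, the map
\[
(X^{T_\#i})(\theta)\colon X^{T_\#E}(\theta) \ra X^{T_\#F[1]}(\theta)
\]
is a homotopy monomorphism of spaces.

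First, I would rewrite these spaces via the adjunction $({-})\times F\theta \dashv ({-})^{F\theta}$:
\[
X^{T_\#K}(\theta) \approx \Map_{\Theta C}(F\theta, X^{T_\#K}) \approx \Map_{\Theta C}(F\theta \times T_\#K, X) \approx \Map_{\Theta C}(T_\#K, X^{F\theta}),
\]
for $K\in \{E, F[1]\}$. Thus the map in question is identified with
\[
\Map_{\Theta C}(T_\#i, X^{F\theta})\colon \Map_{\Theta C}(T_\#E, X^{F\theta}) \ra \Map_{\Theta C}(T_\#F[1], X^{F\theta}).
\]

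Next, I would verify that $X^{F\theta}$ is itself a Segal object. Since $X$ is injective fibrant and $F\theta$ is cofibrant (everything is cofibrant in $\sPsh(\Theta C)^\inj$), the cartesian structure of $\sPsh(\Theta C)^\inj$ makes $X^{F\theta}$ injective fibrant; since $(\Theta C, \Se_C)$ is a cartesian presentation by \eqref{prop:se-is-cartesian}, and $X$ is $\Se_C$-local, the function object $X^{F\theta}$ is also $\Se_C$-local. Hence $X^{F\theta}$ is $\Se_C$-fibrant, i.e., a Segal object.

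Finally, I would apply Proposition \ref{prop:e-represents-equiv-segal-object} to the Segal object $X^{F\theta}$: this says that $\Map_{\Theta C}(T_\#E, X^{F\theta}) \ra \Map_{\Theta C}(T_\#F[1], X^{F\theta}) \approx T^*(X^{F\theta})[1]$ factors through the subspace $(X^{F\theta})^\hequiv$ as a weak equivalence, and $(X^{F\theta})^\hequiv$ is by construction a union of path components of the target. A map that is a weak equivalence onto a union of path components is exactly a homotopy monomorphism, which completes the proof. The main conceptual point is just the interchange between ``function object in $\sPsh(\Theta C)$'' and ``evaluation at $\theta$''; once that is written down, the result is an immediate consequence of two previously established facts, namely the cartesianness of $(\Theta C, \Se_C)$ and Proposition \ref{prop:e-represents-equiv-segal-object}.
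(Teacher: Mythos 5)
Your proposal is correct and follows essentially the same route as the paper's proof: evaluate at $\theta$, identify the result with $\Map_{\Theta C}(T_\#i, X^{F\theta})$, observe that $X^{F\theta}$ is again a Segal object, and invoke \eqref{prop:e-represents-equiv-segal-object}. The only difference is that you spell out the justification (via \eqref{prop:se-is-cartesian}) that $X^{F\theta}$ is a Segal object, which the paper leaves implicit.
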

\begin{proof}
For $\theta \in \ob \Theta C$, the map $X^{T_\#E}(\theta)\ra
X^{T_\#F[1]}(\theta)$ is isomorphic to 
\[
\Map(T_\#E,X^{F\theta}) \ra
\Map(T_\#F[1],X^{F\theta}),
\]
which since $X^{F\theta}$ is a Segal object, is weakly equivalent to
the map $(X^{F\theta})^\hequiv \ra T^*(X^{F\theta})[1]$, which is a
homotopy monomorphism.
\end{proof}

\begin{prop}\label{prop:obj-to-equiv-fully-faithful}
Suppose $X$ is a Segal object in $\sPsh(\Theta C)$.  Then the map
$X^{T_\#q}\colon X\approx X^{T_\#F[0]}\ra 
X^{T_\#E}$ is fully faithful.
\end{prop}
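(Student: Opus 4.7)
The plan is to reduce to the previous proposition \eqref{prop:obj-to-mor-fully-faithful} by factoring $X^{T_\#q}$ through $X^{T_\#F[1]}$ and invoking the pasting lemma for homotopy pullback squares. Since $F[0]$ is terminal in $\sPsh(\Delta)$, the composite $F[1] \xra{i} E \xra{q} F[0]$ is the unique map $F[1]\to F[0]$, so $T_\#(q\circ i) = T_\#F\delta^{00}$. Thus the composite
\[
X\xra{X^{T_\#q}} X^{T_\#E} \xra{X^{T_\#i}} X^{T_\#F[1]}
\]
is precisely the map shown fully faithful in \eqref{prop:obj-to-mor-fully-faithful}. By the pasting lemma for homotopy pullback squares, it suffices to show that $X^{T_\#i}\colon X^{T_\#E}\to X^{T_\#F[1]}$ is fully faithful, i.e., that for each $c\in\ob C$ the square
\[
\xymatrix{
X^{T_\#E}[1](c) \ar[r] \ar[d] & X^{T_\#F[1]}[1](c) \ar[d]
\\
X^{T_\#E}[0]^2 \ar[r] & X^{T_\#F[1]}[0]^2
}
\]
is a homotopy pullback.

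To analyze this square I use \eqref{prop:se-is-cartesian} to know that $X^{F[1](c)}$ is again a Segal object, and then apply \eqref{prop:e-represents-equiv-segal-object} to both $X$ and $X^{F[1](c)}$. After the adjunction $\Map(F[1](c)\times -,X) \approx \Map(-,X^{F[1](c)})$ and the identifications $\Map(T_\#E,Y)\simeq Y^\hequiv$ and $\Map(T_\#F[1],Y)\approx (T^*Y)[1]$ (both natural in the Segal object $Y$), the square is identified up to weak equivalence with
\[
\xymatrix{
(X^{F[1](c)})^\hequiv \ar[r] \ar[d] & (T^*(X^{F[1](c)}))[1] \ar[d]
\\
((T^*X)^\hequiv)^2 \ar[r] & ((T^*X)[1])^2
}
\]
whose vertical maps are induced by $X^{F[1](c)}\to X\times X$ (itself coming from the two vertex inclusions $F[0]\sqcup F[0]\to F[1](c)$) and whose horizontal maps are component inclusions.

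It remains to show that this latter square is a homotopy pullback. Since $(T^*X)^\hequiv\subseteq (T^*X)[1]$ is a union of connected components, so is $((T^*X)^\hequiv)^2\subseteq ((T^*X)[1])^2$; the bottom map is therefore a Kan fibration and the strict pullback computes the homotopy pullback. So I need to verify that $(X^{F[1](c)})^\hequiv$ is exactly the preimage of $((T^*X)^\hequiv)^2$ in $(T^*(X^{F[1](c)}))[1]$. The Segal condition, together with the product decomposition \eqref{prop:product-decomposition}, presents a $1$-cell of $T^*(X^{F[1](c)})$ as a ``square'' in $X$: two $c$-shaped morphisms $f,g\in X[1](c)$ as top/bottom edges (the two $0$-cell endpoints) together with two $1$-cells $a,b$ of $T^*X$ as left/right edges (the two coordinates of the map to $((T^*X)[1])^2$). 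Composition of $1$-cells in $T^*(X^{F[1](c)})$ is vertical stacking of squares, so a square is invertible in the homotopy category $h(X^{F[1](c)})$ if and only if $a$ and $b$ are invertible in $hX$, which is the required characterization.

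The main obstacle is the bookkeeping in this final step: one must carefully unpack the Segal-object structure of $X^{F[1](c)}$ and of its underlying Segal space $T^*(X^{F[1](c)})$, and verify that the map to $((T^*X)[1])^2$ induced by the vertex inclusions $F[0]\sqcup F[0]\to F[1](c)$ really does amount to ``take left and right vertical edges''. Once those identifications are in hand, the characterization of equivalences reduces to the standard observation for Segal spaces that a $1$-cell is invertible in the homotopy category iff the relevant composites admit inverses, which here translates directly to invertibility of $a$ and $b$ in $hX$.
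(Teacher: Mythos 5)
Your reduction is set up correctly: the composite $X\xra{X^{T_\#q}} X^{T_\#E}\xra{X^{T_\#i}} X^{T_\#F[1]}$ is indeed the map of \eqref{prop:obj-to-mor-fully-faithful}, and the pasting lemma would let you conclude \emph{provided} the square for $X^{T_\#i}$ is itself a homotopy pullback, i.e.\ provided $X^{T_\#i}$ is fully faithful. That is where the gap lies. Full faithfulness of $X^{T_\#i}$ is exactly \eqref{prop:equiv-to-mor-fully-faithful}, which the paper proves \emph{after} and \emph{from} the present proposition (via \eqref{lemma:ff-if-factor-thru-ffes} together with essential surjectivity \eqref{prop:obj-to-equiv-ess-surj}), so you cannot cite it; and your direct argument for it comes down to the claim that a $1$-cell of the Segal object $X^{F[1](c)}$ is an equivalence if and only if its two edge-images in $(T^*X)[1]$ are equivalences. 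The ``only if'' direction is easy, but the ``if'' direction is not bookkeeping: given a square whose edges $a,b$ are invertible in $hX$, you must produce an inverse \emph{square} $g\ra f$ --- choose representatives of $a^{-1},b^{-1}$, construct a compatible filler, and then verify that the two composite squares lie in the identity components of $\pi_0\Gamma_C M_{X^{F[1](c)}}(g,g)$ and $\pi_0\Gamma_C M_{X^{F[1](c)}}(f,f)$. Since $h(X^{F[1](c)})$ is not the arrow category of $hX$ (the fillers carry genuine homotopical information), this verification is the substantive content, and it is not supplied by ``the standard observation that a $1$-cell is invertible iff it admits left and right inverses.''

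The paper's proof needs much less from $X^{T_\#i}$: only that it is a \emph{homotopy monomorphism} \eqref{lemma:hequiv-to-mor-homono}, which is an immediate levelwise consequence of \eqref{prop:e-represents-equiv-segal-object} (each $X^{T_\#E}(\theta)\ra X^{T_\#F[1]}(\theta)$ is, up to weak equivalence, the inclusion of a union of path components of $(T^*(X^{F\theta}))[1]$). One then uses the elementary cancellation: if $gf$ is fully faithful and $g$ is a homotopy monomorphism, then $f$ is fully faithful. If you want to salvage your route, replace ``$X^{T_\#i}$ is fully faithful'' by this weaker statement and carry out the cancellation; otherwise you must supply a genuine proof of the edge-characterization of equivalences in $X^{F[1](c)}$, which is comparable in difficulty to the entire chain \eqref{prop:obj-to-mor-fully-faithful}--\eqref{prop:equiv-to-mor-fully-faithful}.
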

\begin{proof}
It is straightforward to check that if $X\xra{f} Y\xra{g} Z$ are maps
of Segal objects in $\sPsh(C)$ such that $gf$ is fully faithful and
$g$ is a homotopy monomorphism, then $f$ is fully faithful.  Apply
this observation to $X\ra X^{T_\#E}\ra X^{T_\#F[1]}$, using
\eqref{prop:obj-to-mor-fully-faithful} and \eqref{lemma:hequiv-to-mor-homono}.
\end{proof}

\subsection{Essentially surjective maps}

If $X$ and $Y$ are Segal objects in $\sPsh(\Theta C)$, we say that a
map $f\colon X\ra Y$ is \dfn{essentially surjective} if the induced
functor $hf\colon hX\ra hY$ on homotopy categories
(\S\ref{subsec:homotopy-category-segal-object}) is essentially
surjective, i.e., if every object of $hY$ is isomorphic to an object
in the image of $hf$.

\begin{prop}\label{prop:obj-to-equiv-ess-surj}
Suppose $X$ is a Segal object in $\sPsh(\Theta C)$.  Then the map
$X^{T_\#q}\colon X\approx X^{T_\#F[0]} \ra X^{T_\#E}$ is essentially
surjective. 
\end{prop}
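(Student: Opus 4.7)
The plan is to use \eqref{prop:e-represents-equiv-segal-object} to identify $(X^{T_\#E})[0] = \Map(T_\#E, X)$ with $X^\hequiv$ up to weak equivalence, and to observe that $X^{T_\#q}$ on $[0]$-values factors as $X[0] \to X^\hequiv$ sending $x_0$ to $\id_{x_0}$. Given an object of $h(X^{T_\#E})$ represented by some $e\colon T_\#E\to X$ with endpoints $x_0,x_1\in X[0]$, it then suffices to construct an isomorphism $\phi\colon X^{T_\#q}(x_0)\to e$ in $h(X^{T_\#E})$. Note that by \eqref{prop:se-is-cartesian}, $X^{T_\#E}$ is itself a Segal object, so $T^*(X^{T_\#E})$ is a Segal space and we may reason about its homotopy category using the Segal condition.

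I will construct $\phi$ directly from a functor between $1$-categories. Let $I$ denote the free-isomorphism groupoid, so $E=NI$; because $I$ is a groupoid with at most one morphism between any pair of objects, every object-level assignment $\ob([m]\times I)\to \ob I$ extends uniquely to a functor. Consider $\mu\colon [1]\times I\to I$ defined on objects by $(i,j)\mapsto i\cdot j$ (constant at $0$ when $i=0$, identity when $i=1$). Since $T_\#$ preserves products, the composite
\[
T_\#F[1]\times T_\#E \;\approx\; T_\#(F[1]\times E) \xra{T_\# N\mu} T_\#E \xra{e} X
\]
is a $1$-simplex of $T^*(X^{T_\#E})$; restricting to $i=0,1$ shows its endpoints are $X^{T_\#q}(x_0)$ and $e$ respectively, yielding the desired $\phi$.

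To show $\phi$ is invertible in the homotopy category, construct $\psi\colon e\to X^{T_\#q}(x_0)$ analogously from $\nu\colon (i,j)\mapsto (1-i)\cdot j$, and then exhibit $2$-simplices of $T^*(X^{T_\#E})$ witnessing $\psi\circ\phi\sim \id$ and $\phi\circ\psi\sim \id$; these come from functors $[2]\times I\to I$ whose object assignments are $(0,j),(2,j)\mapsto 0$ and $(1,j)\mapsto j$ for the first, and $(0,j),(2,j)\mapsto j$ and $(1,j)\mapsto 0$ for the second. The key observation is that the $d_1$-face of each such $2$-simplex is induced by a functor $[1]\times I\to I$ which is constant in the $[1]$-coordinate, so the corresponding $1$-simplex of $T^*(X^{T_\#E})$ factors through the degeneracy $T_\#F[1]\to T_\#F[0]$ and hence is degenerate, representing an identity in $h(X^{T_\#E})$; the Segal condition for $T^*(X^{T_\#E})$ then delivers the required composition identities $\psi\circ\phi=\id_{X^{T_\#q}(x_0)}$ and $\phi\circ\psi=\id_e$.

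The main (minor) obstacle is bookkeeping: tracking the various adjunction identifications $(T^*(X^{T_\#E}))[m] \approx \Map(T_\#(F[m]\times E), X)$, and confirming that the face maps of the $2$-simplices constructed above really coincide with the $1$-simplices one obtains from $\phi$, $\psi$, and the degenerate identity $1$-simplices at $X^{T_\#q}(x_0)$ and $e$. Once this is in place, the argument is purely combinatorial.
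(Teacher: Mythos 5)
Your proof is correct, but it takes a genuinely different route from the paper's. The paper's proof is a two-line reduction: since $T_\#$ preserves products \eqref{cor:t-hash-properties}, one has $T^*(X^{T_\#K})\approx (T^*X)^K$ for $K\in \sPsh(\Delta)$, and since essential surjectivity is a statement purely about homotopy categories (which are defined via $T^*$), the claim reduces to the case $C=1$ of a Segal space $W$ and the map $W\ra W^E$, which is then quoted from \cite{rezk-ho-theory-of-ho-theory}*{Lemma 13.9}. You instead reprove that fact from scratch in the $\Theta C$ setting: you use the product-preservation of $T_\#$ in a different way, to identify $m$-simplices of $T^*(X^{T_\#E})$ with maps out of $T_\#N([m]\times I)$, and then exploit the fact that $I$ is an indiscrete groupoid (so any object assignment extends uniquely to a functor into $I$) to write down explicit $1$- and $2$-simplices exhibiting $\phi$, its candidate inverse $\psi$, and the two composition witnesses whose $d_1$-faces are degenerate. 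I checked your face computations: with the paper's conventions ($M_X(x_0,x_1)$ is the fiber of $(X\delta^0,X\delta^1)$, and a $2$-simplex with faces $d_2=f$, $d_0=g$, $d_1=h$ witnesses $[h]=[g][f]$ in the homotopy category of a Segal space) your two functors $[2]\times I\ra I$ do yield $\id=\psi\phi$ and $\id=\phi\psi$, so $\phi$ is an isomorphism $X^{T_\#q}(x_0)\ra e$ and essential surjectivity follows. Two small remarks: your opening appeal to \eqref{prop:e-represents-equiv-segal-object} is purely motivational and not needed, since the objects of $h(X^{T_\#E})$ are by definition all points of $(X^{T_\#E})[0]\approx\Map(T_\#E,X)$ and you handle an arbitrary such point; and you correctly note (and do need) that $X^{T_\#E}$ is again a Segal object, which comes from \eqref{prop:se-is-cartesian}. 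What your approach buys is self-containedness --- no appeal to the external Lemma 13.9 --- at the cost of explicit simplex bookkeeping; what the paper's approach buys is brevity and a clean illustration of the general principle that statements about homotopy categories of Segal objects reduce along $T^*$ to the simplicial-space case.
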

\begin{proof}
Observe that since $T_\#$ preserves products
\eqref{cor:t-hash-properties}, the map $T^*(X^{T_\#q})\colon
T^*(X^{T_\#F[0]})\ra T^*(X^{T_\#F[1]})$ is isomorphic to the map
$(T^*X)^q\colon T^*X\approx (T^*X)^{F[0]} \ra (T^*X)^{E}$.  Thus we
are reduced to the case when $C=1$, and $X$ is a Segal space, in which
case the result follows from \cite{rezk-ho-theory-of-ho-theory}*{Lemma
  13.9}. 
\end{proof}

\begin{lemma}\label{lemma:ff-if-factor-thru-ffes}
If $X\xra{f} Y\xra{g} Z$ are maps of Segal objects in $\sPsh(\Theta
C)$ such that (i) $gf$ is fully faithful and (ii) $f$ is fully
faithful and essentially
surjective, then $g$ is fully faithful.
\end{lemma}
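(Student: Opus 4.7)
The plan is to combine a two-out-of-three argument on mapping objects with essential surjectivity of $f$.

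First I would treat the case of pairs in the image of $f$. For $x_0, x_1 \in X[0]$ and $c \in \ob C$, functoriality of the mapping-object construction factors the map induced by $gf$ as $g_* \circ f_*$; by hypothesis (i) the composite is a weak equivalence, and by hypothesis (ii) the map $f_*$ is a weak equivalence. Two-out-of-three in $\Sp$ then gives that
\[
g_*\colon M_Y(fx_0, fx_1)(c) \ra M_Z(gfx_0, gfx_1)(c)
\]
is a weak equivalence. So $g$ is fully faithful when restricted to pairs in the image of $f$.

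To extend to arbitrary $y_0, y_1 \in Y[0]$, I would use essential surjectivity of $f$ to pick $x_0, x_1 \in X[0]$ and isomorphisms $\phi_i\colon fx_i \ra y_i$ in $hY$. The functor $hg\colon hY \ra hZ$ sends each $\phi_i$ to an isomorphism $g\phi_i\colon gfx_i \ra gy_i$ in $hZ$. Regarding $\phi_i$ and $g\phi_i$ as invertible morphisms in the $h\sPsh(C)$-enriched homotopy categories $\underline{hY}$ and $\underline{hZ}$ of \S\ref{subsec:enriched-homotopy-cat-segal-obj}, pre- and post-composition with them yields isomorphisms $\underline{hY}(fx_0, fx_1)\approx \underline{hY}(y_0, y_1)$ and $\underline{hZ}(gfx_0, gfx_1)\approx \underline{hZ}(gy_0, gy_1)$ in $h\sPsh(C)$, which evaluate at $c$ to give weak equivalences of spaces. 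Naturality of these composition-with-$\phi$ maps along $g$ produces a commutative square
\[
\xymatrix{
{M_Y(fx_0, fx_1)(c)} \ar[r]^-{\sim} \ar[d]_{g_*}
& {M_Y(y_0, y_1)(c)} \ar[d]^{g_*}
\\
{M_Z(gfx_0, gfx_1)(c)} \ar[r]_-{\sim}
& {M_Z(gy_0, gy_1)(c)}
}
\]
in $\Sp$ whose horizontal arrows are weak equivalences and whose left vertical arrow is a weak equivalence by the first step; two-out-of-three gives that the right vertical arrow is a weak equivalence, so $g$ is fully faithful at $(y_0, y_1)$.

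The main obstacle is the invariance principle underlying the horizontal equivalences: in any Segal object $W$, composition with an equivalence $\phi\colon a\ra a'$ should induce a weak equivalence $M_W(a,b)(c)\xra{\sim} M_W(a',b)(c)$, naturally in maps of Segal objects applied to $\phi$. For Segal spaces (the case $C=1$) this is a standard calculation using the Segal decomposition of $M_W(a,a',b)$ as a fibered product together with \eqref{prop:e-represents-equiv-segal-object}, which guarantees that restriction over an equivalence edge of $W[1]$ is a weak equivalence; the general case follows by evaluating presheaves sectionwise at each $c\in \ob C$, since $M_W(-,-)(c)$ is defined as a pullback of sectionwise evaluations. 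Once this invariance and its naturality in $g$ are in place, the square above commutes and the argument goes through.
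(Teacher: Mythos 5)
Your proof is correct and follows the same strategy as the paper's: reduce to pairs in the image of $f$ via essential surjectivity and transport along the isomorphisms $fx_i\approx y_i$, then apply two-out-of-three to the factorization of the map induced by $gf$. The ``invariance principle'' you flag as the main obstacle is already formal from the $h\sPsh(C)$-enriched category structure of \S\ref{subsec:enriched-homotopy-cat-segal-obj} --- composition with an isomorphism of $hY$ induces an isomorphism of hom-objects in $h\sPsh(C)$, naturally with respect to the enriched functor induced by $g$ --- so no sectionwise Segal-space computation is needed.
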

\begin{proof}
We need to show for all $y_0,y_1$ points of $Y[0]$ that
$\underline{hY}(y_0,y_1) \ra \underline{hZ}(gy_0,gy_1)$ is an
isomorphism in $h\sPsh(C)$.  Since $f$ is essentially surjective, we
may choose points $x_0,x_1$ in $X[0]$ so that $fx_i\approx y_i$,
$i=0,1$, as objects of $hY$.  
\end{proof}

\begin{prop}\label{prop:equiv-to-mor-fully-faithful}
If $X$ is a Segal object in $\sPsh(C)$, then the map $X^{T_\#i}\colon
X^{T_\#E}\ra X^{T_\#F[1]}$ is fully faithful.
\end{prop}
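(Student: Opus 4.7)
The plan is to deduce this as a direct consequence of the preceding results, using the observation that $X^{T_\# i}$ fits into a factorization of an already-known fully faithful map. Specifically, since $pi\colon F[1]\to F[0]$ is (necessarily) the unique map, we have $T_\# p \circ T_\# i \colon T_\# F[1]\to T_\# F[0]$ equal to $T_\# F\delta^{00}$, so on function objects we get a factorization
\[
X\;\approx\;X^{T_\# F[0]}\;\xra{X^{T_\# p}}\;X^{T_\# E}\;\xra{X^{T_\# i}}\;X^{T_\# F[1]}
\]
whose composite is the map $X^{T_\# F\delta^{00}}$ considered in \eqref{prop:obj-to-mor-fully-faithful}.

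The first arrow $X^{T_\# p}$ (called $X^{T_\# q}$ above) is fully faithful by \eqref{prop:obj-to-equiv-fully-faithful} and essentially surjective by \eqref{prop:obj-to-equiv-ess-surj}. The composite $X^{T_\# i}\circ X^{T_\# p}$ is fully faithful by \eqref{prop:obj-to-mor-fully-faithful}. Therefore \eqref{lemma:ff-if-factor-thru-ffes}, applied with $f=X^{T_\# p}$ and $g=X^{T_\# i}$, immediately yields that $X^{T_\# i}$ is fully faithful, as required.

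The only thing to verify before invoking the lemmas is that $X^{T_\# E}$ and $X^{T_\# F[1]}$ are themselves Segal objects, so that the notion of ``fully faithful'' even applies to the relevant maps. This is not an obstacle: by \eqref{prop:se-is-cartesian} the model category $\sPsh(\Theta C)^\inj_{\Se_C}$ is cartesian, and $T_\#$ preserves cofibrations \eqref{cor:t-hash-properties}, so function objects of a Segal object $X$ by arbitrary objects are again Segal. The argument is therefore essentially bookkeeping: identify the composite, cite the three previous propositions, and apply the $2$-out-of-$3$-style \eqref{lemma:ff-if-factor-thru-ffes}. There is no substantive obstacle — the real work was done in establishing \eqref{prop:obj-to-mor-fully-faithful} and \eqref{prop:obj-to-equiv-ess-surj}.
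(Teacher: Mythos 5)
Your proof is correct and is essentially identical to the paper's: the paper also applies \eqref{lemma:ff-if-factor-thru-ffes} to the factorization $X\ra X^{T_\#E}\ra X^{T_\#F[1]}$, citing \eqref{prop:obj-to-mor-fully-faithful}, \eqref{prop:obj-to-equiv-fully-faithful}, and \eqref{prop:obj-to-equiv-ess-surj}. Your extra check that the function objects are again Segal objects (via cartesianness of $(\Theta C,\Se_C)$) is a reasonable piece of bookkeeping that the paper leaves implicit.
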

\begin{proof}
Apply \eqref{lemma:ff-if-factor-thru-ffes} to $X\ra X^{T_\#E}\ra
X^{T_\#F[1]}$, using \eqref{prop:obj-to-mor-fully-faithful},
\eqref{prop:obj-to-equiv-fully-faithful}, and
\eqref{prop:obj-to-equiv-ess-surj}. 
\end{proof}

\subsection{$(\Theta C,\Se_C\cup \Cpt_C)$ is a cartesian presentation}

\begin{prop}\label{prop:se-plus-cpt-is-cartesian}
$(\Theta C, \Se_C\cup \Cpt_C)$ is a cartesian presentation.  
\end{prop}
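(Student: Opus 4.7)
My plan is to apply \eqref{prop:presentations-segal-plus}: the proposition reduces to showing that for every complete Segal object $X$ and every $c \in \ob C$, the function object $X^{F[1](c)}$ is $\Cpt_C$-local. Because the presentation $(\Theta C, \Se_C)$ is cartesian by \eqref{prop:se-is-cartesian}, $X^{F[1](c)}$ is automatically a Segal object. By \eqref{prop:e-represents-equiv-segal-object}, $\Cpt_C$-locality of a Segal object is equivalent to the completeness condition, so the task reduces to showing that $X^{F[1](c)}$ is itself a complete Segal object.

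I would establish this via the intermediate claim that for every complete Segal object $X$, the unit map $X \to X^{T_\# E}$ is a levelwise weak equivalence of presheaves on $\Theta C$. Given the claim, the right Quillen functor $({-})^{F[1](c)}$ preserves levelwise weak equivalences between injectively fibrant objects --- since every object is injectively cofibrant --- so we obtain a levelwise weak equivalence $X^{F[1](c)} \to (X^{F[1](c)})^{T_\# E}$. Evaluating at $[0] \in \ob \Theta C$ and identifying $\Map(T_\# E, X^{F[1](c)}) \approx (X^{F[1](c)})^\hequiv$ via \eqref{prop:e-represents-equiv-segal-object} applied to the Segal object $X^{F[1](c)}$ then yields the completeness condition for $X^{F[1](c)}$.

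The intermediate claim will be proved as follows. By \eqref{prop:obj-to-equiv-fully-faithful} and \eqref{prop:obj-to-equiv-ess-surj}, the map $X \to X^{T_\# E}$ is fully faithful and essentially surjective for any Segal object $X$. I would then check that when $X$ is complete, the target $X^{T_\# E}$ is also complete: on the one hand $(X^{T_\# E})[0] = \Map(T_\# E, X) \approx X[0]$ by completeness of $X$, and on the other hand the completeness condition for $X^{T_\# E}$ unwinds to the assertion that $T_\#(p \times \id_E)\colon T_\#(E \times E) \to T_\# E$ induces a weak equivalence on $\Map(-, X)$. This follows from the fact that $E \times E \to E$ is a $(\Se_1 \cup \Cpt_1)$-equivalence in $\sPsh(\Delta)$ --- the $C = 1$ case, originally established in \cite{rezk-ho-theory-of-ho-theory} --- together with the observation that $T_\#$ is left Quillen between the localized model structures $\sPsh(\Delta)^\inj_{\Se_1 \cup \Cpt_1}$ and $\sPsh(\Theta C)^\inj_{\Se_C \cup \Cpt_C}$, which one checks via \eqref{prop:quillen-pair-loc} using \eqref{prop:generalized-segal-maps} and the definition of $\Cpt_C$. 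Finally, I would invoke a Segal-object analog of \cite{rezk-ho-theory-of-ho-theory}*{Thm 7.7}: a fully faithful, essentially surjective map between complete Segal objects is a levelwise weak equivalence. The argument bootstraps: essential surjectivity combined with completeness at both source and target gives a bijection on $\pi_0$ of the object spaces; fully faithfulness upgrades this to a weak equivalence on each $X[1](c) \to (X^{T_\# E})[1](c)$ as a homotopy pullback over $X[0]^{\times 2} \to (X^{T_\# E})[0]^{\times 2}$; and the Segal condition propagates the equivalence to every level $X[m](c_1, \ldots, c_m)$.

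The main obstacle is the Segal-object version of \cite{rezk-ho-theory-of-ho-theory}*{Thm 7.7} invoked in the final step; this requires careful tracking of the enriched homotopy category of \eqref{subsec:enriched-homotopy-cat-segal-obj} through the completeness condition, but the structure of the argument is parallel to the Segal space case.
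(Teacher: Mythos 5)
Your reduction via \eqref{prop:presentations-segal-plus} is the same as the paper's, and most of your ingredients are the right ones, but the argument has a genuine gap at its load-bearing step: the ``Segal-object analog of Theorem 7.7 of \cite{rezk-ho-theory-of-ho-theory}'' (a fully faithful, essentially surjective map between complete Segal objects is a levelwise weak equivalence) is not proved anywhere in this paper, and your one-sentence sketch of it does not go through as written. A bijection on $\pi_0$ of the object spaces is not enough to run your homotopy-pullback argument at level $[1](c)$: for the square exhibiting fully faithfulness to force $X[1](c)\to Y[1](c)$ to be a weak equivalence, the bottom map $X[0]\times X[0]\to Y[0]\times Y[0]$ must be an actual weak equivalence, and upgrading ``essentially surjective plus complete at both ends'' to a weak equivalence on object spaces is precisely the hard content of the proof of Theorem 7.7 in \cite{rezk-ho-theory-of-ho-theory}. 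So as it stands you have reduced the proposition to a theorem at least as difficult as the one being proved.

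The good news is that for the specific map you care about the missing level-zero equivalence is available for free, and once you use it the detour through essential surjectivity, completeness of $X^{T_\#E}$, and the Dwyer--Kan-type theorem all evaporate. Indeed $(X^{T_\#E})[0]=\Map(T_\#E,X)\approx X^\hequiv$ by \eqref{prop:e-represents-equiv-segal-object}, and $X[0]\to X^\hequiv$ is a weak equivalence precisely because $X$ is complete; feeding this into the homotopy pullback square supplied by fully faithfulness \eqref{prop:obj-to-equiv-fully-faithful} immediately gives that $X[1](c)\to(X^{T_\#E})[1](c)$ is a weak equivalence for every $c\in\ob C$, and this single level is exactly the statement that $X^{F[1](c)}$ is $\Cpt_C$-local, which is all that \eqref{prop:presentations-segal-plus} requires. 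This is the paper's entire proof. (If you insist on your stronger intermediate claim that $X\to X^{T_\#E}$ is a levelwise weak equivalence, these same two facts together with the Segal condition on both sides yield it, since every object of $\Theta C$ has the form $[m](c_1,\dots,c_m)$; but it is not needed.) Your verification that $X^{T_\#E}$ is again a complete Segal object is correct but superfluous, as is the appeal to \eqref{prop:obj-to-equiv-ess-surj}.
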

\begin{proof}
By \eqref{prop:presentations-segal-plus}, it suffices to show that if
$X$ is a complete Segal object, then $X^{F[1](c)}$ is $\Cpt_C$-local
for all $c\in \ob C$.  That is, we must show that $X^{F[1](c)}\approx
\Map_{\Theta 
  C}(T_\#F[0],X^{F[1](c)}) \ra \Map_{\Theta C}(T_\#E,X^{F[1](c)})$ is
a weak equivalence of spaces, or equivalently that 
\[
(X^{T_\#F[0]})[1](c)\ra (X^{T_\#E})[1](c)
\]
is a weak equivalence of spaces.  This is immediate from the fact that
$X^{T_\# F[0]}\ra X^{T_\# E}$ is fully faithful
\eqref{prop:obj-to-equiv-fully-faithful} and the 
fact that $X[0]\approx X^{T_\#F[0]}[0]\ra X^{T_\#E}[0]\approx
X^\hequiv$ is a weak equivalence, since $X$ is a complete Segal object.
\end{proof}

\section{The presentation $(\Theta C, \mathscr{S}_\Theta)$}
\label{sec:presentation-theta-css}

In this section, we consider what happens when we start with a
presentation $(C,\mathscr{S})$.  In this case, we define a new
presentation $(\Theta C,\mathscr{S}_\Theta)$ which depends on
$(C,\mathscr{S})$, by 
\[
\mathscr{S}_{\Theta} \defeq \Se_C \cup \Cpt_C \cup V[1](\mathscr{S}),
\]
where $V[1](\mathscr{S}) \defeq \set{V[1](f)}{f\in \mathscr{S}}$, and 
where $\Se_C$, and $\Cpt_C$ are as defined in
\S\ref{subsec:the-set-se} and \S\ref{subsec:the-set-cpt}.

Say that two model categories $M_1$ and $M_2$ are \dfn{equivalent} if
there is an equivalence $E\colon M_1\ra M_2$ of categories which
preserves and reflects cofibrations, fibrations, and weak equivalences
(this is much stronger than Quillen equivalence).  If $M$ is a model
category equivalent to one of the form $\sPsh(C)_\mathscr{S}$ for some
presentation $(C,\mathscr{S})$, then we write 
\[
\thetaspenr{M} \defeq \sPsh(\Theta C)_{\mathscr{S}_\Theta}.
\]
We call $\thetaspenr{M}$ the model category of
\dfn{$\Theta$-spaces over $M$}. 

In the rest of this section, we prove the following result, which is
the the precise form of \eqref{thm:main-intro}.
\begin{thm}\label{thm:main-thm-precise}
If $(C,\mathscr{S})$ is a cartesian presentation, then $(\Theta C,
\mathscr{S}_\Theta)$ is a cartesian presentation, so 
that $\sPsh(\Theta C)_{\mathscr{S}_\Theta}$ is a cartesian model
category. 
\end{thm}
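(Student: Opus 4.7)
The plan: I apply Proposition \ref{prop:presentations-segal-plus} with $\mathscr{U} = \Cpt_C \cup V[1](\mathscr{S})$; it suffices to show that for every $\mathscr{S}_\Theta$-fibrant $X$ and every $c \in \ob C$, the function object $X^{F[1](c)}$ is both $\Cpt_C$-local and $V[1](\mathscr{S})$-local. The first of these is immediate from Proposition \ref{prop:se-plus-cpt-is-cartesian}, so the heart of the argument is to show, for each $s\colon S \to S'$ in $\mathscr{S}$, that the map $V[1](s) \times F[1](c)$ lies in $\overline{\mathscr{S}_\Theta}$; the $V[1](\mathscr{S})$-locality of $X^{F[1](c)}$ then follows from the exponential adjunction and the $\mathscr{S}_\Theta$-fibrancy of $X$.

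To prove this, I would use the isomorphism $F[1](c) \cong V[1](Fc)$ and the product decomposition of Proposition \ref{prop:product-decomposition}, which exhibits $V[1](A) \times V[1](Fc)$ as the pushout of
\[
V[2](A, Fc) \xleftarrow{V\delta^{02}} V[1](A \times Fc) \xrightarrow{V\delta^{02}} V[2](Fc, A).
\]
The legs are monomorphisms, so this is a homotopy pushout in $\sPsh(\Theta C)^\inj$. By naturality in $A$ and Proposition \ref{prop:saturations-hocolims}, it suffices to show that each of the three maps $V[2](s, \id_{Fc})$, $V[1](s \times \id_{Fc})$, and $V[2](\id_{Fc}, s)$ lies in $\overline{\mathscr{S}_\Theta}$.

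The middle map is handled using cartesian-ness of $(C, \mathscr{S})$: Proposition \ref{prop:cart-pres-equiv-charac}(3) gives $s \times \id_{Fc} \in \overline{\mathscr{S}}$, and Proposition \ref{prop:quillen-pair-loc} shows that $V[1]$ is left Quillen between the localized structures (the criterion $V[1](\mathscr{S}) \subseteq \mathscr{S}_\Theta$ holds tautologically), so $V[1]$ sends $\overline{\mathscr{S}}$ into $\overline{\mathscr{S}_\Theta}$. To handle $V[2](s, \id_{Fc})$, I would use a Segal reduction: Proposition \ref{prop:generalized-segal-maps} gives Segal equivalences $V_{G[2]}(A, Fc) \to V[2](A, Fc)$ natural in $A$, and $V_{G[2]}(A, Fc) \cong V[1](A) \cup_{V[0]} V[1](Fc)$, so $V_{G[2]}(s, \id_{Fc})$ is a pushout of $V[1](s)$ along $V[0] \to V[1](Fc)$. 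After assuming (without loss of generality, by factoring $s$ through a monomorphism) that $s$ is an injective cofibration, $V[1](s)$ is a trivial cofibration in $\sPsh(\Theta C)^\inj_{\mathscr{S}_\Theta}$, and its pushout is again a trivial cofibration. Two-out-of-three applied to the naturality square linking $V_{G[2]}$ and $V[2]$ then yields $V[2](s, \id_{Fc}) \in \overline{\mathscr{S}_\Theta}$, and the symmetric argument handles $V[2](\id_{Fc}, s)$.

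The main technical obstacle is the passage through $V[2]$: unlike $V[1]$, the functor $V[2]$ is not directly available as part of a Quillen pair between $\sPsh(C)^\inj_{\mathscr{S}}$ and $\sPsh(\Theta C)^\inj_{\mathscr{S}_\Theta}$, so one cannot argue that it preserves $\mathscr{S}$-equivalences on the nose. The workaround is the two-step reduction through $V_{G[2]}$, which is built from $V[1]$'s via an injective cofibration pushout and hence inherits the good formal behavior of $V[1]$; the Segal equivalences supplied by Proposition \ref{prop:generalized-segal-maps} close the loop.
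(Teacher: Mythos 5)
Your proposal is correct, and it shares the paper's skeleton --- the reduction via \eqref{prop:presentations-segal-plus} and \eqref{prop:se-plus-cpt-is-cartesian} to a statement about $X^{F[1](c)}$, with the product decomposition \eqref{prop:product-decomposition} of $V[1](A)\times V[1](Fc)$ as the central tool --- but you run the argument in the adjoint direction. The paper works on the object side: using \eqref{prop:v1-fib-charac} it reduces to showing that the mapping objects $M_Y(g_0,g_1)$ of $Y=X^{F[1](d)}$ are $\mathscr{S}$-fibrant, applies $\Map({-},X)$ to the decomposition, and identifies the fibers of the three resulting vertical maps with $M_X(x_{00},x_{10})$, $(M_X(x_{00},x_{11}))^{Fd}$ and $M_X(x_{01},x_{11})$ (the Segal condition handling the outer two, \eqref{lemma:mapping-space-identification} and cartesianness of $(C,\mathscr{S})$ the middle one), so that $M_Y(g_0,g_1)$ is a homotopy pullback of $\mathscr{S}$-fibrant presheaves. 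You instead stay on the map side, showing $V[1](s)\times F[1](c)\in\overline{\mathscr{S}_\Theta}$ by exhibiting it as a map of homotopy pushouts and checking the three legs; your $V_{G[2]}$ reduction and the cobase-change of $V[1](\tilde s)$ along $V[0]\ra V[1](Fc)$ is the pushout-side counterpart of the paper's fiber computation $X[2](c,d)\ra X[1](c)\times_{X[0]}X[1](d)\ra X[0]\times X[1](d)$. Both versions use cartesianness of $(C,\mathscr{S})$ in exactly one place (the middle term), and each buys roughly the same thing; yours avoids chasing fibers of fibrations at the cost of the WLOG-cofibration bookkeeping. One small imprecision: \eqref{prop:quillen-pair-loc} does not literally apply to $V[1]$, since $V[1]$ is left Quillen only into the under-category $V[1](\varnothing)\backslash\sPsh(\Theta C)$ rather than into $\sPsh(\Theta C)$ itself; the conclusion that $V[1]$ carries $\overline{\mathscr{S}}$ into $\overline{\mathscr{S}_\Theta}$ is nonetheless correct (argue via the under-category localization as in \eqref{prop:left-quillen-saturations}, or directly from \eqref{lemma:mapping-space-identification} together with the $\mathscr{S}$-fibrancy of $M_X(x_0,x_1)$ from \eqref{prop:v1-fib-charac}), so this is a citation issue rather than a gap.
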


\subsection{$V[1](\mathscr{S})$-fibrant objects}

The $V[1](\mathscr{S})$-fibrant objects are precisely the injective
fibrant objects whose mapping spaces are $\mathscr{S}$-fibrant.
Explicitly, we have the following.
\begin{prop}\label{prop:v1-fib-charac}
An injectively fibrant object $X$ in $\sPsh(\Theta C)$ is
$V[1]\mathscr{S}$-fibrant if and only if for each $x_1,x_2\in X[0]$, the
object $M_X(x_1,x_2)$ is an $\mathscr{S}$-fibrant object of $\sPsh(C)$.
\end{prop}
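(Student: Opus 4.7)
The plan is to reduce everything to the mapping-space identification $\widetilde{M}_X(x_0,x_1)(A)\approx \Map_C(A,M_X(x_0,x_1))$ of \eqref{lemma:mapping-space-identification}, which lets us translate conditions on $\Map_{\Theta C}(V[1](A),X)$ into conditions on derived mapping spaces into $M_X(x_0,x_1)$ in $\sPsh(C)$.

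First I would observe that since $V[1]\colon \sPsh(C)^\inj\to V[1](\varnothing)\backslash\sPsh(\Theta C)^\inj$ is a left Quillen functor by \eqref{prop:intertwining-functor-colimit-properties} (and the proposition following it), the map $V[1](\varnothing)\to V[1](A)$ is a cofibration for every $A$, so injective fibrancy of $X$ implies that $\Map_{\Theta C}(V[1](A),X)\to \Map_{\Theta C}(V[1](\varnothing),X)\approx X[0]\times X[0]$ is a fibration whose strict fiber over $(x_0,x_1)$ is $\widetilde{M}_X(x_0,x_1)(A)$ by definition. Applying the same reasoning to an acyclic cofibration $A\to B$ in $\sPsh(C)^\inj$, the induced map $\Map_{\Theta C}(V[1](B),X)\to\Map_{\Theta C}(V[1](A),X)$ is an acyclic fibration over $X[0]\times X[0]$, whose fiber over $(x_0,x_1)$ is $\Map_C(B,M_X(x_0,x_1))\to \Map_C(A,M_X(x_0,x_1))$ via \eqref{lemma:mapping-space-identification}. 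This shows $M_X(x_0,x_1)$ is injective fibrant in $\sPsh(C)$.

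Next I would unwind the definition of $V[1]\mathscr{S}$-fibrancy: $X$ is $V[1]\mathscr{S}$-fibrant iff for every $s\colon S\to S'$ in $\mathscr{S}$ the map $\Map_{\Theta C}(V[1](S'),X)\to \Map_{\Theta C}(V[1](S),X)$ is a weak equivalence. Both sides sit compatibly over $X[0]\times X[0]$ as fibrations, so this map is a weak equivalence if and only if it induces a weak equivalence on each fiber, i.e., if and only if $\widetilde{M}_X(x_0,x_1)(S')\to \widetilde{M}_X(x_0,x_1)(S)$ is a weak equivalence for every pair $(x_0,x_1)\in X[0]\times X[0]$. Under \eqref{lemma:mapping-space-identification} this is exactly the condition $\Map_C(S',M_X(x_0,x_1))\to \Map_C(S,M_X(x_0,x_1))$ is a weak equivalence, i.e., $M_X(x_0,x_1)$ is $\mathscr{S}$-local.

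Combining these two steps yields the claim: $X$ is $V[1]\mathscr{S}$-fibrant iff each $M_X(x_0,x_1)$ is injective fibrant and $\mathscr{S}$-local, i.e., $\mathscr{S}$-fibrant. The only mild subtlety is checking injective fibrancy of $M_X(x_0,x_1)$ (rather than merely Kan-like behavior); this is handled in the first step via the left Quillen property of $V[1]$ out of the slice $V[1](\varnothing)\backslash\sPsh(\Theta C)^\inj$, so no real obstacle remains.
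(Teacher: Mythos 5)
Your argument is correct. The paper states this proposition without proof, and your reduction --- using the left Quillen property of $V[1]$ out of $V[1](\varnothing)\backslash\sPsh(\Theta C)^\inj$ to get injective fibrancy of $M_X(x_0,x_1)$, then Lemma \eqref{lemma:mapping-space-identification} together with the fiberwise criterion for weak equivalences between fibrations over $X[0]\times X[0]$ --- is exactly the intended routine verification; the same fiberwise technique is the one the paper deploys later in the proof of \eqref{prop:se-v-cartesian}.
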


\subsection{Proof of \eqref{thm:main-thm-precise}}

It is clear that \eqref{thm:main-thm-precise} follows from
\eqref{prop:presentations-segal-plus},
\eqref{prop:se-plus-cpt-is-cartesian}, and the 
following.
\begin{prop}\label{prop:se-v-cartesian}
If $(C,\mathscr{S})$ is a cartesian presentation, then $(\Theta C,
\Se_C\cup V[1](\mathscr{S}))$ is a cartesian presentation.
\end{prop}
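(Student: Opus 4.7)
The plan is to apply \eqref{prop:presentations-segal-plus}, which reduces the claim to the following: for every $(\Se_C\cup V[1](\mathscr{S}))$-fibrant $X$ in $\sPsh(\Theta C)$ and every $c\in \ob C$, the function object $X^{F[1](c)}$ must be $V[1](\mathscr{S})$-local. By the exponential adjunction, this amounts to showing that for each $s\colon S\ra S'$ in $\mathscr{S}$ the natural map
\[
\Map\bigl(V[1](S')\times V[1](Fc),\,X\bigr)\ra \Map\bigl(V[1](S)\times V[1](Fc),\,X\bigr)
\]
is a weak equivalence of spaces (using $F[1](c)=V[1](Fc)$).

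To attack this, I would invoke \eqref{prop:product-decomposition}, which presents $V[1](A)\times V[1](Fc)$ as the colimit---and in fact the homotopy colimit in $\sPsh(\Theta C)^\inj$, since its legs are monomorphisms---of the span
\[
V[2](A,Fc)\xla{V\delta^{02}} V[1](A\times Fc)\xra{V\delta^{02}} V[2](Fc,A).
\]
Applying $\Map(-,X)$ converts this into a homotopy pullback of spaces in the two cases $A=S$ and $A=S'$, so the task reduces to verifying that the induced map is a weak equivalence on each of the three vertex terms.

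For the outer terms, the Segal condition on $X$ supplies, via the corollary to \eqref{prop:generalized-segal-maps}, natural weak equivalences
\[
\Map(V[2](A,Fc),X)\xra{\sim} \Map(V[1](A),X)\times_{X[0]}\Map(V[1](Fc),X)
\]
and symmetrically for $V[2](Fc,A)$, reducing both cases to the claim that $\Map(V[1](s),X)$ is a weak equivalence, which is immediate from the $V[1](\mathscr{S})$-fibrancy of $X$. For the middle term, the hypothesis that $(C,\mathscr{S})$ is cartesian gives, via \eqref{prop:cart-pres-equiv-charac}(3), that $s\times \id_{Fc}\in \overline{\mathscr{S}}$; combining \eqref{prop:v1-fib-charac} with \eqref{lemma:mapping-space-identification} shows that $\Map(V[1](A'),X)$ fibers over $X[0]\times X[0]$ with fiber $\Map_C(A',M_X(x_0,x_1))$ over $(x_0,x_1)$, where each $M_X(x_0,x_1)$ is $\mathscr{S}$-fibrant in $\sPsh(C)$. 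Consequently $\Map(V[1](s\times Fc),X)$ is a weak equivalence, as required.

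The main obstacle to overcome is really just the organization of these pieces: the product decomposition of \eqref{prop:product-decomposition} and the Segal condition combine to isolate the dependence of $\Map(V[1](-)\times V[1](Fc),X)$ on $s$ into three independently controlled terms, and the translation between $V[1](\mathscr{S})$-fibrancy of $X$ and $\mathscr{S}$-fibrancy of its mapping objects $M_X(x_0,x_1)$ is what allows the cartesian-ness of $(C,\mathscr{S})$ to directly handle the middle term.
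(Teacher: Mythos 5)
Your proposal is correct and follows essentially the same route as the paper's proof: the same reduction via \eqref{prop:presentations-segal-plus}, the same product decomposition \eqref{prop:product-decomposition} into the span $V[2](A,Fc)\la V[1](A\times Fc)\ra V[2](Fc,A)$, with the outer terms handled by the Segal condition and $V[1](\mathscr{S})$-fibrancy and the middle term handled by \eqref{lemma:mapping-space-identification} together with the cartesianness of $(C,\mathscr{S})$. The only difference is organizational --- the paper passes to fibers over $X[0]\times X[0]$ throughout and phrases the conclusion as $\mathscr{S}$-fibrancy of the mapping objects of $X^{F[1](d)}$, whereas you work with total mapping spaces vertex by vertex --- but the substance is identical.
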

In the remainder of the section we prove this proposition
\eqref{prop:se-v-cartesian}. 

In light of \eqref{prop:v1-fib-charac} and
\eqref{prop:presentations-segal-plus}, it is enough to show that if
$(C,\mathscr{S})$ is a cartesian presentation and $X$ is $\Se_C\cup
V[1](\mathscr{S})$-fibrant, and if $Y=X^{F[1](d)}$ for some $d\in \ob
C$, 
then $M_{Y}(g_0,g_1)$ is an
$\mathscr{S}$-fibrant object of $\sPsh(C)$, for all 
points $g_0,g_1$ in $X^{F[1](d)}[0]$.

Let $c$ and $d$ be objects in $C$, and consider the following
diagram in $\sPsh(\Theta C)$. 
\begin{equation}\label{eq:prod-decomp-for-v-cartesian}
\vcenter{
\xymatrix{
{V[2](Fc,Fd)} 
& {V[1](Fc\times Fd)} \ar[l]_{V\delta^{02}} \ar[r]^{V\delta^{02}}
& {V[2](Fd,Fc)}
\\
{V[2](\varnothing,Fd)} \ar[u]
& {V[1](\varnothing)} \ar[u] \ar[l]^-{V\delta^{02}}
\ar[r]_-{V\delta^{02}}
& {V[2](Fd,\varnothing)} \ar[u]
}}
\end{equation}
By \eqref{prop:product-decomposition}, taking colimits along the rows
gives the map
\[
f\colon V[1](Fd)\amalg V[1](Fd)\approx V[1](\varnothing)\times
V[1](Fd)\ra V[1](Fc)\times V[1](Fd) 
\]
induced by $\varnothing\ra Fc$.  (Recall that
$V[1](\varnothing)\approx 1\amalg 1$.)

Now $\Map_{\Theta C}(f,X)$ is isomorphic to the
map
\[
(Y\delta^0,Y\delta^1)\colon Y[1](c) \ra Y[0]\times Y[0]
\]
whose fiber over $(g_0,g_1)$ is $M_Y(g_0,g_1)$.

Applying $\Map_{\sPsh(\Theta C)}({-},X)$ to the diagram
\eqref{eq:prod-decomp-for-v-cartesian} gives
\begin{equation}\label{eq:v-cartesian-fibers-diagram}
\vcenter{
\xymatrix{
{X[2](c,d)} \ar[r] \ar[d]
& {\Map(V[1](Fc\times Fd),X)} \ar[d] 
& {X[2](d,c)} \ar[l] \ar[d]
\\
{X[0]\times X[1](d)} \ar[r]
& {X[0]\times X[0]} 
& {X[1](d)\times X[0]} \ar[l]
}
}
\end{equation}
The space
$M_{Y}(g_0,g_1)$ is the pullback of the diagram obtained by taking
fibers of each of the vertical maps of
\eqref{eq:v-cartesian-fibers-diagram}, over points 
$(x_{00}, g_1)$, $(x_{00},x_{11})$, and 
$(g_0,x_{11})$ respectively, where $x_{ij}=(X\delta^j)(g_i)$.  The
vertical maps of \eqref{eq:v-cartesian-fibers-diagram} are fibrations
of spaces, and thus the pullback of 
fibers is a homotopy 
pullback.  Thus,  
it suffices to show that the fiber of each of the vertical maps,
viewed as a functor of 
$c$, is an $\mathscr{S}$-fibrant object of $\sPsh(C)$.  

We claim that these fibers, as presheaves on $C$, are weakly equivalent
to the presheaves 
$M_X(x_{00},x_{10})$, $(M_X(x_{00},x_{11}))^{Fd}$, and
$M_X(x_{01},x_{11})$ respectively.  The objects $M_X(x_{00},x_{10})$,
$M_X(x_{00},x_{11})$, 
and $M_X(x_{01},x_{11})$ are $\mathscr{S}$-fibrant by the hypothesis
that $X$ is $V[1](\mathscr{S})$-fibrant,  Since $(C,\mathscr{S})$ is a
cartesian 
presentation, it follows that $(M_X(x_{00},x_{11}))^{Fd}$ is
$\mathscr{S}$-fibrant.  Thus, we complete the proof of the proposition
by proving this claim. 

The left-hand vertical arrow of \eqref{eq:v-cartesian-fibers-diagram} factors
\[
X[2](c,d) \xra{(X\delta^{01},X\delta^{12})} X[1](c)\times_{X[0]}
X[1](d) \xra{((X\delta^0)\pi_1, \pi_2)} X[0]\times 
X[1](d).
\]
The first map is a weak equivalence since $X$ is $\Se_C$-local, so
it suffices to examine the 
fibers of the second map over $(x_0, g_1)$.  It is
straightforward to check that this fiber is isomorphic to
$M_X(x_{00}, x_{10})(c)$.

The right-hand vertical arrow of \eqref{eq:v-cartesian-fibers-diagram}
is analysed similarly, so that its 
fibers are weakly equivalent to $M_X(x_{01},x_{11})(c)$.

For the middle vertical arrow of \eqref{eq:v-cartesian-fibers-diagram},
\eqref{lemma:mapping-space-identification} allows us to identify 
the fiber over $(x_{00}, x_{11})$ with 
\[
\Map_{\sPsh(C)}(Fc\times Fd, M_X(x_{00}, x_{11}))
\approx (M_X(x_{00}, x_{11}))^{Fd}(c).
\]

\section{Groupoid objects}
\label{sec:groupoid-objects}

Let $\Gpd_C$ be the set consisting of the morphism
\[
T_\#q\colon T_\#F[1]\ra T_\#E.
\]
We say that a Segal object is a \dfn{Segal groupoid} if it is
$\Gpd_C$-local; likewise, a complete Segal object is called a
\dfn{complete Segal groupoid} if it is $\Gpd_C$-local.

\begin{lemma}\label{lemma:gpd-local-charac}
If $X$ is a Segal object in $\sPsh(\Theta C)$, then $X$ is
$\Gpd_C$-local if and only if $X^{T_\#i}\colon X^{T_\#E}\ra
X^{T_\#F[1]}$ is a levelwise weak equivalence in $\sPsh(\Theta C)$.
\end{lemma}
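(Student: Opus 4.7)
The plan is to reduce the lemma to a general fact about Segal objects: a fully faithful map between Segal objects in $\sPsh(\Theta C)$ is a levelwise weak equivalence if and only if it is a weak equivalence on the $[0]$-level. The map $X^{T_\#i}$ falls into this framework, and its $[0]$-level will coincide with the map appearing in the $\Gpd_C$-locality condition.

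To set this up, I first invoke \eqref{prop:se-is-cartesian} to conclude that $(\Theta C,\Se_C)$ is a cartesian presentation; since $T_\#E$ and $T_\#F[1]$ are cofibrant in $\sPsh(\Theta C)^\inj$, both $X^{T_\#E}$ and $X^{T_\#F[1]}$ are then themselves Segal objects, and \eqref{prop:equiv-to-mor-fully-faithful} shows $X^{T_\#i}$ is fully faithful between them. Next, since $F[0]$ is the terminal object of $\sPsh(\Theta C)$ (a direct check from the definition of morphisms in $\Theta C$), one has $(X^S)[0]\approx \Map_{\Theta C}(S,X)$ for any object $S$, so the $[0]$-level of $X^{T_\#i}$ is the map $\Map_{\Theta C}(T_\#E,X)\to \Map_{\Theta C}(T_\#F[1],X)$. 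Because $X$ is injective fibrant and $T_\#E,T_\#F[1]$ are cofibrant, this is a weak equivalence of spaces if and only if $X$ is $\Gpd_C$-local.

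It then remains to prove the general fact. The ``only if'' direction is immediate. For the ``if'' direction, let $f\colon U\to V$ be fully faithful between Segal objects, with $f[0]$ a weak equivalence. For each $c\in\ob C$, full faithfulness provides a homotopy pullback square with bottom arrow $U[0]\times U[0]\to V[0]\times V[0]$; this latter map is a weak equivalence, so $U[1](c)\to V[1](c)$ is one as well. Since $U$ and $V$ are injective fibrant, the structural maps $U[1](c_i)\to U[0]$ and $V[1](c_i)\to V[0]$ are fibrations of simplicial sets (as $F[0]\to F[1](c_i)$ is a monomorphism), so the Segal condition presents $U[m](c_1,\ldots,c_m)$ and $V[m](c_1,\ldots,c_m)$ as honest homotopy limits of their $[1]$-pieces over $[0]$. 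Consequently the induced map $U[m](c_1,\ldots,c_m)\to V[m](c_1,\ldots,c_m)$ is a weak equivalence for every $m\geq 0$ and every $c_1,\ldots,c_m\in\ob C$; as every object of $\Theta C$ has this form, $f$ is a levelwise weak equivalence.

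Main obstacle: Essentially none, since the substantive geometric input — that $X^{T_\#i}$ is fully faithful — is already supplied by \eqref{prop:equiv-to-mor-fully-faithful}. The only technical point worth verifying is that the strict Segal limit computes the correct homotopy limit, which is routine given injective fibrancy.
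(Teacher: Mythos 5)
Your proof is correct and follows essentially the same route as the paper: the paper's argument also deduces full faithfulness of $X^{T_\#i}$ from \eqref{prop:equiv-to-mor-fully-faithful}, identifies the $[0]$-level with the $\Gpd_C$-locality map, and concludes that a fully faithful map which is an equivalence on $[0]$ is a levelwise equivalence. You have merely made explicit the final step (propagating the equivalence from level $[0]$ to $[1](c)$ via the homotopy pullback square and then to all $[m](c_1,\dots,c_m)$ via the Segal condition), which the paper leaves as ``it follows.''
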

\begin{proof}
The if part is immediate.  To prove the only if part, note that for
any Segal object $Y$, the map $Y^{T_\#E}\ra Y^{T_\#F[1]}$ is fully
faithful by \eqref{prop:equiv-to-mor-fully-faithful}.
If $X$
is $\Se_C\cup \Gpd_C$-fibrant, then $X^{T_\#i}[0]\colon
X^{T_\#E}[0]\ra X^{T_\#F[1]}[0]$ is a weak equivalence of spaces, and
it follows that $X^{T_\#i}$ must be a levelwise
weak equivalence in $\sPsh(\Theta C)$.  
\end{proof}

\begin{prop}
The presentations $(\Theta C,\Se_C\cup \Gpd_C)$ and $(\Theta
C,\Se_C\cup \Cpt_C\cup 
\Gpd_C)$ are cartesian presentations.  If $(C,\mathscr{S})$ is a cartesian
presentation, then $(\Theta C,\Se_C\cup \Gpd_C\cup V[1]\mathscr{S})$
and $(\Theta C, \Se_C\cup \Cpt_C\cup \Gpd_C \cup V[1]\mathscr{S})$ are
cartesian presentations.
\end{prop}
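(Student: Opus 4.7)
The plan is to apply \eqref{prop:presentations-segal-plus} to each of the four claims: it suffices to verify that, for $X$ fibrant with respect to the given set and each $c \in \ob C$, the object $X^{F[1](c)}$ is local with respect to the set minus $\Se_C$. I would then split this remaining set into its pieces drawn from $\{\Gpd_C, \Cpt_C, V[1](\mathscr{S})\}$ and establish each locality condition separately. The $\Cpt_C$-locality of $X^{F[1](c)}$ is immediate from \eqref{prop:se-plus-cpt-is-cartesian}, and the $V[1](\mathscr{S})$-locality is immediate from \eqref{prop:se-v-cartesian}, since in each case $X$ is automatically fibrant for the smaller relevant subset and the cartesian presentation results previously proved already deliver the desired locality of $X^{F[1](c)}$.

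The only genuinely new step, and the main obstacle, is the $\Gpd_C$-locality of $X^{F[1](c)}$. Because $X$ is $\Se_C$-fibrant and $(\Theta C, \Se_C)$ is a cartesian presentation by \eqref{prop:se-is-cartesian}, the function object $X^{F[1](c)}$ is again a Segal object. By \eqref{lemma:gpd-local-charac}, it then suffices to show that the map
\[
(X^{F[1](c)})^{T_\#i}\colon (X^{F[1](c)})^{T_\#E}\ra (X^{F[1](c)})^{T_\#F[1]}
\]
is a levelwise weak equivalence in $\sPsh(\Theta C)$.

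Using the natural isomorphism $(X^K)^{F[1](c)} \approx (X^{F[1](c)})^K$, this map is identified with $(X^{T_\#i})^{F[1](c)}$. Now \eqref{lemma:gpd-local-charac} applied to $X$ itself says that $X^{T_\#i}$ is a levelwise weak equivalence between injective fibrant objects (the injective fibrancy of $X^{T_\#E}$ and $X^{T_\#F[1]}$ follows from cartesianness of the injective model structure together with injective fibrancy of $X$). I would then invoke the fact that $(-)^{F[1](c)}$ is a right Quillen functor for the injective model structure on $\sPsh(\Theta C)$, and that right Quillen functors preserve weak equivalences between fibrant objects (Ken Brown's lemma), to conclude that $(X^{T_\#i})^{F[1](c)}$ is a levelwise weak equivalence, as required.
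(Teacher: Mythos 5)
Your proposal is correct and follows essentially the same route as the paper: reduce via \eqref{prop:presentations-segal-plus}, dispose of the $\Cpt_C$ and $V[1](\mathscr{S})$ pieces by the previously established cartesianness results, and handle $\Gpd_C$ by combining \eqref{lemma:gpd-local-charac} with the exponential adjunction to transfer the question from $X^{F[1](c)}$ back to $X$. The only (harmless) difference is that the paper simply evaluates the levelwise equivalence $X^{T_\#i}$ at the single object $[1](c)$ and identifies that with $\Map_{\Theta C}(T_\#i, X^{F[1](c)})$, whereas you apply the right Quillen functor $(-)^{F[1](c)}$ to the whole map and reinvoke the lemma; both arguments are valid.
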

\begin{proof}
We only need to show that $(\Theta C, \Se_C\cup \Gpd_C)$ is a
cartesian presentation; the other results follow using
\eqref{prop:se-plus-cpt-is-cartesian} and \eqref{thm:main-thm-precise}. 

To show
that $(\Theta C, \Se_C\cup \Gpd_C)$ is  a 
cartesian presentation, we need to show
\eqref{prop:presentations-segal-plus} that if $X$ is $\Se_C\cup
\Gpd_C$-fibrant, then $Y=X^{F[1](c)}$ is
$\Gpd_C$-local for all $c\in \ob C$.
The map $\Map_{\Theta C}(T_\#i,Y)\colon \Map_{\Theta C}(T_\#E,Y)\ra
\Map_{\Theta C}(T_\#F[1],Y)$ is isomorphic to $(X^{T_\#i})[1](c)\colon
(X^{T_\#E})[1](c) \ra (X^{T_\#F[1]})[1](c)$, which is a weak
equivalence by \eqref{lemma:gpd-local-charac}.
\end{proof}

Given a presentation $(C,\mathscr{S})$ with
$M=\sPsh(C)^\inj_\mathscr{S}$, let
\[
\Theta_{\Gpd}(C,\mathscr{S}) = (\Theta C, \mathscr{S}_\Theta\cup
\Gpd_C)
\]
and
\[
\thetagpdenr{M} \defeq \sPsh(\Theta C)^\inj_{\mathscr{S}_\Theta\cup \Gpd_C}.
\]

\section{Alternate characterization of complete Segal objects}
\label{sec:alternate-cs}

In the section we consider a characterization of
the ``completeness'' property in the definition of a complete Segal
space, which is a bit more elementary than the one given in
\cite{rezk-ho-theory-of-ho-theory}.  The results of this section are
not needed elsewhere in this paper.

Let $E$ in $\sPsh(\Delta)$ be the
``discrete nerve'' of the groupoid with two uniquely isomorphic
objects $x$ and $y$, let $p\colon E\ra F[0]$ denote the projection, and let
$i\colon F[1]\ra E$ denote the map which picks out the morphism from
$x$ to $y$.  In \cite{rezk-ho-theory-of-ho-theory}*{Prop.\ 6.2} it is shown
that if $X$ is a Segal space, then $\Map(i,X)\colon \Map(E,X)\ra
\Map(F[1],X)\approx X[1]$ factors through a weak equivalence
$\Map(E,X)\ra X^\hequiv$.  From this, we see that a Segal space $X$ is
complete if and only if $\Map(p,X)$ is a weak equivalence
\cite{rezk-ho-theory-of-ho-theory}*{Prop.\ 6.4}.

The proof of \cite{rezk-ho-theory-of-ho-theory}*{Prop.\ 6.2} is long
and technical.  Also, the result is not entirely satisfying, because
$E$ is an ``infinite dimensional'' object, in the sense that as a
simplicial space it is constructed from infinitely many cells, which
appear in all dimensions (see
\cite{rezk-ho-theory-of-ho-theory}*{\S11}).  It is possible to replace
$E$ with the finite subobject $E^{(k)}$ for $k\geq3$ (see
\cite{rezk-ho-theory-of-ho-theory}*{Prop.\ 11.1}), but this is also
not very satisfying.

Here we prove a variant of \cite{rezk-ho-theory-of-ho-theory}*{Prop.\
  6.2} where $E$ is replaced by an object $Z$, which is a finite cell
object. 
The idea is based on the following observation:
in a category
enriched over spaces, the homotopy equivalences $g\colon X\ra Y$ are
precisely those morphisms for which there exist morphisms $f,h\colon
Y\ra X$ and homotopies $\alpha\colon gf\sim 1_Y$ and $\beta\colon
hg\sim 1_X$, and that for a given homotopy equivalence $g$ the
``moduli space'' of such data $(f,h,\alpha,\beta)$ is weakly
contractible.

Define an object $Z$ in $\sPsh(\Delta)$ to be the colimit of the
diagram
\[
F[3] \xla{(\delta^{02},\delta^{13})} F[1]\amalg F[1]
\xra{\delta^{00}\amalg \delta^{00}} F[0]\amalg F[0].
\]
Let $p\colon Z\ra F[0]$ be the evident projection map, and let
$i\colon F[1]\ra Z$ be the composite of $\delta^{12}\colon F[1]\ra 
F[3]$ with the quotient map $F[3]\ra Z$.  

\begin{prop}
Let $X$ be a Segal space (i.e., an $\Se_1$-fibrant object of
$\sPsh(\Delta)$).  The map $\Map(Z,X)\ra \Map(F[1],X)$ factors through
$X^\hequiv \subseteq X[1]$, and induces a weak equivalence
$\Map(Z,X)\ra X^\hequiv$ of spaces.

Thus, a Segal space $X$ is a complete Segal space if and only if the
square
\[
\xymatrix{
{X[0]} \ar[rr]^{X\delta^{0000}} \ar[d]_{(X\delta^0,X\delta^0)}
&& {X[3]} \ar[d]^{(X\delta^{02},X\delta^{13})}
\\
{X[0]\times X[0]} \ar[rr]_{X\delta^{00}\times X\delta^{00}}
&& {X[1]\times X[1]}
}\]
is a homotopy pullback.
\end{prop}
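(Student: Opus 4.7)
My plan is to prove both assertions of the proposition simultaneously by computing the homotopy fiber of $\Map(Z,X)\to X[1]$ over each point $g\in X[1]$ and showing it is empty when $g\notin X^{\hequiv}$ and contractible when $g\in X^{\hequiv}$. Granting this, the factorization through $X^{\hequiv}$ and the weak equivalence onto $X^{\hequiv}$ are immediate. The ``thus'' reformulation then follows because the displayed square is a homotopy pullback iff the canonical map $X[0]\to\Map(Z,X)$ is a weak equivalence, which under the identification $\Map(Z,X)\simeq X^{\hequiv}$ is exactly the completeness condition $X[0]\to X^{\hequiv}$.

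For the fiber, first note that $F[1]\to Z$ is a monomorphism (a cofibration in the injective model), whence $\Map(Z,X)\to X[1]$ is a fibration for $X$ injectively fibrant and the homotopy fiber equals the strict fiber. Unwinding the pushout, this fiber is the space of $3$-simplices $\sigma\in X[3]$ with $\sigma\delta^{12}=g$ and both long faces $\sigma\delta^{02},\sigma\delta^{13}$ degenerate. Using the Segal condition to write $X[3]\simeq X[2]\times^{h}_{X[1]}X[2]$ via the pair of $2$-faces $(\sigma\delta^{012},\sigma\delta^{123})$, the two degeneracy conditions decouple, so the fiber splits as a product $R(g)\times L(g)$, where $R(g)$ (resp.\ $L(g)$) is the space of $2$-simplices $\sigma_{012}$ (resp.\ $\sigma_{123}$) with middle edge $g$ (i.e., $\sigma_{012}\delta^{12}=g$, resp.\ $\sigma_{123}\delta^{01}=g$) and degenerate long face. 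Applying the Segal equivalence on $X[2]$ and interpreting $\delta^{02}$ as composition identifies $R(g)$ with $\mathrm{hofib}\bigl(M_{X}(v_{0},v_{1})\xrightarrow{g\circ -}M_{X}(v_{0},v_{0});\,\id_{v_{0}}\bigr)$ and $L(g)$ with $\mathrm{hofib}\bigl(M_{X}(v_{0},v_{1})\xrightarrow{-\circ g}M_{X}(v_{1},v_{1});\,\id_{v_{1}}\bigr)$, where $v_{i}:=g\delta^{i}$.

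The main obstacle is the input I need from the theory of Segal spaces: the post- and pre-composition maps $g\circ -$ and $-\circ g$ on mapping spaces are weak equivalences iff $g\in X^{\hequiv}$. This is standard, being the Segal-space analogue of the characterization of isomorphisms by two-sided invertibility, essentially \cite{rezk-ho-theory-of-ho-theory}*{\S5}. Granting it: if $g\in X^{\hequiv}$ then both $R(g)$ and $L(g)$ are homotopy fibers of weak equivalences and are thus contractible, so the total fiber is contractible. Conversely, if the fiber is nonempty then $\pi_{0}$ witnesses both a right and a left inverse of $g$ in the homotopy category $hX$, making $g$ invertible in $hX$ and hence placing it in $X^{\hequiv}$. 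This completes the fiber analysis and yields both conclusions of the proposition.
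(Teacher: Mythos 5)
Your proposal is correct in substance, but it is organized quite differently from the paper's argument, so a comparison is worthwhile. The paper never computes the fiber of $\Map(Z,X)\ra X[1]$ directly; instead it builds a diagram of homotopy pullback squares out of $Q=\Map(Z,X)$, $T=X^\hequiv\times_{X[1]}X[3]$ and $P=Q\times_{X[3]}T$, establishes the factorization through $X^\hequiv$ exactly as you do (a point of $Q$ exhibits both a left and a right inverse of $g$ in $hX$), and then reduces the whole statement to showing that $T$ agrees with $T'=\lim\bigl(X^\hequiv\ra X[0]\times X[0]\la X[1]\times X[1]\bigr)$; that comparison is carried out fiberwise over $(X[0])^{\times 4}$ and rests on the same external input you isolate, namely that composition with an equivalence induces weak equivalences on (path components of) mapping spaces. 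Your version implements literally the heuristic the paper states as motivation --- the moduli space of invertibility data $(f,h,\alpha,\beta)$ over a fixed equivalence $g$ is contractible --- and is arguably more direct; what the paper's pullback-diagram formulation buys is systematic bookkeeping of basepoints and path components, which matters because composition in a Segal space is only defined through the zigzag $X[2]\ra X[1]\times_{X[0]}X[1]$, so identifications like ``$R(g)$ is the homotopy fiber of $g\circ{-}$'' require exactly the strict-fiber-versus-homotopy-fiber care that you pass over quickly (it all works because the relevant maps of presheaves are monomorphisms and $X$ is injective fibrant, so every strict fiber in sight is a homotopy fiber).

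Two minor corrections, neither of which affects the validity of the argument. First, the input you need is not really \S 5 of the earlier paper: the statement that pre- and post-composition with an equivalence are weak equivalences of mapping spaces is the content of \S 11 there (the present paper cites Prop.\ 11.6 for the corresponding step, the equivalences $\zeta$ and $\eta$). Second, your formulas for $R(g)$ and $L(g)$ have the wrong indices: the candidate one-sided inverses of $g\in M_X(v_0,v_1)$ live in $M_X(v_1,v_0)$, so the two factors are the homotopy fibers of $M_X(v_1,v_0)\ra M_X(v_1,v_1)$ (post-composition with $g$) over $\id_{v_1}$ and of $M_X(v_1,v_0)\ra M_X(v_0,v_0)$ (pre-composition with $g$) over $\id_{v_0}$; as written, $g\circ{-}$ on $M_X(v_0,v_1)$ does not typecheck.
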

\begin{proof}
Consider the following commutative diagram 
\[\xymatrix{
{P} \ar[rr]^d \ar@{>->}[d]_b
&& {T} \ar[rr]^e \ar@{>->}[d]_c 
&& {X^\hequiv} \ar@{>->}[d]^{j}
\\
{Q} \ar[rr]^{a}  \ar[d]
&& {X[3]} \ar[rr]^{X\delta^{12}}  \ar[d]_{(X\delta^{02},X\delta^{13})}
&& {X[1]} \ar[d]^{(X\delta^1, X\delta^0)}
\\
{X[0]\times X[0]} \ar[rr]_{X\delta^{00}\times X\delta^{00}}
&& {X[1]\times X[1]} \ar[rr]_{X\delta^1\times X\delta^0}
&& {X[0]\times X[0]}
}\]
Here, the objects $Q$, $T$, and $P$ are defined to be the 
pullbacks of the lower left, upper right, and upper left squares
respectively; each of these squares is a homotopy pullback of spaces,
since $(X\delta^{02},X\delta^{13})$ and $j$ are fibrations.
(The lower right square is in general \emph{not} a 
pullback or a homotopy pullback.)
The maps $b$, $c$, and $j$ are homotopy monomorphisms.
Observe that $Q\approx \Map(Z,X)$, and so we want to prove that
$(X\delta^{12})a$ factors through a weak equivalence $k\colon Q\ra
X^\hequiv$. 

The result will follow by showing (i) that the horizontal map
$(X\delta^{12})a\colon Q\ra
X_1$ factors through the inclusion $j\colon X^\hequiv\ra X_1$ by a map
$k\colon Q\ra X^\hequiv$ (and
thus $b\colon P\ra Q$ is a 
weak equivalence), and (ii) that the right hand rectangle is a
homotopy pullback, i.e., that $T\approx \holim(X_h \ra X_0\times X_0
\la X_1\times X_1)$.  Condition (ii) implies that $ed\colon P\ra
X^\hequiv$ is a weak 
equivalence, since it is a homotopy pullback of the identity map of
$X_0\times X_0$.  Since $fb=ed$, it follows that $k$ is a weak
equivalence, as desired.

The proof of (i) is straightforward.  If $H$ is a point in $Q$, let
$g\defeq ((X\delta^{12})a)(H)$ in $X[1]$.  Then by construction the
class $[g]$ in the homotopy category $hX$ admits both a left and a
right inverse, and thus $g$ is a point of $X^\hequiv$.  (See the
discussion in \cite{rezk-ho-theory-of-ho-theory}*{\S5.5}.)

To prove (ii), let 
\[
T'=\lim( X^\hequiv\xra{(X\delta^1,X\delta^0)j} X[0]\times X[0]
\xla{X\delta^1\times X\delta^0} X[1]\times
X[1]).
\]
Since $X\delta^1$ and $X\delta^0$ are fibrations, this is a homotopy
pullback.  We need to show that $t\colon T\ra T'$ is a weak equivalence.
Let
$\pi'\colon X[1]\times X[1] \ra  X[0]\times X[0] \times
X[0] \times X[0],$ be the map defined by
\[
\pi'(u,v) \defeq ((X\delta^0)u, (X\delta^0)v, (X\delta^1)u, (X\delta^1)v).
\]
Let $\pi\colon T'\ra (X[0])^4$ be the composite of $\pi'$ with the
tautological map $T'\ra X[1]\times X[1]$.  Note that both $\pi$ and
$\pi t$ are fibrations of spaces.

Let  $\underline{x}=(x_0,x_1,x_2,x_3)$ be a tuple of points in $X[0]$.
The fiber of $\pi$ over $\underline{x}$ is the 
space
\[
T_{\underline{x}}'\defeq M_X(x_0,x_2)\times M_x^\hequiv(x_1,x_2)\times
M_X(x_1,x_3). 
\]
The fiber of $\pi t$ over $\underline{x}$ is the limit 
\[
T_{\underline{x}}\defeq \lim\bigl( M_X(x_0,x_1,x_2,x_3) \ra
M_X(x_1,x_2) \leftarrowtail M_X^\hequiv(x_1,x_2) \bigr).
\]
To prove the proposition, we need to show that for all
$\underline{x}$, the map $t_{\underline{x}}\colon T_{\underline{x}}\ra
T_{\underline{x}}'$ induced 
by $t$ is a weak equivalence. 

Given a point $f$ in $M_X(x_0,x_1)$, we write $M_X(x_0,x_1)_f$ for the
path component of $M_X(x_0,x_1)$ containing $f$.  Given 
sequence of points $f_i$ in $M_X(x_{i-1},x_i)$, we write
$M_X(x_0,\dots,x_n)_{f_1,\dots,f_n}$ for the path component of
$M_X(x_0,\dots,x_n)$ which projects to $M_X(x_0,x_1)_{f_1}\times
\cdots \times M_X(x_{n-1},x_n)_{f_n}$ under the Segal map.
We claim that if $f\in M_X(x_0,x_1)$, $g\in M_X^\hequiv(x_1,x_2)$, and
$h\in M_X(x_2,x_3)$, then the maps
\[
\zeta\colon M_X(x_0,x_1,x_2,x_3)_{f,g,h}
\ra M_X(x_0,x_1,x_2)_{f,g}\times M_X(x_1,x_3)_{h\circ g}
\]
and
\[
\eta\colon M_X(x_0,x_1,x_2)_{f,g}\times M_X(x_1,x_3)_{h\circ g} \ra
M_X(x_0,x_2)_{g\circ f}\times M_X(x_1,x_2)_g \times
M_X(x_1,x_3)_{h\circ g}
\]
are weak equivalences.  This is a straightforward calculation, using
the ideas of \cite{rezk-ho-theory-of-ho-theory}*{Prop.\ 11.6}.
The map $t_{\underline{x}}$ is the disjoint union of maps $\eta\zeta$
over the appropriate path components, and thus the proposition is proved.
\end{proof}

\section{$(n+k,n)$-$\Theta$-spaces}
\label{sec:theta-spaces}

In this section, we do three things.  First, we make precise the
``informal description'' of $(n+k,n)$-$\Theta$-spaces given in
\S\ref{subsec:informal-description}.  Next, we identify the
``discrete'' $(\infty,n)$-$\Theta$-spaces
\eqref{prop:rigid-n-cat-is-discrete-theta-space}.  Finally, we show
that
``groupoids'' in $(n+k,n)$-$\Theta$-spaces are essentially the same as
$(n+k)$-truncated spaces \eqref{prop:quillen-equiv-groupoids-spaces},
thus proving the ``homotopy hypothesis'' for 
these models.  

\subsection{Functor associated to a presheaf on $\Theta_n$}

Given an object $X$ of $\sPsh(\Theta_n)$, let $\bar{X}\colon
\sPsh(\Theta_n)^\op\ra \Sp$ denote the functor defined by
\[
\bar{X}(K) \defeq \Map_{\Theta_n}(K,X).
\]
The construction $X\mapsto \bar{X}$ is the Yoneda embedding of
$\sPsh(\Theta_n)$ into the category of $\Sp$-enriched functors
$\sPsh(\Theta_n)^\op\ra \Sp$.  The object $X$ is recovered from the
functor $\bar{X}$ by the formula
$X(\theta) \approx \bar{X}(F\theta)$.

\subsection{The discrete nerve}

Given a strict $n$-category $C$, we define the \dfn{discrete nerve} of
$C$ to be the presheaf of sets $\dnerve C$ on $\Theta_n$ defined by
\[
(\dnerve C)(\theta) = \stcat{n}(\tau_n \theta, C).
\]
Since we can regard presheaves of sets as a full subcategory of
discrete presheaves of simplicial sets, we will regard $\dnerve$ as a
functor 
$\dnerve\colon \stcat{n}\ra \sPsh(\Theta_n)$.  This functor is fully
faithful.  Finally, note that there is a natural isomorphism
$F\approx \dnerve \tau$,
where $\tau_n\colon \Theta_n\ra \stcat{n}$ is the inclusion functor of
\eqref{cor:tau-n-fully-faithful}, and $F\colon \Theta_n\ra
\sPsh(\Theta_n)$ is the Yoneda 
embedding of $\Theta_n$.

\subsection{The suspension and inclusion functors}

For all $n\geq1$ there is a \dfn{suspension} functor
\[
\sigma\colon \Theta_{n-1}\ra \Theta_n
\]
defined on objects by $\sigma(\theta) \defeq [1](\theta)$.  Composing
suspension functors gives functors $\sigma^k\colon \Theta_{n-k}\ra
\Theta_n$ for $0\leq k\leq n$.

For all $n\geq1$ there is an \dfn{inclusion} functor
\[
\tau\colon \Theta_{n-1}\ra \Theta_n
\]
which is the restriction of the standard inclusion $\stcat{n-1}\ra
\stcat{n}$ to $\Theta_{n-1}$.  Composing inclusion functors gives
functors $\tau^k\colon \Theta_{n-k}\ra \Theta_n$ for $0\leq k\leq n$.

\subsection{The category $\thetaspk{n}{k}$}

For $0\leq n<\infty$, let $\mathscr{T}_{n,\infty}$ be the set of morphisms in
$\sPsh(\Theta_n)$ defined by
\begin{align*}
\mathscr{T}_{0,\infty} &= \varnothing,
\\
\mathscr{T}_{n,\infty} &= \Se_{\Theta_{n-1}}\cup
\Cpt_{\Theta_{n-1}} \cup 
V[1](\mathscr{T}_{n-1,\infty})\text{ for $n>0$.}
\end{align*}
If also given $-2\leq k<\infty$, let $\mathscr{T}_{n,k}$ be the set of
morphisms in $\sPsh(\Theta_n)$ defined by 
\begin{align*}
\mathscr{T}_{0,k} &= \{\partial \Delta^{k+2}\ra \Delta^{k+2}\},
\\
\mathscr{T}_{n,k} &= \Se_{\Theta_{n-1}}\cup \Cpt_{\Theta_{n-1}} \cup
V[1](\mathscr{T}_{n-1,k})\text{ for $n>0$.}
\end{align*}
In the notation of \S\ref{sec:presentation-theta-css},
$\mathscr{T}_{n,k}=(\mathscr{T}_{n-1,k})_\Theta$ for $n>0$.
\begin{prop}
For all $0\leq n<\infty$ and $-2\leq k\leq \infty$, the presentation
$(\Theta_n, \mathscr{T}_{n,k})$ is cartesian.
\end{prop}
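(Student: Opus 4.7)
The plan is a straightforward induction on $n$, using the main theorem \eqref{thm:main-thm-precise} as the engine that drives the inductive step.

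For the base case $n=0$, the category $\Theta_0$ is terminal, so $\sPsh(\Theta_0)$ is just $\Sp$. When $k=\infty$, $\mathscr{T}_{0,\infty}=\varnothing$, so $\sPsh(\Theta_0)^\inj_{\mathscr{T}_{0,\infty}}=\Sp$, which is cartesian; equivalently, the presentation $(1,\varnothing)$ is trivially cartesian since every $\varnothing$-local object is automatically $\varnothing$-cartesian local. For $-2\leq k<\infty$, the localization is $\Sp_k$, which was already noted to be a cartesian model category; the corresponding presentation is cartesian by the proposition relating cartesian presentations and cartesian model categories.

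For the inductive step, suppose $(\Theta_{n-1},\mathscr{T}_{n-1,k})$ is a cartesian presentation. By definition,
\[
\mathscr{T}_{n,k}=\Se_{\Theta_{n-1}}\cup \Cpt_{\Theta_{n-1}}\cup V[1](\mathscr{T}_{n-1,k})=(\mathscr{T}_{n-1,k})_\Theta,
\]
where the second equality is exactly the definition of $\mathscr{S}_\Theta$ from \S\ref{sec:presentation-theta-css} applied to the presentation $(\Theta_{n-1},\mathscr{T}_{n-1,k})$. Now apply \eqref{thm:main-thm-precise} to conclude that $(\Theta\Theta_{n-1},(\mathscr{T}_{n-1,k})_\Theta)=(\Theta_n,\mathscr{T}_{n,k})$ is also a cartesian presentation, completing the induction.

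There is essentially no obstacle here: all of the substantive work has been carried out in \eqref{thm:main-thm-precise}, which in turn rested on \eqref{prop:se-is-cartesian}, \eqref{prop:se-plus-cpt-is-cartesian}, and \eqref{prop:se-v-cartesian}. The only thing to verify is that the recursive definition of $\mathscr{T}_{n,k}$ lines up with the $(\,\cdot\,)_\Theta$ construction of \S\ref{sec:presentation-theta-css}, which is immediate from comparing formulas.
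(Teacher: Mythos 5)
Your proof is correct and is exactly the argument the paper intends: the paper's proof reads ``Immediate from \eqref{thm:main-thm-precise},'' and your write-up simply makes explicit the induction on $n$ (with base case $\Sp$ or $\Sp_k$, both already noted to be cartesian) together with the observation, recorded in the paper just before the proposition, that $\mathscr{T}_{n,k}=(\mathscr{T}_{n-1,k})_\Theta$ for $n>0$. No issues.
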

\begin{proof}
Immediate from \eqref{thm:main-thm-precise}.
\end{proof}

Let $\thetaspk{n}{k}\defeq \sPsh(\Theta_n)^\inj_{\mathscr{T}_{n,k}}$;
we call this the \dfn{$(n+k,n)$-$\Theta$-space model category}.  We
show that the fibrant objects of $\thetaspk{n}{k}$ are precisely the
$(n+k,n)$-$\Theta$-spaces described in 
\S\ref{subsec:informal-description}.  

\subsection{The structure of the sets $\mathscr{T}_{n,k}$}
\label{subsec:structure-of-t-nk}

For $n\geq0$ and $-2\leq k<\infty$, we have
\[
\mathscr{T}_{n,\infty} = \mathscr{T}^{\Se}_n \cup \mathscr{T}^{\Cpt}_n
\]
and
\[
\mathscr{T}_{n,k} = \mathscr{T}_{n,\infty}\cup \{(V[1])^{n}(\partial
\Delta^{k+2}\ra \Delta^{k+2})\},
\]
where
\[
\mathscr{T}^{\Se}_n = \set{(V[1])^k(
  \se^{\theta_1,\dots,\theta_r})}{0\leq k<n,\; r\geq2,\;
  \theta_1,\dots,\theta_r\in \ob \Theta_{n-k}}
\]
and 
\[
\mathscr{T}^{\Cpt}_n = \set{(V[1])^k(T_\# p)}{0\leq k<n}.
\]

\begin{prop}
For $\theta_1,\dots,\theta_r\in \ob\Theta_{n-k}$, the map
$(V[1])^k(\se^{\theta_1,\dots,\theta_r})$ is isomorphic to the map 
\[
\colim\bigl( F\sigma^k [1](\theta_1)\la F\sigma^k[0]\ra \cdots \la
F\sigma^k[0] \ra F\sigma^k[1](\theta_r)\bigr) \ra
F\sigma^k[r](\theta_1,\dots, \theta_r),
\]
induced by applying $F\sigma^k$ to the maps $\delta^{i-1,i}\colon
[1](\theta_i)\ra [r](\theta_1,\dots,\theta_r)$.
\end{prop}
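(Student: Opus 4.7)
The plan is to prove the proposition by induction on $k$. The base case $k=0$ is essentially tautological: with $\sigma^0$ interpreted as the identity, the definition in \S\ref{subsec:the-set-se} presents $\se^{(\theta_1,\dots,\theta_r)}$ as exactly the map
\[
G[r](\theta_1,\dots,\theta_r) = \colim\bigl(F[1](\theta_1)\xla{F\delta^1}F[0]\xra{F\delta^0}\cdots\xra{F\delta^0} F[1](\theta_r)\bigr) \;\lra\; F[r](\theta_1,\dots,\theta_r)
\]
induced by the $F\delta^{i-1,i}$, which is the stated formula at $k=0$.

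For the inductive step, I would show that a single application of $V[1]$ converts the colimit presentation at level $k$ into the corresponding one at level $k+1$; iterating then yields the result. Two ingredients make this work. First, the natural isomorphism $\nu\colon F_{\Theta C}\ra V(\Theta F_C)$ of \S\ref{subsec:intertwining}, specialised to $m=1$, gives the canonical identification $V[1](F_C\vartheta)\cong F_{\Theta C}[1](\vartheta)=F_{\Theta C}\sigma(\vartheta)$; in other words, $V[1]$ intertwines the Yoneda embedding with the suspension functor $\sigma$. Second, by \eqref{prop:intertwining-functor-colimit-properties}(2) the functor $V[1]\colon \sPsh(C)\ra V[1](\varnothing)\backslash\sPsh(\Theta C)$ is a left adjoint, and therefore preserves colimits; since the diagram defining $G[r](\theta_1,\dots,\theta_r)$ is nonempty, its colimit in the undercategory coincides with the ordinary colimit in $\sPsh(\Theta C)$. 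Combining the two shows that $V[1]$ sends $\se^{(\theta_1,\dots,\theta_r)}$ to the map
\[
\colim\bigl(F\sigma[1](\theta_1)\la F\sigma[0]\ra\cdots\ra F\sigma[1](\theta_r)\bigr) \;\lra\; F\sigma[r](\theta_1,\dots,\theta_r),
\]
which is the claimed formula at level $k+1$.

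The only real obstacle is bookkeeping: one must keep the indices of the ambient categories $\Theta_\bullet$, of the iterated suspensions $\sigma^\bullet$, and of the iterated $V[1]$'s in lockstep through the induction, and confirm that each application of $V[1]$ advances both the ambient $\Theta$-index and the exponent of $\sigma$ by exactly one. No deeper issue arises, since the two key facts—that $V[1]$ sends Yoneda to Yoneda composed with $\sigma$, and that $V[1]$ preserves the colimit defining $G[r]$—are both direct consequences of material already established in \S\ref{subsec:intertwining}.
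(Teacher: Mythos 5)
Your proposal is correct and follows essentially the same route as the paper, whose proof simply cites the identification $V[1](F\theta)\approx F\sigma(\theta)$ (the content of \eqref{lemma:v-of-f-theta}, which you rederive from the natural isomorphism $\nu$) together with the colimit-preservation of $V[1]$ from \eqref{prop:intertwining-functor-colimit-properties}. The only quibble is that the colimit in the undercategory $V[1](\varnothing)\backslash\sPsh(\Theta C)$ agrees with the ambient colimit because the zigzag diagram defining $G[r]$ is \emph{connected}, not merely nonempty; since that diagram is indeed connected, your argument goes through.
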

\begin{proof}
Immediate using \eqref{lemma:v-of-f-theta} and
\eqref{prop:intertwining-functor-colimit-properties}.  
\end{proof}

\subsection{The objects $O_k$ and $\partial O_k$}

Fix $n\geq0$.  We write $O_k$ for the discrete nerve of the
free-standing $k$-cell in $\stcat{n}$.  It follows that $O_k \approx
F\sigma^k[0] \approx F[1]([1](\cdots [1]([0])))$, where
$\sigma^k\colon \Theta_{n-k}\ra \Theta_n$.   Note that our usage of
$O_k$ here is slightly different than that described in the
introduction, where 
$O_k$ was used to mean the object of $\Theta_n$, rather than the
object of $\sPsh(\Theta_n)$.

If $k>0$, then the free-standing $k$-cell in $\stcat{n}$ is a
$k$-morphism between two parallel $(k-1)$-cells.  Let $s_k,t_k\colon
O_{k-1}\ra O_k$ denote the map between discrete nerves induced by the
inclusion of the two parallel $(k-1)$-cells.  Equivalently, $s_k$ and
$t_k$ are the maps obtained by applying $\sigma^{k-1}$ to the maps
$\delta^0,\delta^1\colon [0]\ra [1]$ of $\Theta_{n-k}$.

Let $\partial O_k$  denote the maximal proper subobject of $O_k$; that
is, $\partial O_k\subset O_k$ is the largest sub-$\Theta_n$-presheaf of $O_k$ 
which does not contain the ``tautological section'' $\iota \in
O_k(\sigma^k[0])$.  Let $e_k\colon \partial O_k\ra O_k$ denote the
inclusion.  
\begin{prop}
For $1\leq k\leq n$, the map
\[
\colim\bigl( O_{k-1} \xla{e_{k-1}} \partial O_{k-2} \xra{e_{k-1}}
O_{k-1}\bigr) \ra \partial O_k
\]
defined by $s_k,t_k\colon O_{k-1}\ra O_k$
is an isomorphism in $\sPsh(\Theta_n)$.
\end{prop}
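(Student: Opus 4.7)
The plan is to induct on $k \geq 1$, based on the identification $O_k \cong V[1](O^{(n-1)}_{k-1})$ in $\sPsh(\Theta_n)$, where $O^{(n-1)}_{k-1}$ denotes the analogous $(k-1)$-cell object in $\sPsh(\Theta_{n-1})$. This identification is a consequence of the natural isomorphism $\nu \colon F_{\Theta C} \Rightarrow V(\Theta F_C)$ of \S\ref{subsec:intertwining}, applied with $C = \Theta_{n-1}$ to $\sigma^k[0] = [1](\sigma^{k-1}[0])$.

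The key structural step will be the \emph{boundary identification}
\[
\partial O_k \cong V[1](\partial O^{(n-1)}_{k-1}).
\]
To prove this, I will use the explicit formula for $V[1]$, which, evaluated at $\theta = [m](\theta_1,\dots,\theta_m)$, gives
\[
V[1](A)(\theta) \cong \{0\} \amalg \{1\} \amalg \coprod_{\text{non-constant } \delta \colon [m] \to [1]} A(\theta_{i_\delta}),
\]
where $i_\delta$ is the unique index at which $\delta$ jumps from $0$ to $1$. A morphism $s \colon \theta \to \sigma^k[0]$ is either one of the two constants, or a pair $(\delta, f)$ with $\delta$ non-constant and $f \in \Theta_{n-1}(\theta_{i_\delta}, \sigma^{k-1}[0]) = O^{(n-1)}_{k-1}(\theta_{i_\delta})$. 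A direct calculation with the composition law in $\Theta_n$ shows that the sub-presheaf of $O_k$ generated by $s$ contains the tautological section $\iota$ if and only if $s$ is non-constant and its $\Theta_{n-1}$-component $f$ admits a right inverse, which by maximality of $\partial O^{(n-1)}_{k-1}$ is equivalent to $f \notin \partial O^{(n-1)}_{k-1}(\theta_{i_\delta})$. Therefore $\partial O_k(\theta)$ consists exactly of the two constants together with those non-constant sections whose $f$-component lies in $\partial O^{(n-1)}_{k-1}$, which is precisely $V[1](\partial O^{(n-1)}_{k-1})(\theta)$.

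With the boundary identification in hand the inductive step is formal. For the base case $k = 1$, $\partial O^{(n-1)}_0 = \varnothing$, so $\partial O_1 \cong V[1](\varnothing) \cong O_0 \amalg O_0$, which agrees with the pushout since $\partial O_0 = \varnothing$. For $k \geq 2$, the inductive hypothesis gives $\partial O^{(n-1)}_{k-1} \cong O^{(n-1)}_{k-2} \cup_{\partial O^{(n-1)}_{k-2}} O^{(n-1)}_{k-2}$ via $s^{(n-1)}_{k-1}$ and $t^{(n-1)}_{k-1}$. Applying the left adjoint $V[1] \colon \sPsh(\Theta_{n-1}) \to V[1](\varnothing) \backslash \sPsh(\Theta_n)$ of \eqref{prop:intertwining-functor-colimit-properties}, which preserves colimits (and pushouts in this slice agree with pushouts in $\sPsh(\Theta_n)$ since all objects involved receive a map from $V[1](\varnothing)$), together with $V[1](O^{(n-1)}_{k-2}) \cong O_{k-1}$ and the boundary identification at the previous level $V[1](\partial O^{(n-1)}_{k-2}) \cong \partial O_{k-1}$, yields $\partial O_k \cong O_{k-1} \cup_{\partial O_{k-1}} O_{k-1}$. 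Naturality of $\nu$ identifies $V[1]$ applied to $s^{(n-1)}_{k-1}, t^{(n-1)}_{k-1}$ with $s_k, t_k$. The hard part will be the boundary identification itself, which requires careful bookkeeping with the explicit formula for $V[1]$ and the action of $\Theta_n$-morphisms on sections; the rest is routine manipulation of colimits.
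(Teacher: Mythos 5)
Your argument is correct. The paper states this proposition without proof, so there is no argument to compare against directly; but your route is essentially the one the paper invokes a few lines later for \eqref{prop:v-of-cell-and-boundary} (``a straightforward calculation using \eqref{lemma:v-of-f-theta} and \eqref{prop:intertwining-functor-colimit-properties}''), and your induction in fact establishes that proposition and this one simultaneously. The two points that need checking are exactly the ones you check: first, that $\partial O_k(\theta)$ consists of the morphisms $\theta\to\sigma^k[0]$ admitting no section, and that a morphism $(\delta,f)$ is a split epimorphism if and only if $\delta$ is non-constant and $f\colon\theta_{i_\delta}\to\sigma^{k-1}[0]$ is a split epimorphism --- your computation with the composition law in $\Theta C$ is right, taking $\epsilon(0)=i_\delta-1$, $\epsilon(1)=i_\delta$; second, that pushouts in $V[1](\varnothing)\backslash\sPsh(\Theta_n)$ are computed in $\sPsh(\Theta_n)$, so that the colimit preservation of \eqref{prop:intertwining-functor-colimit-properties} applies and is compatible with the identification $s_k=V[1](s_{k-1})$, $t_k=V[1](t_{k-1})$ coming from $\nu$. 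One small remark: the displayed colimit in the statement should read $\partial O_{k-1}$ rather than $\partial O_{k-2}$ (a typo in the source); your proof correctly proves the corrected statement.
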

By abuse of notation, we write $s_k,t_k\colon O_{k-1}\ra \partial O_k$
for the inclusion of the two copies of $O_{k-1}$.

It is clear that $\partial O_k$ is isomorphic to the discrete nerve of the
``free-standing pair of parallel $(k-1)$-cells''.  Observe that
$\partial O_0=\varnothing$.

\begin{lemma}\label{lemma:v-of-f-theta}
For $\theta\in \ob \Theta_{n-1}$, the object $V[1](F\theta)\approx
F([1](\theta))\approx F\sigma(\theta)$ as objects of $\sPsh(\Theta_{n})$.
\end{lemma}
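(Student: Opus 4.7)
The plan is to deduce this immediately from the natural isomorphism $\nu\colon F_{\Theta C} \ra V(\Theta F_C)$ established in \S\ref{subsec:intertwining} (just after the definition of the intertwining functor), specializing to $C = \Theta_{n-1}$ so that $\Theta C = \Theta_n$. Evaluating $\nu$ at the object $[1](\theta)$ of $\Theta_n$ produces a natural isomorphism
\[
F_{\Theta_n}\bigl([1](\theta)\bigr) \xra{\sim} V\bigl([1](F_{\Theta_{n-1}}\theta)\bigr) = V[1](F\theta),
\]
and by definition of the suspension functor $\sigma(\theta) = [1](\theta)$, so $F\sigma(\theta) = F[1](\theta)$, which gives the displayed chain of isomorphisms.

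For readers who prefer a direct check, I would unwind both sides at an arbitrary object $[q](c_1,\dots,c_q)$ of $\Theta_n$. On the left,
\[
V[1](F\theta)\bigl([q](c_1,\dots,c_q)\bigr) = \coprod_{\delta\colon[q]\ra[1]}\;\prod_{i=1}^{q}\;\prod_{j=\delta(i-1)+1}^{\delta(i)} (F\theta)(c_i),
\]
while on the right, from the explicit description of morphisms in $\Theta_n = \Theta\Theta_{n-1}$,
\[
F[1](\theta)\bigl([q](c_1,\dots,c_q)\bigr) = \Theta_n\bigl([q](c_1,\dots,c_q),[1](\theta)\bigr) \approx \coprod_{\delta\colon[q]\ra[1]}\;\prod_{i=1}^{q}\;\prod_{j=\delta(i-1)+1}^{\delta(i)}\Theta_{n-1}(c_i,\theta).
\]
Since $(F\theta)(c_i) = \Theta_{n-1}(c_i,\theta)$, these expressions agree summand-by-summand, and the identification is plainly natural in $[q](c_1,\dots,c_q)$.

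There is no real obstacle here: the lemma is essentially a restatement of the defining formula for $V$ in the single special case $m=1$, $A_1 = F\theta$, and amounts to noting that the only nontrivial factor in the product $\prod_j A_j(c_i)$ is a hom-set of $\Theta_{n-1}$ which is precisely the hom-set contributing to the representable $F[1](\theta)$ on $\Theta_n$.
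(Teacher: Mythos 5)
Your proof is correct and matches the paper's, which simply records this as ``a straightforward calculation'': the identity $V[m](Fd_1,\dots,Fd_m)\approx F_{\Theta C}[m](d_1,\dots,d_m)$ is exactly the computation carried out in \S\ref{subsec:intertwining} to produce the natural isomorphism $\nu\colon F_{\Theta C}\ra V(\Theta F_C)$, and the lemma is its specialization to $C=\Theta_{n-1}$, $m=1$, $d_1=\theta$, combined with the definition $\sigma(\theta)=[1](\theta)$. Both your citation of $\nu$ and your direct unwinding are valid and are essentially the intended argument.
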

\begin{proof}
A straightforward calculation.
\end{proof}

\begin{prop}\label{prop:v-of-cell-and-boundary}
For $1\leq k<n$, the functor $V[1]\colon \sPsh(\Theta_{n-1})\ra
\sPsh(\Theta_n)$ carries the diagram
\[
s_k,t_k\colon O_{k-1}\rightrightarrows O_k \la \partial O_k\noloc e_k
\]
up to isomorphism to the diagram
\[
s_{k+1},t_{k+1} \colon O_k \rightrightarrows O_{k+1} \la \partial
O_{k+1}\noloc e_{k+1}.
\]
Furthermore, $V[1](\varnothing)=V[1](\partial O_0)\approx \partial O_1$.
\end{prop}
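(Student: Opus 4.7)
The plan is to induct on $k$, using the pushout description of $\partial O_k$ together with the fact that $V[1]$ preserves pushouts.

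I would begin with the ``furthermore'' clause, which serves as the base case. Evaluating $V[1](\varnothing)$ at an arbitrary $[q](c_1,\dots,c_q)\in \ob\Theta_n$ via the formula in \S\ref{subsec:intertwining}, I see that a summand indexed by $\delta\colon[q]\ra[1]$ is nonempty iff it involves no factors $\varnothing(c_i)$, i.e.\ iff $\delta$ is constant. The two constant maps $\delta^0,\delta^1$ give $V[1](\varnothing)$ as the constant presheaf on a two-element set, namely $F[0]\amalg F[0]\approx O_0\amalg O_0$. Since $\partial O_0=\varnothing$, the pushout formula for $\partial O_k$ yields $\partial O_1\approx O_0\amalg O_0$, so $V[1](\partial O_0)\approx\partial O_1$.

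Next I handle the $O_{k-1}\rightrightarrows O_k$ part by invoking Lemma \ref{lemma:v-of-f-theta}: applying it to $\theta=\sigma^{k-1}[0]$ and $\theta=\sigma^k[0]$ (both of which lie in $\ob\Theta_{n-1}$ since $k<n$) gives natural isomorphisms $V[1](O_{k-1})\approx O_k$ and $V[1](O_k)\approx O_{k+1}$. By naturality, the maps $s_k=F\sigma^{k-1}(\delta^0)$ and $t_k=F\sigma^{k-1}(\delta^1)$ are carried to $F\sigma^k(\delta^0)=s_{k+1}$ and $F\sigma^k(\delta^1)=t_{k+1}$.

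For the object $\partial O_k$ and the map $e_k$, I induct on $k\geq 1$. I use the pushout presentation $\partial O_k\approx O_{k-1}\cup_{\partial O_{k-1}}O_{k-1}$ (along the maps $s_k,t_k$), together with part~(2) of Prop.~\ref{prop:intertwining-functor-colimit-properties}, which says that $V[1]\colon\sPsh(\Theta_{n-1})\ra V[1](\varnothing)\backslash\sPsh(\Theta_n)$ is a left adjoint and therefore preserves the pushout. Combined with the identifications of the previous paragraph and the induction hypothesis $V[1](\partial O_{k-1})\approx\partial O_k$, this gives
\[
V[1](\partial O_k)\approx V[1](O_{k-1})\cup_{V[1](\partial O_{k-1})}V[1](O_{k-1})\approx O_k\cup_{\partial O_k}O_k\approx \partial O_{k+1},
\]
with $V[1](s_k),V[1](t_k)$ realizing the two structure maps $s_{k+1},t_{k+1}$ of $\partial O_{k+1}$. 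The inclusion $e_k\colon\partial O_k\ra O_k$, being induced on the pushout by $s_k,t_k\colon O_{k-1}\rightrightarrows O_k$, is carried by naturality to the inclusion induced by $s_{k+1},t_{k+1}$, which is $e_{k+1}$.

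The main obstacle is purely organizational: making sure that each identification is natural enough so that the inductive step yields an isomorphism of the full diagram $(s_k,t_k,e_k)\mapsto(s_{k+1},t_{k+1},e_{k+1})$, rather than mere isomorphisms of the separate objects. Everything else follows from Lemma~\ref{lemma:v-of-f-theta} and the colimit-preservation property of the intertwining functor.
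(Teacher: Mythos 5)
Your proof is correct and is exactly the argument the paper intends: its entire proof of this proposition is the single sentence ``a straightforward calculation using \eqref{lemma:v-of-f-theta} and \eqref{prop:intertwining-functor-colimit-properties},'' and you have filled in that calculation with the same two ingredients (the identification $V[1](F\theta)\approx F\sigma(\theta)$ for the cells, and colimit-preservation of $V[1]$ into the under-category for the pushout presentation of $\partial O_k$). Your care about the $k=1$ case, where the pushout over $\partial O_0=\varnothing$ is still a connected colimit and hence sent to the pushout over $V[1](\varnothing)\approx\partial O_1$, is exactly the point worth being explicit about.
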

\begin{proof}
Again, a straightforward calculation using \eqref{lemma:v-of-f-theta}
and  \eqref{prop:intertwining-functor-colimit-properties}.
\end{proof}

\subsection{Mapping objects between pairs of parallel $(k-1)$-cells}

Let $X$ be a $\mathscr{T}^\Se_n$-fibrant object in $\sPsh(\Theta_n)$.
We call the space $\bar{X}(O_k) = \Map_{\Theta_n}(O_k, X)$ the
\dfn{moduli space of $k$-cells} of $X$.  We call the space
$\bar{X}(\partial O_k) = \Map_{\Theta_n}(\partial O_k, X)$ the
\dfn{moduli space of pairs of parallel $(k-1)$-cells}.
(These are the spaces denoted $X(O_k)$ and $X(\partial O_k)$ in the
introduction.) 

Observe that the maps $s_k,t_k\colon O_{k-1}\ra \partial O_k$
determine an isomorphism
\[
\bar{X}(\partial O_k)\xra{\sim}
\bar{X}(O_{k-1})\times_{\bar{X}(\partial O_{k-1})} \bar{X}(O_{k-1}).
\]
In particular, $\bar{X}(\partial O_k) \ra
\bar{X}(O_{k-1})\times \bar{X}(O_{k-1})$ is a monomorphism, so that a point of
$\bar{X}(\partial O_k)$ can be named by a suitable pair of points in
$\bar{X}(O_{k-1})$. 

Suppose $1\leq k\leq n$, and suppose given $(f_0,f_1)\in
\bar{X}(\partial O_k)$.  We write $\umap_X(f_0,f_1)$ for the object of
$\sPsh(\Theta_{n-k})$ defined by
\[
\umap_X(f_0,f_1)(\theta) \defeq \lim\bigl( \bar{X}(V[1]^k(F\theta))) \ra
\bar{X}(V[1]^k(\varnothing)) \approx 
\bar{X}(\partial O_k) \la \{(f_0,f_1)\} \bigr).
\]
Observe that these objects can be obtained by iterating the mapping
object construction of \eqref{subsec:mapping-objects}.  In particular,
if $(x_0,x_1)\in X[0]\times X[0]\approx \bar{X}(\partial O_1)$, then
$\umap_X(x_0,x_1)\approx M_X(x_0,x_1)$ as objects of $\sPsh(\Theta_{n-1})$.

\begin{lemma}\label{lemma:se-fibrant-mapping-gadget}
If $X$ is a $\mathscr{T}^\Se_n$-fibrant object of $\sPsh(\Theta_n)$,
then $\umap_X(f_0,f_1)$ is a $\mathscr{T}^\Se_{n-k}$-fibrant object of
$\sPsh(\Theta_{n-k})$.  
\end{lemma}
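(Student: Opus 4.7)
The plan is to realize $\umap_X(f_0,f_1)$ as the value of a right adjoint applied to $X$, and then transfer the required fibrancy along that adjunction.  The starting point is the identification $(V[1])^k(\varnothing)\approx \partial O_k$ from \eqref{prop:v-of-cell-and-boundary}, together with the observation (obtained by iterating \eqref{prop:intertwining-functor-colimit-properties}(2) and using \eqref{lemma:v-of-f-theta}) that $(V[1])^k\colon \sPsh(\Theta_{n-k})\to \partial O_k\backslash \sPsh(\Theta_n)$ is the left adjoint of a Quillen pair.  From this I would deduce, for every $K\in \sPsh(\Theta_{n-k})$, a natural equivalence
\[
\Map_{\Theta_{n-k}}\bigl(K,\umap_X(f_0,f_1)\bigr) \;\approx\; \mathrm{fib}_{(f_0,f_1)}\bigl(\Map_{\Theta_n}((V[1])^kK,X)\to \bar X(\partial O_k)\bigr).
\]
The case $K=F\theta$ is the very definition of $\umap_X(f_0,f_1)$; the general case follows because $(V[1])^k$ preserves the colimits used to write $K$ as a colimit of representables, so that both sides turn colimits in $K$ into limits of spaces.

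With the fiber formula in hand, injective fibrancy is immediate.  By the left Quillen proposition following \eqref{prop:intertwining-functor-colimit-properties}, the functor $V[1]$ preserves monomorphisms and levelwise weak equivalences, hence so does $(V[1])^k$.  For any injective (trivial) cofibration $\alpha\colon A\to B$ in $\sPsh(\Theta_{n-k})$, the map $(V[1])^k\alpha$ is an injective (trivial) cofibration in $\sPsh(\Theta_n)$; since $X$ is injective fibrant, applying $\Map_{\Theta_n}({-},X)$ yields a Kan (trivial) fibration.  Taking fibers over $(f_0,f_1)$ preserves that property, so by the formula above $\umap_X(f_0,f_1)$ has the right lifting property against every injective (trivial) cofibration, and is injective fibrant.

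For Segal-locality, every element of $\mathscr{T}^{\Se}_{n-k}$ has the form $\gamma=(V[1])^j\se^{\theta_1,\dots,\theta_r}$ with $0\le j<n-k$ and $\theta_i\in \ob\Theta_{n-k-j}$; applying $(V[1])^k$ yields $(V[1])^{k+j}\se^{\theta_1,\dots,\theta_r}$, which lies in $\mathscr{T}^{\Se}_n$ because $k+j<n$.  By hypothesis $X$ is local at this map, so $\Map_{\Theta_n}((V[1])^k\gamma,X)$ is a trivial Kan fibration; its fiber at $(f_0,f_1)$ is thus a weak equivalence, and the formula translates this into the statement that $\umap_X(f_0,f_1)$ is local with respect to $\gamma$.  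Combined with the previous step this gives $\mathscr{T}^{\Se}_{n-k}$-fibrancy.  The main technical obstacle is the fiber formula itself: one must carefully check that the canonical map $\partial O_k\to (V[1])^kK$ over which the fiber is taken agrees with $(V[1])^k$ applied to $\varnothing \to K$ under the identification of \eqref{prop:v-of-cell-and-boundary}, and that $(V[1])^k$ genuinely preserves the colimits required to pass from the representable case $K=F\theta$ to general $K$.  Once that bookkeeping is set up, the remainder is a clean transfer along a Quillen adjunction.
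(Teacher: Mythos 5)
Your proposal is correct and follows essentially the same route as the paper, whose proof is the one-line observation that $\mathscr{T}^{\Se}_n\supseteq V[1]^k(\mathscr{T}^{\Se}_{n-k})$ — exactly your key locality step — with the fiber formula and injective fibrancy left implicit as unpacking of the definition of $\umap_X(f_0,f_1)$ via the slice adjunction. The extra bookkeeping you supply (identifying $\umap_X(f_0,f_1)$ as the value of the right adjoint to $(V[1])^k\colon \sPsh(\Theta_{n-k})\to \partial O_k\backslash\sPsh(\Theta_n)$ applied to $(X,(f_0,f_1))$) is a correct and complete justification of what the paper calls ``immediate.''
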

\begin{proof}
Immediate from the fact that $\mathscr{T}^\Se_n\supseteq
V[1]^k(\mathscr{T}^\Se_{n-k})$. 
\end{proof}

\subsection{The moduli space $X(O_k)^\hequiv$ of
  $k$-equivalences} 

Let $X$ be a $\mathscr{T}^\Se_n$-fibrant object of $\sPsh(\Theta_n)$,
and suppose $1\leq k\leq n$.
Given a $k$-cell in $X$, i.e., a point $g$ in $\bar{X}(O_k)$, let 
\[
b_0= (\bar{X}s_k)(g)\quad \text{and}\quad b_1=(\bar{X}t_k)(g)
\]
 be
the ``source'' and 
``target'' $(k-1)$-cells of $g$, and let 
\[
a_0 =
(\bar{X}s_{k-1})b_0=(\bar{X}s_{k-1})b_1\quad \text{and}\quad a_1 =
(\bar{X}t_{k-1})b_0=(\bar{X}t_{k-1})b_1
\]
 be the ``source'' and
``target'' $(k-2)$-cells of $b_0$ and $b_1$.    Let
$Y=\umap_X(a_0,a_1)$ as an object of 
$\sPsh(\Theta_{n-k+1})$; by \eqref{lemma:se-fibrant-mapping-gadget}
the 
presheaf $Y$ is $\mathscr{T}^\Se_{n-k+1}$-fibrant.  Then $b_0$ and
$b_1$ are ``objects'' of $Y$, 
(that is, points in $Y[0]=\bar{Y}(O_0)$), and $g$ is a ``$1$-cell'' of
$Y$, (that is, a point of $\bar{Y}(O_1)$).  Recall
(\S\ref{subsec:homotopy-category-segal-object}) that $g$ thus
represents an element $[g]$ of the homotopy category $hY$ of $Y$.

Say that a $k$-cell $g$ of $X$ is a \dfn{$k$-equivalence} if it
represents an isomorphism in the homotopy category $hY$ of
$Y=\umap_X(a_0,a_1)$.  Let $\bar{X}(O_k)^\hequiv\subseteq
\bar{X}(O_k)$ denote the union of path components of $\bar{X}(O_k)$
which contain $k$-equivalences.

\subsection{Characterization of $\mathscr{T}^\Se_n\cup
  \mathscr{T}^\Cpt_n$-fibrant objects} 

Recall the map $i\colon F[1]\ra E$ of \S\ref{subsec:the-set-cpt}.
\begin{prop}
For all $1\leq k\leq n$, the map 
\[
\bar{X}(V[1]^{k-1}(T_\#i))\colon
\bar{X}(V[1]^{k-1}(T_\#E))\ra \bar{X}(V[1]^{k-1}(T_\#F[1]))\approx 
\bar{X}(O_k)
\]
factors through the subspace $\bar{X}(O_k)^\hequiv \subseteq
\bar{X}(O_k)$ and induces a weak equivalence
$\bar{X}(V[1]^{k-1}(T_\#E))\ra \bar{X}(O_k)^\hequiv$.
\end{prop}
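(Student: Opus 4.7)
The plan is to reduce the claim to \eqref{prop:e-represents-equiv-segal-object} by taking fibres over $\bar{X}(\partial O_{k-1})$, the moduli space of parallel pairs of $(k-2)$-cells. Fix a point $(a_0,a_1)\in\bar{X}(\partial O_{k-1})$ and set $Y := \umap_X(a_0,a_1)$; by \eqref{lemma:se-fibrant-mapping-gadget}, $Y$ is a $\mathscr{T}^\Se_{n-k+1}$-fibrant object of $\sPsh(\Theta_{n-k+1})=\sPsh(\Theta\Theta_{n-k})$, in particular a Segal object to which \eqref{prop:e-represents-equiv-segal-object} applies. For $k=1$ the base $\bar{X}(\partial O_0)=\bar{X}(\varnothing)$ is a point and $Y=X$ itself, so the assertion is literally \eqref{prop:e-represents-equiv-segal-object} applied to $X$; the content lies in $k\geq 2$.

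First I would identify the relevant maps of fibrations. The initial map $\varnothing\to T_\#E$ in $\sPsh(\Theta_{n-k+1})$ factors as $\varnothing\to T_\#F[1]\to T_\#E$. Applying $V[1]^{k-1}$ and iterating \eqref{lemma:v-of-f-theta} and \eqref{prop:v-of-cell-and-boundary} yields $V[1]^{k-1}(\varnothing)\approx \partial O_{k-1}$, $V[1]^{k-1}(T_\#F[1])\approx O_k$, and $V[1]^{k-1}(T_\#E)=:W$, so we obtain a commuting triangle of monomorphisms $\partial O_{k-1}\to O_k\to W$ in $\sPsh(\Theta_n)$. Since $X$ is injectively fibrant, $\bar{X}$ converts this into a commuting triangle of Kan fibrations over $\bar{X}(\partial O_{k-1})$.

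Next I would compute the fibres over the chosen $(a_0,a_1)$. Iterating part~(2) of \eqref{prop:intertwining-functor-colimit-properties} shows that $V[1]^{k-1}\colon \sPsh(\Theta_{n-k+1})\to V[1]^{k-1}(\varnothing)\backslash\sPsh(\Theta_n)$ is a left adjoint, hence preserves colimits. Consequently, for any $K\in\sPsh(\Theta_{n-k+1})$ there is a natural identification of the fibre of $\bar{X}(V[1]^{k-1}K)\to\bar{X}(\partial O_{k-1})$ over $(a_0,a_1)$ with $\Map_{\Theta_{n-k+1}}(K,Y)=\bar{Y}(K)$; this recovers the defining formula of $\umap_X$ when $K=F\theta$ is representable, and extends to arbitrary $K$ by writing $K$ as a colimit of representables and using that $\bar{Y}$ sends such colimits to limits. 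Specialising to $K=T_\#F[1]$ and $K=T_\#E$, the induced map on fibres of $\bar{X}(W)\to\bar{X}(O_k)$ over $(a_0,a_1)$ is the map $\bar{Y}(T_\#E)\to\bar{Y}(T_\#F[1])\approx\bar{Y}(O_1)$ induced by $T_\#i$; by \eqref{prop:e-represents-equiv-segal-object} applied to the Segal object $Y$, this factors through $Y^\hequiv\subseteq\bar{Y}(O_1)$ and is a weak equivalence onto $Y^\hequiv$.

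Finally I would reassemble the fibrewise data. By the very definition of $k$-equivalence, the elements of $Y^\hequiv$ are exactly the points of the fibre of $\bar{X}(O_k)^\hequiv\to \bar{X}(\partial O_{k-1})$ over $(a_0,a_1)$; since $\bar{X}(O_k)^\hequiv$ is a union of path components of $\bar{X}(O_k)$, this fibrewise information forces a global factorisation $\bar{X}(W)\to\bar{X}(O_k)^\hequiv\subseteq\bar{X}(O_k)$. The restriction $\bar{X}(O_k)^\hequiv\to\bar{X}(\partial O_{k-1})$ remains a Kan fibration (a union of components of the total space of a fibration), and the induced map $\bar{X}(W)\to\bar{X}(O_k)^\hequiv$ over $\bar{X}(\partial O_{k-1})$ is a fibrewise weak equivalence between Kan fibrations, hence a weak equivalence of total spaces. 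I expect the main delicate point to be the fibre identification in the preceding paragraph---extending the defining formula of $\umap_X$ from representables $F\theta$ to $K=T_\#E$---which rests on the explicit left-adjoint description of $V[1]^{k-1}$ in the slice under $V[1]^{k-1}(\varnothing)$.
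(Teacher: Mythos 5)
Your proposal is correct and follows essentially the same route as the paper: fix $(a_0,a_1)\in\bar{X}(\partial O_{k-1})$, set $Y=\umap_X(a_0,a_1)$, identify the fibres of $\bar{X}(V[1]^{k-1}(T_\#E))$ and $\bar{X}(O_k)$ over $(a_0,a_1)$ with $\Map(T_\#E,Y)$ and $T^*Y[1]$, and invoke \eqref{prop:e-represents-equiv-segal-object} for the Segal object $Y$. You supply somewhat more detail than the paper on the fibre identification (via the colimit-preservation of $V[1]^{k-1}$, in the spirit of \eqref{lemma:mapping-space-identification}) and on reassembling the fibrewise equivalences, but the argument is the same.
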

\begin{proof}
Let $(a_0,a_1)$ be a point in $\bar{X}(\partial O_{k-1})$ and let
$Y=\umap_X(a_0,a_1)$.   Since
$Y$ is $\mathscr{T}^\Se_{n-k}$-fibrant, it is in
particular $\Se_{\Theta_{n-k-1}}$-fibrant, and thus the map
\[
\Map_{\Theta_{n-k}}(T_\#i, Y)\colon \Map_{\Theta_{n-k}}(T_\# E, Y) \ra
\Map_{\Theta_{n-k}}(T_\# F[1], Y) \approx T^*Y[1]
\]
factors through $Y^\hequiv\subseteq T^*Y[1]$, and induces a weak
equivalence $\Map(T_\# E,Y)\ra Y^\hequiv$ of spaces
\eqref{prop:e-represents-equiv-segal-object}.  

Now consider the diagram
\[
\xymatrix{
{\bar{X}(V[1]^{k-1}(T_\# E))} \ar[rr]^{\bar{X}(V[1]^{k-1}(T_\# i))}
\ar[rd]
&& {\bar{X}(V[1]^{k-1}(T_\# F[1]))} \ar[dl]
\\
& {\bar{X}(V[1]^{k-1}(\varnothing))}
}\]
Over $(a_0,a_1)\in \bar{X}(\partial O_{k-1})\approx
\bar{X}(V[1]^{k-1}(\varnothing))$, the map induced by
$\bar{X}(V[1]^{k-1}(T_\#i))$ on fibers is isomorphic to the map
$\Map_{\Theta_{n-k}}(T_\# i, Y)$, and the result follows.
\end{proof}

\begin{cor}\label{cor:cpt-fibrant-charac}
Let $X$ be a $\mathscr{T}^\Se_n$-fibrant object of $\sPsh(\Theta_n)$.
Then $X$ is $\mathscr{T}^\Cpt_n$-fibrant if and only if the maps
$\bar{X}(i_k)\colon \bar{X}(O_{k-1})\ra \bar{X}(O_k)^\hequiv$ are weak
equivalences for $1\leq k\leq n$.
\end{cor}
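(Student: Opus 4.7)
The plan is to reduce the corollary directly to the preceding proposition by recognizing $i_k$ as the ``suspended'' form of the completeness map. In $\sPsh(\Delta)$, the factorization $F[1] \xra{i} E \xra{p} F[0]$ has composite $p \circ i \colon F[1] \to F[0]$ equal to the unique map to the terminal presheaf. Applying $T_\#$ and then $V[1]^{k-1}$ yields a factorization in $\sPsh(\Theta_n)$
\[
O_k = V[1]^{k-1}(T_\# F[1]) \xra{V[1]^{k-1}(T_\# i)} V[1]^{k-1}(T_\# E) \xra{V[1]^{k-1}(T_\# p)} V[1]^{k-1}(T_\# F[0]) = O_{k-1},
\]
whose overall composite I claim is $i_k$. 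The identification of the endpoints with $O_k$ and $O_{k-1}$ comes from iterated application of Lemma \ref{lemma:v-of-f-theta}, and the identification of the composite with $i_k$ is then routine: under these identifications it corresponds to $\sigma^{k-1}$ applied to the unique map $[1]([0]) \to [0]$ in $\Theta_{n-k+1}$, which is precisely the ``identity inclusion'' described in \S\ref{subsec:informal-description}.

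Next I will apply the contravariant functor $\bar{X}$ to obtain
\[
\bar{X}(O_{k-1}) \xra{\bar{X}(V[1]^{k-1}(T_\# p))} \bar{X}(V[1]^{k-1}(T_\# E)) \xra{\bar{X}(V[1]^{k-1}(T_\# i))} \bar{X}(O_k),
\]
with total composite $\bar{X}(i_k)$. The preceding proposition (applicable since $X$ is $\mathscr{T}^\Se_n$-fibrant) says the second arrow factors through $\bar{X}(O_k)^\hequiv \subseteq \bar{X}(O_k)$ via a weak equivalence $\bar{X}(V[1]^{k-1}(T_\# E)) \xra{\sim} \bar{X}(O_k)^\hequiv$. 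Hence $\bar{X}(i_k)$ automatically factors through $\bar{X}(O_k)^\hequiv$, and by two-out-of-three the induced map $\bar{X}(O_{k-1}) \to \bar{X}(O_k)^\hequiv$ is a weak equivalence if and only if $\bar{X}(V[1]^{k-1}(T_\# p))$ is.

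Finally, unwinding the description $\mathscr{T}^\Cpt_n = \{(V[1])^j(T_\# p) \mid 0 \leq j < n\}$ from \S\ref{subsec:structure-of-t-nk}, and using that $X$ is already injectively fibrant (so locality with respect to these maps is detected pointwise by $\bar{X}$), the $\mathscr{T}^\Cpt_n$-fibrancy of $X$ is equivalent to $\bar{X}(V[1]^{k-1}(T_\# p))$ being a weak equivalence for every $1 \leq k \leq n$, finishing the argument. There is no real obstacle here; the proof is essentially bookkeeping on top of the preceding proposition, and the only step that needs care is verifying that $V[1]^{k-1}(T_\#(p \circ i))$ really is $i_k$ rather than one of the other maps $O_k \to O_{k-1}$ in $\Theta_n$, which however is forced by the terminality of $F[0]$ in $\sPsh(\Delta)$ and the definition of $V[1]$ on representables.
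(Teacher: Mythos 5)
Your proof is correct and takes essentially the same route as the paper, which simply declares the corollary ``immediate from the structure of $\mathscr{T}^{\Cpt}_n$'' after the preceding proposition. Your write-up supplies exactly the bookkeeping that remark suppresses: factoring $i_k$ as $V[1]^{k-1}(T_\#p)\circ V[1]^{k-1}(T_\#i)$ via terminality of $F[0]$, invoking the proposition to identify the second arrow with a weak equivalence onto $\bar{X}(O_k)^{\hequiv}$, and concluding by two-out-of-three together with the explicit description of $\mathscr{T}^{\Cpt}_n$.
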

\begin{proof}
Immediate from the structure of $\mathscr{T}^\Cpt_n$
\eqref{subsec:structure-of-t-nk}. 
\end{proof}

Thus, the $\mathscr{T}^\Se_n\cup \mathscr{T}^\Cpt_n$-fibrant objects
of $\sPsh(\Theta_n)$ are precisely the $(\infty,n)$-$\Theta$-spaces.

We record the following.
\begin{prop}\label{prop:umap-theta-space}
If $X$ is a $(\infty,n)$-$\Theta$-space, and $(f_0,f_1)$ in
$\bar{X}(\partial O_k)$ is a pair of parallel $(k-1)$-cells of $X$,
then $\umap_X(f_0,f_1)$ is an $(\infty,n-k)$-$\Theta$-space.
\end{prop}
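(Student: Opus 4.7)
The plan is to exhibit $Y := \umap_X(f_0, f_1)$ as the value of the right adjoint to a ``pointed'' iterate of $V[1]$ applied to $(X, (f_0, f_1))$, and then to transfer injective fibrancy and $\mathscr{T}^\Cpt$-locality from $X$ to $Y$ by right properness of $\Sp$.  By \eqref{lemma:se-fibrant-mapping-gadget}, $Y$ is already $\mathscr{T}^\Se_{n-k}$-fibrant, so by the structure of $\mathscr{T}_{n-k,\infty}$ recorded in \S\ref{subsec:structure-of-t-nk} it remains to verify injective fibrancy of $Y$ together with $(V[1])^j(T_\#p)$-locality for $0 \leq j < n-k$.

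Iterating \eqref{prop:intertwining-functor-colimit-properties}(2), the functor $V[1]^k$ is the left adjoint of a Quillen pair $\sPsh(\Theta_{n-k})^\inj \rightleftarrows V[1]^k(\varnothing)\backslash\sPsh(\Theta_n)^\inj$, and iterating \eqref{prop:v-of-cell-and-boundary} gives $V[1]^k(\varnothing) \approx \partial O_k$.  The basepoint $(f_0, f_1)\colon \partial O_k \to X$ promotes $X$ to an object of this slice, and a direct comparison with the defining colimit of $\umap_X$ on representables identifies $Y$ with the value of the corresponding right adjoint at this pointed $X$.  Consequently, for every $\eta \in \sPsh(\Theta_{n-k})$ there is a natural identification
\[
\bar{Y}(\eta) \approx \lim\bigl(\bar{X}(V[1]^k(\eta)) \to \bar{X}(\partial O_k) \leftarrow \{(f_0, f_1)\}\bigr).
\]

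With this identification in hand, both remaining verifications reduce to the following right-properness template.  For any (trivial) cofibration $A \hookrightarrow B$ in $\sPsh(\Theta_{n-k})^\inj$, the left-Quillen property lifts it to a (trivial) cofibration $V[1]^k(A) \to V[1]^k(B)$ in $\sPsh(\Theta_n)^\inj$, and injective fibrancy of $X$ then yields a (trivial) fibration $\bar{X}(V[1]^k(B)) \to \bar{X}(V[1]^k(A))$ of spaces over $\bar{X}(\partial O_k)$; pulling back along $\{(f_0, f_1)\}\to \bar{X}(\partial O_k)$ preserves fibrations, and by right properness of $\Sp$ preserves the weak-equivalence case as well (homotopy fibers of a weak equivalence of fibrations over the same base are weakly equivalent), giving injective fibrancy of $Y$.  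The same template applied to $\bar{X}(V[1]^{k+j}(T_\# p))$ for $0 \leq j < n-k$ then finishes the completeness check:  this map of total spaces is a weak equivalence since $V[1]^{k+j}(T_\#p) \in \mathscr{T}^\Cpt_n$ and $X$ is $\mathscr{T}^\Cpt_n$-fibrant, the vertical maps to $\bar{X}(\partial O_k)$ are fibrations (via the monomorphisms $\partial O_k \to V[1]^{k+j}(T_\# E)$ and $\partial O_k \to V[1]^{k+j}(T_\# F[0])$ obtained by applying $V[1]^k$ to $\varnothing \to V[1]^j(T_\# E), V[1]^j(T_\# F[0])$), and right properness then promotes this to a weak equivalence on fibers, which is precisely $\bar{Y}((V[1])^j(T_\#p))$.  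The main technical step is the adjunction identification in the second paragraph; the subtlety is that $V[1]^k$ is only a left adjoint after passing to the slice over $V[1]^k(\varnothing) = \partial O_k$, so the basepoint bookkeeping must be done carefully, but once that is in place the rest is routine.
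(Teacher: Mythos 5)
Your proof is correct, and it is essentially the argument the paper intends: the paper merely ``records'' this proposition without proof, the point being exactly the containment $\mathscr{T}_{n,\infty}\supseteq V[1]^k(\mathscr{T}_{n-k,\infty})$ (cf.\ \S\ref{subsec:structure-of-t-nk} and the one-line proof of \eqref{lemma:se-fibrant-mapping-gadget}) together with the identification of $\bar{Y}(\eta)$ as the fiber of $\bar{X}(V[1]^k(\eta))\to\bar{X}(\partial O_k)$ over $(f_0,f_1)$, which you spell out. (Minor remark: injective fibrancy of $\umap_X(f_0,f_1)$ is already part of the conclusion of \eqref{lemma:se-fibrant-mapping-gadget}, since $\mathscr{S}$-fibrant means injective fibrant and $\mathscr{S}$-local, so that verification is redundant though harmless.)
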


\subsection{Characterization of $k$-truncated objects}

Let $X$ be an $(\infty,n)$-$\Theta$-space (i.e., a
$\mathscr{T}_{n,\infty}=\mathscr{T}^\Se_n\cup
\mathscr{T}^\Cpt_n$-fibrant object in 
$\sPsh(\Theta_n)$).  
Let $(f_0,f_1)$ be a point in $\bar{X}(\partial O_n)$.  Then
$\umap_X(f_0,f_1)$ is an object of $\sPsh(\Theta_0)\approx \Sp$.
Furthermore, if $K$ is a space, the fiber of $\bar{X}(V[1]^n(K))\ra
\bar{X}(V[1]^n(\varnothing)) \approx \bar{X}(\partial O_n)$ over
$(f_0,f_1)$ is
naturally isomorphic to $\Map(K, \umap_X(f_0,f_1))$.
\begin{prop}
A $\mathscr{T}_{n,\infty}$-fibrant object $X$ of $\sPsh(\Theta_n)$ is
$\mathscr{T}_{n,k}$-fibrant if and only if for all $(f_0,f_1)$ in
$\bar{X}(\partial O_n)$, the space $\umap_X(f_0,f_1)$ is $k$-truncated.
\end{prop}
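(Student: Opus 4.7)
My approach will reduce the biconditional to a single extra locality condition and then unwind it fiber-wise. By the structure of the sets $\mathscr{T}_{n,k}$ recorded in \S\ref{subsec:structure-of-t-nk}, namely
\[
\mathscr{T}_{n,k} = \mathscr{T}_{n,\infty} \cup \bigl\{(V[1])^n(\partial \Delta^{k+2}\ra \Delta^{k+2})\bigr\},
\]
an object $X$ that is already $\mathscr{T}_{n,\infty}$-fibrant is $\mathscr{T}_{n,k}$-fibrant if and only if the map
\[
\bar{X}\bigl((V[1])^n(\Delta^{k+2})\bigr) \ra \bar{X}\bigl((V[1])^n(\partial \Delta^{k+2})\bigr)
\]
is a weak equivalence of spaces.

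Next I would observe that, since $\partial \Delta^{k+2}\hookrightarrow \Delta^{k+2}$ is a monomorphism in $\sPsh(\Theta_0)$, iterated application of $V[1]$ --- which is a left Quillen functor into the appropriate slice by \eqref{prop:intertwining-functor-colimit-properties}, and hence in particular carries monomorphisms to monomorphisms --- yields a monomorphism $(V[1])^n(\partial \Delta^{k+2})\ra (V[1])^n(\Delta^{k+2})$ in $\sPsh(\Theta_n)$. Because $X$ is injective fibrant, the induced map of mapping spaces displayed above is therefore a fibration of spaces, and hence is a weak equivalence exactly when its restriction to each fiber over the common base $\bar{X}\bigl((V[1])^n(\varnothing)\bigr)\approx \bar{X}(\partial O_n)$ is a weak equivalence.

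Finally, the fiber description recorded immediately before the statement identifies the fiber over $(f_0,f_1)$ with $\Map(K,\umap_X(f_0,f_1))$ upon taking $K=\Delta^{k+2}$ and $K=\partial \Delta^{k+2}$ respectively. The fiber-wise condition therefore becomes the requirement that, for each $(f_0,f_1)\in \bar{X}(\partial O_n)$, the map
\[
\Map(\Delta^{k+2},\umap_X(f_0,f_1))\ra \Map(\partial \Delta^{k+2},\umap_X(f_0,f_1))
\]
is a weak equivalence. Since $\umap_X(f_0,f_1)$ is itself a Kan complex (being a limit of mapping spaces into the injective fibrant object $X$), this is precisely the standard characterization of being a $k$-truncated space. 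There is no substantive obstacle; the only points requiring a bit of care are verifying that $(V[1])^n$ preserves the monomorphism $\partial \Delta^{k+2}\ra \Delta^{k+2}$ (needed to justify the fibration/fiber reduction) and identifying the fibers correctly, both of which are immediate from the Quillen-pair properties developed in \S\ref{subsec:intertwining} and \S\ref{subsec:mapping-objects}.
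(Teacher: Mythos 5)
Your proposal is correct and follows essentially the same route as the paper's (very terse) proof: reduce to the single localizing map $(V[1])^n(\partial\Delta^{k+2}\ra\Delta^{k+2})$, check the induced map of mapping spaces fiberwise over $\bar{X}(\partial O_n)$, and identify the fibers with $\Map(K,\umap_X(f_0,f_1))$. You have simply made explicit the fibration/fiberwise-reduction step and the Kan-complex hypothesis that the paper leaves implicit.
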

\begin{proof}
On fibers over $(f_0,f_1)$, the map $\bar{X}(V[1]^n(\Delta^{k+2})) \ra
\bar{X}(V[1]^n(\partial \Delta^{k+2}))$ induces the map
$\Map(\Delta^{k+2}, \umap_X(f_0,f_1))\ra \Map(\partial \Delta^{k+2},
\umap_X(f_0,f_1))$ of spaces.
\end{proof}

\subsection{Rigid $n$-categories}

The following proposition characterizes the discrete
$\mathscr{T}^\Se_n$-fibrant objects of $\sPsh(\Theta_n)$.
\begin{prop}
The functor $\dnerve$ induces an equivalence between $\stcat{n}$ and
the full subcategory of discrete $\mathscr{T}_n^\Se$-fibrant objects
of $\sPsh(\Theta_n)$.
\end{prop}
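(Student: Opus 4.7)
The plan is as follows. Since $\dnerve$ is already known to be fully faithful (noted earlier as a consequence of \eqref{cor:tau-n-fully-faithful}, together with the fact that $F\approx \dnerve\tau_n$ and Yoneda), it suffices to verify two claims: (a) for every strict $n$-category $C$ the presheaf $\dnerve C$ is discrete and $\mathscr{T}_n^\Se$-fibrant; and (b) every discrete $\mathscr{T}_n^\Se$-fibrant object of $\sPsh(\Theta_n)$ is isomorphic to $\dnerve C$ for some $C$ in $\stcat{n}$.

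For (a), discreteness of $\dnerve C$ is immediate from the definition, and discrete presheaves are automatically injective fibrant.  To verify $\mathscr{T}_n^\Se$-locality, I would use the explicit form of the generators recorded in \S\ref{subsec:structure-of-t-nk}, namely the maps
\[
\colim\bigl(F\sigma^k[1](\theta_1)\la F\sigma^k[0]\ra \cdots \la F\sigma^k[0]\ra F\sigma^k[1](\theta_r)\bigr) \ra F\sigma^k[r](\theta_1,\dots,\theta_r).
\]
Since $\dnerve C$ is discrete, $\hMap(-,\dnerve C)$ coincides with $\Hom(-,\dnerve C)$, and by Yoneda the locality condition reduces to the claim that $\tau_n\sigma^k[r](\theta_1,\dots,\theta_r)$ is the iterated pushout in $\stcat{n}$ of the $\tau_n\sigma^k[1](\theta_i)$ along $\tau_n\sigma^k[0]$.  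This follows directly from the inductive ``freely generated'' description of the objects of $\Theta_n$ as strict $n$-categories.

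For (b), given a discrete $\mathscr{T}_n^\Se$-fibrant $X$, I would build a strict $n$-category $C$ with set of $k$-cells $X(O_k)$, source/target/identity operations induced by the maps $s_k,t_k\colon O_{k-1}\ra O_k$ and by the evident degeneracies in $\Theta_n$, and composition operations extracted from the Segal isomorphisms
\[
X(\sigma^j[r](\theta_1,\dots,\theta_r)) \cong X(\sigma^j[1](\theta_1))\timesover{X(\sigma^j[0])}\cdots\timesover{X(\sigma^j[0])} X(\sigma^j[1](\theta_r))
\]
together with the ``long edge'' map $[1]\ra [r]$ in $\Delta$; letting $j$ and the $\theta_i$ vary recovers all compositions of $k$-cells along $j$-cell boundaries.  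To show $\dnerve C\cong X$, I would proceed by induction on the structure of $\theta\in \Theta_n$: since $\tau_n\theta$ is expressible in $\stcat{n}$ as an iterated pushout of cells $\tau_n O_k$, the Segal condition on $X$ guarantees that $X(\theta)$ matches the corresponding iterated limit $\Hom_{\stcat{n}}(\tau_n\theta, C)$ naturally in $\theta$.

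The main obstacle is the bookkeeping for (b): verifying that the composition operations extracted from Segal data satisfy all of the strict $n$-category axioms (associativity of each composition, unit laws, and interchange at every pair of levels).  Each individual axiom traces back to the Segal isomorphism at an appropriate object of $\Theta_n$ (for instance, associativity from $\sigma^j[3]$-shaped objects, interchange from mixed objects like $\sigma^j[2]([2](\theta),[2](\theta))$), but combining them into a coherent strict $n$-category structure is intricate.  Rather than grind through this directly, I would expect to invoke or adapt the cellular nerve theorem for $\Theta_n$ from \cite{berger-cellular-nerve}, which formalizes exactly this correspondence between presheaves on $\Theta_n$ satisfying the above Segal conditions and strict $n$-categories.
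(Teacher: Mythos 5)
Your proposal is correct and follows essentially the same route as the paper's (very terse) proof: reduce, via discreteness, the $\mathscr{T}^\Se_n$-locality condition to a strict isomorphism/sheaf condition, and identify presheaves satisfying it with discrete nerves of strict $n$-categories. The paper simply asserts this identification is clear; your appeal to the colimit decomposition of $\tau_n\sigma^k[r](\theta_1,\dots,\theta_r)$ in $\stcat{n}$ and to Berger's cellular nerve theorem supplies exactly the details being elided.
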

\begin{proof}
A discrete presheaf $X$ is $\mathscr{S}$-fibrant if and only if
$\Map(s,X)\colon \Map(S',X)\ra \Map(S,X)$ is an isomorphism for all
$s\colon S\ra S'$ in $\mathscr{S}$.  It is clear that if
$\mathscr{S}=\mathscr{T}^\Se_n$, then this condition amounts to
requiring that $X$ be in the essential image of $\dnerve$.
\end{proof}

Let $C$ be a strict $n$-category.  We define the following notions for
cells in $C$, by downwards induction.
\begin{enumerate}
\item Let $g\colon x\ra y$ be a $k$-morphism in $C$.  If $1\leq k<n$, we
  say $g$ is a 
  \dfn{$k$-equivalence} if there exist $k$-cells $f,h\colon y\ra x$ in
  $C$ such that $gf\sim 1_y$ and $hg\sim 1_x$.  If $k=n$, we say $g$
  is a $k$-equivalence if it is a $k$-isomorphism.
\item  Let $f,g\colon x\ra y$ be two parallel $k$-cells in $C$.  If $0\leq
  k<n$, we say that   $f$ and $g$ are \dfn{equivalent}, and write
  $f\sim g$, if there exists  a $(k+1)$-equivalence $h\colon f\ra g$.
  If $k=n$, we say that $f$ and $g$ are equivalent if there are equal.
\end{enumerate}

\begin{prop}\label{prop:rigid-charac}
Let $C$ be a strict $n$-category.  The following are equivalent.
\begin{enumerate}
\item For all $1\leq k\leq n$, every $k$-equivalence is an identity
  $k$-morphism. 
\item For all $1\leq k\leq n$, every $k$-isomorphism is an identity
  $k$-morphism. 
\end{enumerate}
\end{prop}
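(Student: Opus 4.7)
The plan is to prove (1) $\Rightarrow$ (2) by the trivial observation that every $k$-isomorphism is a $k$-equivalence (any strict two-sided inverse serves as both $f$ and $h$, and a strict equation is a fortiori an equivalence), and to prove (2) $\Rightarrow$ (1) by downward induction on $k$, starting from $k=n$ and descending to $k=1$.

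The base case $k=n$ is immediate from the definitions: an $n$-equivalence is declared to be an $n$-isomorphism, which by (2) must be an identity. For the inductive step at level $k$, with $1\leq k<n$, the central observation is that the inductive hypothesis (every $(k+1)$-equivalence is an identity $(k+1)$-morphism) forces the relation $\sim$ between parallel $k$-cells to coincide with strict equality: if $u \sim v$ are parallel $k$-cells, there is by definition a $(k+1)$-equivalence $u \ra v$, and by the inductive hypothesis this $(k+1)$-morphism is an identity, so $u=v$.

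With this in hand, if $g\colon x\ra y$ is a $k$-equivalence with chosen $k$-morphisms $f,h\colon y\ra x$ satisfying $gf\sim 1_y$ and $hg\sim 1_x$, these relations upgrade to strict equalities $gf=1_y$ and $hg=1_x$. A one-line manipulation $f = (hg)f = h(gf) = h$ then shows that $f=h$ is a genuine two-sided strict inverse of $g$, so $g$ is a $k$-isomorphism, hence by (2) an identity $k$-morphism, completing the induction.

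There is no real obstacle here; the argument is essentially definition-chasing. The only point requiring any care is ensuring that the inductive hypothesis is formulated correctly so as to collapse $\sim$ to $=$ at the next level down, and that the downward induction is started at $k=n$ where $k$-equivalence and $k$-isomorphism agree by fiat. Everything else is automatic once this collapse of the equivalence relation is recognized.
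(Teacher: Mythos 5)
Your proof is correct and follows exactly the route the paper indicates: the paper's own proof consists only of the remarks that $(1)\Rightarrow(2)$ is clear and that $(2)\Rightarrow(1)$ follows by downward induction on $k$, and your argument supplies precisely those details (the collapse of $\sim$ to equality at level $k$ once the inductive hypothesis kills all $(k+1)$-equivalences, followed by the standard $f=(hg)f=h(gf)=h$ manipulation).
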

\begin{proof}
  It is clear that (1) implies (2).  To show that (2) implies (1), use
  downward induction on $k$.
\end{proof}

By a \dfn{rigid $n$-category}, we mean a strict $n$-category $C$
satisfying either of the equivalent conditions of
\eqref{prop:rigid-charac}. 
\begin{prop}\label{prop:rigid-n-cat-is-discrete-theta-space}
Let $C$ be a strict $n$-category.  The discrete nerve $\dnerve C$ is an
$(\infty,n)$-$\Theta$-space (i.e., is $\mathscr{T}_n^{\Se}\cup
\mathscr{T}_n^{\Cpt}$-fibrant) if and only if $C$ is a rigid $n$-category.
\end{prop}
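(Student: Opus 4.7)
The plan is to reduce to Corollary \ref{cor:cpt-fibrant-charac} and then interpret the resulting condition in terms of $C$. Since $\dnerve C$ is a discrete presheaf, the preceding proposition already gives $\mathscr{T}_n^\Se$-fibrancy, so the only remaining issue is $\mathscr{T}_n^\Cpt$-fibrancy. By Corollary \ref{cor:cpt-fibrant-charac}, that holds iff the maps
\[
\overline{\dnerve C}(i_k)\colon \overline{\dnerve C}(O_{k-1})\to \overline{\dnerve C}(O_k)^\hequiv, \qquad 1\le k\le n,
\]
are weak equivalences of spaces. All of these spaces are discrete simplicial sets, so ``weak equivalence'' just means ``bijection of sets.''

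Next I would unpack the sets and the map. Using $O_k\approx F\sigma^k[0]$ and full-faithfulness of $\dnerve$, $\overline{\dnerve C}(O_k)$ is the set of $k$-cells of $C$, and $\overline{\dnerve C}(i_k)$ sends each $(k-1)$-cell to its identity $k$-morphism. To identify $\overline{\dnerve C}(O_k)^\hequiv$, fix a pair of parallel $(k-2)$-cells $(a_0,a_1)\in \overline{\dnerve C}(\partial O_{k-1})$ and set $Y = \umap_{\dnerve C}(a_0,a_1)$. By \eqref{lemma:se-fibrant-mapping-gadget}, $Y$ is $\mathscr{T}^\Se_{n-k+1}$-fibrant, and it is visibly discrete, so by the preceding proposition $Y = \dnerve D$ for a canonical strict $(n-k+1)$-category $D$, namely the hom-$(n-k+1)$-category of $C$ between $a_0$ and $a_1$.

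The remaining step is to verify that for a strict $m$-category $D$ with $m\ge 1$, a $1$-cell $g$ of $\dnerve D$ lies in $\overline{\dnerve D}(O_1)^\hequiv$ iff $g$ is a $1$-isomorphism of $D$ in the strict sense. This is a direct unwinding: for discrete $\dnerve D$, $\Gamma M_{\dnerve D}(x_0,x_1)$ is just the hom-set $D(x_0,x_1)$ (since $[0]$ is terminal in $\Theta_{m-1}$), and the Segal-induced composition on $h(\dnerve D)$ agrees with composition in $D$; hence $h(\dnerve D)$ is the underlying $1$-category of $D$, whose isomorphisms are exactly the $1$-isomorphisms of $D$. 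Applying this with $D = C(a_0,a_1)$, the $k$-equivalences of $\dnerve C$ are precisely the $k$-isomorphisms of $C$ in the sense of \eqref{prop:rigid-charac}(2); so every $\overline{\dnerve C}(i_k)$ is bijective iff every $k$-isomorphism of $C$ is an identity, which by \eqref{prop:rigid-charac} is exactly rigidity. The main nuisance I anticipate is the bookkeeping (tracking $V[1]^{k-1}$, $\sigma^{k-1}$, and the correct $\Theta$-level) together with the more conceptual point that the Segal notion of $k$-equivalence in the discrete case collapses to strict $k$-isomorphism rather than to the weaker $\sim$-based notion used to define $k$-equivalence in a strict $n$-category; it is exactly this mismatch that forces the final appeal to \eqref{prop:rigid-charac} rather than a direct identification of the two classes of ``$k$-equivalences.''
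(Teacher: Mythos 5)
Your proposal is correct and follows essentially the same route as the paper's (much terser) proof: $\mathscr{T}_n^\Se$-fibrancy is automatic for discrete nerves by the preceding proposition, $\mathscr{T}_n^\Cpt$-fibrancy is tested via \eqref{cor:cpt-fibrant-charac}, the subspace $\overline{\dnerve C}(O_k)^\hequiv$ is identified with the set of $k$-isomorphisms of $C$, and \eqref{prop:rigid-charac} converts the resulting condition into rigidity. Your closing observation that the Segal-object notion of $k$-equivalence collapses to strict $k$-isomorphism in the discrete case, so that \eqref{prop:rigid-charac} is genuinely needed to bridge to the $\sim$-based definition, is exactly the point the paper leaves implicit in the phrase ``the result follows from the observation that $\bar{X}(O_k)^\hequiv$ corresponds precisely to the set of $k$-isomorphisms in $C$.''
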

\begin{proof}
Let $C$ be a strict $n$-category.
By \eqref{prop:rigid-charac} $\dnerve C$ is
$\mathscr{T}_n^\Se$-fibrant.  It will also be
$\mathscr{T}_n^\Cpt$-fibrant if and only if $\bar{X}(O_k)^\hequiv \ra
\bar{X}(O_k)$ 
is an isomorphism for all $1\leq k\leq n$, and the result follows from
the observation that $\bar{X}(O_k)^\hequiv$ corresponds precisely to the set
of $k$-isomorphisms in $C$.
\end{proof}

\subsection{Groupoids and the homotopy hypothesis}
\label{subsec:homotopy-hypothesis}

For $n\geq0$, let 
\begin{align*}
\mathscr{T}^\Gpd_0 &\defeq \varnothing,\\
\mathscr{T}^\Gpd_n &\defeq \Gpd_{\Theta_{n-1}} \cup
V[1](\mathscr{T}^\Gpd_{n-1}),
\end{align*}
using the definition of $\Gpd_C$ of \S\ref{sec:groupoid-objects}.
Explicitly, we have
\[
\mathscr{T}^\Gpd_n \approx \set{V[1]^k(T_\#q)}{0\leq k< n},
\]
where $T_\#q\colon T_\# F[1]\ra T_\#E$ is as in
\S\ref{sec:groupoid-objects}. 

Let $\thetagpdk{n}{k} \defeq
\sPsh(\Theta_n)^\inj_{\mathscr{T}_{n,k}\cup \mathscr{T}^\Gpd_n}$; we
call this the \dfn{$(n+k,n)$-$\Theta$-groupoid model category}.  
The fibrant objects of $\thetagpdk{n}{k}$ are called
\dfn{$(n+k,n)$-$\Theta$-groupoids}; they form a full subcategory of
the category of $(n+k,n)$-$\Theta$-spaces.

\begin{prop}
Let $n\geq0$.  
Let $X$ be a $(\infty,n)$-$\Theta$-space.  The following are
equivalent.
\begin{enumerate}
\item The object $X$ is
$\mathscr{T}^\Gpd_n$-fibrant.
\item For all $0\leq k<n$, the maps $\bar{X}(i_k)\colon
  \bar{X}(O_k)\ra \bar{X}(O_{k+1})$ are weak 
  equivalences of spaces.
\item For all $\theta\in \ob \Theta_n$, the map $Xp\colon X[0]\ra
  X\theta$ induced by $p\colon \theta\ra [0]$ is a weak equivalence of
  spaces.
\end{enumerate}
\end{prop}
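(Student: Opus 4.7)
The plan is to prove the implications in three steps: (1)$\iff$(2) directly, (3)$\implies$(2) by a trivial 2-out-of-3, and (2)$\implies$(3) by induction on $n$, with base case $n=0$ trivial since $\Theta_0$ has only the object $[0]$.

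For (1)$\iff$(2), recall the explicit description $\mathscr{T}^\Gpd_n = \{V[1]^k(T_\#q) : 0 \leq k < n\}$ with $q$ the canonical inclusion $F[1]\to E$, together with the identification $V[1]^k(T_\#F[1]) \approx O_{k+1}$ coming from \eqref{lemma:v-of-f-theta}. Being local with respect to $V[1]^k(T_\#q)$ thus amounts to $\Map(V[1]^k(T_\#E),X) \to \bar{X}(O_{k+1})$ being a weak equivalence. The proposition preceding \eqref{cor:cpt-fibrant-charac} shows that this map factors through a weak equivalence onto $\bar{X}(O_{k+1})^\hequiv$, so (1) is equivalent to $\bar{X}(O_{k+1})^\hequiv \subseteq \bar{X}(O_{k+1})$ being a weak equivalence for all $0 \leq k < n$. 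Since $X$ is an $(\infty,n)$-$\Theta$-space, the completeness equivalence $\bar{X}(O_k) \to \bar{X}(O_{k+1})^\hequiv$ from \eqref{cor:cpt-fibrant-charac} lets me conclude via 2-out-of-3 that this is equivalent to $\bar{X}(i_k)\colon \bar{X}(O_k)\to \bar{X}(O_{k+1})$ being a weak equivalence, which is (2).

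For (3)$\implies$(2), apply (3) to $\theta=O_k$ and $\theta=O_{k+1}$ and use 2-out-of-3 in the triangle $X[0] \to \bar{X}(O_k) \to \bar{X}(O_{k+1})$. For (2)$\implies$(3), I induct on $n$. Write $\theta=[m](\theta_1,\dots,\theta_m)\in \Theta_n$ with $\theta_i \in \Theta_{n-1}$. The Segal condition provides the weak equivalence $X\theta \simeq X[1](\theta_1)\times^h_{X[0]} \cdots \times^h_{X[0]} X[1](\theta_m)$, and so it suffices to show that $X[0] \to X[1](\theta') = X\sigma\theta'$ is a weak equivalence for each $\theta'\in \Theta_{n-1}$, since a finite homotopy fibre product of weak equivalences over $X[0]$ is weakly equivalent to $X[0]$. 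I factor this map as $X[0] \to \bar{X}(O_1) \to X\sigma\theta'$; the first map is a weak equivalence by (2) at $k=0$. The second map is analyzed fiberwise over $X[0]\times X[0]\approx \bar{X}(\partial O_1)$: on the fiber over $(x_0,x_1)$, it becomes $Y[0] \to Y\theta'$, where $Y = \umap_X(x_0,x_1)$ is an $(\infty,n-1)$-$\Theta$-space by \eqref{prop:umap-theta-space}. Using that $V[1]$ carries the map $i_k$ in $\sPsh(\Theta_{n-1})$ to $i_{k+1}$ in $\sPsh(\Theta_n)$ (a formal consequence of \eqref{prop:v-of-cell-and-boundary}), one verifies that $Y$ satisfies condition (2) at level $n-1$: the map $\bar{Y}(i_k)$ is identified, on each fiber over $X[0]\times X[0]$, with the restriction of the weak equivalence $\bar{X}(i_{k+1})$ to that fiber, and both sides are fibrations over $X[0]\times X[0]$ by injective fibrancy. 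By the inductive hypothesis, $Y$ satisfies (3) at level $n-1$, giving the desired fiberwise weak equivalence; since both $\bar{X}(O_1)$ and $X\sigma\theta'$ are fibrations over $X[0]\times X[0]$, this lifts to a weak equivalence of total spaces.

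The main technical obstacle will be the bookkeeping in checking that $Y=\umap_X(x_0,x_1)$ satisfies condition (2) at level $n-1$. This requires naturally identifying $\bar{Y}(O_k^{(n-1)})$ with the fiber over $(x_0,x_1)$ of $\bar{X}(O_{k+1}^{(n)}) \to \bar{X}(\partial O_1)$ and identifying $\bar{Y}(i_k^{(n-1)})$ with the corresponding restriction of $\bar{X}(i_{k+1}^{(n)})$, which follows from the definition of $\umap_X$ together with \eqref{lemma:v-of-f-theta} and \eqref{prop:v-of-cell-and-boundary}, but requires care with indices.
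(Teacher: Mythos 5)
Your proof is correct and follows essentially the same route as the paper: (1)$\iff$(2) via the identification of $\Map(V[1]^k(T_\#E),X)\ra \bar{X}(O_{k+1})$ as a weak equivalence onto $\bar{X}(O_{k+1})^\hequiv$ combined with completeness, and (2)$\implies$(3) by induction on $n$ using the Segal decomposition and the fiberwise identification of $\bar{X}(O_1)\ra X\sigma\theta'$ with $\umap_X(x_0,x_1)(p)$. The only cosmetic difference is that you feed condition (2) for $\umap_X(x_0,x_1)$ into the induction where the paper verifies that $\umap_X(x_0,x_1)$ is a $\Theta$-groupoid and handles $\theta=[m]([0],\dots,[0])$ by citing the groupoid-like complete Segal space result from the earlier paper; both variants are sound.
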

\begin{proof}
It is clear from the definition of $\mathscr{T}_n^\Gpd$ that (1) and
(2) are equivalent.  It is immediate that (3) implies (2); it remains
to show that (1) implies (3), which we will show by induction on $n$.
Note that there is nothing to prove if $n=0$.
Since $\mathscr{T}^\Gpd_n\supseteq 
\Gpd_{\Theta_{n-1}}$, we see that the object $T^*X$ is a
$(\infty,1)$-$\Theta$-groupoid, which is to say, a groupoid-like
complete Segal space, and thus all maps $(T^*X)\delta^{0}\colon
(T^*X)[0]\ra (T^*X)[m]$ are weak equivalences of spaces
\cite{rezk-ho-theory-of-ho-theory}*{Cor.\ 6.6}.  
Therefore, $X[0]\ra
X\theta$ is a weak equivalence for all $\theta = [m]([0],\dots,[0])$,
$m\geq0$.  Now consider the diagram
\[
\xymatrix{
{X[0]}  \ar[r]^-{\sim}  \ar[dr]_a
& {X[m]([0],\dots,[0])} \ar[r]^-{\sim} \ar[d]
& {X[1]([0])\times_{X[0]} \cdots \times_{X[0]} X[m]([0])} \ar[d]^b
\\
& {X[m](\theta_1,\dots,\theta_m)}  \ar[r]_-{\sim}
& {X[1](\theta_1)\times_{X[0]}\cdots\times_{X[0]}X[1](\theta_m)}
}\]
where $\theta_1,\dots,\theta_m\in \ob \Theta_{n-1}$.  To show that $a$
is a weak equivalence, it suffices to show that $b$ is, so it suffices
to show that $X[1](p)\colon X[1]([0])\ra X[1](\theta)$ is a weak
equivalence of spaces for all $\theta\in \Theta_{n-1}$.   Consider the diagram
\[
\xymatrix{
{\bar{X}(V[1](F[0]))} \ar[rr]^{\bar{X}(V[1](Fp))} \ar[rd]
&& {\bar{X}(V[1](F\theta))} \ar[ld]
\\
& {\bar{X}(V[1](\varnothing))}
}\]
The map $\bar{X}(V[1](Fp))$ is isomorphic to $X[1](p)\colon
X[1]([0])\ra X[1](\theta)$.  Let
$(x_0,x_1)$ be a point in $\bar{X}(\partial O_1)\approx
\bar{X}(V[1](\varnothing))$; the map induced on fibers over
$(x_0,x_1)$ by $\bar{X}(V[1](Fp))$ is isomorphic to 
\[
\umap_X(x_0,x_1)(p)\colon \umap_X(x_0,x_1)([0]) \ra \umap_X(x_0,x_1)(\theta).
\]
It is clear that $\umap_X(x_0,x_1)$ is a
$(\infty,n-1)$-$\Theta$-groupoid, and thus $\umap_X(x_0,x_1)(p)$ is a
weak equivalence of spaces by the inductive hypothesis.
\end{proof}

Let $c_\#\colon \Sp \rightleftarrows \sPsh(\Theta_n)$ denote the
adjoint pair where the left adjoint $c_\#$ sends a space $X$ to the
constant presheaf with value $X$.

\begin{prop}\label{prop:quillen-equiv-groupoids-spaces}
\forcepar
\begin{enumerate}
\item
The adjoint pair $c_\#\colon \Sp\rightleftarrows
\sPsh(\Theta_n)^\inj_{\mathscr{T}_{n,\infty}\cup
  \mathscr{T}_n^\Gpd}\noloc c^*$ 
is a Quillen equivalence.  
\item
For all $-2\leq k<\infty$ the adjoint pair
$c_\#\colon \Sp_{n+k}\rightleftarrows
\sPsh(\Theta_n)^\inj_{\mathscr{T}_{n,k}\cup \mathscr{T}_n^\Gpd}\noloc
c^*$ is a Quillen equivalence.
\end{enumerate}
\end{prop}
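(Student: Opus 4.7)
The plan is to apply the standard criterion for Quillen equivalence: verify $(c_\#, c^*)$ is a Quillen pair, show the derived counit is a levelwise weak equivalence at every fibrant object, and show that $c_\#$ reflects weak equivalences between cofibrant objects. Throughout let $\mathscr{T}$ denote $\mathscr{T}_{n,\infty}\cup \mathscr{T}_n^\Gpd$ in part (1) and $\mathscr{T}_{n,k}\cup \mathscr{T}_n^\Gpd$ in part (2); note that $c^*X\approx X[0]$ since $[0]$ is terminal in $\Theta_n$.

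To verify the Quillen pair via \eqref{prop:quillen-pair-loc}, part (1) is automatic. In part (2) it reduces, via adjunction, to showing $W[0]$ is $(n+k)$-truncated for every $\mathscr{T}$-fibrant $W$, which I prove by induction on $n$. Since the recursive definitions of these sets give $V[1](\mathscr{T}_{n-1,k}\cup\mathscr{T}_{n-1}^\Gpd)\subseteq \mathscr{T}$, \eqref{prop:v1-fib-charac} implies each $M_W(x_0,x_1)$ is $(\mathscr{T}_{n-1,k}\cup\mathscr{T}_{n-1}^\Gpd)$-fibrant, so $M_W(x_0,x_1)[0]$ is $(n{-}1{+}k)$-truncated by induction; the groupoid-likeness of $W$ then identifies this space (up to weak equivalence) with a based loop space of $W[0]$, whence $W[0]$ itself is $(n+k)$-truncated.

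The derived counit at a fibrant $Y$ is the canonical map $\epsilon_Y\colon c_\# Y[0]\to Y$, whose component at $\theta\in \ob \Theta_n$ is the map $Y[0]\to Y\theta$ induced by the unique morphism $\theta\to [0]$. By the preceding proposition characterizing $\mathscr{T}_n^\Gpd$-fibrant objects as ``homotopy constant'' presheaves, each such map is a weak equivalence, hence $\epsilon_Y$ is a levelwise weak equivalence and in particular a $\mathscr{T}$-equivalence.

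Finally, given cofibrant $K$ and fibrant $Y$ with adjoint maps $f\colon K\to Y[0]$ and $\tilde f\colon c_\# K\to Y$, factoring $\tilde f = \epsilon_Y\circ c_\# f$ and applying two-out-of-three with the previous paragraph reduces the remaining condition to: $c_\# f$ is a $\mathscr{T}$-equivalence if and only if $f$ is a weak equivalence. The ``if'' direction is immediate since $c_\#$ preserves levelwise weak equivalences. The converse is the main obstacle, and the plan is as follows: given any (respectively $(n+k)$-truncated) Kan complex $L$, take an injective fibrant replacement $c_\# L \to W^f$ in $\sPsh(\Theta_n)^\inj$. One verifies that $W^f$ is $\mathscr{T}$-fibrant by direct computation --- since $W^f$ is levelwise weakly equivalent to the constant presheaf $c_\# L$, all the Segal, completeness, groupoid, and (in part (2)) truncation maps in $\mathscr{T}$ induce weak equivalences on the corresponding mapping spaces into $W^f$ --- and clearly $W^f[0]\simeq L$. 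Granting this, the hypothesis that $c_\# f$ is a $\mathscr{T}$-equivalence yields that $\Map_{\Theta_n}(c_\# Y[0], W^f)\to \Map_{\Theta_n}(c_\# K, W^f)$ is a weak equivalence; using the enriched adjunction $c_\#\dashv c^*$ this identifies with $\Map_{\Sp}(Y[0], L)\to \Map_{\Sp}(K, L)$, which being a weak equivalence for every such $L$ forces $f$ to be a weak equivalence in $\Sp$ (respectively $\Sp_{n+k}$).
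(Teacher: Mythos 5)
Your proposal is correct and follows essentially the same route as the paper: both arguments rest on the facts that $\mathscr{T}^\Gpd_n$-fibrant objects are homotopy-constant presheaves (so the counit $c_\#Y[0]\ra Y$ is a levelwise weak equivalence) and that the constant presheaf on a (suitably truncated) Kan complex is $\mathscr{T}$-local, which is exactly what your test objects $W^f$ and the paper's derived-unit computation both come down to. Your inductive loop-space argument that $W[0]$ is $(n+k)$-truncated for $\mathscr{T}$-fibrant $W$ supplies a step the paper leaves implicit but does need for part (2); the only slip is that in part (2) the ``if'' direction of your final step should be justified by Ken Brown's lemma applied to the already-established Quillen pair (a $\Sp_{n+k}$-weak equivalence need not be a levelwise weak equivalence), not by preservation of levelwise weak equivalences.
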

\begin{proof}
We first consider (1).  Observe that $c_\#$ preserves cofibrations,
and that caries all spaces to
$\mathscr{T}_{n,\infty}\cup \mathscr{T}^\Gpd_n$-fibrant objects by
\eqref{prop:quillen-equiv-groupoids-spaces}, and thus $c_\#$ preserves
weak equivalences.  Therefore the pair is a Quillen pair, and it is a
straightforward consequence of
\eqref{prop:quillen-equiv-groupoids-spaces}  that the natural map
$X\ra c^*c_\#X$ is always weak equivalence, the natural map
$c_\#c^*Y\ra Y$ is a weak equivalence for all
$\mathscr{T}_{n,\infty}\cup \mathscr{T}^\Gpd_n$-local objects $Y$, and
thus the pair is a Quillen equivalence.

The proof that we get a Quillen equivalence in (2)  proceeds in the
same way, once we observe that $c_\#$ carries $n+k$-truncated spaces to
$\mathscr{T}_{n,k}\cup \mathscr{T}^\Gpd_n$-local objects.
\end{proof}

\begin{bibdiv}
\begin{biblist}
\bib{barwick-extended-research-statement}{article}{
  author={Barwick, Clark},
  title={Extended research statement},
  date={2007},
  eprint={http://www.math.ias.edu/~clarkbar/},
}

\bib{baez-dolan-categorification}{article}{
  author={Baez, John C.},
  author={Dolan, James},
  title={Categorification},
  conference={ title={Higher category theory}, address={Evanston, IL}, date={1997}, },
  book={ series={Contemp. Math.}, volume={230}, publisher={Amer. Math. Soc.}, place={Providence, RI}, },
  date={1998},
  pages={1--36},
  review={\MR {1664990 (99k:18016)}},
}

\bib{berger-cellular-nerve}{article}{
  author={Berger, Clemens},
  title={A cellular nerve for higher categories},
  journal={Adv. Math.},
  volume={169},
  date={2002},
  number={1},
  pages={118--175},
  issn={0001-8708},
  review={\MR {1916373 (2003g:18007)}},
}

\bib{berger-iterated-wreath}{article}{
  author={Berger, Clemens},
  title={Iterated wreath product of the simplex category and iterated loop spaces},
  journal={Adv. Math.},
  volume={213},
  date={2007},
  number={1},
  pages={230--270},
  issn={0001-8708},
  review={\MR {2331244 (2008f:55010)}},
}

\bib{bergner-model-cat-simplicial-cat}{article}{
  author={Bergner, Julia E.},
  title={A model category structure on the category of simplicial categories},
  journal={Trans. Amer. Math. Soc.},
  volume={359},
  date={2007},
  number={5},
  pages={2043--2058 (electronic)},
  issn={0002-9947},
  review={\MR {2276611 (2007i:18014)}},
}

\bib{bergner-three-models}{article}{
  author={Bergner, Julia E.},
  title={Three models for the homotopy theory of homotopy theories},
  journal={Topology},
  volume={46},
  date={2007},
  number={4},
  pages={397--436},
  issn={0040-9383},
  review={\MR {2321038 (2008e:55024)}},
}

\bib{cheng-lauda-illustrated-guide}{article}{
  author={Cheng, Eugenia},
  author={Lauda, Aaron},
  title={Higher Dimensional Categories: An illustrated guide book},
  date={2004},
  eprint={http://cheng.staff.shef.ac.uk/guidebook/index.html},
}

\bib{dugger-universal-homotopy-theories}{article}{
  author={Dugger, Daniel},
  title={Universal homotopy theories},
  journal={Adv. Math.},
  volume={164},
  date={2001},
  number={1},
  pages={144--176},
  issn={0001-8708},
  review={\MR {1870515 (2002k:18021)}},
}

\bib{hirschhorn-localization}{book}{
  author={Hirschhorn, Philip S.},
  title={Model categories and their localizations},
  series={Mathematical Surveys and Monographs},
  volume={99},
  publisher={American Mathematical Society},
  place={Providence, RI},
  date={2003},
  pages={xvi+457},
  isbn={0-8218-3279-4},
  review={\MR {1944041 (2003j:18018)}},
}

\bib{hirschowitz-simpson-descente-pour-n-champs}{article}{
  author={Hischowitz, Andr\'e},
  author={Simpson, Carlos},
  title={Descente pour les $n$-champs},
  date={2001},
  eprint={arXiv:math/9807049},
}

\bib{hovey-model-categories}{book}{
  author={Hovey, Mark},
  title={Model categories},
  series={Mathematical Surveys and Monographs},
  volume={63},
  publisher={American Mathematical Society},
  place={Providence, RI},
  date={1999},
  pages={xii+209},
  isbn={0-8218-1359-5},
  review={\MR {1650134 (99h:55031)}},
}

\bib{joyal-theta-note}{article}{
  author={Joyal, Andr{\'e}},
  title={Disks, duality, and Theta-categories},
  date={1997-09},
  status={preprint},
}

\bib{joyal-tierney-quasicats-segal-spaces}{article}{
  author={Joyal, Andr{\'e}},
  author={Tierney, Myles},
  title={Quasi-categories vs Segal spaces},
  conference={ title={Categories in algebra, geometry and mathematical physics}, },
  book={ series={Contemp. Math.}, volume={431}, publisher={Amer. Math. Soc.}, place={Providence, RI}, },
  date={2007},
  pages={277--326},
  review={\MR {2342834 (2008k:55037)}},
}

\bib{leinster-survey}{article}{
  author={Leinster, Tom},
  title={A survey of definitions of $n$-category},
  date={2001},
  eprint={arXiv:math/0107188},
}

\bib{lurie-classification-tft}{article}{
  author={Lurie, Jacob},
  title={On the classification of topological field theories},
  date={2009},
  eprint={arXiv:0905.0465},
}

\bib{rezk-ho-theory-of-ho-theory}{article}{
  author={Rezk, Charles},
  title={A model for the homotopy theory of homotopy theory},
  journal={Trans. Amer. Math. Soc.},
  volume={353},
  date={2001},
  number={3},
  pages={973--1007 (electronic)},
  issn={0002-9947},
  review={\MR {1804411 (2002a:55020)}},
}

\bib{rezk-theta-n-spaces}{article}{
  author={Rezk, Charles},
  title={A Cartesian presentation of weak $n$-categories},
  journal={Geom. Topol.},
  volume={14},
  date={2010},
  number={1},
  pages={521--571},
  issn={1465-3060},
  review={\MR {2578310}},
  doi={10.2140/gt.2010.14.521},
}

\bib{rezk-theta-n-spaces-correction}{article}{
  author={Rezk, Charles},
  title={Correction to "A Cartesian presentation of weak $n$-categories"},
  journal={Geom. Topol.},
  volume={14},
  date={2010},
  number={1},
  pages={2301-2304},
  doi={10.2140/gt.2010.14.2301},
}

\bib{tamsamani-n-cat}{article}{
  author={Tamsamani, Zouhair},
  title={Sur des notions de $n$-cat\'egorie et $n$-groupo\"\i de non strictes via des ensembles multi-simpliciaux},
  language={French, with English summary},
  journal={$K$-Theory},
  volume={16},
  date={1999},
  number={1},
  pages={51--99},
  issn={0920-3036},
  review={\MR {1673923 (99m:18007)}},
}

\end{biblist}
\end{bibdiv}

\end{document}